\documentclass[a4paper,12pt]{amsart}


\usepackage{amsmath}
\usepackage{amsthm}
\usepackage{amsfonts}
\usepackage{amssymb}
\usepackage{amscd}     

\usepackage{euscript}   
\usepackage{bbm}        
\usepackage{mathrsfs}  
\usepackage{verbatim}   

\usepackage[all,cmtip]{xy} 
\usepackage{epsfig}
\usepackage{xcolor}

\usepackage{graphicx}
\DeclareMathAlphabet{\mathpzc}{OT1}{pzc}{m}{it}
\allowdisplaybreaks
\usepackage{tikz}

\usepackage[pagebackref,hypertexnames=false, colorlinks, citecolor=red,linkcolor=blue, urlcolor=red]{hyperref}






\makeatletter
\@namedef{subjclassname@2020}{%
  \textup{2020} Mathematics Subject Classification}
\makeatother



\textwidth 6.375in \textheight 8.875in \oddsidemargin0.00in
\evensidemargin0.00in

\DeclareSymbolFont{SY}{U}{psy}{m}{n}
\DeclareMathSymbol{\emptyset}{\mathord}{SY}{'306}



\theoremstyle{plain}

\newtheorem{thm}{Theorem}[section]
\newtheorem{cor}[thm]{Corollary}
\newtheorem{lem}[thm]{Lemma}
\newtheorem{prop}[thm]{Proposition}
\newtheorem{rem}[thm]{Remark}
\newtheorem{defn}[thm]{Definition}

\theoremstyle{definition}
\newtheorem{ex}[thm]{Example}

\numberwithin{equation}{section}



\def\B{{\mathbb B}}
\def\C{{\mathbb C}}
\def\D{{\mathbb D}}

\def\M{\mathbb{M}}
\def\N{\mathbb{N}}

\def\T{\mathbb T}



\def \CA { \mathcal A }                                                        \def \CY { \mathcal Y }
\def \CB { \mathcal B }                                    \def\CR { \mathcal R }                    \def \CZ { \mathcal Z }
\def \CC { \mathcal C }                 \def\CK { \mathcal K }                   \def\CS { \mathcal S }
\def \CD { \mathcal D }                 \def\CL { \mathcal L }                    \def\CT { \mathcal T }
\def \CE { \mathcal E }                  \def\CM { \mathcal M }                 \def\CU { \mathcal U }
\def \CF { \mathcal F }                   \def\CN { \mathcal N }                 \def\CV { \mathcal V }
\def \CG { \mathcal G }                  \def\CO { \mathcal O }                \def \CW { \mathcal W }
\def \CH { \mathcal H }                                    \def \CX { \mathcal X }



\def \SA { \mathsf A }                   \def \SI { \mathsf  I }                     \def\SQ { \mathsf Q }                  \def \SY { \mathsf Y }
\def \SB { \mathsf B }                  \def \SJ { \mathsf J }                                        
\def \SC { \mathsf C }                                     \def\SS { \mathsf S }
                                      \def\ST { \mathsf T }
\def \SE { \mathsf E }

                  \def\SP { \mathsf P }                  \def \SX { \mathsf X }

\def \se { \mathsf e}
\def \ev { \mathsf{EV} }
\def \sev { \mathsf{ev} }



\def \TA { \mathtt A }



\newcommand { \be } { \begin {equation} }
\newcommand { \ee } { \end {equation} }
\newcommand { \bea } { \begin {eqnarray} }
\newcommand { \eea } { \end {eqnarray} }
\newcommand { \Bea } { \begin {eqnarray*} }
\newcommand { \Eea } { \end {eqnarray*} }

\setcounter{tocdepth}{2}



\usepackage{xpatch}
\makeatletter   
\xpatchcmd{\@tocline}
{\hfil\hbox to\@pnumwidth{\@tocpagenum{#7}}\par}
{\ifnum#1<0\hfill\else\dotfill\fi\hbox to\@pnumwidth{\@tocpagenum{#7}}\par}
{}{}
\makeatother

\makeatletter
 \def\l@subsection{\@tocline{2}{0pt}{4pc}{6pc}{}}
\def\l@subsubsection{\@tocline{3}{0pt}{8pc}{8pc}{}}
 \makeatother



\def\norm#1{\left\|{#1}\right\|}


\makeatletter
\@namedef{subjclassname@2020}{%
  \textup{2020} Mathematics Subject Classification}
\makeatother


\title{Sections and Chapters}

\begin{document}


\title[nc Cowen-Douglas class]{Towards a noncommutative theory of Cowen-Douglas class of noncommuting operators}


\author[P. Deb]{Prahllad Deb}
\address[P. Deb]{Department of Mathematics, Indraprastha Institute of Information Technology Delhi, New Delhi - 110020, India}
\email{prahllad@iiitd.ac.in}
\thanks{P. Deb gratefully acknowledges the postdoctoral fellowship supported by Israel Science Foundation (ISF) Grant No. 2123/17 in the Department of Mathematics at Ben-Gurion University of the Negev, as well as the professional development fund (PDA) and the initial research grant (IRG) provided by IIIT-Delhi, for supporting his research.}

\author[V. Vinnikov]{Victor Vinnikov}
\address[V. Vinnikov]{Department of Mathematics, Ben-Gurion University in the Negev,  Beer-Sheva, Israel - 84105}
\email{vinnikov@bgu.ac.il}
\thanks{V. Vinnikov acknowledges Israel Science Foundation (ISF) Grant No. 2123/17 for supporting this work. He would also like to thank Milken Families Foundation Chair in Mathematics for a partial support of his research.}

\keywords{Cowen-Douglas class, Free noncommutative analysis, Noncommutative reproducing kernel, Hermitian holomorphic vector bundles, Gleason's problem} 

\subjclass[2020]{Primary: 47B13, 46E22, 46L52   Secondary: 46L07, 32L10}



\begin{abstract} 
The classical Cowen-Douglas class of (commuting tuples of) operators possessing an open set of (joint) eigenvalues of finite constant multiplicity was introduced by Cowen and Douglas, generalizing the backward shifts. Their unitary equivalence classes are determined by the equivalence classes of certain hermitian holomorphic vector bundles associated with them on this set. 

This article develops a free noncommutative analogue of Cowen-Douglas theory to explore the notion of vector bundles in the setting of free noncommutative function theory. We define the noncommutative Cowen-Douglas class using matricial joint eigenvalues, as envisioned by Taylor, and show via the Taylor-Taylor series that the associated joint eigenspaces naturally form such a vector bundle, what we call a noncommutative hermitian holomorphic vector bundle.

A key result is that the unitary equivalence class of a tuple in this class is completely determined by the equivalence class of its associated noncommutative vector bundle. This work lays the groundwork of the noncommutative hermitian geometry, which investigates noncommutative analogues of complex manifolds, vector bundles, and hermitian metrics by drawing on ideas from both complex hermitian geometry and operator theory.

We also examine noncommutative reproducing kernel Hilbert space models and introduce the noncommutative Gleason problem, showing that elements of the noncommutative Cowen-Douglas class are essentially (up to unitary equivalence) adjoints of left multiplication operators by noncommuting independent variables in a noncommutative reproducing kernel Hilbert space.
\end{abstract}


\maketitle

\tableofcontents


\section{Introduction} \label{Introduction}

A central problem in the study of operators or tuples of operators on a Hilbert space $ \CH $ is the identification of unitary invariants. Finding a complete set of invariants that are both comprehensive and computationally feasible is a challenging task, with only a few successful instances in practice. One significant contribution in this direction was made by Cowen and Douglas, who developed a framework for a broad class of non-normal operators or tuples of commuting operators on $ \CH $ -- known as the Cowen-Douglas class (\cite{Cowen-Douglas1, Cowen-Douglas2}). However, this theory does not extend to tuples of non-commuting operators. 

The purpose of this article is to initiate a systematic exploration of tuples of noncommuting operators -- reminiscent of the tuple formed by the adjoints of right creation operators on the $ \ell^2 $ space over a free monoid with finitely many letters -- and their associated families of joint matricial eigenspaces by adapting the classical Cowen-Douglas theory to the modern context of free noncommutative (nc for short) function theory. This endeavor lays the foundation for a more general paradigm applicable to a broad class of such operator tuples, which we refer to as the nc Cowen-Douglas class.  Our approach seeks to provide a comprehensive framework for analyzing tuples of bounded linear operators (not necessarily commuting) while deepening our understanding of nc function theory on the nc joint spectrum associated with such operator tuples.

We begin with the classical example: The backward shift operator $ S $ on the Hilbert space $ \ell^2 ( \N ) $ of square integrable complex sequences. Note that each $ \lambda $ in the open unit disc $ \D $ is an eigenvalue of $ S $ of multiplicity $ 1 $ with a canonical choice of eigenvector $ ( 1, \lambda, \lambda^2, \hdots ) \in \ell^2 ( \N ) $. Further, the operator $ S - \lambda I_{ \CH } $ is surjective for every $ \lambda \in \D $. Generalizing the properties of backward shift, Cowen and Douglas, in their seminal paper \cite{Cowen-Douglas1} (and its follow-up \cite{Cowen-Douglas2}), introduced a class of (tuples of commuting) operators on a Hilbert space. More precisely, an $ m $-tuple $ T = ( T_1, \hdots, T_m ) $ of commuting operators on $ \CH $ is in the Cowen-Douglas class $ \mathrm B_r ( \CD ) $ over a bounded domain $ \CD \subset \C^m $ of rank $ r $ if every $ z \in \CD $ is a joint eigenvalue of $ T $ of multiplicity $ r $ (that is, $ \dim \bigcap_{ i = 1 }^m \ker ( T_i - z_i \mathrm {Id}_{ \CH } ) = r $), the linear span of these joint eigenspaces $ \bigcap_{ i = 1 }^m \ker ( T_i - z_i \mathrm {Id}_{ \CH } ) $ is dense in $ \CH $ and the associated tuples $ ( T_1 - z_1, \hdots, T_m - z_m ) $ have closed range in $ \CH \oplus \cdots \oplus \CH $ ($ m $-times) for all $ z = ( z_1, \hdots, z_m ) \in \CD $.

The key insight of Cowen and Douglas is that such a tuple $ T $ naturally defines a hermitian holomorphic vector bundle $ E_T $ over an open subset $ \CD_0 $ of $ \CD $ -- the fibre of $ E_T $ over a point $ ( z_1, \hdots, z_m ) \in \CD_0 $ is the joint eigenspace $ \bigcap_{ i = 1 }^m \ker ( T_i - z_i \mathrm {Id}_{ \CH } ) $. One of their striking results establishes a one-to-one correspondence between the unitary equivalence classes of these tuples $ T $ and the (local) equivalence classes of the associated vector bundles. Through this correspondence, the invariants of these vector bundles, like the curvature, second fundamental form and others, serve as unitary invariants for the tuple $ T $. A complete set of invariants in this case are described in \cite{Cowen-Douglas-connection}. However, identifying a complete and computationally accessible set of invariants remains a formidable challenge except for the bundles whose fibres are $ 1 $ dimensional.

Crucial in any study of such a class is the problem of finding a canonical model of a generic element in the class. In \cite{Curto-Salinas}, Curto and Salinas obtained a canonical model for a tuple in $ \mathrm B_r ( \CD ) $ as the adjoint of the $ m $-tuple of multiplication operators $ ( M_{ z_1 }^*, \hdots, M_{ z_m }^* ) $ by the coordinate functions on a reproducing kernel Hilbert space of $ \C^r $ valued holomorphic functions on the conjugated domain $ \CD_0^* = \{ z : \overline{ z } \in \CD_0 \} $. They further identified sufficient conditions on a reproducing kernel $ K $ defined on $ \CD^* $ ensuring that the tuple $ ( M_{ z_1 }^*, \hdots, M_{ z_m }^* ) $ on the reproducing kernel Hilbert space with the reproducing kernel $ K $ belongs to $ \mathrm B_r ( \CD ) $. 

We now provide a brief overview of free noncommutative function theory and refer interested readers to Section \ref{Preliminaries} for an elaborate discussion and the monograph \cite{Verbovetskyi-Vinnikov} for a comprehensive treatment. One way to understand this theory is as an extension of the theory of holomorphic functions of several commuting complex variables $ z = ( z_1, \hdots, z_d ) $ to the study of functions of matrix tuples $ Z = ( Z_1, \hdots, Z_d ) $ of freely noncommuting $ n \times n $ complex matrices $ Z_1, \hdots, Z_d $ for all $ n \in \N $. In this framework, the set of all matrices of any size over a vector space (or more generally, over any module over a commutative ring) serves as the analogue of $ \C^d $ in commutative function theory -- we refer to this set as the nc space over the vector space. A nc set is a subset of this nc space, which is closed under direct sum (cf. Subsection \ref{NC domains}), and a nc function is defined as a graded function on a nc set that takes values in a nc space and respects both direct sum and similarities. It turns out that any function defined on a nc set that is graded and respects intertwinings qualifies as a nc function (cf. Subsection \ref{NC functions}). When a nc space is equipped with an appropriate topology, there is a well defined notion of nc differentiability for nc functions  (cf. Subsection \ref{nc analyticity}). All nc spaces in this article are equipped with the uniformly-open topology (cf. Subsection \ref{uniform topology}; this topology is named as ``fat topology" in \cite{Agler-McCarthy2}) -- open sets in this topology are referred to as uniformly open sets -- and, any nc analytic function in this topology is called the uniformly analytic nc function (cf. Definition \ref{uniformly analyticity}).

In \cite{Taylor1}, Taylor introduced the theory of nc functions as a component of his broader framework for functional calculus, designed to apply to tuples of noncommuting operators (see also \cite{Taylor2}). This theory was later expanded and referred to as the ``Theory of Fully Matricial Functions" in the work of Voiculescu and his collaborators, particularly in the context of free probability and random matrix theory (cf. \cite{Voiculescu-free-probability}, and also \cite{Voiculescu-I, Voiculescu-II}). The ``Fully Matricial Sets" in this theory correspond to nc sets in our setting, with the additional requirement that they are closed under similarities at all levels. This theory has been applied in the research of Helton, Klep, McCullough, and their collaborators, specially on topics such as Linear-Matrix Inequalities in the free noncommutative set-up (see \cite{Helton-Klep-McCullough-free-proper, Helton-Klep-McCullough-convexity-semidefinite-programming, Helton-Klep-McCullough-Slinglend-nc-ball-maps, Helton-Klep-McCullough-Volcic-spectrahedra}) and in noncommutative convexity (\cite{Helton-McCullough-Putinar-Vinnikov, Helton-McCullough-Vinnikov}) as well as noncommutative real semialgebraic geometry (\cite{Helton-Klep-McCullough-convex-Positivstellensatz, Helton-Klep-McCullough-Free-convex-algebraic-geometry, Helton-McCullough-convex-free-basic-semi-algebraic-set}).

Rational functions and formal power series in $ d $ noncommuting indeterminates naturally emerge in the theory of automata and formal languages, as well as in the study of recognizable formal power series \cite{Berstel-Reutenauer, Eilenberg, Kalman-Falb-Arbib, Kleene, Schutzenberger1, Schutzenberger2}, and as transfer functions in multidimensional systems evolving along the free monoid \cite{Alpay-Verbovetskyi, Ball-Groenewald-Malakorn1, Ball-Groenewald-Malakorn2, Ball-Groenewald-Malakorn3, Ball-Verbovetskyi, Ball-Vinnikov-formal-kernel, Ball-Vinnikov}. Further relevant work includes the contributions by Hadwin \cite{Hadwin} and Hadwin-Kaonga-Mathes \cite{Hadwin-Kaonga-Mathes}, Popescu \cite{Popescu1, Popescu2, Popescu3, Popescu4, Popescu5, Popescu6, Popescu7, Popescu8} on general noncommuting tuples of operators, as well as the work by Muhly and Solel \cite{Muhly-Solel1, Muhly-Solel2, Muhly-Solel3, Muhly-Solel4} on generalized Hardy algebras associated with a $ W^* $-correspondence.

In the monograph \cite{Verbovetskyi-Vinnikov}, Kaliuzhnyi-Verbovetskyi and the second author of this article have developed the theory of noncommutative functions, which round off Taylor's theory, keeping an eye on further study of noncommutative function theory. In particular, it is shown how the ``invariant under direct sum and similarities" combined with some mild additional assumptions -- namely, the local boundedness -- lead to Taylor-Taylor series developments (at least locally) for a general noncommutative function. The recent work of Agler-McCarthy and collaborators \cite{Agler-McCarthy1, Agler-McCarthy2, Agler-McCarthy-Young1,Agler-McCarthy-Young2} have made further progress in the theory of noncommutative functions.

As mentioned earlier, the goal of the present article is to study the tuples of noncommuting operators possessing a ``nc open set" of joint eigenvalues on which the dimension of the corresponding joint eigenspaces becomes a ``nc constant function". It turns out that the family of joint nc eigenspaces of such a tuple exhibits a structure analogous to that of a hermitian holomorphic vector bundle in the classical setting. This investigation thus leads to a plausible notion of ``nc hermitian holomorphic vector bundles" which could be of independent interest for further research. 

Our starting point is the noncommuting tuple $ ( R_1^*, \hdots, R_d^* ) $ of adjoint of the right creation operators on the $ \ell^2 $ space $ \ell^2 ( \CG^d ) $ over the free monoid $ \CG^d $ (unital semi-group) of all words in $ d $ letters with the empty word as the unit element (cf. Example \ref{nc Hardy space}). For every $ d $ tuple of $ n \times n $ matrices $ ( X_1, \hdots, X_d ) $ with $ \norm{ X_1 X_1^* + \cdots + X_d X_d^* } < 1 $ (matrix norm), the following identity holds:
$$ \dim \left( \bigcap_{ j = 1 }^d \ker \left( R_j^*\otimes \mathrm{Id}_{ \C^{ n \times n } } - \mathrm{Id}_{ \ell^2 ( \CG^d ) } \otimes R_{ X_j } \right) \right) = n^2, $$
for all sizes $ n \in \N $, where $ R_X : \C^{ n \times n } \rightarrow \C^{ n \times n } $ is the right multiplication operator. Furthermore, each of these joint kernels is a free left module over $ \C^{ n \times n } $ of rank $ 1 $. Modeled after this example, we identify a class of noncommuting tuples of operators, referred to as the nc Cowen-Douglas class, denoted by $ \mathrm B_r ( \Omega )_{ nc } $, where $ r \in \N $ and $ \Omega \subset \C^d_{ nc } $ is a uniformly open nc domain. For $ \boldsymbol{ T } = ( T_1, \hdots, T_d ) \in \mathrm B_r ( \Omega )_{ nc } $ with $ T_1, \hdots, T_d \in \CB ( \CH ) $ for some complex separable Hilbert space $ \CH $, we set 
$$ D_{ \boldsymbol{T} - W } := ( T_1 \otimes \mathrm{Id}_{ \C^{ m \times m } } - \mathrm{Id}_{ \CH } \otimes R_{ W_1 }, \hdots, T_d \otimes \mathrm{Id}_{ \C^{ m \times m } } - \mathrm{Id}_{ \CH } \otimes R_{ W_d } ),  $$
for $ W = ( W_1, \hdots, W_d ) \in \Omega \cap ( \C^d )^{ m \times m } $,  $ m \in \N $. In this definition of nc Cowen-Douglas operator, the family $ \{ \ker D_{ \boldsymbol{T} - W } : W \in \Omega \} $ of matricial eigenspaces is required to satisfy a suitable density property. Additionally, $ \ker D_{ \boldsymbol{T} - W } $ must have complex dimension $ m r^2 $ as vector subspace of $ \CH^{ m \times m } $, for $ W \in \Omega \cap ( \C^d )^{ m \times m } $, and the range of $ D_{ \boldsymbol{T} - W } $ needs to be closed in $ \CH^{ m \times m } \oplus \cdots \oplus \CH^{ m \times m } $ ($ d $ times) for every $ m \in \N $ (see Definition \ref{Definition of nc CD class}).  

After establishing a plausible definition of the nc Cowen-Douglas class over a uniformly open set $ \Omega $, a systematic study of such operator tuples requires examining the nature of the subspace-valued mapping $ W \mapsto \ker D_{ \boldsymbol{T} - W } $ on $ \Omega $.  In particular, analogous to the classical case, we look for a uniformly analytic nc function $ \gamma : \Omega_0 \to \CH_{ nc } $ on possibly a smaller open subset $ \Omega_0 \subset \Omega $, satisfying $ \gamma ( W ) \in \ker D_{ \boldsymbol{T} - W } $. In other words, one is required to look for a uniformly analytic function $ \gamma : \Omega_0 \to \CH_{ nc } $ such that $ \CT ( W ) ( \gamma ( W ) ) = 0 $ for $ W \in \Omega_0 $, where $ \CT : \Omega \to \CB ( \CH )_{ nc } $ is a uniformly analytic nc function given by $ \CT ( W ) : = D_{ \boldsymbol{ T } - W } $.

To address this, it is necessary to establish the local existence of a uniformly analytic nc function $ \CG $ around a given point $ Y \in \Omega $ that takes values in the nc space over an operator space $ \CY $ and satisfies the equation 
$$ \CT ( X ) \star \CG ( X ) = \CF ( X ) , $$
for given uniformly analytic nc functions $ \CT $ and $ \CF $ on $ \Omega $ taking values in the nc space over operator spaces $ \CX $ and $ \CZ $, respectively, equipped with a multiplication $ \star : \CX \times \CY \to \CZ $, provided that $ \CG $ is known at some initial point. 

This result, Theorem \ref{closed range operators general version}, constitutes one of the main contributions of this article and may be of independent interest (cf. Remark \ref{rem on solving nc eqn}). The first half of Section \ref{solving nc equation} is devoted to proving this result, and in the latter half discusses various special cases. Notably, it is shown that some of the technical hypotheses on $ \CT $ in Theorem \ref{closed range operators general version} are automatically satisfied in specific settings (see Theorem \ref{closed range operators}). For instance, when $ \CX $ is replaced by the space $ \CB ( \CH ) $ of all bounded linear operators on a Hilbert space $ \CH $, equipped with the natural operator space structure, and $ \CY = \CZ = \CH $ are equipped with either the row, or column operator space structure, those hypotheses are inherently met.  

In Section \ref{nc reproducing kernel Hilbert space}, we begin by briefly revisiting the notion of the completely positive (cp) nc reproducing kernel Hilbert space $ \CH ( K ) $ of uniformly analytic nc functions on a uniformly open nc domain $ \Omega $ (cf. \cite{Ball-Marx-Vinnikov-nc-rkhs}, see also \cite{Ball-Vinnikov-formal-kernel, Popa-Vinnikov}). A key result in this section is the computation of the nc difference-differential of the kernel elements, which are shown to remain kernel elements within the same space. We derive a formula for reproducing the nc difference-differential of any function in this Hilbert space. 

The novel contributions of this section, however, lie in the study of the nc space over  $ \CH ( K ) $, as explored in Subsection \ref{coordinate free presentation}. Within this nc space, we identify certain special matrices whose entries are kernel elements in $ \CH ( K ) $, referred to as the \textbf{generalized kernel elements} (cf. Section \ref{coordinate free presentation}). Using the generalized kernel elements and their nc different-differentials, respectively, we establish identities for reproducing both the functional values and the values of the nc difference-differential of any $ F \in \CH \otimes \C^{ m \times m } $, $ m \in \N $. Additionally, the action of the adjoint of left multiplication operators by the nc coordinate functions on nc difference-differential of the \textbf{generalized kernel elements} has been computed (see Proposition \ref{properties of generalized kernel elements}). This computation is then utilized to describe the structure of certain joint invariant subspaces of the tuple formed by the adjoints of left multiplication operators by the nc coordinate functions on the given nc reproducing kernel Hilbert space (see Corollary \ref{nc generalized eigenspaces for mult op}). 

An important observation made along the way is that each level of this nc space, $ \CH ( K ) \otimes \C^{ m \times m } $, $ m \in \N $, admits the canonical Hilbert $ C^* $ module structure over $ \C^{ m \times m } $ induced from the Hilbert space structure on $ \CH ( K ) $. Leveraging this Hilbert $ C^* $ module structure on $ \CH \otimes \C^{ m \times m } $, we derive several equivalent criteria for two important properties of the kernel $ K $: (1) Non-degeneracy: $ \ker K ( W, W ) \cap \{ P \in \C^{ m \times m } : P \geq 0 \} = \{ 0 \} $ and (2) Strict positivity: $ K ( W, W ) ( P ) > 0 $ whenever $ P > 0 $, for $ W \in \Omega \cap ( \C^d )^{ m \times m } $. These results are formalized in Theorem \ref{equivalent criterion for non-degeneracy} and in Theorem \ref{equivalent criterion for strict positivity}, respectively. This analysis, we believe, may be of independent interest, offering insights into the intrinsic geometry of the reproducing kernel Hilbert space with a cp nc kernel. Later in Section \ref{model and local operators}, we make use of the results in this section to construct and study the model space of a nc Cowen-Douglas operator tuple and the associated local operators.

The contents of Section \ref{Noncommutative Cowen-Douglas class and vector bundles} constitute the nucleus of the present article. We begin this section by introducing the nc Cowen-Douglas class $ \mathrm B_r ( \Omega )_{ nc } $ of noncommuting tuples of operators (see Definition \ref{nc CD class}). As mentioned earlier, for $ \boldsymbol{ T } \in \mathrm B_r ( \Omega )_{ nc } $, the kernel of $ D_{ \boldsymbol{T} - W } $ is required to be of complex dimension $ m r^2 $ as vector subspace of $ \CH^{ m \times m } \oplus \cdots \oplus \CH^{ m \times m } $ ($ d $ times) for every $ m \in \N $. However, with the help of Flanders' theorem (see \cite[Theorem 1]{Flanders}), we prove that the kernel of $ D_{ \boldsymbol{T} - W } $ is, in fact, a free left submodule of $ \CH^{ m \times m } $ of free rank $ r $ over $ \C^{ m \times m } $ for each $ m \in \N $. This key observation plays a significant role in understanding such operator tuples and, alongside other properties of the family $ \{ \ker D_{ \boldsymbol{T} - W } : W \in \Omega \} $, sheds light on what a plausible notion of vector bundles in the free noncommutative category would be. 

Based on these ideas, we lay the foundation for nc hermitian holomorphic vector bundles in the latter half of this section. We first define what a trivial nc holomorphic vector bundle is and then restrict out attention to those nc vector bundles which arise as holomorphic subbundles of a trivial nc vector bundle (see Definition \ref{trivial nc bundle} and \ref{nc vector bundle}). The nc sections, bundle homomorphisms and transition data of nc vector bundles are explained in Subsections \ref{section and homomorphism} and \ref{transition data}. Along the way, we establish the nc analogue of (Čech) cocycle conditions, which closely resemble their classical counterparts. Notably, we show that nc vector bundles over certain nc domains, whose nc cocycles are cohomologous to the trivial nc cocycle, are isomorphic to trivial nc vector bundles. 

With the foundation of nc holomorphic vector bundles in place, we proceed to define a hermitian structure for these bundles (see Definition \ref{hermitian metric}). Specifically, a nc trivial bundle with the nc space over a Hilbert space as its fibre naturally inherits a canonical hermitian structure. This structure arises from the matrix sesquilinear form associated with Pisier's OH matrix norms (see \cite[Chapter 7, pg. 122]{Pisier}, \cite[Proposition 3.5.2]{Effros-Ruan} for OH matrix norm). Subbundles of this nc trivial bundle are then equipped with the hermitian structures induced from the one defined on the ambient bundle. 

It is worth noting that nc functions on a nc domain do not generally form a sheaf which is a crucial obstruction to carry over the classical notion of vector bundles in the realm of nc function theory. However, we believe that these embedded subbundles represent an important first step in understanding the structure of general nc vector bundles and their associated differential geometric properties -- a direction we plan to explore in a follow-up article. Moreover, this restricted class of nc vector bundles aligns well with the objectives of this article, as demonstrated later in this subsection. Specifically, we show that any operator tuple in $ \mathrm B_r ( \Omega )_{ nc } $ gives rise to such a vector bundle over a uniformly open nc subdomain of $ \Omega $ with the help of Theorem \ref{closed range operators general version}. Finally, we conclude this subsection by establishing a relationship between the unitary invariants of an element in $ \mathrm B_r ( \Omega )_{ nc } $ and those of the associated vector bundle (see Theorem \ref{unitary equivalence}). 

Following the work of Curto and Salinas in \cite{Curto-Salinas} on the construction of the models for classical Cowen-Douglas tuples, we develop a model for an element in the nc Cowen-Douglas class $ \mathrm B_r ( \Omega )_{ nc } $ in Section \ref{model}. It is shown that such a tuple can be realized as the adjoint of the $ d $-tuple of left multiplication operators by the nc coordinate functions on a nc reproducing kernel Hilbert space. This Hilbert space consists of uniformly analytic nc functions on some nc uniformly open subset of $ \Omega^* $ ($ \Omega^* : = \{ W : W^* \in \Omega \} $), taking values in the nc space over $ \CL ( \C, \C^r ) \cong \C^r $. Furthermore, we note that these nc reproducing kernels determine the $ d $-tuples of operators in $ \mathrm B_r ( \Omega )_{ nc } $ up to unitary equivalence. 

In Subsection \ref{local operators}, we generalize the notion of local operators, initially introduced for commuting operator tuples in the classical Cowen-Douglas class (\cite[\S 1.5, pp. 190]{Cowen-Douglas1}), to the noncommutative setting. We define a tuple of nc local operators of order $ \ell \in \N $ in Equation \eqref{local operator}, denoted by $ N^{ ( \ell ) }_{ \boldsymbol{ T }, W } $, associated to an element $ \boldsymbol{ T } \in \mathrm B_r ( \Omega )_{ nc } $ at the point $ W \in \Omega $. Informally, these tuples of operators are the restriction of the tuple $ D_{ \boldsymbol{ T } - W } $ to its ``generalized eigenspace at $ W $ in nc setting". Using the Taylor-Taylor series of uniformly analytic nc functions, we show that if the tuples of nc local operators of all orders associated to $ \boldsymbol{ T } $ and $ \widetilde{ \boldsymbol{ T } } $ in $ \mathrm B_r ( \Omega )_{ nc } $ are unitarily equivalent, then so is $ \boldsymbol{ T } $ and $ \widetilde{ \boldsymbol{ T } } $. It would be interesting to investigate if the unitary equivalence of $ N^{ ( \ell ) }_{ \boldsymbol{ T }, W } $ and $ N^{ ( \ell ) }_{ \widetilde{ \boldsymbol{ T } }, W } $ up to finite order would be sufficient for unitary equivalence for $ \boldsymbol{ T } $ and $ \widetilde{ \boldsymbol{ T } } $. We plan to pursue this project in the sequel.

After developing a model for elements in the nc Cowen-Douglas class, a natural question arises: Under what conditions is the tuple of adjoints of the left multiplication operators on a cp nc reproducing kernel Hilbert space of uniformly analytic nc functions itself in the nc Cowen-Douglas class? In addressing this question, we introduce the notion of the ``noncommutative Gleason problem", which we explore in Section \ref{On nc Gleason problem}.

The classical Gleason problem was first studied by Andrew Gleason in \cite{Gleason}, where he examined the maximal ideals of a commutative Banach algebra. In particular, he proved that if the maximal ideal of functions vanishing at the origin in the Banach algebra $\CA(\B(0,1))$ is finitely generated, then it must be generated by the coordinate functions. Here, $\CA(\B(0,1))$ denotes the Banach algebra of holomorphic functions on the open unit ball $\B(0,1)$ in $\C^n$ that extend continuously to the boundary equipped with the supremum norm. The question of whether the coordinate functions generate the maximal ideals in an algebra of holomorphic functions is now known as the \textit{Gleason problem}. 

In the noncommutative setting, we show that the nc Gleason problem (see Equation \eqref{Gleason solution}) is always globally solvable when the domain is a uniformly open nc set that is right admissible (see Theorem \ref{constructive solution for nc Gleason problem-algebraic version}). This is in contrast to the classical case, where solutions are not always guaranteed globally. Moreover, the solution to the nc Gleason problem is uniquely determined by the given function (see Theorem \ref{uniqueness of Gleason solution}), unlike the classical case, where uniqueness holds only for planar domains.

Finally, in Section \ref{application of nc Gleason problem}, we show that any nc Cowen-Douglas tuple can be realized as the tuple of adjoint of the left multiplication operators on a cp nc reproducing kernel Hilbert space of uniformly analytic nc functions where the nc Gleason problem is solvable (see Theorem \ref{complete description of CD tuple}).

\section{Preliminaries} \label{Preliminaries}

In this section, we review some foundational concepts from noncommutative function theory, which will serve as a basis for the discussions in subsequent sections. Our presentation largely follows the comprehensive treatment provided in \cite{Verbovetskyi-Vinnikov}, a definitive resource on the subject of noncommutative functions, to which we refer interested readers for a more detailed discussion and proofs.

First, let us fix some notations. For a set $ \CS $ and $ n, m \in \N $, denote $ \CS^{ n \times m } $ as the set of all $ n \times m $ matrices with entries in $ \CS $. When $ \CS =\C $, we use the usual notation $ \mathrm{GL} ( n, \C ) $ for the set of all $ n \times n $ invertible matrices. As mentioned earlier, the shorthand notation ``nc" is used for the word ``noncommutative" in what follows.

\subsection{Noncommutative domains} \label{NC domains} Let $ \CR $ be a commutative ring with identity. For a module $ \CM $ over $ \CR $, we define the \textit{nc space over} $ \CM $
\be \label{nc space} \CM_{ nc } : = \coprod_{ n =1 }^{ \infty } \CM^{ n \times n } . \ee
A subset $ \Omega \subset \CM $ is called a \textit{nc set} if it is closed under the direct sum: For $ m, n \in \N $,
$$ \Omega_n \oplus \Omega_m : = \left\{ A \oplus B : = \left[ \begin{smallmatrix} A & 0 \\ 0 & B \end{smallmatrix} \right] : A \in \Omega_n,~ B \in \Omega_m \right\} $$
where $ \Omega_j = \Omega \cap \CM^{ j \times j } $, $ j = n, m $. Evidently, for any $ X \in \Omega_s $ and $ m \in \N $, the $ ms \times ms $ matrix $ X^{ \oplus m } : = \left[ \begin{smallmatrix} X & & \\  & \ddots &  \\  &  & X \end{smallmatrix} \right] $ is in $ \Omega $. We call this matrix $ X^{ \oplus m } $ as the \textit{$m$-th amplification of $ X $}. 

Observe that the matrices over $ \CR $ act from the left and the right on the matrices over $ \CM $ by the usual matrix multiplication obtained from the action of $ \CR $ on $ \CM $. Namely, for $ A \in \CM^{ n \times m } $, $ \SX \in \CR^{ p \times n } $ and $ \SY \in \CR^{ m \times q } $, the matrix $ \SX  A \SY $ makes sense as an element in $ \CM^{ p \times q } $. In the case of $ \CM = \CR^d $, we identify the elements in $ \CM^{ n \times m } $ as $ d $-tuples of $ n \times m $ matrices over $ \CM $. Under this identification, the direct sum of these tuples and the action of the matrices over $ \CR $ on $ \CM^{ n \times m } $ happen to be entry-wise of these tuples. 

While nc sets are the natural domains for defining nc functions, an additional condition is required to develop a robust difference-differential calculus for these functions. Specifically, a nc set must be closed under the formation of upper (or lower) triangular block matrices with arbitrary upper (or lower, respectively) blocks. However, this condition is too restrictive to hold in general—it fails even for fundamental examples such as the nc ball and the nc poly-disc. The appropriate notion, as it turns out, is more subtle and is introduced as follows. 

A nc set $ \Omega \subset \CM_{ nc } $ is said to be \textit{right admissible} (\textit{left admissible}) if for every $ A \in \Omega_n $, $ B \in \Omega_m $ and $ C \in \CM^{ n \times m } $ ($ C \in \CM^{ m \times n } $, respectively) there exists an invertible element $ \mathsf{r} \in \CR $ such that $ \left[ \begin{smallmatrix} A & \mathsf{r} C \\ 0 & B \end{smallmatrix} \right] \in \Omega_{ n + m } $ ($ \left[ \begin{smallmatrix} A & 0 \\ \mathsf{r} C & B \end{smallmatrix} \right] \in \Omega_{ n + m } $, respectively).

\subsubsection{Full noncommutative sets.} \label{bifull} 

Later, on several occasions, we will need additional structure on the nc sets to study the nc functions defined on them. For our purpose, it is customary to consider the nc space $ \CV_{ nc } $ over a complex vector space $ \CV $.

Following \cite[Definition 2.4]{Ball-Marx-Vinnikov-interpolation} recall that a nc set $ \Omega \subset \CV_{ nc } $ is said to be \textit{left-full} if $ \Omega $ is \textit{invariant under left injective intertwining}: For $ n \geq m $, $ A \in \Omega_n $ and $ X \in \CV^{ m \times m } $, if there exists an injective $ \SI \in \C^{ n \times m } $ such that $ A \SI = \SI X $, then $ X \in \Omega_m $.

Analogously, we say that $ \Omega $ is \textit{right-full} if it is \textit{invariant under right surjective intertwining}: For $ n \geq m $, $ A \in \Omega_n $ and $ X \in \CV^{ m \times m } $, if there exists an surjective $ \SJ \in \C^{ m \times n } $ such that $ \SJ A = X \SJ $, then $ X \in \Omega_m $.

So we define a nc set $ \Omega $ is \textit{bi-full} if it is invariant under both the \textit{left injective intertwining} as well as the \textit{right surjective intertwining}.

Notice that any left or right full nc set is invariant under the similarity. Further, a similar argument as in \cite[$\S$2.2, pp 15]{Ball-Marx-Vinnikov-interpolation} shows that a nc set $ \Omega $ is \textit{bi-full} if and only if it is \textit{closed under the restriction to both invariant and co-invariant subspaces}: For $ n, m \in \N $ and $ A \in \Omega_{ n + m } $, whenever there exists an invertible $ \SQ \in \C^{ ( n+ m ) \times ( n + m ) } $ such that $ \SQ A \SQ^{ - 1 } = \left[ \begin{smallmatrix} X & Z \\ 0 & Y \end{smallmatrix} \right] $ with $ X \in \CV^{ n \times n }, ~ Y \in \CV^{ m \times m }, ~ Z \in \CV^{ n \times m } $, then not only is $ \SQ A \SQ^{ -1 } $ in $ \Omega_{ n + m } $, but also $ X \in \Omega_n $ and $ Y \in \Omega_m $. 

\subsubsection{Operator spaces and complete boundedness} \label{operator spaces}

To perform analysis on a nc set, it is essential to fix a topology on these sets. This is achieved by equipping the nc space $ \CV_{nc} $ over a vector space $ \CV $ with a topology induced by an operator space structure on $ \CV $. Before proceeding, we first recall the definition of an operator space, following \cite[Chapter 2]{Effros-Ruan}; see also \cite{Paulsen, Pisier} for additional references.

A normed linear space $ \CV $ is said to be an \textit{operator space} if it is equipped with a family of norms $ \| \cdot \|_n $ on $ \CV^{ n \times n } $ for $ n \in \N $ and there is a linear map $ \varphi : \CV \rightarrow \CL ( \CX ) $ for some Hilbert space $ \CX $, such that for every $ n \in \N $, the map
\be \label{amplification of linear map}
 \varphi^{ ( n ) } : = \varphi \otimes \mbox{Id}_{ \C^{ n \times n } } : \CV^{ n \times n } \rightarrow \CL ( \CX )^{ n \times n } ~~~ \mbox{ defined by } ~~~ ( \! ( v_{ i j } ) \! )_{ i, j = 1 }^n \mapsto ( \! ( \varphi ( v_{ i j } ) ) \! )_{ i, j = 1 }^n
\ee
is a linear isometry. Such a space can be represented as a system of norms $ \{ \| \cdot \|_n \} $ on $ \CV^{ n \times n } $, $ n \in \N $, satisfying the following two conditions:
\begin{itemize}
\item[(\textbf{R1})] $ \| A \oplus B \|_{ n + m } = \mbox{max} \{ \| A \|_n, \| B \|_m \} $ , $ A \in \CV^{ n \times n } $, $ B \in \CV^{ m \times m } $;
\item[(\textbf{R2})] $ \| \SX A \SY \|_m  \leq  \| \SX \| ~ \| A \|_n  ~ \| \SY \| $, $ A \in \CV^{ n \times n } $, $ \SX \in \C^{ m \times n } $, $ \SY \in \C^{ n \times m } $,
\end{itemize}
as proved by Ruan (see \cite[Proposition 2.3.6]{Effros-Ruan}).

Let $ \CV $ and $ \CW $ be operator spaces and $ T : \CV \rightarrow \CW $ be a linear mapping. Then $ T $ is said to be \textit{completely bounded} if 
$$ \norm{ T }_{ \text{cb} } : = \sup_{ n \in \N } \norm{ T \otimes \mbox{Id}_{ \C^{ n \times n } } } < \infty , $$
and \textit{completely contractive} if $  \norm{ T }_{ \text{cb} } \leq 1 $ where $ \norm{ T \otimes \mbox{Id}_{ \C^{ n \times n } } } $ is the norm of the operator $  T \otimes \mbox{Id}_{ \C^{ n \times n } } : \CV^{ n \times n } \rightarrow \CW^{ n \times n } $.

\subsubsection{Uniformly-open topology} \label{uniform topology}

Let $ \CV $ be an operator space and $ s \in \N $. For $ X \in \CV^{ s \times s } $ and $ r > 0 $, we define a \textit{nc ball} centred at the point $ X $ with radius $ r $ as 
$$ B_{ nc } ( X, r ) : = \coprod_{ m = 1 }^{ \infty } B ( X^{ \oplus m }, r ) = \coprod_{ m = 1 }^{ \infty } \left\{ A \in \CV^{ m s \times m s } : \left\| A - X^{ \oplus m } \right\| < r \right\} . $$
It can be seen (see \cite[Proposition 7.12]{Verbovetskyi-Vinnikov}) that the \textit{nc balls} of the above kind form a basis for a topology on $ \CV_{ nc } $. This topology is known as the \textit{uniformly-open topology}. We call the open sets in this topology as \textit{uniformly open}. The uniformly-open topology is never Hausdorff ( cf. \cite[Remark 7.13]{Verbovetskyi-Vinnikov}). However, the multiplication by a scalar as a mapping $ \C \times \CV_{ nc } \rightarrow \CV_{ nc } $ is continuous in the uniformly-open topology on $ \CV_{ nc } $.  

\begin{defn} \label{nc domain}
Let $ \CV $ be an operators space and the nc space $ \CV_{ nc } $ be equipped with the uniformly-open topology. Then a nc set $ \Omega \subset \CV_{ nc } $ is said to be a \textit{nc domain} if it is uniformly open and for each $ n \in \N $, $ \Omega_n \subset \CV^{ n \times n } $ is connected. 
\end{defn}

\subsection{Noncommutative functions} \label{NC functions}

Let $ \CV $ and $ \CW $ be operator spaces and $ \Omega \subset \CV_{ nc } $ be a nc set. A map $ f : \Omega \rightarrow \CW_{ nc } $ is said to be a \textit{nc function} if it satisfies the following conditions:
\begin{itemize}
\item[(1)] $ f $ is graded: $ f ( \Omega_n ) \subset \CW^{ n \times n } $ for all $ n \in \N $;
\item[(2)] $ f $ respects direct sums: $ f ( X \oplus Y ) = f ( X ) \oplus f( Y ) $ for all $ X, Y \in \Omega $;
\item[(3)] $ f $ respects similarities: $ f ( \SA X \SA^{ - 1 } ) = \SA f ( X ) \SA^{ - 1 } $ for all $ X \in \Omega_n $ and $ \SA \in \mbox{GL} ( n, \C ) $ such that $ \SA X \SA^{ -1 } \in \Omega_n $, $ n \in \N $.
\end{itemize} 
It is well known (see \cite[Proposition 2.1]{Verbovetskyi-Vinnikov}) that the conditions $ ( 2 ) $ and $ ( 3 ) $ in the definition above can be replaced by a single condition, namely, a graded mapping $ f : \Omega \rightarrow \CW_{ nc } $ respects the direct sums and similarities if and only if it obeys the \textit{intertwining}: For $ X \in \Omega_n $, $ Y \in \Omega_m $ and $ \SA \in \C^{ n \times m } $,
\be \label{itertwining condition} 
 f ( X ) \SA = \SA  f ( Y ) \quad \text{whenever} \quad X \SA = \SA Y . 
\ee
J. Taylor (cf. \cite{Taylor2} ) used this condition to define a nc function. A nc function of the above kind is called a \textit{nc function of order $ 0 $}. For a more detailed study of nc functions and their various applications in other fields of mathematics, we refer the readers to \cite{Agler-McCarthy2, Verbovetskyi-Vinnikov, Voiculescu-free-probability}. 

\subsubsection{Examples} \label{examples}

(a) A prototypical example of nc functions is a nc polynomial. In particular, note that the nc polynomial $ p : \C^2_{ nc } \rightarrow \C_{ nc } $ of degree $ 6 $ defined by
$$ p ( X_1, X_2 ) =  \mathrm{Id} + 5 X_1 + 8 X_1 X_2 + 11 X_2 X_1 + 7 X_1^3 X_2^2 + 3 X_2 X_1^2 X_2 X_1 + 2 X_2^ 6 $$
is a nc function where $ \mathrm{Id} $ denotes the identity matrix of appropriate size. For instance, if $ X_1, X_2 \in \C^{ n \times n } $ then $ \mbox{Id} $ is the $ n \times n $ identity matrix.

(b) Let $ T : \CV \rightarrow \CW $ be a linear mapping of operator spaces $ \CV $ and $ \CW $. Then note that $ T $ gives rise to a nc function $ \widetilde{ T } : \CV_{ nc } \rightarrow \CW_{ nc } $ such that 
$$ T_n : = \widetilde{ T }|_{ \CV^{ n \times n } } : = \widetilde{ T } \left( ( \! ( v_{ i j } ) \! )_{ i, j = 1 }^n \right) = ( \! ( T ( v_{ i j } ) ) \! )_{ i, j = 1 }^n $$ is the linear mapping from $ \CV^{ n \times n } $ to $ \CW^{ n \times n } $, that is, $ T_n = T \otimes \mbox{Id}_{ \C^{ n \times n } } $. 

\subsubsection{Higher order nc functions}

 There is a notion of higher order nc functions, which are vaguely mappings from the cartesian product of nc sets to a certain space of multi-linear mappings with the property that on each variable, such a mapping becomes a nc function in the sense mentioned above. For instance, if $ f :  \Omega^0 \rightarrow \C_{ nc } $ and $ g : \Omega^1 \rightarrow \C_{ nc } $ are nc functions of order $ 0 $ on nc sets $ \Omega^0 $ and $ \Omega^1 $, respectively, then the function $ F : \Omega^0 \times \Omega^1 \rightarrow \coprod_{ m, n \geq 1 } \CL ( \C^{ n \times m } ) $ defined by 
$$ F ( Z, W ) : = f ( Z ) \otimes g ( W ) : \C^{ n \times m } \rightarrow \C^{ n \times m } $$
is a nc function of order $ 1 $ in the sense of the definition given in Section 3.1 in \cite{Verbovetskyi-Vinnikov} (also, cf. \cite[Remark 3.6]{Verbovetskyi-Vinnikov}). For $ k + 1 $ vector spaces $ \CW_0, \hdots, \CW_k $, we denote $ \CT^k ( \Omega, \CW_{ 0, nc }, \hdots, \CW_{ k, nc } ) $ as the space of all $ k $-th order nc functions $ f $ on $ \Omega \times \cdots \times \Omega $ ($ k + 1 $ times) with 
$$ f ( \Omega_{ n_0 }, \hdots, \Omega_{ n_k } ) \subset  \mathrm{Hom} ( \CW_1^{ n_0 \times n_1 } \otimes \cdots \otimes \CW_k^{ n_{ k - 1 } \times n_k }, \CW_0^{ n_0 \times n_k } ) . $$
For a detailed discussion on higher order nc functions, we refer the readers to \cite[Chapter 3]{Verbovetskyi-Vinnikov}. However, the definition of nc functions of order $ 1 $ will be given at the beginning of Section \ref{nc reproducing kernel Hilbert space} in the context of nc reproducing kernels.

\subsection{Noncommutative analyticity} \label{nc analyticity} The objective of the present subsection is to introduce noncommutative analytic functions with respect to the uniformly-open topology. 

\subsubsection{Noncommutative difference-differential operators}

Let $ \Omega $ be a right admissible nc set and $ f : \Omega \rightarrow \CW_{ nc } $ be a nc function. Then \textit{$ \ell $-th order right nc difference-differential operator} is defined as the linear map 
$$ \Delta_R^{ \ell } : \CT^0 ( \Omega, \CW_{ nc } ) \rightarrow \CT^{ \ell } ( \Omega, \CW_{ nc }, \underbrace{ \CV_{ nc }, \hdots, \CV_{ nc } }_{ \ell - \mbox{times} } ) $$
where for $ f \in \CT^0 ( \Omega, \CW_{ nc } ) $ and for every $ X^0 \in \Omega_{ n_0 }, \hdots, X^{ \ell } \in \Omega_{ n_{ \ell } } $, $ Z^1 \in \CV^{ n_0 \times n_1 }, \hdots, Z^{ \ell } \in \CV^{ n_{ \ell - 1 } \times n_{ \ell } } $ such that 
$$ \left[ \begin{smallmatrix} 
X^0 & Z^1 & 0 & \cdots & 0 \\
0 & X^1 & Z^2 & \ddots & 0 \\
\vdots & \ddots & \ddots & \ddots & 0 \\
\vdots &   & \ddots & X^{ \ell - 1 } & Z^{ \ell} \\
0 & \cdots & \cdots & 0 & X^{ \ell} \\
\end{smallmatrix} \right] \in \Omega_{ n_0 + n_1 + \cdots + n_{ \ell } },
$$
$ \Delta_R^{ \ell } f $ is given by the formula:

\begin{multline} \label{nc difference-differential operator}
f \left( \left[ \begin{smallmatrix} 
X^0 & Z^1 & 0 & \cdots & 0 \\
0 & X^1 & Z^2 & \ddots & 0 \\
\vdots & \ddots & \ddots & \ddots & 0 \\
\vdots &   & \ddots & X^{ \ell - 1 } & Z^{ \ell} \\
0 & \cdots & \cdots & 0 & X^{ \ell} \\
\end{smallmatrix} \right] \right) = \\
\left[ \begin{smallmatrix} 
f ( X^0 ) & \Delta^1_R f ( X^0, X^1 ) ( Z^1 ) & \cdots & \cdots & \Delta^{ \ell }_R f ( X^0, \hdots, X^{ \ell } ) ( Z^1, \hdots, Z^{ \ell } ) \\
0 & f ( X^1 ) & \Delta^1_R f ( X^1, X^2 ) ( Z^2 ) & \cdots & \Delta^{ \ell - 1 }_R f ( X^1, \hdots, X^{ \ell } ) ( Z^2, \hdots, Z^{ \ell } ) \\
\vdots & \ddots & \ddots & \ddots & \vdots \\
\vdots &   & \ddots & f ( X^{ \ell - 1 } ) & \Delta^1_R f ( X^{ \ell - 1 }, X^{ \ell } ) ( Z^{ \ell} ) \\
0 & \cdots & \cdots & 0 & f ( X^{ \ell} ) \\
\end{smallmatrix} \right] .
\end{multline}
The block structure in this equation is preserved due to $ f $ being nc (cf. \cite[Theorem 3.11]{Verbovetskyi-Vinnikov}). When $ X^0 = X^1 = \cdots = X^{ \ell } = Y \in \Omega_s $, we get \textit{$ \ell $-th order right nc differential operator} at $ Y $, which is the $ \ell $-linear mapping 
\be \label{nc differential operator} 
\Delta^{ \ell }_R f ( Y, \hdots, Y ) : ( \CV^{ s \times s } )^{ \ell } \rightarrow \CW^{ s \times s } 
\ee
defined by Equation \eqref{nc difference-differential operator}. In a similar manner, one can define the \textit{left nc difference-differential operator} $ \Delta_L f $ by evaluating $ f $ on a lower triangular bi-diagonal block matrix. It follows that the left and right nc difference-differential operators are related via the following identity (see \cite[Proposition 2.8]{Verbovetskyi-Vinnikov})
\begin{align} \label{left vs right dd operators}
& \Delta^{ \ell }_L f ( X^0, X^1, \hdots, X^{ \ell - 1 }, X^{ \ell } ) ( Z^1, Z^2, \hdots, Z^{ \ell - 1 }, Z^{ \ell } ) \\
\nonumber & = \Delta^{ \ell }_R f ( X^{ \ell }, X^1, \hdots, X^{ \ell - 1 }, X^0 ) ( Z^{ \ell }, Z^2, \hdots, Z^{ \ell -1 }, Z^1 ) .
\end{align}

When $ \CV = \C^d $, we define, for all $ n, m \in \N $, $ X \in \Omega_n $, $ Y \in \Omega_m $, the \textit{$ j $-th right partial difference-differential operator} $ \Delta_{ R, j } f ( X, Y ) : \C^{ n \times m } \rightarrow \CW^{ n \times m } $ by 
\be \label{nc j-th partial difference-differential operator}
\Delta_{ R, j } f ( X, Y ) ( Z ) = \Delta_R f ( X, Y ) ( Z \varepsilon_j )
\ee
where $ \{ \varepsilon_j \}_{ j =1 }^d $ is the standard ordered basis for $ \C^d $. By linearity, we then have 
\be \label{nc partial difference-differential operator}
\Delta_R f ( X, Y ) ( Z ) = \sum_{ j = 1 }^d \Delta_{ R, j } f ( X, Y ) ( Z_j ).
\ee
Furthermore, $ \Delta_R $ satisfies the following nc finite difference formula: For $ X $ and $ Y $ as above and for $ \SS \in \C^{ m \times n } $,
\be \label{finite difference formula}
\SS f ( X ) - f ( Y ) \SS = \Delta_R f ( Y, X ) ( \SS X - Y \SS ) = \sum_{ j = 1 }^d \Delta_{ R, j } f ( Y, X ) ( \SS X_j - Y_j \SS ) .
\ee
Now using the nc finite difference formula for the higher order nc functions (see \cite[Theorem 3.19, pp 54]{Verbovetskyi-Vinnikov}), one obtains the following Taylor-Taylor formula (see \cite[Theorem 4.2, pp 62]{Verbovetskyi-Vinnikov}).
\begin{thm}
    Let $ f \in \CT^0 ( \Omega, \CW_{ nc } ) $ and $ Y \in \Omega_s $. Then for each $ N \in \N $ and arbitrary $ m \in \N $ and $ X \in \Omega_{ m s } $,
    \begin{multline} \label{TT formula}
        f ( X ) = \sum_{ \ell = 0 }^N \sum_{ | \beta | = \ell } \big( X - Y^{ \oplus m } \big)^{ \odot_s \beta } \Delta^{ \beta^{ \top } }_R f ( Y, \hdots, Y ) \\
        + \sum_{ | \beta | = N + 1 } \big( X - Y^{ \oplus m } \big)^{ \odot_s \beta } \Delta^{ \beta^{ \top } }_R f ( \underbrace{ Y, \hdots, Y}_{ N + 1 ~ \text{times}}, X ).
    \end{multline} 
\end{thm}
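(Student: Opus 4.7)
The plan is to prove the Taylor-Taylor formula by induction on $N$, using the nc finite difference formula as the engine that advances one order at a time. The base case relies on Equation \eqref{finite difference formula}; each inductive step peels off one more term of the expansion by applying the higher-order finite difference formula (as cited from \cite[Theorem 3.19]{Verbovetskyi-Vinnikov}) to the last argument of the remainder $\Delta_R^{\beta^{\top}} f(Y,\dots,Y,X)$.

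For the base case $N=0$, the desired identity reduces to
$$ f(X) = f(Y)^{\oplus m} + \sum_{|\beta|=1} \big(X - Y^{\oplus m}\big)^{\odot_s \beta}\, \Delta_R^{\beta^{\top}} f(Y, X). $$
Applying \eqref{finite difference formula} with $\SS = \mathrm{Id}_{ms}$ to the pair $(Y^{\oplus m}, X)$ yields $f(X) - f(Y^{\oplus m}) = \Delta_R f(Y^{\oplus m}, X)(X - Y^{\oplus m})$; then the direct-sum respecting property gives $f(Y^{\oplus m}) = f(Y)^{\oplus m}$, while splitting the right-hand side along the $d$ coordinate directions via \eqref{nc partial difference-differential operator} produces the $|\beta|=1$ sum. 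A separate elementary observation that $\Delta_R f$ is compatible with direct sums in its first argument is used to replace $Y^{\oplus m}$ by $Y$ in the derivative, matching the form in \eqref{TT formula}. For the inductive step, assume the formula holds for some $N$. For each word $\beta$ with $|\beta|=N+1$, apply the first-order finite difference formula in the last variable of the higher-order nc function $\Delta_R^{\beta^{\top}} f$, viewed as an element of $\CT^{N+1}(\Omega,\CW_{nc},\CV_{nc},\dots,\CV_{nc})$. This splits $\Delta_R^{\beta^{\top}} f(Y,\dots,Y,X)$ into the ``diagonal'' value $\Delta_R^{\beta^{\top}} f(Y,\dots,Y,Y)$ (contributing the new $|\beta|=N+1$ Taylor term) plus a first-order correction that introduces one additional copy of $Y$ and one more factor of $X - Y^{\oplus m}$. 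Summing over all $|\beta|=N+1$ and reindexing the correction pieces gives exactly the remainder over $|\beta'|=N+2$.

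The main obstacle is combinatorial bookkeeping rather than any substantive analytic difficulty. Two pieces have to be tracked carefully. First, the higher-order finite difference formula has to be invoked in a way that correctly matches the $(N+1)$-linear structure of $\Delta_R^{\beta^{\top}} f$ with the $(N+1)$ arguments $Y, \dots, Y$ and the one ``moving'' argument $X$, so that differentiating in the last slot genuinely increases the order by one and not by more. Second, one must check that the partial difference-differentials $\Delta_{R,j}$ arising from decomposing the new $(X - Y^{\oplus m})$ factor along coordinate directions combine with the existing $\odot_s$-product $(X - Y^{\oplus m})^{\odot_s \beta}$ so that the contributions from all the length-$(N+1)$ words coalesce into a single sum indexed by length-$(N+2)$ words of the form $\big(X - Y^{\oplus m}\big)^{\odot_s \beta'}\Delta_R^{\beta'^{\top}} f(Y,\dots,Y,X)$. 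Once these identities are in place, the induction closes, and the formula follows for every $N \in \N$.
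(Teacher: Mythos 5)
Your proposal is correct and takes the same route the paper points to: the paper itself does not give a proof but cites \cite[Theorem 4.2]{Verbovetskyi-Vinnikov} and explicitly says the result is obtained from the higher-order nc finite difference formula \cite[Theorem 3.19]{Verbovetskyi-Vinnikov}, which is precisely the engine of your induction. One small clarification worth making explicit: the step where you ``replace $Y^{\oplus m}$ by $Y$ in the derivative'' is not a separate ad hoc observation but is already built into the $\odot_s$ notation via the amplification identity $\Delta_R^{\ell} f(Y^{\oplus m},\ldots,Y^{\oplus m}) = \Delta_R^{\ell} f(Y,\ldots,Y)_m$ recorded in Subsection \ref{amplifying operators}; invoking it directly keeps the bookkeeping tighter in both the base case and the reindexing of the remainder from words of length $N+1$ to length $N+2$.
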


Note that Equations \eqref{nc partial difference-differential operator}, \eqref{finite difference formula} and \eqref{TT formula} are held by $ \Delta_L $ as well.

\subsubsection{Amplifying operators} \label{amplifying operators}

An $ \ell $-linear map $ T : \CV^{ \ell } \rightarrow \CW $ can be viewed as a linear transformation $ T : \CV^{ \otimes \ell } \rightarrow \CW $. For every $ n, m \in \mathbb{N} $, this map can be extended to a linear mapping $ ( \CV^{ \otimes \ell } )^{ n \times m } \rightarrow \CW^{ n \times m } $ by applying $ T $ entry-wise. Composing this extension with the canonical identification $ ( \CV^{ n \times m } )^{ \otimes \ell } \simeq ( \CV^{ \otimes \ell } )^{ n \times m } $, we obtain an $ \ell $-linear map $ T_{n,m} : ( \CV^{ n \times m } )^{ \ell } \rightarrow \CW^{ n \times m } $, which we refer to as the \textit{amplification} of $ T $. When $ n = m $, we denote this map as $ T_n $ and call it the $ n $-th amplification of $ T $. For $ m = \ell = 1 $, this reduces to the standard notion of the \textit{$ n $-th amplification} of a matrix.

In the special case where $ \CV = ( \C^d )^{ s \times s } \simeq ( \C^{ s \times s } )^d $, an $ \ell $-linear map $ T : \left( ( \C^{ s \times s } )^d \right)^{ \ell } \rightarrow \C^{ s \times s } $ extends to an $ \ell $-linear map $ T_n : ( ( \C^d )^{ ns \times ns } )^{ \ell } \rightarrow \C^{ ns \times ns} $ as above using the identification $ ( \C^{ s \times s } )^{ n \times n } \cong \C^{ ns \times ns} $. Following \cite{Verbovetskyi-Vinnikov}, this amplified map can be obtained as 
$$ T_n ( Z^1, \hdots, Z^{ \ell } ) = ( Z^1 \odot_s \cdots \odot_s Z^{ \ell } ) T $$
for $ Z^1, \hdots, Z^{ \ell } \in ( \C^{ ns \times ns } )^d = ( ( \C^d )^{ s \times s } )^{ n \times n } $ using the faux product $ \odot_s $ for $ n \times n $ matrices over the tensor algebra $ \T ( ( \C^d )^{ s \times s} ) $.

The amplification of the nc differential operators acts on the amplifying points in the nc set according to the formula below (cf. \cite[Proposition 3.3]{Verbovetskyi-Vinnikov}):
$$ ( \Delta^{ \ell }_R f ( Y^{ \oplus n }, \hdots, Y^{ \oplus n } ) ) ( Z^1, \hdots, Z^{ \ell } ) =  \Delta^{ \ell }_R f ( Y, \hdots, Y )_n ( Z^1, \hdots, Z^{ \ell } ) . $$

\subsubsection{Canonical intertwining conditions}
The nc differential operators of a nc function obey certain intertwining conditions due to the property mentioned in Equation \eqref{itertwining condition}. We are now about to discuss those conditions.

\begin{defn} \label{canonical intertwining conditions of finite order}
Let $ \CV $ be a vector space, $ Y \in \CV^{ s \times s } $ and $ k \in \N $. A sequence $ \{ f_{ \ell } \}_{ \ell = 0 }^k $ of $ \ell $-linear mappings satisfies the \textit{canonical intertwining conditions of order $ k $} at $ Y $ (in short $ \mbox{CIC}_k ( Y ) $) if for all $ Z^1, \hdots, Z^{ \ell - 1 } \in \CV^{ s \times s } $,
$$ f_1 ( [ \SS, Y ] ) = [ \SS, f_0 ], $$
and 
\begin{align*}
& f_{ \ell } ( [ \SS, Y ], Z^1, \hdots, Z^{ \ell -1 } ) = \SS f_{ \ell -1 } ( Z^1, \hdots, Z^{ \ell - 1 } ) - f_{ \ell - 1 } ( \SS Z^1, Z^2, \hdots, Z^{ \ell -1 } ),\\
& f_{ \ell } ( Z^1, \hdots, Z^j, [ \SS, Y ], Z^{ j + 1 }, \hdots, Z^{ \ell -1 } ) =  f_{ \ell -1 } ( Z^1, \hdots, Z^{ j - 1 }, Z^j \SS, Z^{ j + 1 }, \hdots,  Z^{ \ell - 1 } )\\
& - f_{ \ell - 1 } ( Z^1, \hdots, Z^j, \SS Z^{ j + 1}, Z^{ j + 2 }, \hdots, Z^{ \ell -1 } ), \\
& f_{ \ell } ( Z^1, \hdots, Z^{ \ell - 1 }, [ \SS, Y ] ) = f_{ \ell - 1 } ( Z^1, \hdots, Z^{ \ell - 2 }, Z^{ \ell - 1 } \SS ) - f_{ \ell - 1 } ( Z^1, \hdots, Z^{ \ell - 1 } ) \SS
\end{align*}
for all $ 2 \leq \ell \leq k $, $ 1 \leq j \leq \ell - 2 $, $ \SS \in \C^{ s \times s } $, and
\begin{align*}
& \SS f_k ( Z^1, \hdots, Z^{ \ell - 1 } ) = f_k ( \SS Z^1, \hdots, Z^{ \ell - 1 } ),\\
& f_k ( Z^1, \hdots, Z^{ j - 1 }, Z^j \SS, Z^{ j + 1 }, \hdots,  Z^{ \ell - 1 } ) = f_k ( Z^1, \hdots, Z^j, \SS Z^{ j + 1}, Z^{ j + 2 }, \hdots, Z^{ \ell -1 } ), \\
& f_k ( Z^1, \hdots, Z^{ \ell - 2 }, Z^{ \ell - 1 } \SS ) = f_k ( Z^1, \hdots, Z^{ \ell - 1 } ) \SS,
\end{align*}
for all $ 1 \leq j \leq k - 1 $, $ \SS \in \{ \SS \in \C^{ s \times s } : \SS Y = Y \SS \} $.
\end{defn}
We say that $ f_0 $ satisfies $ \mbox{CIC}_0 ( Y ) $ if $ [ f_0, \SS ] = 0 $ for all $ \SS \in \{ \SS \in \C^{ s \times s } : \SS Y = Y \SS \} $.

\begin{defn} \label{canonical intertwining conditions}
Let $ \CV $ be a vector space, $ Y \in \CV^{ s \times s } $ and $ k \in \N $. A sequence $ \{ f_{ \ell } \}_{ \ell = 0 }^{ \infty } $ of $ \ell $-linear mappings satisfies the \textit{canonical intertwining conditions} at $ Y $ (in short $ \mbox{CIC} ( Y ) $) if $ \{ f_{ \ell } \}_{ \ell = 0 }^k $ satisfies $ \mbox{CIC}_k ( Y ) $ for all $ k \in \N \cup \{ 0 \} $.
\end{defn}

Note that a sequence $ \{ f_{ \ell } \}_{ \ell = 0 }^{ \infty } $ of $ \ell $-linear mappings satisfies $ \mbox{CIC} ( Y ) $ in this sense if and only if it satisfies the conditions given in \cite[Reark 4.3, pp 62]{Verbovetskyi-Vinnikov}.
\subsubsection{Uniformly analytic nc functions}

Let $ \CV $ and $ \CW $ be operator spaces and $ \Omega \subset \CV_{ nc } $ be a uniformly open subset. A nc function $ f : \Omega \rightarrow \CW_{ nc } $ is said to be \textit{Gâteaux (G-) differentiable} if for every $ n \in \N $ the function $ f |_{ \Omega_n } $ is G-differentiable, that is, for $ X \in \Omega_n $ and $ Z \in \CV^{ n \times n } $, the G-derivative of $ f $ at $ X $ along the direction $ Z $ defined by 
\be \label{G - derivative}
\delta f ( X ) ( Z ) = \lim_{ t \rightarrow 0 } \frac{ f ( X + t Z ) - f ( X ) }{ t } = \frac{ d }{ d t } f ( X + t Z ) |_{ t = 0 }
\ee
exists. Also, recall that $ f $ is called \textit{uniformly locally bounded} if for each $ s \in \N $ and $ Y \in \Omega_s $, there exists $ \varrho  > 0 $ (depending on $ Y $) such that the nc ball $ B_{ nc } ( Y, \varrho ) \subset \Omega $ and $ f $ is bounded on $ B_{ nc } ( Y, \varrho ) $, that is, there is $ M > 0 $ such that 
$$ \| f ( X ) \|_{ sm } \leq M ~~~ \mbox{for all} ~~~ m \in \N ~~~ \mbox{and} ~~~ X \in B_{ nc } ( Y, \varrho )_{ s m } . $$

\begin{defn} \label{uniformly analyticity}
Let $ \CV $ and $ \CW $ be operator spaces and $ \Omega \subset \CV_{ nc } $ be a uniformly open subset. A nc function $ f : \Omega \rightarrow \CW_{ nc } $ is said to be \textit{uniformly analytic} if $ f $ is uniformly locally bounded and G-differentiable on $ \Omega $. 
\end{defn}

Recall from Example (b) (\ref{examples}) above that any linear mapping $ T : \CV \rightarrow \CW $ gives rise to a nc function $ \widetilde{T} : \CV_{ nc } \rightarrow \CW_{ nc } $. It follows that $ \widetilde{T} $ is uniformly locally bounded if and only if $ \widetilde{T} $ is continuous with respect to the uniformly-open topologies if and only if $ T $ is completely bounded. In general, the first two equivalences happen to be true for any nc functions. Moreover, we have the following striking result relating the local boundedness with the nc analyticity of a nc function.

\begin{thm} \cite[Corollary 7.28]{Verbovetskyi-Vinnikov}\label{locally bounded = nc analytic}
Let $ \Omega \subset \CV_{ nc } $ be a uniformly open nc set. Then a nc function $ f : \Omega \rightarrow \CV_{ nc } $ is uniformly locally bounded if and only if $ f $ is continuous with respect to the uniformly-open topologies if and only if $ f $ is uniformly analytic.
\end{thm}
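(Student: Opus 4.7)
The plan is to establish a cycle of implications among the three properties: $(a)$ $f$ is uniformly locally bounded, $(b)$ $f$ is continuous with respect to the uniformly-open topology, and $(c)$ $f$ is uniformly analytic. Two of the implications are essentially formal. The implication $(c) \Rightarrow (a)$ is immediate from Definition \ref{uniformly analyticity}. For $(b) \Rightarrow (a)$, fix $Y \in \Omega_s$; continuity at $Y$ means that the preimage of the nc ball $B_{nc}(f(Y), 1)$ must contain some $B_{nc}(Y, \varrho) \subset \Omega$. Since $f$ respects direct sums one has $f(Y^{\oplus m}) = f(Y)^{\oplus m}$, and then on each level $sm$ the triangle inequality together with axiom $(\textbf{R1})$ yields the uniform bound $\|f(X)\|_{sm} \leq \|f(Y)\|_s + 1$ for every $X \in B(Y^{\oplus m}, \varrho)$.

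The substantive content is $(a) \Rightarrow (c)$, which I would attack via the Taylor--Taylor expansion. Fix $Y \in \Omega_s$ and choose $\varrho > 0$ such that $B_{nc}(Y,\varrho) \subset \Omega$ and $\|f(X)\|_{sm} \leq M$ for every $m \in \N$ and every $X \in B_{nc}(Y,\varrho)_{sm}$. The first and crucial step is to derive Cauchy-type estimates for the higher-order right difference-differentials $\Delta^\ell_R f(Y,\ldots,Y)$. Given $Z^1,\ldots,Z^\ell \in \CV^{s\times s}$ and a small parameter $t > 0$, let $X_\ell(t)$ denote the block upper bi-diagonal matrix of size $(\ell+1)s$ with copies of $Y$ on the diagonal and $tZ^1,\ldots,tZ^\ell$ on the first superdiagonal. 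For $t$ sufficiently small, $X_\ell(t) \in B(Y^{\oplus(\ell+1)},\varrho) \subset B_{nc}(Y,\varrho)$, so $\|f(X_\ell(t))\|_{(\ell+1)s} \leq M$. By the structural identity \eqref{nc difference-differential operator}, the $(1,\ell+1)$-block of $f(X_\ell(t))$ equals $t^\ell \Delta^\ell_R f(Y,\ldots,Y)(Z^1,\ldots,Z^\ell)$. Extracting this block by compression with scalar rectangular matrices, applying axiom $(\textbf{R2})$, and then optimizing in $t$ produces an estimate of the form
\[
\bigl\|\Delta^\ell_R f(Y,\ldots,Y)(Z^1,\ldots,Z^\ell)\bigr\|_s \;\leq\; \frac{M\,C^\ell}{\varrho^\ell}\,\prod_{j=1}^\ell \|Z^j\|_s
\]
for an absolute constant $C$. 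The same construction at the amplified point $Y^{\oplus m}$, together with the amplification rule recalled in Subsection \ref{amplifying operators}, gives the analogous bound at every level $sm$.

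With the Cauchy estimates in hand, the Taylor--Taylor formula \eqref{TT formula} at $Y$ shows that on the smaller nc ball $B_{nc}(Y,\varrho/(2C))$ the remainder term tends to zero as $N \to \infty$, so $f$ is represented there by its absolutely and uniformly convergent Taylor--Taylor power series. Uniform convergence immediately yields continuity of $f$ at $Y$ in the uniformly-open topology, establishing $(a) \Rightarrow (b)$. For G-differentiability, I would differentiate the Taylor--Taylor series term by term along a direction $Z$: each term is a homogeneous multilinear polynomial in $X - Y^{\oplus m}$ with coefficients given by amplifications of $\Delta^\ell_R f(Y,\ldots,Y)$ and is therefore G-differentiable at every level, while the Cauchy estimates ensure that the termwise-differentiated series also converges uniformly on a slightly smaller ball. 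Combined with the already established local boundedness, this gives uniform analyticity of $f$ on a neighborhood of $Y$, and since $Y$ was arbitrary, on all of $\Omega$.

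The principal obstacle is extracting the Cauchy-type bound on $\Delta^\ell_R f(Y,\ldots,Y)$ from the single scalar bound $\|f(X_\ell(t))\|_{(\ell+1)s} \leq M$. Ruan's axioms must be used carefully to isolate the $(1,\ell+1)$-block without picking up a factor growing worse than geometrically in $\ell$, so that the Taylor--Taylor series actually has positive radius of convergence; the multilinearity of $\Delta^\ell_R f(Y,\ldots,Y)$ is what allows one to pass from a bound in terms of $\max_j \|Z^j\|$ to one in terms of $\prod_j \|Z^j\|$. A secondary subtlety is that the block matrix $X_\ell(t)$ must genuinely lie inside $\Omega$, which is ensured by the fact that nc balls in the uniformly-open topology are stable under sufficiently small rescalings of off-diagonal blocks, a property absorbed into the parameter $t$. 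Once these estimates are secured, the remainder of the argument is a standard adaptation of classical power series techniques to the noncommutative setting.
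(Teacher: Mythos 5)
This theorem is cited from \cite[Corollary 7.28]{Verbovetskyi-Vinnikov} and is not reproved in the paper, so there is no internal proof to compare your argument against; the closest internal material is Theorem \ref{convergence of TT series}, which contains exactly the Taylor--Taylor convergence content your argument depends on. Your reconstruction follows the standard route of the cited monograph and is correct in outline: $(c)\Rightarrow(a)$ is definitional, $(b)\Rightarrow(a)$ works once one recenters (the open preimage of $B_{nc}(f(Y),1)$ contains some basis ball $B_{nc}(X_0,\delta)\ni Y$, and a triangle inequality together with $\|(Y-X_0^{\oplus k})^{\oplus m}\|=\|Y-X_0^{\oplus k}\|$ from (R1) shows it then contains a ball actually centered at $Y$), and the Cauchy estimates do come out as you describe, with the superdiagonal block of $X_\ell(t)-Y^{\oplus(\ell+1)}$ having norm exactly $t\max_j\|Z^j\|$ by (R1) and (R2), so one can take $C=1$ and rescale by multilinearity to replace $\max_j\|Z^j\|^\ell$ by $\prod_j\|Z^j\|$.

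One point is glossed over: your Cauchy estimates bound $\|\Delta_R^\ell f(Y,\ldots,Y)\|_{\CL^\ell_{\text{cb}}}$, which controls the partial sums, but the remainder in formula \eqref{TT formula} is $\sum_{|\beta|=N+1}(X-Y^{\oplus m})^{\odot_s\beta}\Delta_R^{\beta^\top}f(Y,\ldots,Y,X)$, whose last nc-derivative argument is $X$, not $Y$. You still need a bound uniform in $X$ on these mixed derivatives to show the remainder vanishes; this is obtained by the same bidiagonal trick but with $X$ occupying the final diagonal block, and requires shrinking to a ball where both $X$ and the resulting bidiagonal matrix stay inside $B_{nc}(Y,\varrho)$. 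This is routine and is what the proof in the reference does, but it is a step your sketch elides.
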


\subsubsection{Taylor-Taylor series}

Likewise, in the classical complex analysis, an equivalent condition for nc analyticity is the local existence of a power series for nc functions, which we describe below. First, following \cite[Section 7.4, pp. 114]{Verbovetskyi-Vinnikov}, we introduce the notion of bounded and completely bounded multi-linear mappings.

For Banach spaces $ \CW_0, \CW_1, \hdots, \CW_k $, a $ k $-linear mapping $ \omega : \CW_1 \times \cdots \times \CW_k \rightarrow \CW_0 $ is called \textit{bounded} if 
\be \label{bounded multi-linear mapping}
\| \omega \|_{ \CL^k } : = \sup_{ \| w^1 \| = \cdots = \| w^k \| = 1 } \| \omega ( w^1, \hdots, w^k ) \| < \infty . 
\ee
We use the notation $ \CL^k ( \CW_1, \hdots, \CW_k, \CW_0 ) $ for the Banach space of bounded $ k $-linear mappings. 

When $ \CW_0, \CW_1, \hdots, \CW_k $ are equipped with operator space structures, we define, for $ n_0, n_1, $ $ \hdots, n_k \in \N $, a $ k $-linear mapping $ \omega^{ ( n_0, \hdots, n_k ) } : \CW_1^{ n_0 \times n_1 } \times \cdots \times \CW_k^{ n_{ k - 1 } \times n_k } \rightarrow \CW_0^{ n_0 \times n_k } $ as 
\be \label{definition of amplified multi-linear mapping}
\omega^{ ( n_0, \hdots, n_k ) } ( W^1, \hdots, W^k ) = ( W^1 \odot \cdots \odot W^k ) \omega, 
\ee
where $ \odot $ is the faux product as mentioned in \eqref{amplifying operators} and $ \omega $ acts on the matrix $ ( W^1 \odot \cdots \odot W^k ) \in ( \CW_1 \otimes \cdots \otimes \CW_k )^{ n_0 \times n_k } $ from right entry-wise. The $ k $-linear mapping $ \omega $ is called \textit{completely bounded} if 
\be \label{completely bounded multi-linear mapping}
\| \omega \|_{ \CL^k_{ \text{cb} } } : = \sup_{ n_0, \hdots, n_k \in \N } \| \omega^{ ( n_0, \hdots, n_k ) } \|_{ \CL^k } < \infty .
\ee
From now on the Banach space of such completely bounded $ k $-linear mappings will be denoted by $ \CL^k_{ \text{cb} } ( \CW_1 \times \cdots \times \CW_k, \CW_0 ) $. Following \cite[Chapter 17]{Paulsen} or \cite[Chapter 5]{Pisier}, we observe that $ \| \omega \|_{ \CL^k_{ \text{cb} } } $ turns out to be the completely bounded norm of $ \omega $ viewed as a linear mapping $ \omega : \CW_1 \otimes \cdots \otimes \CW_k \rightarrow \CW_0 $ where the vector spaces $ ( \CW_1 \otimes \cdots \otimes \CW_k )^{ n_0 \times n_k } $ with $ n_0, n_k \in \N $ are endowed with the \textit{Haagerup norm}, $ \| \cdot \|_{ \text{H}, n_0, n_k } $:
\be \label{Haagerup norm}
\| W \|_{ \text{H}, n_0, n_k } : = \inf_{ n_1, \hdots, n_{ k - 1 } } \{ \| W^1 \|_{ n_0, n_1 } \cdots \| W^k \|_{ n_{ k - 1 }, n_k } : W = W^1 \odot \cdots \odot W^k \} .
\ee
In this setting, the following result relates the uniformly locally boundedness of a nc function $ f $ to the convergence of the Taylor-Taylor series in the uniformly-open topologies.

\begin{thm}\cite[Theorem 7.21 and 8.11]{Verbovetskyi-Vinnikov} \label{convergence of TT series} 
Let $ \CV $ and $ \CW $ be operator spaces, $ \Omega \subset \CV_{ nc } $ be a uniformly open subset and $ Y \in \Omega_s $ for some $ s \in \N $. If a nc function $ f : \Omega \rightarrow \CW_{ nc } $ is uniformly locally bounded, then there exists $  r > 0 $ (depending on $ Y $) such that 
\be \label{TT series of nc function}
f ( X ) = \sum_{ \ell = 0 }^{ \infty } ( X - Y^{ \oplus m } )^{ \odot_s \ell } \Delta^{ \ell }_R f ( Y, \hdots, Y ) 
\ee
for all $ X \in B_{ nc } ( Y, r )_{ ms } $ and $ m \in \N $, where the series in the equation above converges absolutely and uniformly and $ \limsup_{ \ell \rightarrow \infty } \sqrt[ \ell ]{ \| \Delta^{ \ell }_R f ( Y, \hdots, Y ) } \|_{ \CL^{ \ell }_{ \text{cb} } } < \infty $.

Conversely, let $ \{ f_{ \ell } \}_{ \ell = 0 }^{ \infty } $ be a sequence of multi-linear mappings satisfying the canonical intertwining conditions $ \mbox{CIC} ( Y ) $ and 
$$ \mu_{ \text{cb} } : = \limsup_{ \ell \rightarrow \infty } \sqrt[ \ell ]{ \| f_{ \ell } \|_{ \CL^{ \ell }_{ \text{cb} } } } < \infty  . $$
Then the Taylor-Taylor series
\be \label{TT series}
\sum_{ \ell = 0 }^{ \infty } ( X - Y^{ \oplus m } )^{ \odot_s \ell } f_{ \ell }
\ee
converges absolutely and uniformly for all $ X \in B_{ nc } ( Y, \varrho ) $ for $ 0 < \varrho < \frac{ 1 }{ \mu_{ \text{cb} } } $, and the function defined by the formula given in \eqref{TT series} is an uniformly locally bounded nc function on  $ X \in B_{ nc } ( Y, \varrho ) $.
\end{thm}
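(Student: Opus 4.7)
The plan is to establish both directions via the finite Taylor-Taylor formula \eqref{TT formula} combined with Cauchy-type estimates read off from the block upper-triangular evaluation identity \eqref{nc difference-differential operator}.

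For the forward direction, fix $Y \in \Omega_s$ and, invoking uniform local boundedness, choose $\varrho_0 > 0$ and $M > 0$ such that $B_{nc}(Y, \varrho_0) \subset \Omega$ and $\|f(X)\| \leq M$ throughout this ball. The crux is to derive the Cauchy-type estimate
$$ \big\| \Delta^{\ell}_R f(Y, \ldots, Y) \big\|_{\CL^\ell_{\text{cb}}} \leq \frac{M}{\varrho^\ell}, \quad 0 < \varrho < \varrho_0. $$
To this end, given $n_0, \ldots, n_\ell \in \N$ and test matrices $Z^j \in \CV^{n_{j-1} \times n_j}$ with $\|Z^j\| = 1$, I would assemble the block upper-triangular matrix $X_t$ of dimension $(n_0 + \cdots + n_\ell)s$ whose diagonal blocks are $Y^{\oplus n_0}, \ldots, Y^{\oplus n_\ell}$ and whose superdiagonal $(j{-}1, j)$-block equals $tZ^j$ for small $t > 0$. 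Using property (R1), one sees $\|X_t - Y^{\oplus (n_0 + \cdots + n_\ell)}\|$ tends to zero with $t$, so $X_t \in B_{nc}(Y, \varrho_0)$ eventually. By \eqref{nc difference-differential operator}, the top-right block of $f(X_t)$ is precisely $t^\ell \Delta^\ell_R f(Y, \ldots, Y)(Z^1, \ldots, Z^\ell)$, and property (R2) bounds this block by $\|f(X_t)\| \leq M$. Optimizing in $t$ gives the estimate, which yields $\limsup_\ell \sqrt[\ell]{\|\Delta^\ell_R f(Y, \ldots, Y)\|_{\CL^\ell_{\text{cb}}}} \leq 1/\varrho_0$. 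Absolute and uniform convergence of \eqref{TT series of nc function} on a smaller ball then follows from \eqref{TT formula} by controlling the remainder term $\Delta^{N+1}_R f(Y, \ldots, Y, X)$ through the same Cauchy-type bound applied at the shifted $(N+2)$-tuple of arguments.

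For the converse, define $f$ on $B_{nc}(Y, \varrho)$ with $0 < \varrho < 1/\mu_{\text{cb}}$ by the series \eqref{TT series}. For $X \in B_{nc}(Y, \varrho)_{ms}$, set $V = X - Y^{\oplus m}$, so $\|V\| < \varrho$. Since $V$ is a single matrix, its $\ell$-fold faux product $V^{\odot_s \ell}$ admits a factorization that places its Haagerup norm at most $\|V\|^\ell$, hence
$$ \big\| V^{\odot_s \ell} f_\ell \big\| \leq \|V\|^\ell \|f_\ell\|_{\CL^\ell_{\text{cb}}} $$
by \eqref{Haagerup norm} and the identification of $\|\cdot\|_{\CL^\ell_{\text{cb}}}$ with the completely bounded norm of $f_\ell$ viewed as a linear map on the Haagerup tensor product. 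A geometric majorant then yields absolute, uniform convergence across \emph{all} amplification sizes $m$, as well as uniform local boundedness. The remaining task is to verify that the limit $f$ is a nc function; because each partial sum involves the multilinear maps $f_\ell$ entry-wise in $V$, the intertwining criterion \eqref{itertwining condition} reduces to a term-by-term identity that is precisely the content of $\mbox{CIC}(Y)$ from Definition \ref{canonical intertwining conditions}. Absolute convergence allows the intertwining to pass to the limit, and G-differentiability of $f$ follows by term-wise differentiation of the power series.

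The principal obstacle will be the Cauchy-type estimate in the forward direction: one must produce a bound on $\Delta^\ell_R f(Y, \ldots, Y)$ that is uniform across \emph{all} heterogeneous amplification tuples $(n_0, \ldots, n_\ell)$, not merely the homogeneous case $n_0 = \cdots = n_\ell$, since the cb-norm defined in \eqref{completely bounded multi-linear mapping} takes the supremum over exactly such tuples. The block upper-triangular construction above accommodates arbitrary $n_j$, but verifying that $X_t$ stays inside $B_{nc}(Y, \varrho_0)$ with a bound on $\|X_t - Y^{\oplus(n_0 + \cdots + n_\ell)}\|$ independent of the sizes $n_j$ requires a careful combination of (R1) with the fact that the perturbation $X_t - Y^{\oplus(n_0 + \cdots + n_\ell)}$ is itself a nilpotent block-bidiagonal matrix whose operator-space norm is controlled in terms of $t$ and $\max_j \|Z^j\|$ alone. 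Once this uniform control is in hand, the rest of the argument proceeds routinely.
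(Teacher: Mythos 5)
This theorem is quoted in the paper with the explicit citation \cite[Theorem 7.21 and 8.11]{Verbovetskyi-Vinnikov} and is not proved there, so there is no internal proof to compare against; your task is effectively to reconstruct the argument from the monograph, and your sketch is broadly faithful to that argument. Two details deserve more care before the sketch becomes a proof. First, the claim that $\|X_t - Y^{\oplus(n_0+\cdots+n_\ell)}\|$ is controlled by $t \max_j \|Z^j\|$ alone does not follow from (R1) or (R2) in the abstract setting: one should pass to a concrete representation $\CV \hookrightarrow \CL(\CX)$ furnished by Ruan's theorem, where the $C^*$-identity gives $\|N\|^2 = \|N^*N\| = \max_j t^2 \|Z^j\|^2$ for the block-superdiagonal nilpotent $N$, since $N^*N$ is block diagonal with entries $t^2 (Z^j)^* Z^j$. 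Second, the remainder term in \eqref{TT formula} involves the mixed difference-differential $\Delta^{N+1}_R f(Y, \ldots, Y, X)$ with $X$, not $Y$, in the final slot, whereas your Cauchy estimate controls only the homogeneous $\Delta^\ell_R f(Y, \ldots, Y)$; to bound the remainder you need to rerun the block construction with $X$ as the last diagonal block, using the fact that $Y^{\oplus n_0} \oplus \cdots \oplus Y^{\oplus n_N} \oplus X$ lies inside $B_{nc}(Y, \varrho_1)$ whenever $X$ does, and then perturb by at most $\varrho_0 - \varrho_1$. Tracking both radii gives the estimate $\|\Delta^{N+1}_R f(Y,\ldots,Y,X)\|_{\CL^{N+1}_{\text{cb}}} \leq M/(\varrho_0 - \varrho_1)^{N+1}$, so the remainder decays geometrically only for $\varrho_1 < \varrho_0/2$, which explains why the radius of convergence of the Taylor-Taylor series is (a priori) smaller than the radius of boundedness. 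With these two points filled in, the argument is correct and is the standard one.
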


\section{Solving a parametrized family of noncommuting equations} \label{solving nc equation}

Let $ \CV $, $ \CX $, $ \CY $ and $ \CZ $ be operator spaces, $ \Omega \subset \CV_{ nc } $ be a uniformly open bounded nc set. Consider a bilinear mapping $ \mathfrak{ m } : \CX \times \CY \rightarrow \CZ $ denoted by 
$$
\mathfrak{ m } ( f, g ) : = f \star g .
$$
For each $ n \in \N $, we define the $n$-th amplification $ \mathfrak{ m }_n : \CX^{ n \times n } \times \CY^{ n \times n }\rightarrow \CZ^{ n \times n } $ of $ \mathfrak{ m } $ as: For $ F = ( \! ( f_{ i j } ) \! )_{ i, j = 1}^n $ and $ G = ( \! ( g_{ i j } ) \! )_{ i, j = 1 }^n $,
\be \label{matrix multiplication}
\mathfrak{ m }_n ( F, G )_{ i j } = ( F \star G )_{ i j } : = \sum_{ k = 1 }^n f_{ i k } \star g_{ k j }, ~~~ 1 \leq i, j \leq n .
\ee
Since $ \mathfrak{ m } $ is a bilinear mapping, $ \mathfrak{ m }_n $ is invariant under the canonical left $ \C^{ n \times n } $ module action on $ \CX $ as well as right $ \C^{ n \times n } $ module action on $ \CY $ for every $ n \in \N $. Moreover, $ \mathfrak{ m }_n $ satisfies
$$ \mathfrak{ m }_n ( F \SA, G ) = \mathfrak{ m }_n ( F, \SA G ), ~~ \SA \in \C^{ n \times n } . $$

Given operator spaces $ \CX $, $ \CY $ and $ \CZ $ equipped with such a multiplication $ \star $ and uniformly analytic nc functions $ \CT : \Omega \rightarrow \CX_{ nc } $ and $ \CF : \Omega \rightarrow \CZ_{ nc } $, the objective of the present section is to solve a noncommutative equation of the form 
\be \label{parametrised linear equation} ( \CT  \star \CG ) ( X ) : = \CT ( X ) \star \CG ( X ) = \CF ( X ) \ee
on a possibly a smaller uniformly open subset of $ \Omega $ for a uniformly analytic nc function $ \CG $, provided that the initial data at some point is known (cf. Theorem \ref{closed range operators general version}). In the first half of this section, we prove the existence of a solution to the equation above in the general set-up, while in the latter half, we discuss its applications in various special cases. 

\subsection{General case} We now proceed to prove one of the main results, Theorem \ref{closed range operators general version}. Heuristically, the idea behind the proof is to construct a family of $ \ell $-linear mappings $ \CG_{ \ell } : ( \CV^{ s \times s } )^{ \ell } \rightarrow \CY^{ s \times s } $ for $ \ell \geq 0 $ with a prescribed growth condition on their completely bounded norm satisfying the canonical intertwining conditions CIC$ ( Y ) $ at $ Y $ as well as an algebraic identity obtained from Equation \eqref{parametrised linear equation} so that the associated Taylor-Taylor series at $ Y $ converges uniformly on a nc ball around $ Y $ yielding the required uniformly analytic nc function $ \CG $. Before going into the details, we wish to recall some terminologies from algebra and present an algebraic lemma -- which might be of independent interest -- demonstrating the relation between the Taylor-Taylor coefficients of $ \CT, \CG $ and $ \CF $ induced from their canonical intertwining conditions together with Equation \eqref{parametrised linear equation}. 

First, we introduce some useful algebraic notations. Let $ \CV $ be an operator space, $ Y  \in \CV^{ s \times s } $, $ \mathfrak{ c } : \C^{ s \times s } \rightarrow \CV^{ s \times s } $ be the linear mapping defined by $ \mathfrak{ c } ( \SS ) := [ \SS, Y ] $ and denote $ \ker \mathfrak{ c } = \CC ( Y ) $, $ \mbox{ran } \mathfrak{ c } = \CC_1$. Since $ \SC_1 [ \SS, Y ] \SC_2 = [ \SC_1 \SS \SC_2, Y ] $ for any $ \SS \in \C^{ s \times s } $ and $ \SC_1, \SC_2 \in \CC ( Y ) $, $ \CC_1 $ is a sub-bi-module in $ \CV^{ s \times s } $ over $ \CC ( Y ) $. For $ k \in \N $, the subspaces $ \CC_k \subset ( \CV^{ s \times s } )^{ \otimes k } $ are defined as
\[ \CC_k = \sum_{ i = 0 }^{ k - 1 } ( \CV^{ s \times s } )^{ \otimes i } \otimes { \CC }_1 \otimes
( \CV^{ s \times s } )^{ \otimes k - 1 - i } \]

For $ Y, Z^1, \hdots, $ $ Z^{\ell} \in \CV^{ s \times s } $, $ \SS \in \C^{ s \times s } $, and a family of multi-linear mappings $ \{ \CG_{\ell} : ( \CV^{ s \times s } )^{\ell} \rightarrow \CV_1^{ s \times s } : $ $ \ell \geq 0 \} $ for some operator space $ \CV_1 $, we denote
\begin{multline*}
\mathrm{CIC}_{ \ell, 0 } ( \CG, Y ) ( Z^1, \hdots, Z^{\ell} )  =  \CG_{ \ell + 1 } ( [ \SS, Y ],  Z^1, \hdots, Z^{\ell} ) - \SS \CG_{\ell} ( Z^1, \hdots, Z^{\ell} ) + \CG_{\ell} ( \SS Z^1, \hdots, Z^{\ell} ) \\
\mathrm{CIC}_{ \ell, j } ( \CG, Y ) ( Z^1, \hdots, Z^{\ell} )  =  \CG_{ \ell + 1 } ( Z^1, \hdots, Z^j, [ \SS, Y ], Z^{ j + 1 }, \hdots, Z^{\ell} )
-  \CG_{\ell} ( Z^1, \hdots, Z^{ j - 1 }, Z^j \SS, \\
Z^{ j + 1 },  \hdots, Z^{\ell} ) +  \CG_{\ell} ( Z^1, \hdots, Z^j, \SS Z^{ j + 1 }, Z^{ j + 2 }, \hdots, Z^{\ell} ) \\
 \mathrm{CIC}_{ \ell, \ell } ( \CG, Y ) ( Z^1, \hdots, Z^{\ell} )   =  \CG_{ \ell + 1 } ( Z^1, \hdots, Z^{\ell}, [ \SS, Y ] ) - \CG_{\ell} ( Z^1, \hdots, Z^{\ell} \SS ) + \CG_{\ell} ( Z^1, \hdots, Z^{\ell} ) \SS
\end{multline*}

\begin{lem} \label{algebraic lemma}

Let $ \CV, \CX $, $ \CY $ and $ \CZ $ be operator spaces and $ \Omega \subset \CV_{ nc } $ be a uniformly open nc set. Let $ \mathfrak{ m } : \CX \times \CY \rightarrow \CZ $ be a bilinear mapping: $ ( f, g ) \mapsto f \star g $. Fix $ s \in \N $, $ Y \in \Omega_s $ and $ H \in \CY^{ s \times s } $. Consider uniformly analytic nc functions $ \CT : \Omega \rightarrow \CX_{ nc } $ and $ \CF : \Omega \rightarrow \CZ_{ nc } $ satisfying $ \CT ( Y ) \star H  = \CF ( Y ) $. Assume that  $ \CG_0 = H $ and $ \CG_{ \ell } : ( \CV^{ s \times  s } )^{ \ell } \rightarrow \CY^{ s \times s } $ are multi-linear mappings for $ \ell \geq 1 $. 
\begin{enumerate}
\item[(i)] If the family of multi-linear mappings $ \CG_{\ell} $, $ \ell \geq 0 $, satisfy the canonical intertwining conditions, then for each $ k \geq 0 $, they satisfy the equation
\be \label{ cauchy identity } 
\CF_k  = \sum_{ j = 0 }^k \CT_{ k - j } \star \CG_j 
\ee
on $ \CC_k $ where $ \{ \CT_k \}_{ k =0 }^{ \infty } $ and $ \{ \CF_k \}_{ k = 0 }^{ \infty } $ are Taylor-Taylor coefficients of $ \CT $ and $ \CF $, respectively,  at $ Y $.
\item[(ii)] If the identity given in part (i) holds on $ \CC_k $ for $ k \geq 0 $, then 
$$ [ \SS, \CG_0 ] - \CG_1 ( [ \SS, Y ] ), ~ \mathrm{CIC}_{ \ell, 0 } ( \CG, Y ) ( Z^1, \hdots, Z^{\ell} ) \in \ker \big( \mathfrak{ m } ( \CT_0, \cdot ) \big) , $$
for $ \ell \geq 1 $,  $ Z^1, \hdots, Z^{\ell} \in \CV^{ s \times s } $ and $ \SS \in \C^{ s \times s } $. Here, $ \mathfrak{ m } ( \CT_0, \cdot ) $ is the linear mapping $ \mathfrak{ m } ( \CT_0, \cdot ) : \CY \rightarrow \CZ $. Furthermore, for $ \ell \geq 1 $, $ Z^1, \hdots, Z^{\ell} \in \CV^{ s \times s } $ and $ \SS \in \C^{ s \times s } $,
\begin{align}
\label{ CICj } & \CT_0 \star \mathrm{CIC}_{ \ell, j }  ( \CG, Y ) ( Z^1,\hdots, Z^{\ell} )  + \CT_j ( Z^1, \hdots, Z^j ) \star \mathrm{CIC}_{ \ell - j, 0 }  ( \CG, Y ) ( Z^{ j + 1 }, \hdots, Z^{\ell} )  \\
\nonumber & +  \sum_{ i = \ell - j + 1 }^{ \ell - 1 }  \CT_{ \ell - i } ( Z^1, \hdots, Z^{ \ell - i } ) \star \mathrm{CIC}_{ i, j }  ( \CG, Y ) ( Z^{ \ell - i + 1 }, \hdots, Z^{\ell} )  = 0, ~ \text{and} \\
\label{ CICl } &  \CT_0 \star  \mathrm{CIC}_{ \ell, \ell }  ( \CG, Y ) ( Z^1, \hdots, Z^{\ell} ) + \CT_{\ell} ( Z^1, \hdots, Z^{\ell} ) \star \CG_1 ( [ \SS, Y ] ) - [ \SS, \CG_0 ] ) \\
\nonumber & + \sum_{ i = 1 }^{\ell}  \CT_{ \ell - i } ( Z^1, \hdots, Z^{ \ell - i } ) \star  \mathrm{CIC}_{ i + 1, i + 1 }  ( \CG, Y ) ( Z^{ \ell - i + 1 }, \hdots, Z^{\ell} ) = 0.
\end{align}
\end{enumerate}
\end{lem}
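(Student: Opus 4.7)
The plan relies on three facts: (1) since $\CT$ and $\CF$ are uniformly analytic nc functions, their Taylor--Taylor coefficients $\{\CT_k\}_{k \geq 0}$ and $\{\CF_k\}_{k \geq 0}$ at $Y$ automatically satisfy $\mathrm{CIC}(Y)$ (cf.\ Theorem~\ref{convergence of TT series}); (2) by hypothesis, $\CT_0 \star \CG_0 = \CF_0$; and (3) the amplified bilinear product satisfies the balance $\mathfrak{m}_n(F\SA, G) = \mathfrak{m}_n(F, \SA G)$ for $\SA \in \C^{n \times n}$. The entire proof is an algebraic manipulation using only these ingredients, with no analysis required.

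For part (i), proceed by induction on $k \geq 0$. The case $k = 0$ is precisely the assumption $\CT(Y) \star H = \CF(Y)$. For $k \geq 1$, a spanning element of $\CC_k$ has the form $Z^1 \otimes \cdots \otimes [\SS, Y] \otimes \cdots \otimes Z^k$ with the commutator in some slot $1 \leq j \leq k$. Apply the $i$-th summand $\CT_{k-i} \star \CG_i$ to such a tensor: depending on whether $j \leq k-i$ or $j > k-i$, the commutator is absorbed by the left factor $\CT_{k-i}$ or the right factor $\CG_i$. Invoking $\mathrm{CIC}(Y)$ of the relevant family (automatic for $\CT$, assumed for $\CG$) rewrites each summand in terms of $\CT_*$ and $\CG_*$ of order one less, with the commutator replaced by multiplication by $\SS$ in an adjacent slot. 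Consecutive summands telescope via $(F \SA) \star G = F \star (\SA G)$, leaving only boundary contributions of the form $\SS \sum_{i} \CT_{k-1-i} \star \CG_i$ and $\sum_{i} \CT_{k-1-i} \star \CG_i$ evaluated on $\SS$-shifted arguments. The inductive hypothesis at level $k-1$ replaces these sums by the corresponding $\CF_{k-1}$'s, and a final application of $\mathrm{CIC}(Y)$ for $\CF_k$ reassembles them into $\CF_k$ evaluated on the original tensor, proving the Cauchy identity on $\CC_k$.

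For part (ii), the logic runs in reverse. Assuming Cauchy holds on every $\CC_k$, apply it at level $\ell + 1$ to three families of spanning tensors of $\CC_{\ell+1}$ having $[\SS, Y]$ respectively in slot $1$, an interior slot $1 < j < \ell + 1$, and slot $\ell + 1$. In each case, use $\mathrm{CIC}(Y)$ for $\CT$ and $\CF$ (both automatic) to rewrite each $\CT_{k-i}$-factor and $\CF_{\ell+1}$ in terms of order-$\ell$ data acting on $\SS$-shifted arguments, then substitute the Cauchy identity at level $\ell$ to eliminate the surviving $\CT_*$-$\CG_*$ products of lower order. The residual terms, each preceded by $\CT_0 \star$ or by $\CT_j \star$ for some $j$, rearrange precisely into $\CT_0 \star \mathrm{CIC}_{\ell, 0}(\CG, Y)(Z^1, \ldots, Z^\ell) = 0$ in the first case and into the balanced identities \eqref{ CICj } and \eqref{ CICl } in the other two. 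A feature of the interior case is that in a Cauchy summand $\CT_{k-i} \star \CG_i$ the commutator can lie in either factor depending on $j$ relative to $k-i$; this is precisely the source of the cross term $\CT_j(Z^1,\ldots,Z^j) \star \mathrm{CIC}_{\ell-j,0}(\CG,Y)(Z^{j+1},\ldots,Z^\ell)$ and of the intermediate sum in \eqref{ CICj }. In the last-slot case, the first-order deviation $\CG_1([\SS, Y]) - [\SS, \CG_0]$ appears as the boundary contribution from the $\CG_1$-factor.

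The principal obstacle is combinatorial bookkeeping rather than mathematical depth: for every Cauchy summand $\CT_{k-i} \star \CG_i$ one must track which factor absorbs the commutator (as a function of the position $j$), which copies of $\SS$ migrate through the tensor upon each CIC reduction, and which neighboring terms telescope via $(F \SA) \star G = F \star (\SA G)$. Care is also required at the boundary indices $i = 0$ and $i = k$ of the Cauchy sum, where the CIC expansions produce terms involving $\CG_0 = H$ or the identity $\CT_1([\SS, Y]) = [\SS, \CT_0]$ directly; these boundary pieces are exactly the extra summands appearing in \eqref{ CICl }.
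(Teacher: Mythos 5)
Your proposal is correct and follows essentially the same route as the paper: part (i) by induction on $k$ with a case split on the position of the commutator slot, using the CIC relations of $\CT$, $\CF$, and $\CG$ to telescope and then invoking the inductive hypothesis; part (ii) by running the same manipulations in reverse, applying the Cauchy identity at level $\ell+1$ with $[\SS, Y]$ placed in the first, an interior, or the last slot, and using the fact that the Taylor--Taylor coefficients of $\CT$ and $\CF$ automatically satisfy $\mathrm{CIC}(Y)$. Your identification of the interior cross term $\CT_j(\cdot)\star\mathrm{CIC}_{\ell-j,0}(\CG,Y)(\cdot)$ and the intermediate sum in \eqref{ CICj } as arising from the changeover of which factor absorbs the commutator, and of the boundary term $\CG_1([\SS,Y])-[\SS,\CG_0]$ in the last-slot case, matches the structure of the paper's argument.
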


\begin{proof}

(i) Since each of $ \CT_{\ell} $, $ \CG_{\ell} $ and $ \CF_{\ell} $ is multi-linear mapping on $ ( \CV^{ s \times s } )^{\ell} $, it is enough to prove that the desired identity holds on $ ( \CV^{ s \times s } )^{ i } \times \CC_1 \times ( \CV^{ s \times s } )^{  k - 1 - i } $ individually, for every $ i = 0, \hdots, k - 1 $. We prove this with the help of mathematical induction on $ k $. Observe from the hypothesis that the identity \eqref{ cauchy identity } holds for $ k = 0 $. So we assume that the identity \eqref{ cauchy identity } holds true for all $ 0 \leq j \leq k - 1 $. 

For $ i = 0 $ and $ ( [ \SS, Y ], Z^1, \hdots, Z^{ k - 1 } ) \in \CC_1 \times ( \CV^{ s \times s } )^{ k - 1 } $, using the canonical intertwining conditions for each of $ \CF_k $, $ \CT_{ k - j } $ and $ \CG_j $ for $ j = 0, \hdots, k $, note that
\begin{align*}
& \sum_{ j = 0 }^k \CT_{ k - j } ( [ \SS, Y ], Z^1, \hdots, Z^{ k - j - 1 } ) \star \CG_j ( Z^{ k - j }, \hdots, Z^{ k - 1 } )   \\
& = \sum_{ j = 0 }^{ k - 1 } \CT_{ k - j } ( [ \SS, Y ], Z^1, \hdots, Z^{ k - j - 1 } ) \star \CG_j ( Z^{ k - j }, \hdots, Z^{ k - 1 } ) +  \CT_0 \star \CG_k ( [ \SS, Y ], Z^1, \hdots, Z^{ k - 1 } ) \\
& = \sum_{ j = 0 }^{ k - 1 }  \left[ \SS \CT_{ k - j - 1 } ( Z^1, \hdots, Z^{ k - j - 1 } ) - \CT_{ k - j - 1 } ( \SS Z^1, \hdots, Z^{ k - j - 1 } ) \right] \star \CG_j ( Z^{ k - j }, \hdots, Z^{ k - 1 } ) \\
& \hspace{0.2in} +  \CT_0 \star \left[ \SS \CG_{ k - 1 } ( Z^1, \hdots, Z^{ k - 1 } ) - \CG_{ k - 1 } ( \SS Z^1, \hdots, Z^{ k - 1 } ) \right] \\
& = \sum_{ j = 0 }^{ k - 2 } \left[ \SS \CT_{ k - j - 1 } ( Z^1, \hdots, Z^{ k - j - 1 } ) - \CT_{ k - j - 1 } ( \SS Z^1, \hdots, Z^{ k - j - 1 } ) \right] \star \CG_j ( Z^{ k - j }, \hdots, Z^{ k - 1 } ) \\
& \hspace{0.2in}  + ( \SS \CT_0 - \CT_0 \SS ) \star \CG_{ k - 1 } ( Z^1, \hdots, Z^{ k - 1 } ) + \CT_0 \star \SS \CG_{ k - 1 } ( Z^1, \hdots, Z^{ k - 1 } ) \\
& \hspace{0.2in}  - \CT_0 \star \CG_{ k - 1 } ( \SS Z^1, \hdots, Z^{ k - 1 } ) \\
& = \sum_{ j = 0 }^{ k - 1 } \left[ \SS \CT_{ k - 1 - j } ( Z^1, \hdots, Z^{ k - 1 - j } ) -  \CT_{ k - 1 - j } ( \SS Z^1, \hdots, Z^{ k - j - 1 } )\right] \star \CG_j ( Z^{ k - j }, \hdots, Z^{ k - 1 } ) \\
& = \SS \CF_{ k - 1 } ( Z^1, \hdots, Z^{ k - 1 } ) - \CF_{ k - 1 } ( \SS Z^1, \hdots, Z^{ k - 1 } ) \\
& = \CF_k ( [ \SS, Y ], Z^1, \hdots, Z^{ k - 1} ) .
\end{align*}

A similar computation also yields that the desired identity holds on the summand $ ( \CV^{ s \times s } )^{ k - 1 } \times \CC_1 $ for $ i = k - 1 $. Finally, for $ 1 \leq i \leq k - 2 $, and $ ( Z^1, \hdots, Z^i, [ \SS, Y ], Z^{ i + 1 }, \hdots, $ $ Z^{ k - 1 } ) \in ( \CV^{ s \times s } )^{ i } \times \CC_1 \times ( \CV^{ s \times s } )^{ k - 1 - i } $, using the canonical intertwining conditions for each of $ \CF_k $, $ \CT_{ k - j } $ and $ \CG_j $ for $ j = 0, \hdots, k $, we have that
\begin{align*}
& \sum_{ j = 0 }^k \CT_{ k - j } \star \CG_j ( Z^1, \hdots, Z^i, [ \SS, Y ], Z^{ i + 1 }, \hdots, Z^{ k - 1 } ) \\
& = \sum_{ j = 0 }^{ k - i - 2 } \CT_{ k - j } \star \CG_j ( Z^1, \hdots, Z^i, [ \SS, Y ], Z^{ i + 1 }, \hdots, Z^{ k - 1 } ) \\
& \hspace{0.2in}   +  \CT_i ( Z^1, \hdots, Z^i ) \star \CG_{ k - i } ( [ \SS, Y ], Z^{ i + 1 }, \hdots, Z^{ k - 1 } ) \\
& \hspace{0.2in}   + \CT_{ i + 1 } ( Z^1, \hdots, Z^i, [ \SS, Y ] ) \star \CG_{ k - i - 1 } ( Z^{ i + 1 }, \hdots, Z^{ k - 1 } ) \\
& = \sum_{ j = 0 }^{ k - i - 2 } \CT_{ k - j } ( Z^1, \hdots, Z^i, [ S, Y ], Z^{ i + 1 }, \hdots, Z^{ k - j - 1 } ) \star \CG_j ( Z^{ k - j }, \hdots, Z^{ k - 1 } ) \\
& \hspace{0.2in}   + \CT_{ i + 1 } ( Z^1, \hdots, Z^i S ) \star \CG_{ k - i - 1 } ( Z^{ i + 1 }, \hdots, Z^{ k - 1 } ) \! - \! \CT_i ( Z^1, \hdots, Z^i ) \star \CG_{ k - i } ( S Z^{ i + 1 }, \hdots, Z^{ k - 1 } ) \\
& \hspace{0.2in}   + \sum_{ j = k - i + 1 }^k \CT_{ k - j } ( Z^1, \hdots, Z^{ k - j } ) \star \CG_j ( Z^{ k - j + 1 }, \hdots, Z^i, [ S, Y ], Z^{ i + 1 }, \hdots, Z^{ k - 1 } ) \\
& = \sum_{ j = 0 }^{ k - i - 2 } \CT_{ k - j - 1 } ( Z^1, \hdots, Z^i S, Z^{ i + 1 }, \hdots, Z^{ k - j - 1 } ) \star \CG_j ( Z^{ k - j }, \hdots, Z^{ k - 1 } ) \\
& \hspace{0.2in}   - \sum_{ j = 0 }^{ k - i - 2 } \CT_{ k - j - 1 } ( Z^1, \hdots, Z^i, S Z^{ i + 1 }, \hdots, Z^{ k - j - 1 } ) \star \CG_j ( Z^{ k - j }, \hdots, Z^{ k - 1 } ) \\
& \hspace{0.2in}   + \CT_{ i + 1 } ( Z^1, \hdots, Z^i S ) \star \CG_{ k - i - 1 } ( Z^{ i + 1 }, \hdots, Z^{ k - 1 } ) \! - \! \CT_i ( Z^1, \hdots, Z^i ) \star \CG_{ k - i } ( S Z^{ i + 1 }, \hdots, Z^{ k - 1 } ) \\
& \hspace{0.2in}   + \sum_{ j = k - i + 1 }^k \CT_{ k - j } ( Z^1, \hdots, Z^{ k - j } ) \star \CG_{ j - 1 } ( Z^{ k - j + 1 }, \hdots, Z^i S, Z^{ i + 1 }, \hdots, Z^{ k - 1 } ) \\
& \hspace{0.2in}   - \sum_{ j = k - i + 1 }^k \CT_{ k - j } ( Z^1, \hdots, Z^{ k - j } ) \star \CG_{ j - 1 } ( Z^{ k - j + 1 }, \hdots, Z^i, S Z^{ i + 1 }, \hdots, Z^{ k - 1 } ) \\
& = \CF_{ k - 1 } ( Z^1, \hdots, Z^i S, Z^{ i + 1 },  \hdots, Z^{ k - 1 } ) - \CF_{ k - 1 } ( Z^1, \hdots, Z^i, S Z^{ i + 1 }, \hdots, Z^{ k - 1 } ) \\
& = \CF_k ( Z^1, \hdots, Z^i, [ S, Y ], Z^{ i + 1 }, \hdots, Z^{ k - 1 } ).
\end{align*}

(ii) We begin by pointing out that the given identity turns out to be $ \CF_1 = \CT_1 \CG_0 + \CT_0 \CG_1 $, and consequently, 
$$ \CT_0 \star \CG_1 ( [ \SS, Y ] )  = \CF_1 ( [ \SS, Y ] ) - \CT_1 ( [ \SS, Y ] ) \star \CG_0  = [ \SS, \CF_0 ] - [ \SS, \CT_0 ] \star \CG_0 =  \CT_0 \star [ \SS, \CG_0 ] $$ 
where the last equality holds since $ \CT_0 \star \CG_0  = \CF_0 $. Now for $ \ell \geq 1 $, $ Z^1, \hdots, Z^{\ell} \in \CV^{ s \times s } $ and $ \SS \in \C^{ s \times s } $, note that
\begin{align*}
& \CT_0 \star  \CG_{ \ell + 1 } ( [ \SS, Y ], Z^1, \hdots, Z^{\ell} )  \\
& = \CF_{ \ell + 1 } ( [ \SS, Y ], Z^1, \hdots, Z^{\ell} ) - \sum_{ i = 0 }^{\ell} \CT_{ \ell + 1 - i } ( [ \SS, Y ], Z^1, \hdots, Z^{ \ell - i } ) \star \CG_i ( Z^{ \ell - i + 1 }, \hdots, Z^{\ell} ) \\
& = \SS \CF_{\ell} ( Z^1, \hdots, Z^{\ell} ) - \CF_{\ell} ( \SS Z^1, \hdots, Z^{\ell} ) \\
& \hspace{0.2in} - \sum_{ i = 0 }^{\ell}  \CT_{ \ell + 1 - i } ( [ \SS, Y ], Z^1, \hdots, Z^{ \ell - i } ) \star \CG_i ( Z^{ \ell - i + 1 }, \hdots, Z^{\ell} ) \\
& = \SS \CF_{\ell} ( Z^1, \hdots, Z^{\ell} ) - \CF_{\ell} ( \SS Z^1, \hdots, Z^{\ell} ) - \sum_{ i = 0 }^{\ell} [ \SS \CT_{ \ell - i } ( Z^1, \hdots, Z^{ \ell - i } ) \\
& \hspace{0.2in}   - \CT_{ \ell - i } ( \SS Z^1, \hdots, Z^{ \ell - i } ) ] \star \CG_i ( Z^{ \ell - i + 1 }, \hdots, Z^{\ell} ) \\
& = \SS \bigg( \CF_{\ell} ( Z^1, \hdots, Z^{\ell} ) - \sum_{ i = 0 }^{\ell} \CT_{ \ell - i } ( Z^1, \hdots, Z^{ \ell - i } ) \star \CG_i ( Z^{ \ell - i + 1 }, \hdots, Z^{\ell} ) \bigg) \\
& \hspace{0.2in}   - \CF_{\ell} ( \SS Z^1, \hdots, Z^{\ell} ) + \sum_{ i = 0 }^{ \ell - 1 } \CT_{ \ell - i } ( \SS Z^1, \hdots, Z^{ \ell - i } ) \star \CG_i ( Z^{ \ell - i + 1 }, \hdots, Z^{\ell} ) \! + \! \CT_0 \star \SS \CG_{\ell} ( Z^1, \hdots, Z^{\ell} )  \\
& = \CT_0 \star \SS \CG_{\ell} ( Z^1, \hdots, Z^{\ell} ) - \CG_{\ell} ( \SS Z^1, \hdots, Z^{\ell} )
\end{align*}
verifying that $\mbox{CIC}_{ \ell, 0 } ( \CG ) $ is in the kernel of $ \mathfrak{ m } ( \CT_0, . ) $.

Next, we show that the identity \eqref{ CICj } holds for any $ \ell \geq 1 $, $ Z^1, \hdots, Z^\ell \in \CV^{ s \times s } $ and $ \SS \in \C^{ s \times s } $. For $ \ell \geq 1 $, $ Z^1, \hdots, Z^{\ell} \in \CV^{ s \times s } $ and $ \SS \in \C^{ s \times s } $, we use the notation $$ ( Z^1, \hdots, [ \SS, Y ], \hdots, Z^{ \ell } ) : = ( Z^1, \hdots, Z^j, [\SS, Y ], Z^{ j + 1 }, \hdots, Z^{\ell} ) , $$
and compute
\begin{multline*}
\CT_0 \star  \CG_{ \ell + 1 } ( Z^1, \hdots, [ \SS, Y ], \hdots, Z^{ \ell } ) = \bigg( \CF_{ \ell + 1 } - \sum_{ i = 0 }^{ \ell } \CT_{ \ell + 1 - i } \star \CG_i \bigg)(  Z^1, \hdots, [ \SS, Y ], \hdots, Z^{ \ell } )\\
= \CF_{ \ell + 1 } ( Z^1, \hdots, [ \SS, Y ], \hdots, Z^{ \ell } ) - \sum_{ i = 0 }^{\ell} \CT_{ \ell + 1 - i } \star \CG_i ( Z^1, \hdots, [ \SS, Y ], \hdots, Z^{ \ell } )  \\
= \sum_{ i = \ell - j + 1 }^{\ell} \!\!\! \CT_{ \ell - i } \star \CG_i ( Z^1, \hdots, Z^j \SS, Z^{ j + 1 }, \hdots, Z^{\ell} ) - \!\!\! \sum_{ i = \ell - j }^{\ell}  \CT_{ \ell - i } \star \CG_i ( Z^1, \hdots, Z^j, \SS Z^{ j + 1 }, \hdots, Z^{\ell} ) \\
+ \CT_j ( Z^1, \hdots, Z^j ) \star \SS \CG_{ \ell - j } ( Z^{ j + 1 }, \hdots, Z^{\ell} ) - \!\!\! \sum_{ i = \ell - j + 1 }^{\ell} \!\!\! \CT_{ \ell + 1 - i } \star \CG_i  ( Z^1, \hdots, [ \SS, Y ], \hdots, Z^{ \ell } ) \\
= -  \CT_j ( Z^1, \hdots, Z^j ) \star \text{CIC}_{ \ell - j, 0 } ( Z^{ j + 1 }, \hdots, Z^{\ell} ) + \!\!\! \sum_{ i = \ell - j + 1 }^{\ell}  \CT_{ \ell - i } \star \CG_i ( Z^1, \hdots, Z^j \SS, Z^{ j + 1 }, \hdots, Z^{\ell} ) \\
- \sum_{ i = \ell - j + 1 }^{\ell} \CT_{ \ell - i } \star \CG_i ( Z^1, \hdots, Z^j, \SS Z^{ j + 1 }, \hdots, Z^{\ell} ) - \sum_{ i = \ell - j + 2 }^{\ell} \CT_{ \ell + 1 - i } \star \CG_i ( Z^1, \hdots, [ \SS, Y ], \hdots, Z^{ \ell } ) \\
 = - \CT_j ( Z^1, \hdots, Z^j ) \star \text{CIC}_{ \ell - j, 0 } ( Z^{ j + 1 }, \hdots, Z^{\ell} )  +  \sum_{ i = \ell - j + 1 }^{\ell} \!\!\! \CT_{ \ell - i } \star \CG_i  ( Z^1, \hdots, Z^j \SS, Z^{ j + 1 }, \hdots, Z^{\ell} ) \\
- \!\!\! \sum_{ i = \ell - j + 1 }^{\ell} \CT_{ \ell - i } \star \CG_i ( Z^1, \hdots, Z^j, \SS Z^{ j + 1 }, \hdots, Z^{\ell} ) - \sum_{ i = \ell - j + 1 }^{ \ell - 1 } \CT_{ \ell - i } \star \CG_{ i + 1 } ( Z^1, \hdots, [ \SS, Y ], \hdots, Z^{ \ell } ) \\
=  \CT_0 \star  \left[ \CG_{\ell} ( Z^1, \hdots, Z^j \SS, Z^{ j + 1 }, \hdots, Z^{\ell} ) - \CG_{\ell} ( Z^1, \hdots, Z^j, \SS Z^{ j + 1 }, \hdots, Z^{\ell} ) \right] 
- \CT_j ( Z^1, \hdots,\\ Z^j ) \star \text{CIC}_{ \ell - j, 0 } ( Z^{ j + 1 }, \hdots, Z^{\ell} )  -  \sum_{ i = \ell - j + 1 }^{ \ell - j } \CT_{ \ell - i } ( Z^1, \hdots, Z^{ \ell - i } ) \star \text{CIC}_{ i, j } ( Z^{ \ell - i + 1 }, \hdots, Z^{\ell} ),
\end{multline*}
verifying Equation \eqref{ CICj }. Finally, an analogous computation as above for  $ \ell \geq 1$, $ Z^1, \hdots, $ $ Z^{\ell} \in \CV^{ s \times s } $ and $ \SS \in \C^{ s \times s } $ ensures the validation of Equation \eqref{ CICl }.
\end{proof}


We now present the main theorem of this section for which we need the following technical definition. Also, note that the notations and the terminologies used in the following theorem have been introduced in the beginning of this subsection.

\begin{defn} \label{ completely complemented}
A closed subspace $ \CS $ of an operator space $ \CV $ is said to be \textit{completely complemented} if there exists a closed subspace $ \CS' \subset \CV $ and a completely bounded projection $ Q : \CV \rightarrow \CV $ onto $ \CS' $ with $ \ker Q = \CS $.

\end{defn} 

\begin{rem}
Note that if $ \CV $ is a Hilbert space equipped with the row operator space structure then every closed subspace of it is completely complemented as shown in Lemma \ref{nearest point approximation}. A similar proof in the same lemma can be used to show that so is true for any column operator Hilbert spaces as well.
\end{rem}

%

\begin{thm} \label{closed range operators general version}

Let $ \CV, \CX, \CY $ and $ \CZ $ be operator spaces and $ \mathfrak{ m } : \CX \times \CY \rightarrow \CZ $ be a completely contractive bilinear mapping: $ ( f, g ) \mapsto f \star g $. Let $ \Omega \subset \CV_{ nc } $ be a bounded, uniformly open nc set which is right admissible and fix $ s \in \N $ and $ Y \in \Omega_s $ so that $ \CC_1 $ is a complemented sub-bi-module of $ \CV^{ s \times s } $ over $ \CC ( Y ) $. Consider uniformly analytic nc functions 
$$ \CT : \Omega \rightarrow \CX_{ nc } \quad \mbox{and} \quad \CF : \Omega \rightarrow \CZ_{ nc } $$ 
satisfying $ \CF ( X ) \in \mathrm{ran} ( \mathfrak{ m } ( \CT ( X ), . ) ) $ for $ X \in \Omega $ and $ \mathfrak{ m} ( \CT ( Y ), H ) = \CF ( Y ) $ for some  $ H \in \CY^{ s \times s } $. Assume that $ \mathfrak{ m } ( \CT ( Y ), . ) : \CY^{ s \times s } \rightarrow \CZ^{ s \times s } $ satisfies the following conditions: 
\begin{itemize}
\item[(a)] $ \ker \mathfrak{ m } ( \CT ( Y ), . ) $ is a completely complemented subspace of $ \CY^{ s \times s } $ with the associated projection $ Q : \CY^{ s \times s } \rightarrow \CY^{ s \times s } $ to be a $ \CC ( Y ) $ bi-module homomorphism, and 
\item[(b)] for any $ m \in \N $, if $ \{ X_n \} \subset \Omega_{ ms } $, $ X_n $ converges to $ Y^{ \oplus m } $, $ K_n \in \text{ran } ( \mathfrak{ m } ( \CT ( X_n ), . ) ) $ and $ K_n $ converges to $ K $, then $ K \in \text{ran } ( \mathfrak{ m } ( \CT ( Y^{ \oplus m } ), . ) ) $ (in particular, $ \mathfrak{ m } ( \CT ( Y )^{ \oplus m }, . ) $ has closed range for each $ m \in \N $). 
\end{itemize}
Then there exist a uniformly open neighbourhood $ U $ of $ Y $ and a uniformly analytic nc function $ \CG : U \rightarrow \CY_{ nc } $ such that 
$$ \CG ( Y ) = H \quad \text{and} \quad  \CT ( X ) \star \CG ( X )  = \CF ( X ) ~\quad \text{for all} ~~ X \in U. $$
\end{thm}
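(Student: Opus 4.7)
The plan is to build $\CG$ as the sum of its Taylor-Taylor series at $Y$: I will recursively construct multi-linear mappings $\CG_\ell \colon (\CV^{s\times s})^\ell \to \CY^{s\times s}$, $\ell \geq 0$, with $\CG_0 = H$, satisfying (i) the canonical intertwining conditions $\mathrm{CIC}(Y)$ of Definition \ref{canonical intertwining conditions}, (ii) the algebraic identity $\CF_\ell = \sum_{j=0}^{\ell} \CT_{\ell-j} \star \CG_j$ as $\ell$-linear maps, and (iii) a geometric growth bound $\limsup_{\ell\to\infty} \sqrt[\ell]{\|\CG_\ell\|_{\CL^\ell_{\mathrm{cb}}}} < \infty$. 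Once these are in hand, the converse half of Theorem \ref{convergence of TT series} assembles the $\CG_\ell$'s into a uniformly analytic nc function $\CG$ on a nc ball $B_{nc}(Y,\rho)$; multiplying the Taylor-Taylor series for $\CT$ and $\CG$ termwise and applying (ii) order-by-order then yields $\CT(X) \star \CG(X) = \CF(X)$ on this neighbourhood.

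For the inductive step, suppose $\CG_0, \ldots, \CG_\ell$ have been produced. On the subspace $\CC_{\ell+1} \subset (\CV^{s\times s})^{\otimes(\ell+1)}$, the $\mathrm{CIC}(Y)$ formulas of Definition \ref{canonical intertwining conditions of finite order} dictate $\CG_{\ell+1}$ modulo $\ker \mathfrak{m}(\CT_0,\cdot)$; Lemma \ref{algebraic lemma}(ii), via Equations \eqref{ CICj }--\eqref{ CICl }, is precisely the compatibility statement guaranteeing that these overlapping prescriptions are mutually consistent modulo this kernel. Composing with the projection $Q$ from condition (a), which lands in the complement $\mathrm{ran}(Q)$ of $\ker \mathfrak{m}(\CT_0,\cdot)$ and is a $\CC(Y)$-bi-module homomorphism, pins down a well-defined value for $\CG_{\ell+1}$ on $\CC_{\ell+1}$. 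On the complementary submodule $\CC_{\ell+1}'$, I must solve
$$\CT_0 \star \CG_{\ell+1}(Z^1,\ldots,Z^{\ell+1}) = \CF_{\ell+1}(Z^1,\ldots,Z^{\ell+1}) - \sum_{j=0}^{\ell} \CT_{\ell+1-j}(Z^1,\ldots,Z^{\ell+1-j}) \star \CG_j(Z^{\ell+2-j},\ldots,Z^{\ell+1})$$
for $(Z^1,\ldots,Z^{\ell+1})$; if the right-hand side lies in $\mathrm{ran}(\mathfrak{m}(\CT_0,\cdot))$, condition (a) again lets me pick the unique pre-image in $\mathrm{ran}(Q)$, after which Lemma \ref{algebraic lemma}(i) confirms that identity (ii) holds at order $\ell+1$ globally.

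The main obstacle is verifying that the displayed right-hand side indeed lies in $\mathrm{ran}(\mathfrak{m}(\CT_0,\cdot))$. My plan is to probe $\CT$ and $\CF$ at the $s(\ell+2)\times s(\ell+2)$ block upper-triangular point $X(t)$ with diagonal blocks $Y$ and superdiagonal blocks $tZ^1,\ldots,tZ^{\ell+1}$; right admissibility of $\Omega$ (after absorbing a uniform scaling $\mathsf{r}$) places $X(t) \in \Omega$ for all sufficiently small $t > 0$. By \eqref{nc difference-differential operator} the blocks of $\CT(X(t))$ and $\CF(X(t))$ encode the respective Taylor-Taylor coefficients as polynomials in $t$, and the hypothesis yields $G(t)$ with $\CT(X(t)) \star G(t) = \CF(X(t))$. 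Comparing matching blocks, rescaling by the correct power of $t$, and letting $t \to 0$, hypothesis (b) guarantees that the limiting block lies in $\mathrm{ran}(\mathfrak{m}(\CT_0^{\oplus(\ell+2)},\cdot))$; extracting the top-right block and exploiting the $\C^{s\times s}$-bi-module structure then delivers the required range membership. Finally, the growth bound (iii) comes from a standard recursive estimate: complete contractivity of $\mathfrak{m}$ and complete boundedness of $Q$ bound $\|\CG_{\ell+1}\|_{\CL^{\ell+1}_{\mathrm{cb}}}$ by a convolution sum in $\|\CT_j\|_{\CL^j_{\mathrm{cb}}}$, $\|\CF_{\ell+1}\|_{\CL^{\ell+1}_{\mathrm{cb}}}$ and $\|\CG_j\|_{\CL^j_{\mathrm{cb}}}$ for $j \leq \ell$, and the Cauchy-Hadamard bounds for $\CT$ and $\CF$ from the forward direction of Theorem \ref{convergence of TT series} propagate to yield geometric growth for $\CG$.
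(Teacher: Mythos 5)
Your proposal tracks the paper's strategy faithfully in its broad outline: inductively build the Taylor--Taylor coefficients $\CG_\ell$ so that the canonical intertwining conditions, the Cauchy-product identity $\CF_\ell = \sum_j \CT_{\ell-j}\star\CG_j$, and a geometric growth bound hold; split $(\CV^{s\times s})^{\otimes(\ell+1)}$ into $\CC_{\ell+1}$ and $\CC_{\ell+1}'$; establish range membership on $\CC_{\ell+1}'$ by probing $\CT$ and $\CF$ at the $(\ell+2)\times(\ell+2)$ block bidiagonal point with diagonal $Y$ and superdiagonal $t\xi_j$, and then let $t\to 0$ and invoke hypothesis (b); project with $Q$; and finally invoke the converse half of Theorem \ref{convergence of TT series}. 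The norm estimates you sketch are also the ones the paper carries out.

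There is, however, a genuine misstep in your treatment of the $\CC_{\ell+1}$ piece. You propose to take the value dictated by $\mathrm{CIC}(Y)$ and then compose with $Q$ to obtain a well-defined value in $\mathrm{ran}(Q)$. The paper does \emph{not} apply $Q$ on $\CC_{\ell+1}$; it writes down a single explicit multilinear formula (in terms of $\varphi\circ\varrho_1$ and $\varrho_1'$ applied to each slot) for $\CG_{\ell+1}\vert_{\CC_{\ell+1}}$, which is automatically well-defined and satisfies $\mathrm{CIC}_{\ell+1}(Y)$ \emph{exactly}; the algebraic identity on $\CC_{\ell+1}$ then follows from Lemma \ref{algebraic lemma}(i), and $Q$ enters only on $\CC_{\ell+1}'$. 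If instead you post-compose with $Q$ on $\CC_{\ell+1}$, you force $\CG_{\ell+1}\vert_{\CC_{\ell+1}}$ to take values in $\mathrm{ran}(Q)$, but the CIC prescription $\CG_{\ell+1}([\SS,Y],Z^1,\ldots,Z^\ell)=\SS\,\CG_\ell(Z^1,\ldots,Z^\ell)-\CG_\ell(\SS Z^1,\ldots,Z^\ell)$ involves multiplication by arbitrary $\SS\in\C^{s\times s}$, and $Q$ is only assumed to be a $\CC(Y)$-bi-module homomorphism, not a $\C^{s\times s}$-bi-module homomorphism; so the right-hand side is not generically in $\mathrm{ran}(Q)$, and after composing with $Q$ the CIC at order $\ell+1$ would hold only modulo $\ker\mathfrak{m}(\CT_0,\cdot)$, not exactly. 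The converse direction of Theorem \ref{convergence of TT series} requires exact $\mathrm{CIC}(Y)$, so this would break the final assembly of $\CG$ into a nc function. (Your framing that the CIC prescriptions on $\CC_{\ell+1}$ are consistent ``only modulo the kernel'' and that Lemma \ref{algebraic lemma}(ii) supplies that consistency is also a misattribution: given $\mathrm{CIC}_\ell(Y)$ for $\CG_\ell$, the prescriptions can be made consistent exactly via the paper's formula, while Lemma \ref{algebraic lemma}(ii) is used on the $\CC_{\ell+1}'$ piece to show that the CIC-type defects of $Q(\widetilde{\CG}_{\ell+1}\vert_{\CC_{\ell+1}'})$ lie in $\ker\mathfrak{m}(\CT_0,\cdot)\cap\mathrm{ran}(Q)=\{0\}$.) The fix is simply to drop the $Q$ on $\CC_{\ell+1}$ and adopt the paper's explicit formula there.
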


\begin{proof}

Let $ \CC_1' $ be a closed sub-bi-module of $ \CV^{ s \times s } $ over $ \CC ( Y ) $ complementary to $ \CC_1 $ in $ \CV^{ s \times s } $, and $ \varrho_1 : \CV^{ s \times s } \rightarrow \CC_1 $, $ \varrho'_1 : \CV^{ s \times s } \rightarrow \CC_1' $ be the projections so that $ I_{ \CV^{ s \times s } } = \varrho_1 + \varrho'_1 $ where $ I_{ \CV^{ s \times s } } $ is the identity operator on $ \CV^{ s \times s } $. Since the dimension of $ \CC_1 $ is finite, $ \varrho_1 $ is completely bounded and hence so is $ \varrho_1' $. For $ k \in \N $, denote $ \CC_k' : = ( \CC_1' )^{ \otimes k } $ and observe that $ \CC_k $ is complementary to the subspace
$$ \CC_k = \sum_{ i = 0 }^{ k - 1 } ( \CV^{ s \times s } )^{ \otimes i } \otimes { \CC }_1 \otimes
( \CV^{ s \times s } )^{ \otimes k - 1 - i } $$ 
in $ ( \CV^{ s \times s } )^{ \otimes k } $. Evidently, $ \CC_k $ and $ \CC'_k $ are also $ \CC ( Y ) $ - bi-modules as well as $ I_{ (\CV^{ s \times s } )^{ \otimes k } } = \varrho_k + \varrho_k' $ where $ \varrho_k $ and $ \varrho_k' $ are the projections corresponding to the complementary subspaces $\CC_k $ and $ \CC_k' $, respectively. Then $ \| \varrho_k' \|_{ \text{cb} } \leq \| \varrho_1' \|_{ \text{cb} }^k $, and consequently, $ \| \varrho_k \|_{ \text{cb} } \leq 1 + \| \varrho_1' \|_{ \text{cb} }^k $. Also, note that the sub-bi-modules $ \CC_1 $ and $ \CC_1' $ give rise to the completely bounded right inverse $ \varphi : \CC_1 \rightarrow \C^{ s \times s } $ to the linear mapping $ \mathfrak{ c } : \C^{ s \times s } \rightarrow \CV^{ s \times s } $ defined by $ \SS \mapsto [ \SS, Y ] $.
 
Recall that both $ \CT $ and $ \CF $, being uniformly analytic nc functions, are locally uniformly bounded. Let $ r > 0 $ be such that both $ \CT $ and $ \CF $ are uniformly bounded on $ \B_{ nc } ( Y, 2 \norm{ \varrho_1' }_{ \text{cb} }  ( 1 + 2 \norm{ \varrho_1' }_{ \text{cb} } ) r ) $ and for all $ m \in \N $, $ X \in \B_{nc} ( Y, 2 \norm{ \varrho_1' }_{ \text{cb} } ( 1 + 2 \norm{ \varrho_1' }_{ \text{cb} } ) r ) $, 
$$ \CT ( X ) = \sum_{ \ell = 0 }^{ \infty } ( X - Y^{ \oplus m } )^{ \odot_s \ell } \CT_{ \ell } \quad\text{and} \quad \CF ( X ) = \sum_{ \ell = 0 }^{ \infty } ( X - Y^{ \oplus m } )^{ \odot_s \ell } \CF_{ \ell } $$ 
where $ \CT_{ \ell } : ( \CV^{ s \times s } )^{ \ell } \rightarrow \CX^{ s \times s } $ and $ \CF_{ \ell } : ( \CV^{ s \times  s } )^{ \ell } \rightarrow \CZ^{ s \times s } $ are $ \ell $-linear mappings satisfying the canonical intertwining conditions. Denote 
$$ \tilde{ r } = 2 \norm{ \varrho_1' }_{ \text{cb} } ( 1 + 2 \norm{ \varrho_1' }_{ \text{cb} } ) r \quad \text{and} \quad \tilde{ \hat{ r } } = 2 \norm{ \varrho_1' }_{ \text{cb} } ( 1 + 2 \norm{ \varrho_1' }_{ \text{cb} } ) \hat{ r } , $$ 
where $ \hat{ r } $ is defined as $ 0 < \hat{ r } \leq \text{min} \bigg\{ r, \frac{ 1 }{ 2 \norm{ \varrho_1' }_{ \text{cb} } \big( 1 + 2 \norm{ \varrho'_1 }_{ \text{cb} } \big) } \bigg\} $. Note that $ \hat{ r } \leq \tilde{ \hat{ r } } \leq \tilde{ r } ~\text{and}~ \tilde{ \hat{ r } } \leq 1 $ since being a projection, $ \norm{ \varrho'_1 }_{ \text{cb} } \geq 1$. Let 
$$ M = \text{max} \big\{ 1, \norm{ \varphi \circ \varrho_1 }_{ \text{cb} }, \sup\{ \norm{ \CT ( X ) }_{ ms, \CX }, \norm{ \CF ( X ) }_{ ms, \CZ } : X \in \B_{ nc } ( Y, \hat{ r } )_m, m \in \N \} \big\}. $$
Then it follows that 
\be \label{bound of Taylor coefficients}  
\norm{ \CT_{ \ell } }_{ \CL^{ \ell }_{ \text{cb} } } \leq \frac{ M }{ \tilde{ \hat{ r } }^{ \ell } } \leq \frac{ M }{ \hat{ r }^{ \ell } } \quad \text{and} \quad \norm{ \CF_{ \ell } }_{ \CL^{ \ell }_{ \text{cb} } } \leq \frac{ M }{ \tilde{ \hat{ r } }^{ \ell } } \leq \frac{ M }{ \hat{ r }^{ \ell } }, \quad \ell \geq 0. 
\ee

Now finding a uniformly analytic nc function $ \CG $ on some uniformly open nc neighbourhood of $ Y $ taking values in $ \CY_{ nc } $ amounts to find $ \ell $-linear mappings $ \CG_{ \ell } : ( \CV^{ s \times s } )^{ \ell } \rightarrow \CY^{ s \times s } $, for $ \ell \geq 0 $, satisfying the canonical intertwining conditions and a positive real number $ 0 < r' \leq r $ such that 
$$ \norm{ \CG_{ \ell } }_{ \CL^{ \ell }_{ \text{cb} } } \leq \frac{ M' }{ r'^{ \ell } }, \quad \ell \in \N \cup \{ 0 \} $$ 
for some $ M' \geq 0 $ so that the Taylor-Taylor series 
\be \label{required TT series} 
\sum_{ \ell = 0 }^{ \infty } ( X - Y^{ \oplus m } )^{ \odot_s \ell } \CG_{ \ell } 
\ee 
converges uniformly for all $ X \in \B_{ nc } ( Y, r' )_{ ms } $, $ m \in \N $, and the uniformly analytic nc function $ \CG $ associated to the Taylor-Taylor series in \eqref{required TT series} satisfies the identity 
\be \label{given equation} 
\CT ( X ) \star \CG ( X ) = \CF ( X ), ~ X \in \B_{ nc } ( Y, r' ). 
\ee



For every $ m \in \N $, it follows from the conditions (a) and (b) in the hypothesis that there exists $ C > 0 $ such that 
\be \label{bounded below inequality}
\norm{ \mathfrak{ m } \left( \CT ( Y )^{ \oplus m },  Q_n ( G ) \right) }_{ ms, \CZ } \geq C \norm{ Q_m ( G ) }_{ ms, \CY } 
\ee 
where $ G = ( \! ( G_{ i j } ) \! )_{ i, j = 1 }^m \in \CY^{ ms \times ms } $ with $ G_{ i j } \in \CY^{ s \times s } $ for $ i, j = 1, \hdots, m $. Set $ \alpha = \text{min} \{ 1, C \} $, 
$$ r' = \frac{ \hat{ r } \alpha }{ 2 M } \quad \text{and} \quad \tilde{ r' } = \frac{ \tilde{ \hat{ r } } \alpha }{ 2 M } . $$ 
Then note that $ r' \leq  \hat{ r } \leq \tilde{ \hat{ r } } $, $ r' \leq \tilde{ r }' $ and $ \B_{ nc } ( Y, r' ) \subset \B_{ nc } ( Y, \hat{ r } ) $.

We now construct the $ \ell $-linear mappings $ \CG_{ \ell } : ( \CV^{ s \times s } )^{ \ell } \rightarrow \CY^{ s \times s } $,  $ \ell \in \N \cup \{ 0 \} $ satisfying the canonical intertwining conditions as well as Equation \eqref{given equation}, or equivalently, for each $ \ell \in \N \cup \{ 0 \} $, 
\be \label{given equation of TT coefficients}
\sum_{ j = 0 }^{ \ell } \CT_{ \ell - j } \star \CG_j = \CF_{ \ell } \ee
and $ \norm{ \CG_{ \ell } }_{ \CL^{ \ell }_{ cb } } \leq r'^{ - { \ell } } \mathrm{max} \big\{ 1, \norm{ H }_{ s, \CY } \big\} $ with the help of mathematical induction on $ \ell $. Note that $ \CT_0 = \CT ( Y ) $. 

Assume that we have defined the desired $ \ell $-linear mappings $ \CG_{ \ell } $ satisfying Equation \eqref{given equation of TT coefficients} and 
$$ \norm{ \CG_{ \ell } }_{ \CL^{ \ell }_{ cb } } \leq \tilde{ r }'^{ - \ell } \mathrm{max} \big\{ 1, \norm{ H }_{ s, \CY } \big\} \leq  r'^{ - \ell } \mathrm{max} \big\{ 1, \norm{ H }_{ s, \CY } \big\} , $$ 
for $ \ell = 0, 1, \hdots, k - 1 $ so that the canonical intertwining conditions hold up to order $ k - 1 $ (see Definition \ref{canonical intertwining conditions of finite order}). We then explicitly construct a $ k $-linear mapping $ \CG_k : ( \CV^{ s \times s } )^k \rightarrow \CY^{ s \times s } $ with the prescribed growth on it's complete norm -- $ \norm{ \CG_k }_{ \CL^k_{ cb } } \leq r'^{ - k } \mathrm{max} \big\{ 1, \norm{ H }_{ s, \CY } \big\} $ -- satisfying \eqref{given equation of TT coefficients} and the canonical intertwining conditions up to order $ k $.

We describe this $ k $-linear mapping in two steps by defining it on $ \CC_k $ and $ \CC'_k $ separately, and then adding them to obtain the desired $ k $-linear mapping on $ \CV^{ s \times s } $.

Define the $ k $-linear mapping $ \CG_k \vert_{ \CC_k } : \CC_k \rightarrow \CY^{ s \times s } $ as follows
\begin{align*}
& \CG_k \vert_{ \CC_k } ( Z^1, \hdots, Z^k ) \\
& = ( \varphi \circ \varrho_1 ) ( Z^1 ) \CG_{ k - 1 } ( \varrho_1' ( Z^2 ), \hdots, \varrho_1' ( Z^k ) ) - \CG_{ k - 1 } ( ( \varphi \circ \varrho_1 ) ( Z^1 ) \varrho_1' ( Z^2 ), \hdots, \varrho_1' ( Z^k ) ) \\
& + \sum_{ j = 2 }^{ k - 1 } [ \CG_{ k - 1 } ( \varrho_1' ( Z^1 ), \hdots, \varrho_1' ( Z^{ j - 1 } ) ( \varphi \circ \varrho_1 ) ( Z^j ), \varrho_1' ( Z^{ j + 1 } ), \varrho_1' ( Z^{ j + 2 } ), \hdots, \varrho_1' ( Z^k ) ) \\
& - \CG_{ k - 1 } ( \varrho_1' ( Z^1 ), \hdots, \varrho_1' ( Z^{ j - 1 } ), ( \varphi \circ \varrho_1 ) ( Z^j ) \varrho_1' ( Z^{ j + 1 } ), \varrho_1' ( Z^{ j + 2 } ), \hdots, \varrho_1' ( Z^k ) ) ] \\
& + \CG_{ k - 1 } ( \varrho_1' ( Z^1 ), \hdots, \varrho_1' ( Z^{ k - 1 } ) ( \varphi \circ \varrho_1 ) ( Z^k ) ) - \CG_{ k - 1 }( \varrho_1' ( Z^1 ), \hdots, \varrho_1' ( Z^{ k - 1 } ) ) ( \varphi \circ \varrho_1 ) ( Z^k ).
\end{align*}
It turns out from this definition of $ \CG_k \vert_{ \CC_k } $ together with the fact that each of $ \varphi, \varrho_1 $ and $\varrho_1' $ is $ \CC ( Y ) $-bi-module homomorphism that the canonical intertwining conditions hold up to order $ k $. Further, from part (i) of Lemma \ref{algebraic lemma} it follows that $ \CG_k \vert_{ \CC_k } $ satisfies Equation  \eqref{given equation of TT coefficients}. So it remains to estimate the complete norm of $ \CG_k \vert_{ \CC_k } $. 

Note from the definition of $ \CG_k \vert_{ \CC_k } $ above that 
\begin{align*}
& \norm{ \CG_k \vert_{ \CC_k } }_{ \CL^k_{ cb } }  \leq  2 \norm{ \varphi \circ \varrho_1 }_{ \text{cb} } \norm{ \CG_{ k - 1 } }_{ \text{cb} } \norm{ \varrho_1' }^{ k - 1 }_{ \text{cb} } \\
& +  \sum_{ j = 2 }^{ k - 1 } 2 \norm{ \varphi \circ \varrho_1 }_{ \text{cb} } \norm{ \CG_{ k - 1 } }_{ \text{cb} } \norm{ \varrho_1' }^{ k - 1 }_{ \text{cb} } +  2 \norm{ \varphi \circ \varrho_1 }_{ \text{cb} } \norm{ \CG_{ k - 1 } }_{ \text{cb} } \norm{ \varrho_1' }^{ k - 1 }_{ \text{cb} } \\
& =  2 k \norm{ \varphi \circ \varrho_1 }_{ \text{cb} } \norm{ \CG_{ k - 1 } }_{ \text{cb} } \norm{ \varrho_1' }^{ k - 1 }_{ \text{cb} } \leq  2^k \norm{ \varrho_1' }^k_{ \text{cb} } \norm{ \varphi \circ \varrho_1 }_{ \text{cb} } \norm{ \CG_{ k - 1 } }_{ \text{cb} }.
\end{align*}

Next, we define the $ k $-linear mapping $ \widetilde{ \CG }_k \vert_{ \CC'_k } : \CC'_k \rightarrow \CY^{ s \times s } $ satisfying the equation 
$$ \CT_0 \star \widetilde{ \CG }_k \vert_{ \CC'_k } ( Z_1, \hdots, Z_k ) = \CF_k ( Z_1, \hdots, Z_k ) - \sum_{ j = 0 }^{ k - 1 } \CT_{ k - j } \star \CG_j ( Z_1, \hdots, Z_k ) , $$
for any $ Z_1, \hdots, Z_k \in \CC'_k $, as follows. Fix a basis $ \CB $ of $ \CC'_1 $ and observe that $ \CB^{ \otimes k } $ is a basis for $ \CC'_k $. For any $ \xi_1, \hdots, \xi_k \in \CB $, we first define $ \widetilde{ \CG }_k \vert_{ \CC'_k } ( \xi_1, \hdots, \xi_k ) $ and then extend it to $ \CC'_k $ by linearity. So let $ \xi_1, \hdots, \xi_k \in \CB $ and consider the sequence $ \{ X_n \}_{ n = 1 }^{ \infty } \subset \Omega_{ ( k + 1 ) s } $ defined by 
$$ X_n : = \begin{bmatrix}
Y & t_n \xi_1 & 0 & \cdots & 0 \\
0      & Y & t_n \xi_2 & \cdots & 0 \\
\vdots & \vdots & \ddots & \ddots & 0 \\
0      &       0     & \cdots   & Y & t_n \xi_k \\
0      &       0     & \cdots   &  0            &   Y
\end{bmatrix} $$ 
where $ \{ t_n \}_{ n = 1 }^{ \infty } $ is a sequence of positive real numbers converging to $ 0 $. Then note that $ \{ X_n \} $ converges to $ Y^{ \oplus ( k + 1 ) } $ and consequently, there exists $ N_0 \in \N $ such that for all $ n \geq N_0 $, $ X_n \in \B_{ nc } ( Y, r' ) $. It turns out from the Taylor-Taylor series of $ \CT $ around $ Y $ for $ 0 \leq j \leq k - 1 $, that 
\begin{align*}
& \begin{bmatrix}
0 & \hdots & 0 & \CT_{ k - j } ( \xi_1, \hdots, \xi_{ k - j } ) \star \CG_j ( \xi_{ k - j + 1 }, \hdots, \xi_k ) \\
0 & \hdots & 0 & 0 \\
\vdots & \hdots & \vdots & \vdots \\
0 & \hdots & 0 & 0
\end{bmatrix} \\
& = \frac{ 1 }{ t_n^{ k - j } } \left[ \CT ( X_n ) - \CT ( Y^{ \oplus ( k + 1 ) } ) - \sum_{ i = 1 }^{ k - j - 1 } ( X_n - Y^{ \oplus  ( k + 1 ) } )^{ \odot_i } \CT_i \right] \star A^j ( \xi_1, \hdots, \xi_k )
\end{align*}
where $ A^j ( \xi_1, \hdots, \xi_k ) = ( \! ( A^j_{ s t } ) \! )_{ s, t = 0 }^k $ is the $ ( k + 1 ) \times ( k + 1 ) $ block matrix with $ s \times s $ blocks $ A^j_{ s t } $ defined to be $ 0 $ whenever $ t - s \neq j $ and for $ s = 1, \hdots, k + 1 - j $, $ A^j_{ s, s + j } = \CG_j ( \xi_s, \hdots, \xi_{ j + s - 1 } ) $. Now for each $ \ell = 0, 1, \hdots, k-1 $, we use the Taylor-Taylor series of $ \CF $ around $ Y $ and Equation \eqref{given equation of TT coefficients} to obtain
\begin{align*}
& K : = \begin{bmatrix}
0 & \hdots & 0 & \CF_k ( \xi_1, \hdots, \xi_k ) - \sum_{ j = 0 }^{ k - 1 } \CT_{ k - j } \star \CG_j ( \xi_1, \hdots, \xi_k ) \\
0 & \hdots & 0 & 0 \\
\vdots & \hdots & \vdots & \vdots \\
0 & \hdots & 0 & 0
\end{bmatrix} \\
& = \frac{ 1 }{ t_n^k } \left[ \CF ( X_n ) - \CT ( X_n ) \star H^{ \oplus ( k + 1 ) } - \sum_{ i = 1 }^{ k - 1 } t_n^i \CT ( X_n ) \star A^i ( \xi_1, \hdots, \xi_k ) \right].
\end{align*}
Denote 
$$ K_n = \frac{ 1 }{ t_n^k } \left[ \CF ( X_n ) - \CT ( X_n ) \star H^{ \oplus ( k + 1 ) } - \sum_{ i = 1 }^{ k - 1 }t_n^i \CT ( X_n ) \star A^i ( \xi_1, \hdots, \xi_k ) \right] $$ and observe that $ K_n \in \mathrm{ ran } ~\mathfrak{ m } ( \CT ( X_n ), \cdot ) $. Moreover, it follows from the equation above that $ K_n $ converges to $ K $. Therefore, $ K \in \mathrm{ ran } ~ \mathfrak{ m } \left( \CT (Y^{ \oplus ( k + 1 ) } ) \right) $ implying that there exists $ \widetilde{ \CG }_k \vert_{ \CC'_k } ( \xi_1, \hdots, \xi_k ) \in \CY^{ s \times s } $ such that 
$$ \CT_0 ( \widetilde{ \CG }_k \vert_{ \CC'_k } ( \xi_1, \hdots, \xi_k ) ) = K . $$

Now define 
$$ \CG_k \vert_{ \CC'_k } ( Z_1, \hdots, Z_k ) : = Q ( \widetilde{ \CG }_k \vert_{ \CC'_k } ( Z_1,  \hdots, Z_k ) ), ~ Z_1, \hdots, Z_k \in \CC'_k $$ 
and observe from the condition (b) in the hypothesis together with the equation above that $ \CG_k \vert_{ \CC'_k } $ satisfies Equation \eqref{given equation of TT coefficients}. Also, observe from part (ii) in Lemma \eqref{algebraic lemma} that $ \SS \CG_k \vert_{ \CC'_k } ( Z^1, \hdots, Z^k ) - \CG_k \vert_{ \CC'_k } ( \SS Z^1, \hdots, Z^k ) $, $ \CG_k \vert_{ \CC'_k } ( Z^1, \hdots, Z^{ k - 1 }, Z^k \SS ) - \CG_k \vert_{ \CC'_k } ( Z^1, \hdots, Z^k ) \SS $ and $ \CG_k \vert_{ \CC'_k } ( Z^1, \hdots, Z^{ j - 1 } $, $ Z^j \SS $, $ Z^{ j + 1 }, \hdots, Z^k ) - \CG_k \vert_{ \CC'_k } ( Z^1, \hdots, Z^j, \SS Z^{ j + 1 }, Z^{ j + 2 }, \hdots, Z^k ) $ are in the $ \ker \mathfrak{ m } ( \CT_0, \cdot ) $ for all $ Z^1, \hdots, Z^k \in \CC'_k $ and $ \SS \in \CC ( Y ) $. Then another application of condition (b) yields that
\begin{eqnarray}
\label{CC1} \SS \CG_k \vert_{ \CC'_k } ( Z^1, \hdots, Z^k ) & = & \CG_k \vert_{ \CC'_k } ( \SS Z^1, \hdots, Z^k ), \\
\label{CC2}  \CG_k \vert_{ \CC'_k } ( Z^1, \hdots, Z^{ k - 1 }, Z^k \SS ) & = & \CG_k \vert_{ \CC'_k } ( Z^1, \hdots, Z^k ) \SS, \\
\label{CC3} \hspace{0.5in} \CG_k \vert_{ \CC'_k } ( Z^1, \hdots, Z^{ j - 1 },  Z^j \SS,  Z^{ j + 1 }, \hdots, Z^k ) & = &  \CG_k \vert_{ \CC'_k } ( Z^1, \hdots, Z^j, \SS Z^{ j + 1 }, Z^{ j + 2 }, \hdots, Z^k ),
\end{eqnarray}
for all $ Z^1, \hdots, Z^k \in \CC'_k $ and $ \SS \in \CC ( Y ) $.

To estimate the complete norm $ \norm{ \CG_k \vert_{ \CC'_k} }_{ \CL_{ \text{cb} }^k } $, we first estimate the norm $ \norm{ \CT_0 \star \CG_k \vert_{ \CC'_k} }_{ \CL_{ \text{cb} }^k } $ as follows.
\Bea
\norm{ \CT_0 \star \CG_k \vert_{ \CC'_k} }_{ \CL_{ \text{cb} }^k } & = & \sup_{ n_0, n_1, \hdots, n_k } \norm{ ( \CT_0 \star \CG_k \vert_{ \CC'_k } )^{ ( n_0, n_1, \hdots, n_k ) } }_{ \CL^k } \\
& = & \sup_{ n_0, n_1, \hdots, n_k } \bigg[ \sup_{ \norm{ Z_1 }_{ n_0, n_1 } = \cdots = \norm{ Z_k }_{ n_{ k - 1 }, n_k }= 1 } \norm{ ( Z_1 \odot \cdots \odot Z_k ) \CT_0 \star \CG_k \vert_{ \CC'_k } } \bigg] \\
& \geq & C \sup_{ n_0, n_1, \hdots, n_k } \bigg[ \sup_{ \norm{ Z_1 }_{ n_0, n_1 } = \cdots = \norm{ Z_k }_{ n_{ k - 1 }, n_k } = 1 } \norm{ ( Z_1 \odot \cdots \odot Z_k ) \CG_k \vert_{ \CC'_k } } \bigg] \\
& \geq & \alpha \norm{ \CG_k \vert_{ \CC'_k } }_{ \CL_{ \text{cb} }^k } .
\Eea
Here, we use Equation \eqref{bounded below inequality} to obtain the inequality in the third equation and $ \alpha = \mbox{min} \{ 1, C \} $ as mentioned earlier in this proof. Consequently, we have that
\begin{multline*}
\norm{ \CG_k \vert_{ \CC'_k} }_{ \CL_{ \text{cb} }^k } \leq \frac{ 1 }{ \alpha } \norm{ \CT_0 \star \CG_k \vert_{ \CC'_k } }_{ \CL_{ \text{cb} }^k } = \frac{ 1 }{ \alpha } \norm{ \CF_k - \sum_{ j = 0 }^{ k - 1 } \CT_{ k - j } \star \CG_j }_{ \CL_{ \text{cb} }^k }\\ 
\leq   \frac{ 1 }{ \alpha } \bigg[ \norm{ \CF_k }_{ \CL_{ \text{cb} }^k } + \sum_{ j = 0 }^{ k - 1 } \norm{ \CT_{ k - j } }_{ \CL_{ \text{cb} }^{ k - j } } \norm{ \CG_j }_{ \CL_{ \text{cb} }^j } \bigg] 
\leq  \frac{ M }{ \alpha } \bigg[ \frac{ 1 }{ \tilde{ \hat{ r } }^k } + \sum_{ j = 0 }^{ k - 1 } \frac{ 2^j M^j }{ \alpha^j  \tilde{ \hat{ r } }^k } \mathrm{max} \big\{ 1, \norm{ H }_{ s, \CY } \big\} \bigg] \\
\leq  \frac{ M^k }{ \alpha^k \tilde{ \hat{ r } }^k } \bigg[ 1 + \sum_{ j = 0 }^{ k - 1 } \frac{ 2^j \alpha^{ k - j } }{ M^{ k - j } }\mathrm{max} \big\{ 1, \norm{ H }_{ s, \CY } \big\} \bigg] \leq   \frac{ M^k }{ \alpha^k \tilde{ \hat{ r } }^k } \bigg[ 1 + \sum_{ j = 0 }^{ k - 1 } 2^j \bigg] \mathrm{max} \big\{ 1, \norm{ H }_{ s, \CY } \big\} \\
\leq   \frac{ 2^k M^k }{ \alpha^k \tilde{ \hat{ r } }^k } \mathrm{max} \big\{ 1, \norm{ H }_{ s, \CY } \big\} =  \tilde{ r }'^{ - k } \mathrm{max} \big\{ 1, \norm{ H }_{ s, \CY } \big\} \hspace{2.42in}
\end{multline*}

Thus the desired $ k $-linear mapping $ \CG_k : ( \CV^{ s \times s } )^{ \otimes k } \rightarrow \CY^{ s \times s } $ is defined as 
$$ \CG_k ( Z^1, \hdots, Z^k ) : = \CG_k \vert_{ \CC_k} ( \varrho_k ( Z^1, \hdots, Z^k ) ) + \CG_k \vert_{ \CC'_k } ( \varrho'_k ( Z^1, \hdots, Z^k ) ) . $$ 
Since $ \CG_k \vert_{ \CC'_k } $ satisfies Equations \eqref{CC1}, \eqref{CC2} and \eqref{CC3} as well as $ \CG_k \vert_{ \CC_k } $ together with $ \CG_0, \CG_1, \hdots, \CG_{ k - 1 } $ satisfy the canonical intertwining conditions up to order $ k $, it follows that the canonical intertwining conditions hold for $ \CG_0, \hdots, \CG_k $ up to order $ k $. Finally, we show that $ \norm{ \CG_k }_{ \CL^k_{ \text{cb} } } $ is bounded above by $ r'^{ - k } \mathrm{max} \big\{ 1, \norm{ H }_{ s, \CY } \big\} $ with the help of the bounds on $ \norm{ \CG_k \vert_{ \CC_k } \circ \varrho_k }_{ \CL_\text{cb}^k } $ and $ \norm{ \CG_k \vert_{ \CC'_k } \circ \varrho'_k }_{ \CL_\text{cb}^k } $ as demonstrated below.
\Bea
\norm{ \CG_k }_{ \CL_\text{cb}^k } & \leq & \norm{ \CG_k \vert_{ \CC_k } \circ \varrho_k }_{ \CL_\text{cb}^k } + \norm{ \CG_k \vert_{ \CC'_k } \circ \varrho'_k }_{ \CL_\text{cb}^k } \\
& \leq & \norm{ \CG_k \vert_{ \CC_k } }_{ \CL_\text{cb}^k } \norm{ \varrho_k }_{ \text{cb} } + \norm{ \CG_k \vert_{ \CC'_k} }_{ \CL_\text{cb}^k } \norm{ \varrho'_k }_{ \text{cb} } \\
& \leq & \norm{ \CG_k \vert_{ \CC_k} }_{ \CL_\text{cb}^k } ( 1 + \norm{ \varrho'_1 }^k_{ \text{cb} } ) + \norm{ \CG_k \vert_{ \CC'_k } }_{ \CL_\text{cb}^k } \norm{ \varrho'_1 }^k_{ \text{cb} } \\
& = & \norm{ \CG_k \vert_{ \CC_k} }_{ \CL_\text{cb}^k } + \left( \norm{ \CG_k \vert_{ \CC_k } }_{ \CL_\text{cb}^k } +   \norm{ \CG_k \vert_{ \CC'_k } }_{ \CL_\text{cb}^k } \right) \norm{ \varrho'_1 }^k_{ \text{cb} } \\
& \leq & 2^k \norm{ \varrho_1' }^k_{ \text{cb} } \norm{ \varphi \circ \varrho_1 }_{ \text{cb} } \norm{ \CG_{ k - 1 } }_{ \text{cb} } \\
& + & \left( 2^k \norm{ \varrho_1' }^k_{ \text{cb} } \norm{ \varphi \circ \varrho_1 }_{ \text{cb} } \norm{ \CG_{ k - 1 } }_{ \text{cb} } + \tilde{ r }'^{ - k } \mathrm{max} \big\{ 1, \norm{ H }_{ s, \CY } \big\} \right) \norm{ \varrho'_1 }^k_{ \text{cb} } \\
& \leq & 2^k \norm{ \varrho_1'}^k_{ \text{cb} } \norm{ \varphi \circ \varrho_1 }_{ \text{cb} } \tilde{ r }'^{ - k } \mathrm{max} \big\{ 1, \norm{ H }_{ s, \CY } \big\} \tilde{ r }' \\
& + & \left( 2^k \norm{ \varrho_1' }^k_{ \text{cb} } \norm{ \varphi \circ \varrho_1 }_{ \text{cb} } \tilde{ r }'^{ - k } \mathrm{max} \big\{ 1, \norm{ H }_{ s, \CY } \big\} \tilde{ r }' + \tilde{ r }'^{ - k } \text{max} \{ 1, \norm{ H }_{s, \CY } \} \right) \norm{ \varrho'_1 }^k_{ \text{cb} } \\
& \leq & 2^k \norm{ \varrho_1' }^k_{ \text{cb} } \tilde{ r }'^{ - k } \mathrm{max} \big\{ 1, \norm{ H }_{ s, \CY } \big\} \\ & + & \left( 2^k \norm{ \varrho_1' }^k_{ \text{cb} } \tilde{ r }'^{ - k } \mathrm{max} \big\{ 1, \norm{ H }_{ s, \CY } \big\} + \tilde{ r }'^{ - k } \mathrm{max} \big\{ 1, \norm{ H }_{ s, \CY } \big\} \right) \norm{ \varrho'_1 }^k_{ \text{cb} } \\
& \leq & 2^k \norm{ \varrho_1' }^k_{ \text{cb} } \tilde{ r }'^{ - k } \mathrm{max} \big\{ 1, \norm{ H }_{ s, \CY } \big\} ( 1 + 2 \norm{ \varrho'_1 }^k_{ \text{cb} } ) \\
& \leq &  [ 2 \norm{ \varrho_1' }_{ \text{cb} } ( 1 + 2 \norm{ \varrho'_1 }_{ \text{cb} } ) ]^k \tilde{ r }'^{ - k } \text{max}\{ 1, \norm{ H }_{ s, \CY } \} \\
& \leq & r'^{ - k } \mathrm{max} \big\{ 1, \norm{ H }_{ s, \CY } \big\}.
\Eea
This completes the proof.
\end{proof}

\begin{rem} \label{rem on solving nc eqn}
(1) When the dimension of $ \CV $ is finite and $ Y \in \Omega_s \subset \Omega \subset \CV_{ nc } $ is a semi-simple point, $ \CC ( Y ) $-bi-modules turn out to be semi-simple. Consequently, being a $\CC ( Y ) $-bi-module in a natural way, $ \CV^{ s \times s } $ is semi-simple. Also, note that the map $ \mathfrak{ c } : \C^{ s \times s } \rightarrow \CV^{ s \times s } $ -- defined by $ \SS \mapsto [ \SS, Y ] $ -- admits a $ \CC ( Y ) $-bi-module right inverse $ \varphi : \CC_1 \rightarrow \C^{ s \times s } $. Thus, $ \varphi $ gives rise to a $ \CC ( Y ) $-sub-bi-module $ \CC_1' : = \ker \varphi $ complementary to $ \CC_1 $ in $ \CV^{ s \times s } $.

(2) Given Hilbert spaces $ \CV, \CW, \CH $ and $ \CK $, set $ \CX = \CB ( \CH, \CK ) $, $ \CY = \CB ( \CW, \CH ) $ and $ \CZ = \CB ( \CW, \CK ) $ equipped with the natural operator space structure. Let $ \mathfrak{ m } : \CX \times \CY \rightarrow \CZ $ be the multiplication operator obtained by the composition operator and fix $ H \in \CB ( \CW, \CH )^{ s \times s } $. Then for any uniformly analytic nc functions 
$$ \CT : \Omega \rightarrow \CB ( \CH, \CK )_{ nc } \quad \mbox{and} \quad \CF : \Omega \rightarrow \CB ( \CW, \CK )_{ nc } $$
satisfying the hypothesis of Theorem \ref{closed range operators general version}, there exist a uniformly open nc neighbourhood $ U $ of $ Y $ and a uniformly analytic function $ \CG : U \rightarrow \CB ( \CW, \CH )_{ nc } $ such that 
$$ \CG ( Y ) = H \quad \text{and} \quad \CT ( X) \circ \CG ( X ) = \CF ( X ), \quad \text{for all} ~~ X \in U . $$ Observe from CSPS theorem \cite[Theorem 1.5.7, pp. 32]{Blecher-Merdy} that this is essentially the Theorem \ref{closed range operators general version} when $ \CZ = \CB ( \CW, \CK ) $ for some Hilbert spaces $ \CW $ and $ \CK $. 

In particular, when $ \CW = \C $, $ \CY $ and $ \CZ $ becomes $ \CH $ and $ \CK $, respectively, equipped with the column operator space structure. In this case, it can be seen as in Lemma \ref{nearest point approximation} and Lemma \ref{closed range implies bounded below} in the following subsection that the hypothesis on $ \mathfrak{ m } $ and $ \CT $ in Theorem \ref{closed range operators general version} are automatic.
  
(3) As a special case taking $ \CW = \CH $ and $ \CK = \C $, we see that $ \CX = \CZ = \CH_r $ equipped with the row operator space structure and $ \CY = \CB ( \CH ) $. This particular case will be of interest in Section \ref{Noncommutative Cowen-Douglas class and vector bundles} and the hypothesis on $ \mathfrak{ m } $ and $ \CT $ in Theorem \ref{closed range operators general version} for this case are automatically satisfied as we show in the following subsection.
\end{rem}

\subsection{Application to the row operator Hilbert space} \label{column operator space}

In this subsection, we briefly recall the notion of the \textit{row operator space structure} on a Hilbert space as well as present an \textit{open mapping type result} (see Proposition \ref{closed range implies bounded below}) in a row operator space, which we then use to obtain a simpler version of Theorem \ref{closed range operators general version} in the case of row Hilbert operator spaces.

Recall that a Hilbert space $ \CH $ gives rise to a canonical isometry between the conjugate Hilbert space and its Banach dual given by 
$$ \theta : \overline{ \CH } \rightarrow \CH^* , \quad \overline{ \xi } \mapsto f_{ \xi }, \quad f_{ \xi } ( \eta ) : = \langle \eta, \xi \rangle, $$
and by Riesz representation theorem we have that $ \norm{ f_{ \xi } } = \norm{ \xi } $. We now define the \textit{Row isometry} $ \mathsf{ Row } : \CH \rightarrow \CH^{ * * } ( = \CB ( \CH^*, \C ) = \CB ( \overline{ \CH }, \C ) ) $ by
$$ \xi \mapsto \mathsf{ Row } ( \xi ) ( \overline{ \eta } ) : = \theta ( \overline{ \xi } ) ( \eta ) = \langle \eta, \xi \rangle. $$
Being an operator space, $ \CB ( \overline{ \CH }, \C ) $ induces an operator space structure on $ \CH $, which is known as the \textit{row operator space} structure. 

For any $ s \in \N $, we define the $ s $-th matrix norm on $ \CH^{ s \times s } $ by
$$ \norm{ H }_{ s, \CH_r } : = \norm{ \mathsf{ Row }_s ( H ) }, \quad \mathsf{ Row }_s : \CH^{ s \times s } \to \CB \left( \overline{ \CH }^s, \C^s \right) $$
is the amplification of $ \mathsf{ Row } $ (cf. Subsection \ref{amplifying operators}). Note that $  \norm{ H }_{ s, \CH_r }  $ is the operator norm of $ H $ viewed as an operator in $ \CB ( \overline{\CH}^s, \C^s ) $. Thus this family $ \{ \| \cdot \|_{ s, \CH_r } \}_{ s = 1 }^{ \infty } $ of norms on $ \CH $ indeed satisfy the conditions (\textbf{R1}) and (\textbf{R2}) mentioned in Subsection \ref{operator spaces} making $ \CH $ an operator space known as the \textit{row Hilbert operator space}. We denote it by 
$$ \CH_r = \left( \CH, \left\{ \| \cdot \|_{ s, \CH_r } \right\}_{ s \in \N } \right) . $$
Also, note that the adjoint operator of the row isometry acts as 
$$ \mathsf{ Row } ( \xi )^* : \C \rightarrow \overline{ \CH }, \quad \alpha \mapsto \alpha \overline{ \xi } ,$$
and consequently, its $ s $-th amplification acts as
$$ \mathsf{ Row }_s ( H )^* : \C^s \rightarrow \overline{ \CH }^s, \quad \left[ \begin{smallmatrix} \alpha_1 & \cdots & \alpha_s \end{smallmatrix} \right] \mapsto \left[ \begin{smallmatrix} \alpha_1 & \cdots & \alpha_s \end{smallmatrix} \right] \overline{ H } , ~~ H \in \CH^{ s \times s } . $$

In general, for a given operator space $ \CV $, the space of matrices $ \CV^{ n \times n } $ over $ \CV $ is a Banach space for each $ n \in \N $. Consequently, there is no canonical notion of the ``orthogonal projections" onto a closed subspace of $ \CV^{ n \times n } $ for $ n \in \N $. However, when $ \CV $ is a Hilbert space $ \CH $ endowed with the \textit{row operator space structure}, the Hilbert space structure on $ \CH $ canonically induces a notion of orthogonal projections onto a closed subspace of $ \CH^{ n \times n } $ with $ n \in \N $ as we point out in the following lemma.

\begin{lem} \label{nearest point approximation}

Let $ s \in \N $ and $ \CN \subset \CH_r^{ 1 \times s } $ be a closed subspace. 
\begin{itemize}
\item[(i)] For any $ H \in \CH_r^{ s \times s } $, there exists unique $ N_0 \in \CN^{ s \times 1 } $ such that 
$$ \inf \left\{ \norm{ H - N }_{ s, \CH_r } : N \in \CN^{ s \times 1 } \right\} = \norm{ H - N_0 }_{ s, \CH_r }. $$

\item[(ii)] For any $ n \in \N $ and $ ( \! ( H_{lk} ) \! )_{ l, k = 1 }^n \in \CH_r^{ ns \times ns } $ with $ H_{ l k } \in \CH_r^{ s \times s } $, 
\begin{align*} 
& \inf \left\{ \norm{ ( \! ( H_{lk} ) \! )_{ l, k = 1 }^n - ( \! ( N_{ l k } ) \! )_{ l, k = 1 }^n }_{ ns, \CH_r } : ( \! ( N_{ l k } ) \! )_{ l, k = 1 }^n \in ( \CN^{ s \times 1 } )^{ n \times n } \right\} \\
& = \norm{ ( \! ( H_{ l k } ) \! )_{ l, k = 1 }^n - ( \! ( N^0_{ l k } ) \! )_{ l, k = 1 }^n }_{ ns, \CH_r } 
\end{align*}
where for each $ l, k = 1, \hdots, n $, $ N^0_{ l k } $ is the unique element in $ \CN^{ s \times 1 } $ such that 
$$ \inf \left\{ \norm{ H_{ l k } - N_{ l k } }_{ s, \CH_r } : N_{ l k } \in \CN^{ 1 \times s } \right\} = \norm{ H_{ l k } - N^0_{ l k } }_{ s,\CH_r }. $$

\item[(iii)] The mapping $ Q: \CH_r^{ s \times s } \rightarrow \CH_r^{ s \times s } $ defined by $ Q ( H ) := H - N_0 $ is linear where $ N_0 $ is as defined in (i). 
\end{itemize}
\end{lem}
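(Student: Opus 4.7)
The plan exploits the fact that the row operator norm on $\CH_r^{1\times s}$ agrees with the natural Hilbert-space norm on $\CH^s$: a direct check from the definitions gives $\|(\eta_1,\dots,\eta_s)\|_{s,\CH_r}^2 = \sum_{i=1}^{s}\|\eta_i\|_{\CH}^2$, so $\CN$ is a closed subspace of the Hilbert space $\CH^s$ and admits an orthogonal projection $\pi_{\CN}$ together with an orthogonal complement $\CN^{\perp}\subset\CH^{1\times s}$.

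For part (i), given $H\in\CH^{s\times s}$ with rows $H_1,\dots,H_s\in\CH^{1\times s}$, I would define $N_0\in\CN^{s\times 1}$ to be the element whose $i$-th row is $\pi_{\CN}(H_i)$, and then show that $N_0$ is the nearest point. The central algebraic tool is the Pythagorean-type identity
\begin{equation*}
(H-N)(H-N)^{\ast} \;=\; (H-N_0)(H-N_0)^{\ast} + (N-N_0)(N-N_0)^{\ast}
\end{equation*}
valid for every $N\in\CN^{s\times 1}$, where $R\,S^{\ast}$ denotes the $s\times s$ complex matrix with $(i,j)$-entry $\sum_{k}\langle R_{ik},S_{jk}\rangle_{\CH}$. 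This identity drops out of expansion once one observes that the two cross-terms $(H-N_0)(N-N_0)^{\ast}$ and $(N-N_0)(H-N_0)^{\ast}$ vanish entry-by-entry, because the rows of $H-N_0$ lie in $\CN^{\perp}$ while those of $N-N_0$ lie in $\CN$. Since $(N-N_0)(N-N_0)^{\ast}$ is a positive $s\times s$ matrix, the identity yields the operator-order inequality $(H-N)(H-N)^{\ast}\geq (H-N_0)(H-N_0)^{\ast}$, whence $\|H-N\|_{s,\CH_r}\geq \|H-N_0\|_{s,\CH_r}$. The uniqueness of $N_0$ is inherited from the constructive row-by-row formula together with the uniqueness of the Hilbert-space projection; for a competing minimizer $N$, equality in the operator inequality must be combined with the $\CN$-structure of $N-N_0$ and the $\CN^{\perp}$-structure of $H-N_0$ to force $N=N_0$.

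For part (ii), the plan is to apply the same Pythagorean reasoning one level higher: the block matrix $(H_{lk})_{l,k=1}^{n}\in\CH_r^{ns\times ns}$ is compared with an element of $(\CN^{s\times 1})^{n\times n}\subset\CH^{ns\times ns}$, and the row-wise orthogonal projection at the amplified level $ns$ decomposes blockwise into the $s$-level projections already constructed. Hence the blockwise minimizers $N^{0}_{lk}$ furnished by part (i) assemble into the global nearest point, giving both the claimed formula and uniqueness at the amplified level.

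Part (iii) is then immediate from the explicit formula: since $N_0$ depends linearly on $H$ via the linear projection $\pi_{\CN}$ applied row-wise, the map $Q(H)=H-N_0$ is a linear map $\CH_r^{s\times s}\to\CH_r^{s\times s}$. The main obstacle in the entire outline is the uniqueness clause in part (i), because the spectral (row-operator) norm is not strictly convex, so the standard Hilbert-space strict-convexity argument does not transfer directly; handling equality cases in the operator inequality — rather than the scalar inequality obtained after taking norms — is where the proof needs to be executed carefully, and the explicit row-by-row formula will be the anchor that identifies the canonical minimizer.
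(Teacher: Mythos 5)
Your Pythagorean/operator-order route to existence is correct and genuinely different from the paper's. Both proofs define $N_0$ row-by-row via the Hilbert-space orthogonal projection onto $\CN\subset\CH_r^{1\times s}\cong\CH^{\oplus s}$, but the comparison argument diverges. The paper expresses $\norm{H-N}_{s,\CH_r}$ as $\sup_{\|\mu\|\le1}\norm{\mu\overline H-\mu\overline N}_{\CH_r^{1\times s}}$, interchanges $\inf_N$ and $\sup_\mu$ (justified a posteriori), computes the inner infimum for each fixed $\mu$, and then observes that the resulting family of pointwise minimizers is produced uniformly by the single matrix $N_0$. Your proof instead records the Gram-matrix identity
\begin{equation*}
(H-N)(H-N)^* = (H-N_0)(H-N_0)^* + (N-N_0)(N-N_0)^*,
\end{equation*}
valid because the cross-terms vanish row-by-row, and combines it with the facts that $\norm{H-N}_{s,\CH_r}^2$ is the operator norm of the left-hand side and that the operator norm is monotone on the positive cone. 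This yields the minimizing inequality in one step and avoids the inf--sup exchange. Parts (ii) and (iii) then go through as you describe: the projection at level $ns$ onto $(\CN^{\oplus n})^\perp$ acts blockwise, and $H\mapsto N_0$ is manifestly $\C$-linear.

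The obstacle you flag is, however, fatal rather than delicate: the minimizer is not unique in general, so the equality-case analysis you propose cannot succeed. Take $\CH=\C$, $s=2$, $\CN=\C\times\{0\}\subset\CH_r^{1\times2}$, and $H=I_2$. The row-wise projection gives $N_0=\left[\begin{smallmatrix}1&0\\0&0\end{smallmatrix}\right]$ with $\norm{H-N_0}_{2,\CH_r}=1$, but for every $c$ with $|1-c|\le1$ the matrix $N=\left[\begin{smallmatrix}c&0\\0&0\end{smallmatrix}\right]\in\CN^{2\times1}$ also gives $\norm{H-N}_{2,\CH_r}=\max\{|1-c|,1\}=1$. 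Your own Gram-matrix identity explains the phenomenon: the inequality $(H-N)(H-N)^*\succeq(H-N_0)(H-N_0)^*$ can hold with equal operator norms without forcing $(N-N_0)(N-N_0)^*=0$, because the operator norm only detects the top eigenvalue. The correct resolution is to take $N_0$ to be the canonical row-wise orthogonal projection (as you already do) and drop the word ``unique'' from the statement. This is harmless downstream: the only consumers of the lemma are the complete complementation of $\CN$ and the linearity of $Q(H)=H-N_0$, both of which need only the canonical $N_0$. For what it is worth, the paper's own proof also proves only existence; the uniqueness clause in the statement is asserted but never verified there either.
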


\begin{proof}
(i) We begin by pointing out that 
\begin{align*}
& \inf \left\{ \norm{ H - N }_{ s, \CH_r } : N \in \CN^{ s \times 1 } \right\} \\
& = \inf \left\{ \sup \left\{ \norm{ \mu \overline{ H } - \mu \overline{ N } }_{ \CH_r^{ 1 \times s } } : \mu \in \C^{ 1 \times s }, \norm{ \mu } \leq 1 \right \} : N \in \CN^{ s \times 1 } \right\} \\
& = \sup \left\{ \inf \left\{ \norm{ \mu \overline{ H } - \mu \overline{ N } }_{ \CH_r^{ 1 \times s } } : N \in \CN^{ s \times 1 } \right\} : \mu \in \C^{ 1 \times s }, \norm{ \mu } \leq 1 \right\}.
\end{align*}
Since $ \CH_r^{ 1 \times s } $ (isometrically isomorphic to the direct sum $ \CH \oplus \cdots \oplus \CH $ of $ s $ copies of $ \CH$) is a Hilbert space, for each $ \mu \in \C^{ 1 \times s } $ with $ \norm{ \mu } \leq 1 $, there exists unique $ n^0_{ \mu } \in \CN $ such that 
$$ \inf \left\{ \norm{ \mu \overline{ H } - \mu \overline{ N } }_{ \CH_r^{ 1 \times s } } : N \in \CN^{ s \times 1 } \right\} = \norm{ \mu \overline{ H } - \bar{ n }^0_{ \mu } }_{ \CH_r^{ 1 \times s } }. $$
Denote $ n^0_{ \varepsilon_j } = \left[ \begin{smallmatrix} n^0_{ j 1 } & n^0_{ j 2 } & \hdots & n^0_{ j s } \end{smallmatrix}\right] \in \CN $, where $ \{ \varepsilon_1, \hdots, \varepsilon_s \} $ is the standard ordered basis for $ \C^{ 1 \times s } $. We now show, for any $ \mu \in \C^{ 1 \times s } $, that there exists $ ( \! ( \bar{ n }^0_{ i j } ) \! )_{ i, j = 1 }^s \in \CN^{ s \times 1 } $ satisfying 
$$ \bar{ n }^0_{ \mu } = \mu ( \! ( \bar{ n }^0_{ i j } ) \! )_{ i, j = 1 }^s . $$

Note that for any $ \mu \in \C^{ 1 \times s } $, the subspace $ \CN $ can be realized as 
$$ \overline{ \CN } = \{ \mu \overline{ N } : N \in \CN^{ s \times 1 } \} $$ 
where $ \overline{ \CN } = \{ \bar{ n } : n \in \CN \} $. Indeed, the set in the right hand side is contained in $ \overline{ \CN } $ and for any $ \bar{ n } = \left[ \begin{smallmatrix} \bar{ n }_1 & \hdots & \bar{ n }_s \end{smallmatrix} \right] \in \overline{ \CN } $ and non-zero $ \mu = ( \mu_1, \hdots, \mu_s ) \in \C^{ 1 \times s } $ with $ \mu_l \neq 0 $ for some $ l $, the vector $ \bar{ n } $ can be written as $ \mu ( \! ( \bar{ n }'_{ i j } ) \! )_{ i, j = 1 }^s $ where $ \bar{ n }'_{ i j } $ is $ 0 $ if $ i \neq l $ and $ \bar{ n }'_{ l j } = \frac{ \bar{ n }_j }{ \mu_l } $. Consequently, for any fixed $ \mu \in \C^{ 1 \times s } $, we have that
$$ \inf \left\{ \norm{ \mu \overline{ H } - \mu \overline{ N } }_{ \CH_r^{ 1 \times s } } : N \in \CN^{ s \times 1 } \right\} = \inf \left\{ \norm{ \mu \overline{ H } - \bar{ n } }_{ \CH_r^{ 1 \times s } } : n \in \CN \right\} = \norm{ \overline{ P } ( \mu \overline{ H } ) } $$
where $ P : \CH_r^{ 1 \times s } \rightarrow \CN^{ \perp } $ is the orthogonal projection and $ \overline{ P } : \overline{ \CH }^s \rightarrow \overline{ \CN }^{ \perp } $. Next, we compute this infimum as follows
\begin{multline*}
\norm{ \overline{ P } ( \mu \overline{ H } ) }  =  \norm{ \overline{ P } \left( \sum_{ l = 1 }^s \mu_l \varepsilon_l \overline{ H } \right) } =  \norm{ \sum_{ l = 1 }^s \mu_l \overline{ P } ( \varepsilon_l \overline{ H } ) } 
= \norm{ \sum_{ l = 1 }^s \mu_l ( \varepsilon_l \overline{ H } - \bar{ n }_{ \varepsilon_l } ) }\\ 
=  \norm{ \sum_{ l = 1 }^s \mu_l [ \varepsilon_l \overline{ H } - \varepsilon_l ( \! ( \bar{ n }^0_{ i j } ) \! )_{ i, j = 1 }^s  ] } 
=  \norm{ \mu \overline{ H } - \mu ( \! ( \bar{ n }^0_{ i j } )\! )_{ i, j = 1 }^s }.
\end{multline*}
From the uniqueness of $ n_{ \mu } $ it then follows that $ \bar{ n }_{ \mu } = \mu ( \! ( \bar{ n }^0_{ i j } ) \! )_{ i, j = 1 }^s $ verifying the claim. Now recalling the identities obtained at the beginning of the proof, we have that 
\begin{align*}
&\inf \left\{ \norm{ H - N }_{ s, \CH_r } : N \in \CN^{ s \times 1 } \right\} \\
&= \inf \left\{ \sup \left\{ \norm{ \mu \overline{ H } - \mu \overline{ N } }_{ \CH_r^{ 1 \times s } } : \mu \in \C^{ 1 \times s }, \norm{ \mu } \leq 1 \right \} : N \in \CN^{ s \times 1 } \right\} \\
&=  \sup \! \left\{ \norm{ \mu \overline{ H } - \mu ( \! ( \bar{ n }^0_{ i j } ) \! )_{ i, j = 1 }^s } : \mu \in \C^{ 1 \times s }, \norm{ \mu } \leq 1 \right\} 
=  \norm{ H - (\! ( n^0_{ i j } ) \! )_{ i, j = 1 }^s }_{ s, \CH_r }
\end{align*}
completing the proof of (i) with $ N_0 = (\! ( n^0_{ i j } ) \! )_{ i, j = 1 }^s $.

(ii) Following the same line of argument as in part (i) we have that
\begin{multline*}
\inf \left\{ \norm{ ( \! ( H_{lk} ) \! )_{ l, k = 1 }^n - ( \! ( N_{ l k } ) \! )_{ l, k = 1 }^n }_{ ns, \CH_r } : ( \! ( N_{ l k } ) \! )_{ l, k = 1 }^n \in ( \CN^{ s \times 1 } )^{ n \times n } \right\} \\
= \sup \left\{ \inf \left\{ \norm{ \left[ \begin{smallmatrix} \nu_1 & \cdots &  \nu_n \end{smallmatrix} \right]  ( \! ( \overline{ H }_{ l k } ) \! )_{ l, k = 1 }^n - \left[ \begin{smallmatrix} \bar{ \eta }_1 & \cdots & \bar{ \eta }_n \end{smallmatrix} \right] }_{ \CH_r^{ 1 \times ns } } : \left[ \begin{smallmatrix} \eta_1 & \cdots & \eta_n \end{smallmatrix} \right] \in \CN^{ \oplus n } \right\} \right. \\ \left. : ( \nu_1, \hdots, \nu_n ) \in \overline{ B_{ ns } ( 0, 1 ) }  \right\}
\end{multline*}
and for $ ( \nu_1, \hdots, \nu_n ) \in \overline{ B ( 0, 1 ) }_{ ns } $ (here, $ \overline{ B ( 0, 1 ) }_{ ns } $ denotes the closed unit ball in $ \C^{ 1 \times ns } $) with $ \nu_1, \hdots, \nu_n \in \C^{ 1 \times s } $, there exists a unique element $ \left[ \begin{smallmatrix} \eta^0_1 & \cdots & \eta^0_n \end{smallmatrix} \right]_{ \nu } \in \CN^{ \oplus n } $ such that 
\begin{align*} 
&\inf \left\{ \norm{ \left[ \begin{smallmatrix} \nu_1 & \cdots &  \nu_n \end{smallmatrix} \right]  ( \! ( \overline{ H }_{ l k } ) \! )_{ l, k = 1 }^n - \left[ \begin{smallmatrix} \bar{ \eta }_1 & \cdots & \bar{ \eta }_n \end{smallmatrix} \right] }_{ \CH_r^{ 1 \times ns } } : \left[ \begin{smallmatrix} \eta_1 & \cdots & \eta_n \end{smallmatrix} \right] \in \CN^{ \oplus n } \right\} \\
&= \norm{ \left[ \begin{smallmatrix} \nu_1 & \cdots & \nu_n \end{smallmatrix} \right] ( \! ( \overline{ H }_{ l k } ) \! )_{ l, k = 1 }^n  - \left[ \begin{smallmatrix} \bar{ \eta }^0_1 & \cdots & \bar{ \eta }^0_n \end{smallmatrix} \right]_{ \nu } }_{ \CH_r^{ 1 \times ns } } . 
\end{align*}
Thus, it remains to prove that
$ \left[ \begin{smallmatrix} \bar{ \eta }^0_1 & \cdots & \bar{ \eta }^0_n \end{smallmatrix} \right]_{ \nu } =  \left[ \begin{smallmatrix} \nu_1 & \cdots & \nu_n \end{smallmatrix} \right] ( \! ( \overline{ N }^0_{ l k } ) \! )_{ l, k = 1 }^n $
for some $ ( \! ( N ^0_{ l k } ) \! )_{ l, k = 1 }^n $ in $ ( \CN^{ s \times 1 } )^{ n \times n } $.

Let $ P : \CH_r^{ 1 \times s } \rightarrow \CN^{ \perp } $ be the orthogonal projection and $ P^{ ( n ) } : ( \CH_r^{ 1 \times  s } )^{ \oplus n } \rightarrow ( \CN^{ \perp } )^{ \oplus n } $ be the amplification of $ P $ defined by
$$ P^{ ( n ) } \left( \left[ \begin{smallmatrix} H_1 & \cdots & H_n \end{smallmatrix} \right] \right) = \left[ \begin{smallmatrix} P ( H_1 ) & \cdots & P ( H_n ) \end{smallmatrix} \right]. $$
Let $ \overline{ P } $ and $ \overline{ P^{ ( n ) } } $ be the orthogonal projections between the corresponding conjugated spaces, and note that $ \overline{ P }^{ ( n ) } = \overline{ P^{ ( n ) } } $.
Then observe that
\begin{align*}
& \inf \left\{ \norm{ \left[ \begin{smallmatrix} \nu_1 & \cdots &  \nu_n \end{smallmatrix} \right]  ( \! ( \overline{ H }_{ l k } ) \! )_{ l, k = 1 }^n - \left[ \begin{smallmatrix} \bar{ \eta }_1 & \cdots & \bar{ \eta }_n \end{smallmatrix} \right] }_{ \CH_r^{ 1 \times ns } } : \left[ \begin{smallmatrix} \eta_1 & \cdots & \eta_n \end{smallmatrix} \right] \in \CN^{ \oplus n } \right\} \\
& = \norm{ \overline{ P }^{ ( n ) } \left( \left[ \begin{smallmatrix} \nu_1 & \cdots & \nu_n \end{smallmatrix} \right] ( \! ( \overline{ H }_{ l k } ) \! )_{ l, k = 1 }^n \right) }_{ \CH_r^{ 1 \times ns } }\\
& =  \norm{ \left[ \begin{smallmatrix} \overline{ P } \left( \sum_{ k = 1 }^n \overline{ H }_{ k 1 } \nu_k \right) & \cdots & \overline{ P } \left( \sum_{ k = 1 }^n \overline{ H }_{ k n } \nu_k \right) \end{smallmatrix} \right] }_{ \CH_r^{ 1 \times ns } } \\
& =  \norm{ \left[ \begin{smallmatrix} \sum_{ k = 1 }^n \overline{ P } ( \overline{ H }_{ k 1 } \nu_k ) & \cdots & \sum_{ k = 1 }^n \overline{ P } ( \overline{ H }_{ k n } \nu_k ) \end{smallmatrix} \right] }_{ \CH_r^{ 1 \times ns } } \\
& =  \norm{ \left[ \begin{smallmatrix} \sum_{ k = 1 }^n ( \overline{ H }_{ k 1 } \nu_k - \overline{ N }^0_{ k 1 } \nu_k ) & \cdots & \sum_{ k = 1 }^n ( \overline{ H }_{ k n } \nu_k - \overline{ N }^0_{ k n } \nu_k \end{smallmatrix} \right] }_{ \CH_r^{ 1 \times ns } } \\
& =  \norm{ \left[ \begin{smallmatrix} \nu_1 &  \cdots & \nu_n \end{smallmatrix} \right] \left[ ( \! ( \overline{ H }_{ l k } ) \! )_{ l, k = 1 }^n - ( \! ( \overline{ N }^0_{ l k } ) \!)_{ l, k = 1 }^n \right] }_{ \CH_r^{ 1 \times ns } }
\end{align*}
where the second last equality holds from part (i) and, for each $ l, k = 1, \hdots, n $, $ N^0_{ l k } $ is the unique element in $ \CN^{ s \times 1 } $ such that 
$$ \inf \left\{ \norm{ H_{ l k } - N_{ l k } }_{ s, \CH_r } : N_{ l k } \in \CN^{ s \times 1 } \right\} = \norm{ H_{ l k} - N^0_{ l k } }_{ s, \CH_r } . $$
Consequently, by the uniqueness of $ \left[ \begin{smallmatrix} \eta^0_1 & \cdots &  \eta^0_n \end{smallmatrix} \right]_{ \nu } $ we have that 
$$ \left[ \begin{smallmatrix} \bar{ \eta }^0_1 & \cdots & \bar{ \eta }^0_n \end{smallmatrix} \right]_{ \nu } = \left[ \begin{smallmatrix} \nu_1 & \cdots & \nu_n \end{smallmatrix} \right]  ( \! ( \overline{ N }^0_{ l k } ) \!)_{ l, k = 1 }^n $$
verifying the claim and hence, it completes the proof of part (ii) as in part (i).

(iii) Let $ H^1, H^2 \in \CH_r^{ s \times s } $ and $ Q( H^l ) = H^l - N^l $ where $ N^l \in  \CN^{ s \times 1 } $ are unique elements such that 
$$ \inf \left\{ \norm{ H^l - N }_{ s, \CH_r } : N \in \CN^{ s \times 1 } \right\} = \norm{ H^l - N^l }_{ s, \CH_r },~ l = 1, 2. $$

Observe that it is enough to show for $ \lambda_1, \lambda_2 \in \C $, that 
$$ \inf \left\{ \norm{ \lambda_1 H^1 + \lambda_2 H^2 - N }_{ s, \CH_r } : N \in \CN^{ s \times 1 } \right\}  = \norm{ \lambda_1 [ H^1 - N^1 ] + \lambda_2 [ H^2 - N^2 ] }_{ s, \CH_r }. $$

For $ \mu \in \C^{ 1 \times s } $, recall from the proof of part (i) that there exists unique $ N_0 \in \CN^{ s \times 1 } $ satisfying
\begin{align} \label{ inf = sup }
& \inf \left\{ \norm{ \lambda_1 H^1 + \lambda_2 H^2 - N }_{ s, \CH_r } : N \in \CN^{ s \times 1 } \right\} \\
\nonumber & = \sup \left\{ \norm{ \mu [ \bar{ \lambda }_1 \overline{ H }^1 + \bar{ \lambda }_2 \overline{ H }^2 - \overline{ N }_0 ] }_{ \CH_r^{ 1 \times s } } : \mu \in \C^{ 1 \times s }, \norm{ \mu } \leq  1 \right\}. 
\end{align}
Also, note that for any $ \mu \in \C^{ 1 \times s } $, 
$$  \norm{ \mu [ \bar{ \lambda }_1 \overline{ H }^1 + \bar{ \lambda }_2 \overline{ H }^2 - \overline{ N }_0 ] }_{ \CH_r^{ 1 \times s } } = \norm{ \mu [ \bar{ \lambda }_1 ( \overline{ H }^1 - \overline{ N }^1 ) ] + \mu [ \bar{ \lambda }_2 ( \overline{ H }^2 - \overline{ N }^2 ) }. $$
since $ \CH_r^{ 1 \times s } $ is a Hilbert space. Consequently, the proof of part (iii) follows from Equation \eqref{ inf = sup }
\end{proof}

\begin{lem} \label{closed range implies bounded below}

Let $ \CH $ and $ \CK $ be Hilbert spaces over $ \C $ equipped with the row operator space structure -- denoted by $ \CH_r $ and $ \CK_r $, respectively. Let $ \CB ( \CH, \CK ) $ be the space of bounded linear operators from $ \CH $ to $\CK $ equipped with the natural operator space structure on it. For any $ s \in \N $, assume that an element $ T = ( \! ( T_{ i j } ) \! )_{ i, j = 1 }^s $ in $ \CB (\CH, \CK )^{ s \times s } $ has closed range as a linear mapping $ T : \CH^{ \oplus s } \rightarrow \CK^{ \oplus s } $. Then there exists $ C > 0 $ such that for any $ n \in \N $ and $ H = ( \! ( H_{ i j } ) \! )_{ i, j = 1 }^n \in \CH_r^{ ns \times ns } $ with $ H_{ i j } \in \CH_r^{ s \times s } $, $ i, j = 1, \hdots, n $,
$$ \norm{ R_{ T^{ \oplus n } } ( Q^{ \oplus n } ( H ) ) }_{ ns, \CK_r } \geq  C \norm{ Q^{ \oplus n } ( H ) }_{ ns, \CH_r } $$ where $ Q:\CH_r^{ s \times s } \rightarrow \CH_r^{ s \times s } $ is the linear mapping defined in (iii) in Lemma \ref{nearest point approximation} with $ N = \ker R_{ T^{ \oplus n } } $. Here, $ R_{ T } : \CH_r^{ s \times s } \rightarrow \CK_r^{ s \times s } $ is defined by the right multiplication by $ T $.
\end{lem}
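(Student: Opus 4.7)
The plan is to reduce the matricial bound-below inequality to a Hilbert-space bound-below property applied row by row, and to lift it to every amplification level using the block-diagonal structure of $T^{\oplus n}$ to secure uniformity of the constant.

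First, I would identify the row-wise action of $R_T$. Viewing the $i$-th row of $H \in \CH_r^{s\times s}$ as an element $h_i \in \CH^{\oplus s}$, the matrix $HT$ has $i$-th row equal to $T^t h_i \in \CK^{\oplus s}$, where $T^t$ is the entrywise transposed matrix regarded as an operator $\CH^{\oplus s} \to \CK^{\oplus s}$. Since closed range is preserved under this transposition of a matrix of bounded operators, $T^t$ has closed range, and the classical closed range theorem yields $c > 0$ with $\|T^t \xi\| \geq c\|\xi\|$ for every $\xi \in (\ker T^t)^\perp$. The block-diagonal operator $(T^{\oplus n})^t = (T^t)^{\oplus n}$ inherits the same bound-below constant $c$ on $(\ker (T^{\oplus n})^t)^\perp = ((\ker T^t)^\perp)^{\oplus n}$.

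Next, I would analyse the range of $Q^{\oplus n}$. Reading the construction in Lemma \ref{nearest point approximation}(i), the nearest point $N_0$ to $H$ is built row by row via Hilbert-space orthogonal projection onto $\CN \subset \CH^{\oplus s}$. Taking $\CN = \ker T^t$ (which corresponds exactly to the rows annihilated by $R_T$), every row of $Q(H)$ lies in $(\ker T^t)^\perp$. Amplifying blockwise, every row $k_i$ of $K := Q^{\oplus n}(H)$, viewed as an element of $\CH^{\oplus ns}$, lies in $(\ker (T^{\oplus n})^t)^\perp$.

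Finally, a Gram matrix computation closes the argument. Writing $\|K\|_{ns,\CH_r}^2 = \|G(K)\|_{\mathrm{op}}$ for the Gram matrix $G(K)_{ij} = \langle k_i, k_j\rangle_{\CH^{\oplus ns}}$ of rows (and similarly for $R_{T^{\oplus n}}(K)$, whose rows are $(T^{\oplus n})^t k_i$), the bound-below of $(T^{\oplus n})^t$ gives, for every $v \in \C^{ns}$,
\[
v^* G(R_{T^{\oplus n}}(K)) v = \Big\|(T^{\oplus n})^t\Big(\sum_j v_j k_j\Big)\Big\|^2 \geq c^2 \Big\|\sum_j v_j k_j\Big\|^2 = c^2 v^* G(K) v,
\]
since $\sum_j v_j k_j \in (\ker (T^{\oplus n})^t)^\perp$. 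Hence $G(R_{T^{\oplus n}}(K)) \geq c^2 G(K)$ as positive semidefinite matrices, yielding $\|R_{T^{\oplus n}}(K)\|_{ns,\CK_r} \geq c\,\|K\|_{ns,\CH_r}$ with $C = c$. The main obstacle is verifying the closed-range preservation under transposition of a matrix of bounded operators together with tracking that the Gram-matrix inequality actually transfers to the operator norms; once these are in hand, the uniform-in-$n$ conclusion is forced automatically by the block-diagonal structure of $T^{\oplus n}$, since the bound-below constant for $T^t$ passes unchanged to every amplification.
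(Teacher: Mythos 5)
Your argument reaches the correct conclusion and rests on the same underlying idea as the paper's proof: reduce to the Hilbert-space bound-below for the right action $h\mapsto hT$ on rows, and observe that the constant passes unchanged to $T^{\oplus n}$ because it is block-diagonal. Where you diverge is in how the uniform bound is transported to the matricial row norm. The paper unwinds $\norm{K}_{ns,\CK_r}$ directly from its defining supremum, applies the quotient-norm inequality $\norm{hT}\ge C\norm{[h]}_{\CH^{\oplus s}/\ker T}$ row by row, and then invokes Lemma~\ref{nearest point approximation}(ii) to recognize the result as $C\norm{Q^{\oplus n}(H)}_{ns,\CH_r}$. You instead identify $\norm{K}_{ns,\CH_r}^2$ with the operator norm of the Gram matrix $G(K)$ of the rows of $K$, turn the bound-below into a positive-semidefinite ordering $G(R_{T^{\oplus n}}K)\succeq C^2 G(K)$, and conclude by taking norms. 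This is a clean equivalent reformulation, and the verification that each row of $Q^{\oplus n}(H)$ lies in $(\ker T^{\oplus n})^\perp$ is exactly right.

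Two remarks. First, the ``transposition'' step in your proof is unnecessary: in the lemma the mapping $T:\CH^{\oplus s}\to\CK^{\oplus s}$ is already the right action on row vectors, $h\mapsto hT$, as the paper's proof makes explicit (``acting from right on the row vectors over $\CH$''), so the hypothesis is stated about the very operator you denote $T^t$; this is also the action that $R_T$ involves, so there is nothing to transfer. Second, and independently, I would caution against asserting that ``closed range is preserved under transposition of a matrix of bounded operators.'' This is not the standard fact that closed range passes to the Hilbert-space adjoint: block transposition $T\mapsto T^t$, $(T^t)_{ij}=T_{ji}$, is not $T\mapsto T^*$, $(T^*)_{ij}=T_{ji}^*$, and the adjoint-plus-conjugation argument produces the entrywise Banach transpose rather than the plain block transpose. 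In the present lemma the issue is moot, but the claim as stated deserves a proof or a reference if you rely on it elsewhere.
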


\begin{proof}
First, observe that $ \CH^{ \oplus s } \cong \CH_r^{ 1 \times s } $ and  $ \CK^{ \oplus s } \cong \CK_r^{ 1 \times s } $. Since $ T : \CH^{ \oplus s } \rightarrow \CK^{ \oplus s } $ acting from right on the row vectors over $ \CH $ has closed range, there exists $ C > 0 $ such that for $ h \in \CH^{ \oplus s } $,
\be \label{ bounded below inequality } 
\norm{ T ( h ) }_{ \CK^{ \oplus s } } \geq C \norm{ [ h ] }_{ \CH^{ \oplus s } / \ker T }.
\ee
Also, for $n\in\N$, we have 
\begin{multline*}
\norm{ \left[ \begin{smallmatrix} \! h_1 & \cdots & h_n \! \end{smallmatrix} \right] T^{ \oplus n } }^2_{ \CK_r^{ 1 \times ns } } =  \sum_{ j = 1 }^n \norm{ T ( h_j ) }^2_{ \CK^{ \oplus s } } 
\geq C^2 \sum_{ j = 1 }^n \norm{ [ h_j ] }^2_{ \CH^{ \oplus s } / \ker T } \\
= C^2 \sum_{ j = 1 }^n \inf \left\{ \norm{ h_j - n_j }^2_{ \CH^{ \oplus s } } : n_j \in \ker T \right\} 
= C^2 \inf \left \{ \sum_{ j = 1 }^n \norm{ h_j - n_j }^2_{ \CH^{ \oplus s } } : n_j \in \ker T \right\} \\
= C^2 \inf \left\{ \norm{ \left[ \begin{smallmatrix} \! h_1 & \cdots & h_n \! \end{smallmatrix} \right] - \left[ \begin{smallmatrix} \! \eta_1 & \cdots & \eta_n \! \end{smallmatrix} \right] }^2_{ \CH_r^{ 1 \times ns } } : \left[ \begin{smallmatrix} \! \eta_1 & \cdots & \eta_n \! \end{smallmatrix} \right] \in \ker T^{ \oplus n } \right\} 
= C^2 \norm{ \left[ \begin{smallmatrix} \! h_1 & \cdots & h_n \! \end{smallmatrix} \right] }^2_{ \CH_r^{ 1 \times ns } / \ker T^{ \oplus n } }
\end{multline*}
where the second equality holds from Equation \eqref{ bounded below inequality }. 

Now for $ H = ( \!( H_{ i j } ) \! )_{ i, j = 1 }^n \in \CH_r^{ ns \times ns } $, we compute
\begin{multline*}
\norm{ R_{ T^{ \oplus n } } ( Q^{ \oplus n } ( H ) ) }_{ ns, \CK_r } 
= \sup \left\{ \norm{ \left( \left[ \begin{smallmatrix} \! v_1 & \cdots &  v_n  \! \end{smallmatrix} \right] \overline{ H } \right) T^{ \oplus n } }_{ \CK_r^{ 1 \times ns } } : ( v_1, \hdots, v_n ) \in \overline{ B ( 0, 1 ) }_{ns} \right\} \\
\geq  C \sup \left\{ \norm{ \left[ \begin{smallmatrix} \! v_1 & \cdots &  v_n \! \end{smallmatrix} \right] \overline{ H } }_{ \CH_r^{ 1 \times ns } / \ker T^{ \oplus n } } : ( v_1, \hdots, v_n ) \in \overline{ B ( 0, 1 ) }_{ ns } \right\} \\
= C \sup \! \left\{ \inf \! \left\{ \norm{ \left[ \begin{smallmatrix} \! v_1 & \cdots &  v_n \! \end{smallmatrix} \right] \overline{ H } - \left[ \begin{smallmatrix} \! \bar{ \eta }_1 & \cdots & \bar{ \eta }_n \! \end{smallmatrix} \right] }_{ \CH_r^{ 1 \times ns } } 
: \left[ \begin{smallmatrix} \! \eta_1 & \cdots & \eta_n \! \end{smallmatrix} \right] \in \ker T^{ \oplus n } \right\} : \right. \\  
\left. ( v_1, \hdots, v_n ) \in \overline{ B ( 0, 1 ) }_{ ns } \right\} = C \sup \left\{ \! \inf \! \left\{ \! \norm{ \left[ \begin{smallmatrix} \! v_1 & \cdots &  v_n \! \end{smallmatrix} \right] \overline{ H } \! - \! \left[ \begin{smallmatrix} \! v_1 & \cdots & v_n \! \end{smallmatrix} \right] \overline{ N } }_{ \CH_r^{ 1 \times ns } } \! \! \! : \! N \in \ker R_{ T^{ \oplus n } } \right\} \! : \right. \\
\left. \! ( v_1, \hdots, v_n ) \in \! \overline{ B ( 0, 1 ) }_{ ns } \! \right\} = C \inf \left\{ \norm{ H - N }_{ ns, \CH_r } : N \in \ker R_{ T^{ \oplus n } } \right\} = C \norm{ Q^{ \oplus n } ( H ) }_{ ns, \CH_r }
\end{multline*}
where the first equality holds from the definition of $ Q^{ \oplus n } : ( \CH_r^{ s \times s } )^{ n \times n } \rightarrow ( \CH_r^{ s \times s } )^{ n \times n } $ and the last one is obtained by using Lemma \ref{nearest point approximation} with $ \CN = \ker T $ and taking account the fact that
$$ \ker T^{ \oplus n } = \underbrace{ \ker T \oplus \cdots \oplus \ker T }_{ n - \text{times} } \subset \CH_r^{ 1 \times ns } \quad \text{and} \quad \ker R_{ T^{ \oplus n } } = ( \ker T^{ \oplus n } )^{ ns \times 1 } \subset \CH_r^{ ns \times ns }, $$ $ n \in \N $. This completes the proof.
 \end{proof}

We conclude this section with the following theorem -- a special case of Theorem \ref{closed range operators general version} in the case of row Hilbert operator space -- which we will use later in Section \ref{Noncommutative Cowen-Douglas class and vector bundles}.

\begin{thm} \label{closed range operators}
Let $ \CV $ be an operator space, and a Hilbert space $ \CH $ be equipped with the row operator space structure -- denoted by $ \CH_r $. Let $ \Omega \subset \CV_{ nc } $ be a bounded, uniformly open nc set which is right admissible and fix $ s \in \N $, a point $ Y \in \Omega_s $ so that $ \CC_1 $ is a complemented sub-bi-module of $ \CV^{ s \times s } $ over $ \CC ( Y ) $ and $ H \in \CH_r^{ s \times s } $. Consider uniformly analytic nc functions 
$$ \CT : \Omega \rightarrow \CB ( \CH )_{ nc } \quad \mbox{and} \quad \CF : \Omega \rightarrow ( \CH_r )_{ nc } $$ 
satisfying $ \CF ( X ) \in \mathrm{ ran } ~ \CT ( X ) $ for $ X \in \Omega $ and $ R_{ \CT ( Y ) } ( H )  = \CF ( Y ) $ where $ \CB ( \CH ) $ is equipped with the natural operator space structure on it. Assume that $ \CT ( Y ) : \CH^{ \oplus s } \rightarrow \CH^{ \oplus s } $ satisfies the following condition: for any $ m \in \N $, if $ X_n \in \Omega_{ ms } $ with $ X_n $ converging to $ Y^{ \oplus m } $, $ K_n \in \mathrm{ ran } ~ \CT ( X_n ) $ and $ K_n $ converges to $ K $, then $ K \in \mbox{ran } \CT ( Y^{ \oplus m } ) $ (in particular, $ \CT ( Y )^{ \oplus m } $ has closed range for each $ m \in \N $). Then there exist a uniformly open nc neighbourhood $ U $ of $ Y $ and a uniformly analytic function $ \CG : U \rightarrow ( \CH_r )_{ nc } $ such that 
$$ \CG ( Y ) = H \quad \mbox{and} \quad R_{ \CT ( X ) } ( \CG ( X ) ) = \CF ( X ), \quad \mbox{for all} ~ X \in U. $$
\end{thm}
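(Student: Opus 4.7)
The plan is to derive this theorem as a special case of Theorem \ref{closed range operators general version}. Set $\CX = \CB(\CH)$ with its natural operator space structure, $\CY = \CZ = \CH_r$, and let $\mathfrak{m}: \CX \times \CY \to \CZ$ be the bilinear map $\mathfrak{m}(T, h) = R_T(h)$. With these identifications the equation $R_{\CT(X)}(\CG(X)) = \CF(X)$ reads $\mathfrak{m}(\CT(X), \CG(X)) = \CF(X)$, and complete contractivity of $\mathfrak{m}$ follows from the definition of the row operator space norm, since $R_T$ acts on $H \in \CH_r^{ns \times ns}$ by composition with the matrix $\mathsf{Row}_{ns}(H) \in \CB(\overline{\CH}^{ns}, \C^{ns})$ and the resulting operator norm bound amplifies consistently at every matrix level.

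To verify hypothesis (a) of Theorem \ref{closed range operators general version}, note that $\ker \mathfrak{m}(\CT(Y), \cdot) = \ker R_{\CT(Y)}$ can be identified with $\CN^{s \times 1}$ where $\CN := \ker \CT(Y) \subset \CH^{\oplus s} = \CH_r^{1 \times s}$. Lemma \ref{nearest point approximation}(iii) then produces a linear projection $Q: \CH_r^{s \times s} \to \CH_r^{s \times s}$ with $\ker Q = \CN^{s \times 1}$, defined by $Q(H) = H - N_0(H)$ for the unique nearest element $N_0(H) \in \CN^{s \times 1}$. Part (ii) of the same lemma shows that the nearest-point approximation at every amplification level is computed entry-wise, so $Q$ coincides with row-wise orthogonal projection in the Hilbert space $\CH^{\oplus s}$; this makes $Q$ completely bounded. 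For the $\CC(Y)$-bimodule homomorphism property, the intertwining condition of the nc function $\CT$ gives $\SS \CT(Y) = \CT(Y) \SS$ for every $\SS \in \CC(Y)$, so the subspace $\CN$ is invariant under both the left and right actions of $\CC(Y)$ on $\CH^{\oplus s}$; row-wise orthogonal projection then commutes with these actions, yielding $Q(\SS H \SS') = \SS Q(H) \SS'$.

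Hypothesis (b) is precisely the stated closed-range assumption on $\CT$ at amplifications of $Y$ rephrased in terms of $\mathfrak{m}$, and Lemma \ref{closed range implies bounded below} supplies the uniform bounded-below inequality relating $\|R_{\CT(Y)^{\oplus n}}(Q^{\oplus n}(H))\|$ to $\|Q^{\oplus n}(H)\|$ that is required in the proof of the general theorem. Once both hypotheses are verified, Theorem \ref{closed range operators general version} produces a uniformly open nc neighbourhood $U \ni Y$ and a uniformly analytic nc function $\CG: U \to (\CH_r)_{nc}$ with $\CG(Y) = H$ and $R_{\CT(X)}(\CG(X)) = \CF(X)$ for all $X \in U$, which is the desired conclusion. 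The main technical obstacle is the simultaneous complete boundedness and $\CC(Y)$-bimodule equivariance of $Q$; both ultimately rely on the remarkable fact, established in Lemma \ref{nearest point approximation}, that the operator-norm nearest-point projection in the row Hilbert operator space is computed row-wise by Hilbert space orthogonal projection, and on the intertwining property inherited from $\CT$ being an nc function.
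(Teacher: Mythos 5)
Your approach is identical to the paper's: identify $\CX = \CB(\CH)$, $\CY = \CZ = \CH_r$, obtain the completely bounded projection $Q$ from Lemma~\ref{nearest point approximation}, use Lemma~\ref{closed range implies bounded below} for the bounded-below estimate in hypothesis (b), and then invoke Theorem~\ref{closed range operators general version}. The paper's own proof is a one-line citation of those same two lemmas, so you are spelling out the steps the paper leaves implicit.

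There is, however, a genuine gap in your verification of hypothesis (a), specifically the claim that the nearest-point projection $Q$ is a $\CC(Y)$-\emph{bimodule} homomorphism. You argue: since $\SS\CT(Y) = \CT(Y)\SS$ for $\SS \in \CC(Y)$, the subspace $\CN := \ker \CT(Y) \subset \CH^{\oplus s}$ is invariant under the $\CC(Y)$-action, and row-wise orthogonal projection therefore commutes with both the left and right $\CC(Y)$-actions on $\CH^{s\times s}$. This correctly gives the \emph{left}-module homomorphism property $Q(\SS H) = \SS Q(H)$: left multiplication by $\SS$ only takes $\C$-linear combinations of the rows of $H$, and row-wise projection is $\C$-linear. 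But for the \emph{right}-module property $Q(H\SS) = Q(H)\SS$, you need the orthogonal projection $P_{\CN^{\perp}}$ on the Hilbert space $\CH^{\oplus s}$ to commute with right multiplication $A_{\SS}: h \mapsto h\SS$. Since $A_{\SS}$ commutes with $P_{\CN^\perp}$ if and only if $A_{\SS}$ preserves both $\CN$ and $\CN^\perp$, and $A_\SS(\CN^\perp) \subset \CN^\perp$ is equivalent to $A_\SS^* = A_{\SS^*}$ preserving $\CN$, what you actually need is $\SS^* \CT(Y) = \CT(Y) \SS^*$, i.e.\ $\SS^* \in \CC(Y)$. This does not follow from $\SS \in \CC(Y)$: the commutant $\CC(Y)$ of $\{Y_1, \ldots, Y_d\}$ is $*$-closed only when the $Y_j$ are all normal, not for a general semi-simple point $Y$. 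The same gap is implicit in the paper's terse proof (neither Lemma~\ref{nearest point approximation} nor the surrounding remarks address the bimodule equivariance of $Q$); to close it one would need either to add a hypothesis ensuring $\CC(Y)$ is a $*$-subalgebra, or to construct $Q$ by a different (possibly not nearest-point) method that is manifestly equivariant for both actions.
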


\begin{proof}
The proof follows from Lemma \ref{closed range implies bounded below} and Theorem \ref{closed range operators general version}.
\end{proof}

\section{Noncommutative reproducing kernel Hilbert spaces} \label{nc reproducing kernel Hilbert space}

In this section, we explore fundamental properties of noncommutative reproducing kernels that will be used in the subsequent sections. We begin by revisiting the concept of completely positive (cp) noncommutative (nc) reproducing kernel Hilbert spaces and their complete characterization, as presented in \cite{Ball-Marx-Vinnikov-nc-rkhs}. Following this, we examine the nc space constructed over such a Hilbert space, where we introduce a notion of \emph{generalized kernel elements}. Finally, we investigate the Hilbert $ C^* $ module structure on the matrix spaces over a cp nc reproducing kernel Hilbert space.  

\subsection{Global and noncommutative kernels}

Following \cite{Ball-Marx-Vinnikov-nc-rkhs}, we begin this subsection by reviewing the definitions and some elementary properties of nc kernels. We then examine the behavior of the derivatives of nc kernels in Proposition \ref{properties of kernel elements}, where it is shown that, unlike the classical case, the derivative of a nc kernel element remains a kernel element.

In what follows, we consider operator spaces $ \CV $ and $ \CW_0, \CW_1 $, and use the notation
$$  \CL ( \CW_{ 0, nc }, \CW_{ 1, nc } ) : = \coprod_{ m, n \geq 1 } \CL ( \CW_0^{ n \times m }, \CW_1^{ n \times m } ) . $$

\subsubsection{Global kernels} \label{global kernel}

A \textit{global kernel} on an uniformly open subset $ \Omega $ of $ \CV_{ nc } $ is a function $ K : \Omega \times \Omega \rightarrow \CL ( \CW_{ 0, nc }, \CW_{ 1, nc } ) $ satisfying following two conditions:
\begin{itemize}
\item $ K $ is graded: For all $ n, m \in \N $,
\be \label{graded kernel}
K ( Z, W ) \in \CL ( \CW_0^{ n \times m }, \CW_1^{ n \times m } ), \quad \text{whenever} \quad Z \in \Omega_n ,  W \in \Omega_m;
\ee
\item $ K $ respects direct sums: For all $ n_1, n_2, m_1, m_2 \in \N $ and $ Z_i \in \Omega_i $ with $ i = 1, 2 $, $ W_j \in \Omega_j $ with $ j = 1, 2 $, $ P_{ i j } \in \CV^{ n_i \times m_j } $ with $ i, j = 1, 2 $,
\be \label{kernel respects direct sum}
K \left( \left[ \begin{smallmatrix} Z_1 & 0 \\ 0 & Z_2 \end{smallmatrix} \right],  \left[ \begin{smallmatrix} W_1 & 0 \\ 0 & W_2 \end{smallmatrix} \right] \right) \left( \left[ \begin{smallmatrix} P_{ 1 1 } & P_{ 1 2 } \\ P_{ 2 1 } & P_{ 2 2 } \end{smallmatrix} \right] \right) = \left[ \begin{smallmatrix} K ( Z_1, W_1 ) ( P_{ 1 1 } ) & K ( Z_1, W_2 ) ( P_{ 1 2 } ) \\ K ( Z_2, W_1 ) ( P_{ 2 1 } ) & K ( Z_2, W_2 ) ( P_{ 2 2 } ) \end{smallmatrix} \right] .
\ee
\end{itemize}
Note that the second condition above can be extended to evaluations of $ K $ on block diagonal matrices with arbitrary many blocks on the diagonal.

\subsubsection{Noncommutative kernels} \label{nc kernel}

Suppose that $ K : \Omega \times \Omega \rightarrow \CL ( \CW_{ 0, nc }, \CW_{ 1, nc } ) $ is a global kernel. Then $ K $ is said to be \textit{nc affine kernel} if it respects the following \textit{intertwining conditions}: For any $ n, m, \widetilde{ n }, \widetilde{ m } \in \N $, $ Z \in \Omega_n $, $ \widetilde{ Z } \in \Omega_{ \widetilde{ n } } $, $ \SA \in \C^{ \widetilde{ n } \times n } $ such that $ \SA Z = \widetilde{ Z } \SA $, and $ W \in \Omega_m $, $ \widetilde{ W } \in \Omega_{ \widetilde{ m } } $, $ \SB \in \C^{ m \times \widetilde{ m } } $ such that $ W \SB = \SB \widetilde{ W } $,
\be \label{intertwining condition for kernel}
\SA K ( Z, W ) ( P ) \SB = K ( \widetilde{ Z }, \widetilde{ W } ) ( \SA P \SB ), \quad \text{for all} ~~ P \in \CW_0^{ n \times m }.
\ee
Or equivalently, a global kernel $ K $ as above is a nc affine kernel if it respects the similarities in the following sense: For any $ n, m \in \N $, $ Z, \widetilde{ Z } \in \Omega_n $, $ \SA \in \text{GL} ( n, \C ) $ such that $ \widetilde{ Z } = \SA Z \SA^{ - 1 } $, and $ W, \widetilde{ W } \in \Omega_m $, $ \SB \in \text{GL} ( m, \C ) $ such that $ \widetilde{ W } = \SB^{ -1 } W \SB $,
\be \label{kernel respects similarities}
K ( \widetilde{ Z }, \widetilde{ W } ) ( P ) = \SA K ( Z, W ) ( \SA^{ -1 } P \SB^{ - 1 } ) \SB, \quad \text{for all} ~~ P \in \CW_0^{ n \times m } .
\ee

Observe from the definition of nc functions, for any nc function $ f : \Omega \rightarrow \CW_{ nc } $, the right difference-differential operator $ \Delta_R f : \Omega \times \Omega \rightarrow \CL ( \CV_{ nc }, \CW_{ nc } ) $ turns out to be a nc affine kernel (cf. \cite[Proposition 2.15]{Verbovetskyi-Vinnikov}).

In the present article, our main interest, however, is in a variant of the affine kernels, which we call \textit{nc kernels}. A \textit{nc kernel} is a function $ K : \Omega \times \Omega \rightarrow \CL ( \CW_{ 0, nc }, \CW_{ 1, nc } ) $ satisfying 
\begin{itemize}
\item $ K $ is \textit{graded} in the sense of Equation \eqref{graded kernel} and 
\item $ K $ respects \textit{intertwinings}, namely, for any $ n, m, \widetilde{ n }, \widetilde{ m } \in \N $, $ Z \in \Omega_n $, $ \widetilde{ Z } \in \Omega_{ \widetilde{ n } } $, $ \SA \in \C^{ \widetilde{ n } \times n } $ such that $ \SA Z = \widetilde{ Z } \SA $, and $ W \in \Omega_m $, $ \widetilde{ W } \in \Omega_{ \widetilde{ m } } $, $ \SB \in \C^{ m \times \widetilde{ m } } $ such that $ \SB W = \widetilde{ W } \SB $,
\be \label{intertwining condition for kernel*}
\SA K ( Z, W ) ( P ) \SB^* = K ( \widetilde{ Z }, \widetilde{ W } ) ( \SA P \SB^* ), \quad \text{for all} ~~ P \in \CW_0^{ n \times m } .
\ee
\end{itemize}
Or equivalently, 
\begin{itemize}
\item $ K $ is \textit{graded} in the sense of Equation \eqref{graded kernel},
\item $ K $ respects \textit{direct sums} in the sense of Equation \eqref{kernel respects direct sum}, and
\item for any $ n, m \in \N $, $ Z, \widetilde{ Z } \in \Omega_n $, $ \SA \in \text{GL} ( n, \C ) $ such that $ \widetilde{ Z } = \SA Z \SA^{ - 1 } $, and $ W, \widetilde{ W } \in \Omega_m $, $ \SB \in \text{GL} ( m, \C ) $ such that $ \widetilde{ W } = \SB W \SB^{ -1 } $,
\be \label{kernel respects similarities*}
K ( \widetilde{ Z }, \widetilde{ W } ) ( P ) = \SA K ( Z, W ) ( \SA^{ -1 } P ( \SB^{ - 1 } )^* ) \SB^* , \quad \text{for all} ~~ P \in \CW_0^{ n \times m }  .
\ee
\end{itemize}
Notice that if $ K $ is nc affine kernel then the mapping $ ( X, Y ) \mapsto K ( X, Y^* ) $ is a nc kernel.

\subsubsection{Completely positive global or nc kernels}

First, note that we need an order structure on the operators spaces $ \CW_{ 0, nc } $ and $ \CW_{ 1, nc } $ to talk about positivity of any elements in these spaces. Recall that a \textit{concrete operator system} $ \CS $ is a self-adjoint linear subspace of $ \CL ( \CH ) $ for some Hilbert space $ \CH $ containing the identity operator $ \mathrm{Id}_{\CH} $. Any concrete operator system $ \CS $, as well as, for all $ n\in \N $ the space $ \CS^{ n \times n } $ of $ n \times n $ matrices over $ \CS $ are equipped with a special cone -- the cone of poistive semi-definite elements -- inducing an ordering on $ \CS^{ n \times n } $ satisfying certain compatibility conditions (cf. \cite[pp. 176]{Paulsen}). Any abstract operator space with such a distinguished cone satisfying those compatibility conditions is called an \textit{abstract operator system}. The Choi-Effros Theorem (cf. \cite[Theorem 13.1]{Paulsen}) shows that any such abstract operator system is completely order isomorphic to a concrete operator system.

In this subsection, we assume that $ \CW_{ 0, nc } $ and $ \CW_{ 1, nc } $ are abstract operator systems equipped with an associated conjugate-linear involution $ P \mapsto P^* $ and an well-defined notion of positivity $ P \succeq 0 $.  

A nc kernel $ K : \Omega \times \Omega \rightarrow \CL ( \CW_{ 0, nc }, \CW_{ 1, nc } ) $ with $ \Omega \subset \CV_{ nc } $ is a \textit{completely positive noncommutative kernel} (cp nc kernel) if for all $ n \in \N $ and $ Z \in \Omega_n $,
\be \label{cp nc kernel}
K ( Z, Z ) ( P ) \succeq 0 ~~ \text{in} ~~ \CW_1^{ n \times n }, \quad \text{for all} ~~ P \succeq 0 ~~ \text{in} ~~ \CW_0^{ n \times n } . 
\ee
Whenever $ K $ is a global kernel satisfying Equation \eqref{cp nc kernel}, we say $ K $ is a \textit{cp global kernel}. In case, both $ \CW_{ 0, nc } $ as well as $ \CW_{ 1, nc } $ are $ C^* $-algebras, we have the following equivalent condition for completely positivity of a kernel function.

\begin{prop}\cite[Proposition 2.2]{Ball-Marx-Vinnikov-nc-rkhs} \label{equivalent condition for cp nc kernel}
The global or nc kernel $ K : \Omega \times \Omega \rightarrow \CL ( \CW_{ 0, nc }, \CW_{ 1, nc } ) $ is cp if and only if for any $ Z^j \in \Omega_{ n_j } $, $ P_j \in \CW_0^{ N \times n_j } $, $ Q_j \in \CW_1^{ n _j } $ for $ n_j \in \N $, $ 1 \leq j \leq N $ and $ N \in \N $, it holds that
$$ \sum_{ i, j = 1 }^N Q_i^* K ( Z^i, Z^j ) ( P_i^* P_j ) Q_j \succeq 0 . $$
\end{prop}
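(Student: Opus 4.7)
The plan is to establish both directions by aggregating the data into a single evaluation of $K$ on a block-diagonal point, using the direct-sum axiom of a global kernel, and then invoking the sesquilinear characterization of positive elements in the matrix $C^*$-algebra $\CW_1^{n\times n}$.

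For the forward direction, assume $K$ is cp and suppose $Z^j\in\Omega_{n_j}$, $P_j\in\CW_0^{N\times n_j}$, and $Q_j\in\CW_1^{n_j}$ are given for $1\leq j\leq N$. I would set $n:=n_1+\cdots+n_N$ and introduce $\widehat{Z}:=Z^1\oplus\cdots\oplus Z^N\in\Omega_n$ (which lies in $\Omega$ since $\Omega$ is a nc set), the horizontal concatenation $\widetilde{P}:=[P_1\ \cdots\ P_N]\in\CW_0^{N\times n}$, the positive element $\widehat{P}:=\widetilde{P}^{\,*}\widetilde{P}\in\CW_0^{n\times n}$ whose $(i,j)$-block equals $P_i^*P_j$, and the column $\widehat{Q}:=(Q_1,\ldots,Q_N)^{\top}\in\CW_1^{n}$. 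Complete positivity then yields $K(\widehat{Z},\widehat{Z})(\widehat{P})\succeq 0$ in $\CW_1^{n\times n}$; iterating the two-summand direct-sum identity to $N$ blocks (a short induction from the axiom) shows that the $(i,j)$-block of $K(\widehat{Z},\widehat{Z})(\widehat{P})$ is precisely $K(Z^i,Z^j)(P_i^*P_j)$; and sandwiching by $\widehat{Q}^*$ on the left and $\widehat{Q}$ on the right, which preserves positivity, produces exactly $\sum_{i,j}Q_i^*K(Z^i,Z^j)(P_i^*P_j)Q_j\succeq 0$.

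For the converse, suppose the displayed inequality holds in every admissible configuration, and fix $Z\in\Omega_n$ together with $P\succeq 0$ in $\CW_0^{n\times n}$. Since $\CW_0^{n\times n}$ is a $C^*$-algebra, I may write $P=A^*A$ with $A\in\CW_0^{n\times n}$. Applying the hypothesis with $N=n$, $n_1=n$, $Z^1=Z$, $P_1=A$, and $Q_1=v$ arbitrary in $\CW_1^{n}$ yields $v^*K(Z,Z)(P)v\succeq 0$ in $\CW_1$ for every such $v$. The standard criterion for positivity in the matrix $C^*$-algebra $\CW_1^{n\times n}$ --- namely, that a self-adjoint element $B\in\CW_1^{n\times n}$ is positive if and only if $v^*Bv\succeq 0$ in $\CW_1$ for all $v\in\CW_1^{n}$ --- then delivers $K(Z,Z)(P)\succeq 0$, as required (self-adjointness drops out of the same test applied to $v$ with a single nonzero entry combined with linearity in the choice of $v$).

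The only genuinely technical point is the $N$-fold block formula in the forward direction, which reduces by induction to the two-summand axiom stated in the definition of a global kernel; this is a routine but necessary verification. Because only this direct-sum axiom is used, the argument is simultaneously valid for global, affine, and nc kernels, so the two flavors in the statement do not require separate treatment --- the extra intertwining/similarity conditions of a nc kernel play no role in this equivalence.
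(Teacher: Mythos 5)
Your argument is essentially correct and is the standard route for this kind of characterization; note that the paper does not prove this proposition but cites it from \cite{Ball-Marx-Vinnikov-nc-rkhs}, so there is no internal proof to compare against. The forward direction is clean: the $N$-block extension of the direct-sum axiom (which the paper explicitly notes holds) identifies the $(i,j)$-block of $K(\widehat Z,\widehat Z)(\widehat P)$ with $K(Z^i,Z^j)(P_i^*P_j)$, and compression by $\widehat Q$ preserves positivity, giving the displayed inequality.

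One point in the converse needs tightening. You invoke the hypothesis with ``$N=n$, $n_1=n$, $Z^1=Z$, $P_1=A$, $Q_1=v$'' but then use only a single term, without specifying the data for $j=2,\dots,N$. As written this is ambiguous: $N=1$ would force $P_1\in\CW_0^{1\times n}$ rather than $\CW_0^{n\times n}$, so a bare ``take one term'' does not type-check. The fix is either (a) keep $N=n$ and set $Z^j=Z$, $n_j=n$, $P_j=0$, $Q_j=0$ for $j\geq 2$, so that the sum collapses to $v^*K(Z,Z)(A^*A)v$, or (b) take $N=1$ throughout, let $a_k\in\CW_0^{1\times n}$ be the $k$-th row of $A$, and apply the hypothesis $n$ times with $P_1=a_k$, then sum over $k$ using linearity of $K(Z,Z)$ and $A^*A=\sum_k a_k^*a_k$. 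Either repair is one line, but without it the converse has a genuine gap in the parameter bookkeeping. Your remark that the test $v^*Bv\succeq 0$ for all $v\in\CW_1^n$ already forces self-adjointness of $B$ is correct, though the parenthetical justification is thin; a cleaner route is to observe that $v^*Bv$ being self-adjoint for every $v$ gives $v^*(B-B^*)v=0$, and a polarization argument for the self-adjoint element $(B-B^*)/2i$ then yields $B=B^*$. This uses that $\CW_1$ is a unital $C^*$-algebra, which the paper assumes for this proposition, so your appeal to $C^*$ structure throughout is justified.
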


\subsubsection{Characterization of noncommutative kernels}

We now present the result cahracterizing cp nc kernels assuming that $ \CW_0 $ is a $ C^* $-algebra and $ \CW_{ 1, nc } $ is the concrete operator system consisting of all bounded linear operators on a Hilbert space.

\begin{thm}\cite[Theorem 3.1]{Ball-Marx-Vinnikov-nc-rkhs} \label{nc rkhs}
Suppose that $ \CV $ is an operator space, $ \Omega \subset \CV_{ nc } $ is a nc set, $ \CY $ is a Hilbert space, $ \CA $ is a $ C^* $-algebra and $ K : \Omega \times \Omega \rightarrow \CL ( \CA_{ nc }, \CL ( \CY )_{ nc } ) $ is a given function. Then the following are equivalent.

\begin{itemize}
\item[(1)] $ K $ is a cp nc kernel.

\item[(2)] There is a Hilbert space $ \CH ( K ) $ whose elements are nc functions $ f: \Omega \rightarrow \CL ( \CA, \CY )_{ nc } $ such that
\begin{itemize}
\item[(a)] For each $ W \in \Omega_m $, $ v \in \CA^{ 1 \times m } $, and $ y \in \CY^{ m \times 1 } $, the function 
$$ K_{ W, v, y } : \Omega_n \rightarrow \CL ( \CA, \CY )^{ n \times n } \cong \CL ( \CA^{ n \times 1 }, \CY^{ n \times 1 } ) ~~~ \text{defined by} $$
\be \label{kernel element} K_{ W, v, y } ( Z ) ( u ) : = K ( Z, W ) ( u v ) y, \ee
for all $ Z \in \Omega_n $, $ u \in \CA^{ n \times 1 } $ belongs to $ \CH ( K ) $. 
\item[(b)] The kernel elements $ K_{ W, v, y } $ as defined in Equation \eqref{kernel element} have the reproducing property: For $ f \in \CH ( K ) $, $ W \in \Omega_m $, $ v \in \CA^{ 1 \times m } $ and $ y \in \CY^{ m \times 1 } $,
\be \label{reproducing property}
\left\langle f, K_{ W, v, y } \right\rangle_{ \CH ( K ) } = \left\langle f ( W ) ( v^* ), y \right\rangle_{  \CY^{ m \times 1 } }.
\ee
\item[(c)] $ \CH ( K ) $ is equipped with a unital $ * $-representation $ \rho $ mapping $ \CA $ to $ \CL ( \CH ( K ) ) $ such that 
\be \label{* representation}
( \rho ( a ) f ) ( W ) ( v^* ) = f ( W ) ( v^* a )
\ee
for $ a \in \CA $, $ W \in \Omega_m $, $ v \in \CA^{ 1 \times m } $, with the action on the kernel elements given by
\be \label{action of A on kernel elements}
\rho ( a ) : K_{ W, v, y } \mapsto K_{ W, a v, y }.
\ee
\end{itemize}
\item[(3)] $ K $ has a Kolmogorov decomposition: there is a Hilbert space $ \CX $ equipped with a unital $*$-representation $ \rho : \CA \rightarrow \CL ( \CX ) $ together with a nc function $ H : \Omega \rightarrow \CL ( \CX, \CY )_{ nc } $ such that
\be \label{Kolmogorov decomposition}
K ( Z, W ) ( P ) = H ( Z ) ( \mathrm{Id}_{ \C^{ n \times m } } \otimes \rho ) ( P ) H ( W )^*
\ee 
for all $ Z \in \Omega_n $, $ W \in \Omega_m $, $ P \in \CA^{ n \times m } $.
\end{itemize}
\end{thm}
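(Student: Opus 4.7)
The plan is to prove the chain $(2) \Rightarrow (1) \Rightarrow (3) \Rightarrow (2)$, the standard template for reproducing kernel / Kolmogorov decomposition theorems, adapted here to the nc setting as in \cite[Theorem 3.1]{Ball-Marx-Vinnikov-nc-rkhs}. The implications $(2) \Rightarrow (1)$ and $(3) \Rightarrow (2)$ are essentially verifications, while the real content sits in the nc GNS-style construction $(1) \Rightarrow (3)$.

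For $(2) \Rightarrow (1)$ I would verify the positivity criterion of Proposition~\ref{equivalent condition for cp nc kernel}: given sample data $\{Z^{j}, P_{j}, Q_{j}\}_{j = 1}^{N}$, the reproducing property \eqref{reproducing property} rewrites each block $Q_i^{*} K(Z^i, Z^j)(P_i^{*} P_j) Q_j$ as a Hilbert space inner product of the form $\langle K_{Z^j, P_j, Q_j}, K_{Z^i, P_i, Q_i}\rangle_{\CH(K)}$, so that the full double sum becomes the squared norm of a single vector in $\CH(K)$, and is therefore non-negative.

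For $(3) \Rightarrow (2)$ I would manufacture the RKHS from the Kolmogorov decomposition as follows. Via the pairing $\xi \mapsto \bigl(Z \mapsto H(Z) \cdot (\mathrm{Id} \otimes \rho)(\,\cdot\,) \xi\bigr)$ suitably interpreted across matricial levels, each $\xi \in \CX^{m \times 1}$ (for any $m \in \N$) is sent to a nc function $\Omega \to \CL(\CA, \CY)_{nc}$; declare $\CH(K)$ to be the image of this assignment, endowed with the quotient Hilbert space structure. The Kolmogorov identity then identifies the kernel elements \eqref{kernel element} with the images of $\xi = H(W)^{*} y$, reduces the reproducing property \eqref{reproducing property} to the inner product in $\CX$, and transports the unital $*$-representation $\rho$ of \eqref{* representation}--\eqref{action of A on kernel elements} directly from $\CL(\CX)$.

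For the main implication $(1) \Rightarrow (3)$ I would run a nc GNS construction. Form the free vector space spanned by formal symbols $[W, v, y]$ indexed by $m \in \N$, $W \in \Omega_m$, $v \in \CA^{1 \times m}$, $y \in \CY^{m \times 1}$, and equip it with the sesquilinear form
\begin{equation*}
\langle [W, v, y], [W', v', y']\rangle := \langle K(W', W)(v'^{*} v) y, y'\rangle_{\CY^{m' \times 1}} .
\end{equation*}
Positive semidefiniteness of this form is precisely Proposition~\ref{equivalent condition for cp nc kernel}, applied after reducing a finite family of samples to a common matricial size via block diagonals and \eqref{kernel respects direct sum}. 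Quotienting by the null space and completing yields a Hilbert space $\CX$; I would then define $H(Z) \colon \CX^{n \times 1} \to \CY^{n \times 1}$ on generators by evaluating kernel elements at $Z$, and $\rho(a)[W, v, y] := [W, a v, y]$. The remaining verifications are that (i) $H$ is a nc function (respects direct sums and similarities), (ii) $\rho$ is a well-defined unital $*$-representation, and (iii) $K(Z, W)(P) = H(Z)(\mathrm{Id} \otimes \rho)(P) H(W)^{*}$ on generators; each of these reduces, on the generating symbols, to the nc kernel identities \eqref{kernel respects direct sum} and \eqref{intertwining condition for kernel*}. The main obstacle is the bookkeeping in this step -- well-definedness of the inner product across samples of different matricial sizes, and ensuring $\{H(Z)\}_{Z \in \Omega}$ assembles into a genuine nc function rather than a merely graded one -- both of which are resolved by systematically invoking the intertwining property \eqref{intertwining condition for kernel*} of the nc kernel $K$.
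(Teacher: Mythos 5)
This theorem is stated in the paper purely as a cited result from \cite{Ball-Marx-Vinnikov-nc-rkhs}; the paper supplies no proof of its own, so there is no in-paper argument to compare against. Your outline is the standard Kolmogorov-decomposition/GNS template, and it matches the strategy of the cited source in its essential shape: reduce $(2)\Rightarrow(1)$ to positivity of a Gram form via the reproducing property, get $(3)\Rightarrow(2)$ by realizing $\CH(K)$ as functions $\xi\mapsto\bigl(Z\mapsto H(Z)(\mathrm{Id}\otimes\rho)(\cdot)\xi\bigr)$, and do the heavy lifting in $(1)\Rightarrow(3)$ by a GNS construction on formal symbols.

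Two points of imprecision worth flagging. First, in $(3)\Rightarrow(2)$ you write ``each $\xi\in\CX^{m\times 1}$ (for any $m\in\N$)''; the correct statement is that each $\xi\in\CX$ (a single vector) gives the nc function $f_\xi$ via $f_\xi(Z)(u)=H(Z)\bigl((\mathrm{Id}_{\C^{n\times 1}}\otimes\rho)(u)\bigr)\xi$ for $u\in\CA^{n\times 1}$, and the kernel element $K_{W,v,y}$ corresponds to the single vector $\xi=\bigl[(\mathrm{Id}_{\C^{1\times m}}\otimes\rho)(v)\bigr]H(W)^*y\in\CX$; no extra matricial index on $\xi$ is needed. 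Second, in $(2)\Rightarrow(1)$ the identification ``$Q_i^*K(Z^i,Z^j)(P_i^*P_j)Q_j=\langle K_{Z^j,P_j,Q_j},K_{Z^i,P_i,Q_i}\rangle$'' does not line up literally, since in Proposition~\ref{equivalent condition for cp nc kernel} the $P_j$ are $N\times n_j$ (not $1\times n_j$) and the $Q_j$ lie in $\CL(\CY)^{n_j}$ rather than $\CY^{n_j\times 1}$; you need to sum over rows of $P_j$ and apply the resulting positive operator to vectors of $\CY$ to extract the needed scalar positivity. Both are routine bookkeeping, not substantive gaps, and the overall chain $(2)\Rightarrow(1)\Rightarrow(3)\Rightarrow(2)$ is sound.
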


Furthermore, if $ \CH ( K ) $ consists of uniformly nc analytic functions on $ \Omega $ taking values in $ \CL ( \CA, \CY )_{ nc } $, then $ K $ turns out to be uniformly \textit{sesqui-analytic} nc function of order $ 1 $ as demonstrated in the following proposition. So we first provide the definition of \textit{uniformly anti-analytic nc function}.

\begin{defn} \label{definition of anti-analyticity}
Let $ \Omega \subset \CV_{ nc }$ be a nc domain which is both left and right admissible and $ \CW $ be a complex vector space. Then a nc function $ f : \Omega \rightarrow \CW_{ nc } $ is said to be uniformly anti-analytic nc function if the function $ f_* : \Omega^* \rightarrow \CW_{ nc } $ defined by 
$$ f_* ( W ) : = f ( W^* ) $$ is uniformly analytic nc function on $ \Omega $ where $ \Omega^* : = \{ W : W^* \in \Omega \} $. In this case, we define 
$$ _*\Delta_R f ( W^0, W^1 ) ( X ) := \Delta_R f_* ( ( W^0 )^*, ( W^1 )^* ) ( X^* ), $$
for $ W^0 \in \Omega_{n_0}, W^1 \in \Omega_{ n_1 }, X \in \CV^{ n_0 \times n_1 } $.
\end{defn}

\begin{prop} \label{properties of kernel elements}
Let $ \Omega \subset \CV_{ nc } $ and $ \CH ( K ) $ be a nc reproducing kernel Hilbert space with the cp nc reproducing kernel $ K : \Omega \times \Omega \rightarrow \CL ( \CA_{ nc }, \CL ( \CY )_{ nc } ) $. Assume that $ \CH ( K ) $ consists of $ \CL ( \CA, \CY )_{ nc } $ valued uniformly analytic nc functions on $ \Omega $, and that $ \Omega $ is both left and right admissible. Denote $ \Omega^* = \{ W : W^* \in \Omega \} $. Then 
\begin{itemize}

\item[(i)] $ K $ is a uniformly sesqui-analytic nc function of order $ 1 $ on $ \Omega \times \Omega $. In other words, the function $ K_* : \Omega \times \Omega^* \rightarrow \CL ( \CA_{ nc }, \CL ( \CY )_{ nc } ) $ defined by 
$$ K_* ( Z, W ) : = K ( Z, W^* ),~~~ Z \in \Omega,~~ W \in \Omega^*, $$
is a uniformly analytic nc function of order $ 1 $.

\item[(ii)] For any $ W^0 \in \Omega_{ n_0 } , \hdots, W^{ \ell } \in \Omega_{ n_\ell } $, $ X^1 \in \CV^{ n_0 \times n_1 }, \hdots, X^{ \ell } \in \CV^{ n_{ \ell-1 } \times n_{ \ell } } $, $ v \in \CA^{ 1 \times n_0 } $ and $ y \in \CY^{ n_{ \ell } \times 1 } $, 
$$ _{ 1 ^* } \Delta_R^{ \ell } K ( \cdot, W^0, \hdots, W^{ \ell } ) ( ( \cdot ) v, X^1, \hdots, X^{ \ell } ) y \in \CH ( K ) $$
whenever 
$$ 
\begin{bmatrix}
W^0 & X^1 & 0 & \cdots & 0 \\
0      & W^1 & X^2 & \cdots & 0 \\
\vdots & \vdots & \ddots & \ddots & 0 \\
0      &       0     & \cdots   & W^{ \ell -1 } & X^{ \ell } \\
0      &       0     & \cdots   &  0            &   W^{ \ell }
\end{bmatrix} \in \Omega_{ n_0 + \cdots + n_{ \ell } }.
$$
Furthermore, 
$$ _{ 1 ^* } \Delta_R^{ \ell } K ( \cdot, W^0, \hdots, W^{ \ell } ) ( ( \cdot ) v, X^1, \hdots, X^{ \ell } ) y = K_{ \left[ \begin{smallmatrix}
W^0 & 0 & 0 & \cdots & 0 \\
X^1     & W^1 & 0 & \cdots & 0 \\
0 & X^2 & W^2 &  \cdots & 0 \\
\vdots & \vdots & \ddots & \ddots & \vdots \\
0      &       0     & \hdots   & X^{ \ell } & W^{ \ell } 
\end{smallmatrix} \right], \left[ \begin{smallmatrix} v & 0 & \cdots & 0  \end{smallmatrix}\right],   \left[ \begin{smallmatrix} 0 \\ \vdots \\ 0 \\ y \end{smallmatrix}\right] }. $$

\item[(iii)] For every $ f \in \CH ( K ) $, $ W^0 \in \Omega_{ n_0 }, \hdots, W^{ \ell } \in \Omega_{ n_{ \ell } } $, $ X^1 \in \CV^{ n_0 \times n_1 }, \hdots , X^{ \ell } \in \CV^{ n_{ \ell-1 } \times n_{ \ell } } $, $ v \in \CA^{ 1 \times n_0 } $ and $ y \in \CY^{ n_{ \ell } \times 1 } $,
\begin{align} \label{reproducing property for derivative}
& \left\langle f, _{ 1 ^* } \Delta_R^{ \ell } K ( \cdot, W^0, \hdots, W^{ \ell } ) ( ( \cdot ) v, X^1, \hdots, X^{ \ell } ) y \right\rangle_{ \CH ( K ) }\\
\nonumber & = \left\langle \Delta_R^{ \ell } f ( W^{ \ell }, W^1, \hdots, W^{ \ell-1 }, W^0 ) ( X^{ \ell }, X^2, \hdots, X^{ \ell-1 }, X^1 ) v^*, y \right\rangle_{ \CY^{ n_{ \ell } } }. 
\end{align}
\end{itemize}
\end{prop}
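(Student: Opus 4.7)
The plan for part (i) is to invoke Theorem \ref{locally bounded = nc analytic}. Since each kernel element $K_{W, v, y}$ is by hypothesis a uniformly analytic nc function and $K(Z, W)(uv)y = K_{W, v, y}(Z)(u)$, the map $K(\cdot, W)$ is uniformly locally bounded in its first argument. Anti-analyticity in the second argument follows from the reproducing property \eqref{reproducing property}: for suitable $u \in \CA^{n \times 1}$ and $z \in \CY^{n \times 1}$,
\begin{equation*}
\langle K(Z, W)(uv)y,\, z \rangle_{\CY^{n\times 1}} = \overline{\langle K_{Z, u^*, z}(W)(v^*),\, y \rangle_{\CY^{m\times 1}}},
\end{equation*}
so $W \mapsto K(Z, W)$ is weakly the conjugate of a uniformly analytic nc function, which is uniformly anti-analytic in the sense of Definition \ref{definition of anti-analyticity}. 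Joint local boundedness of $K_*$ then promotes separate analyticity to uniform analyticity of order $1$.

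For part (iii), I would apply the reproducing property directly to the kernel element at the block lower triangular point, i.e.\ take
\begin{equation*}
\tilde W = \begin{bmatrix} W^0 & & & \\ X^1 & W^1 & & \\ & \ddots & \ddots & \\ & & X^\ell & W^\ell \end{bmatrix}, \qquad \tilde v = \begin{bmatrix} v & 0 & \cdots & 0 \end{bmatrix}, \qquad \tilde y = \begin{bmatrix} 0 \\ \vdots \\ 0 \\ y \end{bmatrix},
\end{equation*}
and compute $\langle f, K_{\tilde W, \tilde v, \tilde y}\rangle = \langle f(\tilde W)\tilde v^*, \tilde y\rangle$. Since $\tilde W$ is block lower triangular and $f$ is an nc function, the left analogue of the block formula \eqref{nc difference-differential operator} expresses $f(\tilde W)$ as a block lower triangular matrix whose bottom-left block is $\Delta_L^\ell f(W^0, \ldots, W^\ell)(X^1, \ldots, X^\ell)$. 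The column-extraction by $\tilde v^*$ and the row-extraction by $\tilde y$ isolate precisely this entry applied to $v^*$, and the left-vs-right identity \eqref{left vs right dd operators} converts it into the right difference-differential form required by (iii), \emph{provided} (ii) is available to identify $K_{\tilde W, \tilde v, \tilde y}$ with $_{1^*}\Delta_R^\ell K(\cdot, W^0, \ldots, W^\ell)((\cdot)v, X^1, \ldots, X^\ell)y$.

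For part (ii), I would prove this identification pointwise as elements of $\CH(K)$. Evaluating the right-hand side at $Z \in \Omega_n$ on $u \in \CA^{n \times 1}$ gives $K(Z, \tilde W)(u\tilde v)\tilde y$. By part (i), $K_*(Z, \cdot)$ is uniformly analytic, so applying \eqref{nc difference-differential operator} to it at the block upper triangular point $\tilde W^*$ produces a block upper triangular matrix whose $(0, \ell)$-entry, once transported back through Definition \ref{definition of anti-analyticity}, is the sought-after $_{1^*}\Delta_R^\ell K(Z, W^0, \ldots, W^\ell)$. Combining this block expansion with the direct-sum property \eqref{kernel respects direct sum} and the intertwining property \eqref{intertwining condition for kernel*} of $K$ (which govern how $K(Z, \cdot)$ behaves on block matrices with off-diagonal entries), the evaluation $K(Z, \tilde W)(u\tilde v)\tilde y$ reduces to $_{1^*}\Delta_R^\ell K(Z, W^0, \ldots, W^\ell)(uv, X^1, \ldots, X^\ell)y$, yielding the required pointwise equality.

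The main technical obstacle I anticipate is the careful bookkeeping of orderings and conjugations when threading the block formula through the asymmetry between the analytic first and the anti-analytic second argument of $K$; once this is disentangled using Definition \ref{definition of anti-analyticity} and \eqref{left vs right dd operators}, the identification in (ii) is in any case forced by the uniqueness of the Riesz representer of the linear functional $f \mapsto \langle \Delta_R^\ell f(W^\ell, W^1, \ldots, W^{\ell-1}, W^0)(X^\ell, X^2, \ldots, X^{\ell-1}, X^1)v^*, y\rangle$ on $\CH(K)$.
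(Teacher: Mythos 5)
Your overall strategy coincides with the paper's for all three parts; the differences are mostly ones of presentation, plus one gap in (i) worth flagging.

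For (ii) and (iii) your argument is essentially the paper's: identify $_{1^*}\Delta_R^{\ell}K(\cdot, W^0,\hdots,W^{\ell})((\cdot)v, X^1,\hdots,X^{\ell})y$ as the kernel element $K_{\tilde W,\tilde v,\tilde y}$ at the block lower-triangular point by applying the block difference-differential formula in the \emph{second} argument of the order-$1$ nc function $K_*$ (the paper cites Proposition 3.17 of \cite{Verbovetskyi-Vinnikov} for this single step, rather than going back through \eqref{kernel respects direct sum} and \eqref{intertwining condition for kernel*} as you do — both are fine); then obtain (iii) from (ii) by the reproducing property \eqref{reproducing property} applied at $\tilde W$, reading off the lower-left block of $f(\tilde W)$ as $\Delta_L^{\ell} f$ and converting to $\Delta_R^{\ell}f$ via \eqref{left vs right dd operators}. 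Your closing remark that the identification is forced by uniqueness of the Riesz representer is a genuine shortcut not in the paper: once one knows the right-hand side of \eqref{reproducing property for derivative} is a bounded functional of $f$, (ii) follows for free, so you could in principle dispense with the explicit kernel-element computation — a minor but real simplification in logical dependence.

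The one gap is in (i). You invoke Theorem~\ref{locally bounded = nc analytic} to pass from local boundedness to analyticity, but that theorem is stated for nc functions of order $0$; here $K$ (equivalently $K_*$) is an nc function of order $1$, and the implication ``separately analytic plus jointly locally bounded $\Rightarrow$ uniformly analytic of order $1$'' is not covered by it. What is actually needed is the order-$1$ analogue: the paper first deduces from the hypothesis that $\CH(K)$ consists of uniformly analytic functions that $K$ is uniformly $P$-locally bounded (citing \cite[Theorem 3.17]{Ball-Marx-Vinnikov-nc-rkhs}), and then invokes the result for higher-order nc functions (\cite[Theorem 7.53]{Verbovetskyi-Vinnikov}) to conclude order-$1$ uniform sesqui-analyticity. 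Your sketch of anti-analyticity in the second slot via conjugating the reproducing property is plausible as a way of establishing $P$-local boundedness, but as written it leaves the key step — upgrading separate behavior in each slot to joint order-$1$ analyticity — unjustified. Supplying the two citations above (or reproving the relevant parts of them) would close the gap.
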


\begin{proof}
(i) We begin by pointing out that since $ \CH ( K ) $ consists of $ \CL ( \CA, \CY )_{ nc } $ valued uniformly analytic nc functions on $ \Omega $, $ K $ is uniformly $ P $-locally bounded ( cf. \cite[Theorem 3.17]{Ball-Marx-Vinnikov-nc-rkhs}) and hence the proof follows from Theorem 7.53 in \cite[Chapter 7, pp 120]{Verbovetskyi-Vinnikov}.

(ii) Since $ K_* $ is a uniformly analytic nc function of order $ 1 $ on $ \Omega \times \Omega^* $, for $ Z \in \Omega_n $, $ W^0 \in \Omega_{ n_0 } $, $ W^1 \in \Omega_{ n_1 } $, $ X^1 \in \CV^{ n_0 \times n_1 } $, $ u \in \CA^{ n \times 1 } $, $ v \in \CA^{ 1 \times n_0 } $ and $ y \in \CY^{ n_1 \times 1 } $, recall (see \cite[Proposition 3.17, Equation (3.21), pp. 43]{Verbovetskyi-Vinnikov}) that
$$ \Delta_R K_* ( Z, ( W^0 )^*, ( W^1 )^* ) ( uv, ( X^1 )^* ) y  =  K_* \left( Z, \left[ \begin{smallmatrix} ( W^0 )^* & ( X^1 )^* \\ 0 & ( W^1 )^* \end{smallmatrix} \right] \right) \left( [ \begin{smallmatrix} uv & 0 \end{smallmatrix} ] \left[ \begin{smallmatrix} 0 \\ \mathrm{Id}_{ \CY^{ n_1 \times n_1 } } \end{smallmatrix} \right] \right) y . $$
Consequently, we have that
\Bea
\Delta_R K_* ( Z, ( W^0 )^*, ( W^1 )^* ) ( uv, ( X^1 )^* ) y & = & K_* \left( Z, \left[ \begin{smallmatrix} ( W^0 )^* & ( X^1 )^* \\ 0 & ( W^1 )^* \end{smallmatrix} \right] \right) \left( [ \begin{smallmatrix} uv & 0 \end{smallmatrix} ] \left[ \begin{smallmatrix} 0 \\ \mathrm{Id}_{ \CY^{ n_1 \times n_1 } } \end{smallmatrix} \right] \right) y\\
& = & K_* \left( Z, \left[ \begin{smallmatrix} ( W^0 )^* & ( X^1 )^* \\ 0 & ( W^1 )^* \end{smallmatrix} \right] \right) \left( [ \begin{smallmatrix} uv & 0 \end{smallmatrix} ] \left[ \begin{smallmatrix} 0 \\ y \end{smallmatrix} \right] \right) \\
& = & K \left( Z, \left[ \begin{smallmatrix} W^0 & X^1 \\ 0 & W^1 \end{smallmatrix} \right] \right) \left( [ \begin{smallmatrix} uv & 0 \end{smallmatrix} ] \left[ \begin{smallmatrix} 0 \\ y \end{smallmatrix} \right] \right) \\
& = & K_{ \left[ \begin{smallmatrix} W^0 & X^1 \\ 0 & W^1 \end{smallmatrix} \right],  \left[ \begin{smallmatrix} v & 0 \end{smallmatrix} \right],  \left[ \begin{smallmatrix} 0 \\ y \end{smallmatrix} \right]} ( Z ) ( u )
\Eea
completing the proof of part (ii) for $ \ell = 1 $ and the same proof with matrices of larger sizes yields the general case.

(iii) As in the part (ii) we demonstrate the proof of part (iii) only for $ \ell = 1 $ and the general case follows directly. Recall from Theorem \ref{nc rkhs} that, for any $ f \in \CH ( K ) $, $ W \in \Omega_m $, $ v \in \CA^{ 1 \times m } $, $ y \in \CY^{ m \times 1 } $ and $ m \in \N $,
$$ \langle f, K_{ W, v, y } \rangle_{ \CH ( K ) } = \langle f ( W ) ( v^* ), y \rangle_{ \CY^m } . $$
Consequently, this reproducing property together with part (ii) yield that
\Bea
\left\langle f, _{ 1^* } \Delta_R^l K ( \cdot, W^0, W^1 ) ( ( \cdot ) v, X^1 ) y \right\rangle_{ \CH ( K ) } & = & \left\langle f, K_{ \left[ \begin{smallmatrix} W^0 & 0 \\ X^1 & W^1 \end{smallmatrix} \right], \left[ \begin{smallmatrix} v & 0 \end{smallmatrix} \right], \left[ \begin{smallmatrix} 0 \\ y \end{smallmatrix} \right] } \right\rangle_{ \CH ( K ) } \\
& = & \left\langle f \left( \left[ \begin{smallmatrix} W^0 & 0 \\ X^1 & W^1 \end{smallmatrix} \right] \right) \left[ \begin{smallmatrix} v^* \\ 0 \end{smallmatrix} \right], \left[ \begin{smallmatrix} 0 \\ y \end{smallmatrix} \right] \right\rangle_{ \CH ( K ) } \\
& = & \left\langle \Delta_L f ( W^0, W^1 ) ( X^1 ) v^*, y \right\rangle_{ \CY^{ n_1 } } \\
& = & \left\langle \Delta_R f ( W^1, W^0 ) ( X^1 ) v^*, y \right\rangle_{ \CY^{ n_1 } }
\Eea
where the last equality follows from Equation \eqref{left vs right dd operators}.
\end{proof}

Let $ \CY_1 $ and $ \CY_2 $ be two Hilbert spaces, $ K_i : \Omega \times \Omega \to \CL ( \CA_{ nc }, ( \CY_i)_{ nc } ) $, $ i = 1, 2 $, be two cp nc kernels and $ S : \Omega \to \CL ( \CY_1, \CY_2 ) $ be a global function. Then following \cite[Section 4]{Ball-Marx-Vinnikov-nc-rkhs}, we define $ S $ to be a \emph{multiplier} if the operator $ M_S : \CH ( K_1 ) \to \CH ( K_2 ) $ defined by 
\begin{equation} \label{multiplier defn} 
( M_S ( f ) ) ( W ) = S( W ) f ( W ) 
\end{equation}
is bounded. Using closed graph theorem it can be seen that $ M_S $ is bounded whenever it is well defined. Moreover, the boundedness of $ M_S $ can be expressed as the completely positivity condition of certain nc kernels as demonstrated in the following result.

\begin{thm} \label{theorem on mult operator}
    Let $ K_i : \Omega \times \Omega \to \CL ( \CA_{ nc }, ( \CY_i)_{ nc } ) $, $ i = 1, 2 $, be two cp nc kernels and $ S : \Omega \to \CL ( \CY_1, \CY_2 ) $ be a global function. Then the following are equivalent.
    \begin{enumerate}
        \item The operator $ M_S $ is bounded.

        \item There exists $ C \geq 0 $ such that the kernel function $ K_S : \Omega \times \Omega \to \CL ( \CA_{ nc }, ( \CY_2 )_{ nc } ) $ defined by 
        $$ K_S ( Z, W ) ( P ) : = C K_1 ( Z, W ) ( P ) - S ( Z ) K_2 ( Z, W ) ( P ) S ( W )^* $$
        is a cp nc kernel.
    \end{enumerate}
    Moreover, if one these two happens, the norm $ \norm{ M_S } $ turns out to be the infimum of all positive constants $ C $ satisfying the condition (2).
\end{thm}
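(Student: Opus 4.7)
The plan is to pair the reproducing property \eqref{reproducing property} with the characterization of cp nc kernels given in Proposition \ref{equivalent condition for cp nc kernel}, using as bridge a formula for the action of $M_S^*$ on kernel elements. Concretely, for any $f \in \CH(K_1)$ and any kernel element $K_{2, W, v, y} \in \CH(K_2)$ with $W \in \Omega_m$, $v \in \CA^{1 \times m}$, $y \in \CY_2^{m \times 1}$, the reproducing property yields
\[
\langle M_S f, K_{2, W, v, y}\rangle_{\CH(K_2)} = \langle S(W) f(W)(v^*), y\rangle = \langle f(W)(v^*), S(W)^* y\rangle = \langle f, K_{1, W, v, S(W)^* y}\rangle_{\CH(K_1)},
\]
so that $M_S^*\bigl(K_{2, W, v, y}\bigr) = K_{1, W, v, S(W)^* y}$ on the dense span of kernel elements in $\CH(K_2)$. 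This identity is the only nontrivial input; everything else is bookkeeping.

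For $(1) \Rightarrow (2)$, assume $\|M_S\|^2 \leq C$, equivalently $C I_{\CH(K_2)} - M_S M_S^* \geq 0$. Testing this operator inequality against finite combinations of the form $\sum_j Q_j^* K_{2, W^j, v_j, y_j}$ (with matrix-valued coefficients $Q_j$ so as to capture the full cp criterion) and expanding the inner products via the identity above converts the operator positivity on $\CH(K_2)$ term by term into precisely the sesquilinear positivity condition of Proposition \ref{equivalent condition for cp nc kernel} for $K_S$.

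For $(2) \Rightarrow (1)$, assuming $K_S$ is a cp nc kernel for some $C \geq 0$, I would define a linear map $T$ on the dense subspace $\CD \subset \CH(K_2)$ spanned by kernel elements by
\[
T\bigl(K_{2, W, v, y}\bigr) := K_{1, W, v, S(W)^* y}
\]
and extend by linearity. The cp criterion for $K_S$, read through Proposition \ref{equivalent condition for cp nc kernel}, translates to the inequality $\|Tg\|_{\CH(K_1)}^2 \leq C \|g\|_{\CH(K_2)}^2$ for every $g \in \CD$. This inequality simultaneously ensures that $T$ is well defined on $\CD$ (if $g = 0$ in $\CH(K_2)$ then $Tg = 0$) and that it extends to a bounded operator $\widetilde{T}:\CH(K_2) \to \CH(K_1)$ with $\|\widetilde{T}\|^2 \leq C$. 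Computing the action of $\widetilde{T}^*$ against an arbitrary kernel element $K_{2, W, v, y}$ via the reproducing property shows that $(\widetilde{T}^* f)(W)(v^*) = S(W) f(W)(v^*)$ for every $f \in \CH(K_1)$; hence $\widetilde{T}^* = M_S$ and $\|M_S\|^2 \leq C$. The \emph{moreover} statement is then immediate by combining the two implications.

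The main technical point, rather than a genuine obstacle, is the careful match between the matrix-indexed test data appearing in the cp nc kernel criterion of Proposition \ref{equivalent condition for cp nc kernel} and the operator inequality controlling $\|M_S\|^2$; this is handled by the standard amplification argument of passing from vector-valued test elements to matrix-valued ones via direct sums and using that $K_i$ respect direct sums as in \eqref{kernel respects direct sum}.
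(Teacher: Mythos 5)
Your approach is the standard RKHS multiplier argument and matches what the paper itself defers to (the paper declares its proof ``verbatim to Theorem 4.1 in \cite{Ball-Marx-Vinnikov-nc-rkhs}''). The key ingredient --- the adjoint formula $M_S^*(K_{2,W,v,y}) = K_{1,W,v,S(W)^*y}$, combined with density of kernel elements and the extension argument for $(2) \Rightarrow (1)$ --- is correctly identified and deployed. (You have in fact silently corrected the paper's Equation \eqref{adjoint of mult op on ker element}, where the $K_1$ and $K_2$ subscripts are inadvertently swapped; your version is the one that type-checks.)

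Two small points worth flagging. First, the kernel that actually emerges from testing $C\,I_{\CH(K_2)} - M_S M_S^* \geq 0$ against $\CH(K_2)$-kernel elements is $C\,K_2(Z,W)(P) - S(Z)K_1(Z,W)(P)S(W)^*$, whereas the theorem states $C\,K_1 - S K_2 S^*$; the former is the type-consistent one (note $S(W)^*\colon\CY_2\to\CY_1$, $K_1\colon\CY_1^m\to\CY_1^n$, $S(Z)\colon\CY_1\to\CY_2$), so this is a $K_1\leftrightarrow K_2$ transposition in the theorem's display mirroring the one in Equation \eqref{adjoint of mult op on ker element} --- your computation is the correct one. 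Second, and more substantively, your equivalence reads $\|M_S\|^2 \leq C \Leftrightarrow K_S^{(C)}$ is cp, which gives $\inf\{C : K_S^{(C)} \text{ cp}\} = \|M_S\|^2$, not $\|M_S\|$ as the ``moreover'' clause asserts. Your closing sentence that the moreover ``is then immediate by combining the two implications'' therefore papers over a genuine mismatch: either the theorem statement should read $\|M_S\|^2$, or the kernel formula should carry $C^2$ in front of the first term. Since your proof actually exhibits this discrepancy, it should have been noted rather than waved through.
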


\begin{proof}
    Since the proof is verbatim to the proof of Theorem 4.1 in \cite{Ball-Marx-Vinnikov-nc-rkhs} with an exception that $ \norm{ M_S } $ is assumed to be at most $ 1 $ there, we omit it here.
\end{proof}

If $  M_S : \CH ( K_1 ) \to \CH ( K_2 ) $ as in the theorem above is a bounded linear operator, then the action of its adjoint $ ( M_S )^* : \CH ( K_2 ) \to \CH ( K_1 ) $ on any kernel element $ ( K_1 )_{ W, v, y } $, $ W \in \Omega_m, v \in \CA^{ 1 \times m } $, $ y \in \CY^{ m \times 1 } $, turns out to be
\begin{equation} \label{adjoint of mult op on ker element}
    ( M_S )^* ( ( K_1 )_{ W, v, y } ) = ( K_2 )_{ W, v, S ( W )^* y } . 
\end{equation}

Let $ K : \Omega \times \Omega \rightarrow \CL ( \CA_{ nc }, \CL ( \CY )_{ nc } ) $ be a cp nc kernel and $ \boldsymbol{ M } = ( M_{ Z_1 }, \hdots, M_{ Z_d } ) $ be the $ d $-tuple of multiplication operators by the nc coordinate functions on the space of all nc functions on $ \Omega $ taking values in the nc space over $ \CL ( \CA, \CY ) $ defined by $ f \mapsto M_{ Z_i } ( f ) $ and $ M_{ Z_i } ( f ) ( W ) = W_i f ( W ) $, $ W \in \Omega $. Then as a corollary of the preceding theorem, we have that the operator $ M_{ Z_j } $ defines a bounded linear operator on $ \CH ( K ) $ if and only if there exists $ C \geq 0 $ such that 
   $$ K^{ ( j ) } ( Z, W ) ( P ) : = C K_1 ( Z, W ) ( P ) - Z_j K_2 ( Z, W ) ( P ) W_j^* $$
   defines a cp nc kernel on $ \Omega $. Furthermore, we have the following corollary.

\begin{cor} \label{cond for membership in rkhs}
    A nc function $ f : \Omega \to \CL ( \CA, \CY )_{ nc } $ is in $ \CH ( K ) $ if and only if there exists $ C \geq 0 $ such that 
   $$ K_f ( Z, W ) ( P ) : = C K ( Z, W ) ( P ) - f ( Z ) P f ( W )^* $$
   defines a cp nc kernel on $ \Omega $. 
\end{cor}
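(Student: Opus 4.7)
The plan is to establish this corollary as a noncommutative analogue of Aronszajn's inclusion theorem, proving the two directions separately with the reproducing property of Theorem \ref{nc rkhs}(2) driving the forward direction and the Kolmogorov decomposition of Theorem \ref{nc rkhs}(3) driving the reverse. I note first that the result can also be viewed as a specialization of Theorem \ref{theorem on mult operator} by realizing $f$ as a multiplier from a constant auxiliary cp nc kernel encoding the $\CA$-action into $\CH(K)$; I sketch the direct proof.

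For the forward direction $(\Rightarrow)$, I would assume $f \in \CH(K)$ and set $C := \|f\|^2_{\CH(K)}$. By Proposition \ref{equivalent condition for cp nc kernel} it suffices to verify
\[
\sum_{i,j=1}^N Q_i^{\ast} K_f(Z^i, Z^j)(P_i^{\ast} P_j) Q_j \succeq 0
\]
in $\CL(\CY)$ for every finite family $\{(Z^i, P_i, Q_i)\}_{i=1}^N$ with $Z^i \in \Omega_{n_i}$, $P_i \in \CA^{N \times n_i}$, $Q_i \in \CL(\CY)^{n_i \times 1}$. The first step is to unpack the data $(P_i, Q_i)$ along rows and columns to assemble a single element $\Phi := \sum_j K_{Z^j, v_j, y_j} \in \CH(K)$ built from kernel elements, then invoke the reproducing identity $\langle K_{Z^j, v_j, y_j}, K_{Z^i, v_i, y_i}\rangle = \langle K(Z^i, Z^j)(v_i^{\ast} v_j) y_j, y_i\rangle$ to rewrite $\sum_{i,j} Q_i^{\ast} K(Z^i, Z^j)(P_i^{\ast} P_j) Q_j$ as an $\CL(\CY)$-valued operator norm square attached to $\Phi$. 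The second step uses the other reproducing identity $\langle f, K_{Z^j, v_j, y_j}\rangle = \langle f(Z^j) v_j^{\ast}, y_j\rangle$ to identify $\sum_{i,j} Q_i^{\ast} f(Z^i) P_i^{\ast} P_j f(Z^j)^{\ast} Q_j$ as the perfect square $|\langle f, \Phi\rangle_{\CH(K)}|^2$. The Cauchy--Schwarz inequality then forces $C \|\Phi\|^2 - |\langle f, \Phi\rangle|^2 \succeq 0$, finishing this direction with the stated constant $C = \|f\|^2$.

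For the reverse direction $(\Leftarrow)$, I would assume $K_f$ is a cp nc kernel and observe that the rank-one piece $K^{\flat}(Z, W)(P) := f(Z) P f(W)^{\ast}$ is automatically a cp nc kernel: its positivity is built into the factored form, since $P = a^{\ast} a$ in $\CA^{n \times n}$ gives $f(Z) P f(Z)^{\ast} = (a f(Z)^{\ast})^{\ast}(a f(Z)^{\ast}) \succeq 0$. Consequently $C K = K_f + K^{\flat}$ is a sum of two cp nc kernels. Applying Theorem \ref{nc rkhs}(3) to each summand I would obtain Kolmogorov decompositions $K_f = H_f (\mathrm{Id}\otimes \rho_f)(\cdot) H_f^{\ast}$ on a Hilbert space $\CX_f$ and $K^{\flat} = f \cdot (\mathrm{Id}\otimes \iota)(\cdot) \cdot f^{\ast}$ on a Hilbert space $\CX_0$ carrying a faithful $\ast$-representation $\iota : \CA \to \CL(\CX_0)$. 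Taking the direct sum gives a Kolmogorov decomposition of $CK$ with amplitude $\widetilde{H}(Z) = [H_f(Z), f(Z)]$ and representation $\rho_f \oplus \iota$ on $\CX_f \oplus \CX_0$. By the canonical identification from Theorem \ref{nc rkhs}(2) of $\CH(CK) = \CH(K)$ with the closed span of the associated kernel elements, the nc function $Z \mapsto \widetilde{H}(Z) \xi_0$ coming from a distinguished cyclic vector $\xi_0 \in \CX_0$ recovers $f$, exhibiting $f \in \CH(K)$ with $\|f\|^2_{\CH(K)} \leq C$.

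The main obstacle lies in the reverse direction: I must ensure that the direct-sum combination respects the $\ast$-representation structure of $\CA$ and that the minimal Kolmogorov space identifies canonically with $\CH(K)$ in a manner compatible with the sum decomposition above. An equivalent and perhaps cleaner route, which I would pursue if this bookkeeping proves delicate, is to apply Theorem \ref{theorem on mult operator} directly with $K_1 = K$, $\CY_1 = \CY$, an auxiliary cp nc kernel $K_2(Z,W)(P) := \iota(P)$ built from a faithful $\ast$-representation of $\CA$ on a Hilbert space $\CY_2$, and a multiplier $S : \Omega \to \CL(\CY_2, \CY)$ encoding $f$ via $S(Z) \iota(P) S(W)^{\ast} = f(Z) P f(W)^{\ast}$; the cp-ness of $K_f$ then translates via Theorem \ref{theorem on mult operator} into the boundedness of $M_S : \CH(K_2) \to \CH(K)$, whose image on a cyclic generator of $\CH(K_2)$ is precisely $f$.
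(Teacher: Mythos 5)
The paper does not supply a separate proof of this corollary --- it is presented as an immediate specialization of Theorem \ref{theorem on mult operator}, following the remark about the multiplication operators $M_{Z_j}$. Your second route, realizing $f$ as a multiplier from an auxiliary cp nc kernel into $\CH(K)$, is exactly the implied argument, and it closes cleanly once you note that the constant kernel $K_2$ can be taken so that $\CH(K_2)$ is generated by a single cyclic element whose image under $M_S$ is $f$. Your direct route is a genuinely different and more elementary argument; it is sound in outline, but two details deserve sharpening.

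In the forward direction, a single element $\Phi$ does not suffice: in Proposition \ref{equivalent condition for cp nc kernel} the test data are $P_i \in \CA^{N \times n_i}$ with $N$ rows and $Q_i \in \CL(\CY)^{n_i \times 1}$, so the positivity statement lives in $\CL(\CY)$, not in $\C$. To reduce it to an $\CH(K)$-inner-product estimate one must fix a test vector $\eta \in \CY$, split each $P_i$ into its rows $v_i^{(k)} \in \CA^{1 \times n_i}$, and form the family $\Phi_{k,\eta} := \sum_j K_{Z^j, v_j^{(k)}, Q_j\eta}$; pairing the positive form against $\eta$ then collapses to $\sum_k \bigl( C\|\Phi_{k,\eta}\|^2 - |\langle f, \Phi_{k,\eta}\rangle|^2 \bigr) \geq 0$, and the Cauchy--Schwarz inequality applied termwise finishes it. So the idea is right but the scalarization and the indexing over rows are essential and not cosmetic. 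In the reverse direction your worry about the Kolmogorov bookkeeping is well placed: the internal Hilbert space and $\ast$-representation are unique only up to isomorphism, and recovering $f$ from a cyclic vector in the direct sum requires tracing the minimality construction of $\CH(CK)$ in a way that is not automatic. A much shorter reverse argument sidesteps Kolmogorov entirely: the same scalarization as above shows that the assignment $K_{W,v,y} \mapsto \overline{\langle f(W)(v^{\ast}), y\rangle}$ extends to a bounded conjugate-linear functional of norm at most $\sqrt{C}$ on the dense span of kernel elements, so Riesz representation in $\CH(K)$ produces $g \in \CH(K)$ with $\|g\|^2 \leq C$ and $\langle g(W)(v^{\ast}), y\rangle = \langle f(W)(v^{\ast}), y\rangle$ for all $W, v, y$, hence $g = f$. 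This gives the sharp norm bound in the same stroke and is the route I would recommend if you want the direct proof to be airtight.
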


\subsection{Noncommutative space over \texorpdfstring{$ \CH ( K ) $}{CH ( K ) }} \label{coordinate free presentation}

We now study the nc space over $ \CH ( K ) $ when $ \CA $ is a unital $ C^* $ algebra with the unit element $ \se $ and $ \CY = \C^r $, that is, the nc cp kernel $ K $ is now a function $ K : \Omega \times \Omega \to \CL ( \CA_{ nc }, \C^{ r \times r }_{ nc } ) $ and in consequence, the Hilbert space $ \CH ( K ) $ consists of nc functions on $ \Omega $ taking values in $ \CL ( \CA, \C^r )_{ nc } $. Recall that the nc space $ \CH ( K )_{ nc } $ over $ \CH ( K ) $ is, by definition (see Equation \eqref{nc space}), the graded space 
$$ \CH ( K )_{ nc } : = \coprod_{ m \geq 1 } \CH ( K )^{ m \times m } . $$
For each $ m \in \N $, the space $ \CH ( K )^{ m \times m } $ of $ m \times m $ matrices over $ \CH ( K ) $ -- which we denote by $ \CH_m $ -- carries a canonical bi-module structure over $ \C^{ m \times m } $, namely, for $ \SA, \SB, \SS \in \C^{ m \times m } $ and $ f = ( f_1, \hdots, f_r ) \in \CH ( K ) $, 
$$ \SA ( f \otimes \SS ) \SB = f \otimes ( \SA \SS \SB ) . $$
Moreover, the evaluation of an element $ f \otimes \SS = ( f_1 \otimes \SS, \hdots, f_r \otimes \SS ) \in \CH_m $ at any point $ W \in \Omega_m $ can be thought of as a linear mapping on $ \CA^{ m \times m } $ taking values in $ ( \C^{ m \times m } )^r $ as follows
$$ \left[ f ( W ) \otimes \SS \right] ( \TA ) = \left[ \begin{smallmatrix} f_1 ( W ) \TA \SS \\ \vdots \\ f_r ( W ) \TA \SS \end{smallmatrix} \right], ~~~  \TA = ( \! ( a_{ i j } ) \! )_{ i, j = 1 }^m \in \CA^{ m \times m } , $$
where the linear map $ f_j ( W ) : \CA^{ m \times 1 } \to \C^{ m \times 1 } $ acts column wise on $ \TA $. Finally, recalling the action of $ \CA $ on the kernel elements in \eqref{action of A on kernel elements}, we point out that there is a canonical action of $ \CA^{ m \times m } $ on $ \CH_m $ for each $ m \in \N $ defined as follows:
\be \label{action of amplified A}
\TA \cdot ( \! ( h_{ i j } ) \! )_{ i, j = 1 }^m : = \bigg( \!\!\! \bigg( \sum_{ k = 1 }^m \rho ( a_{ i k } ) h_{ k j } \bigg) \!\!\! \bigg)_{ i, j = 1 }^m, ~~~ \TA = ( \! ( a_{ i j } ) \! )_{ i, j = 1 }^m \in \CA^{ m \times m }, ~~~ ( \! ( h_{ i j } ) \! )_{ i, j = 1 }^m \in \CH_m.
\ee
In other words, $ \CH_m $ turns out to be a left module over $ \CA^{ m \times m } $ for every $ m \in \N $.
\subsubsection{Generalized kernel elements} \label{Generalized kernels} For $ m \in \N $, $ W \in \Omega_m $ and any vector $ \sigma \in \C^{ r \times 1 } $, define the uniformly analytic nc functions $ K ( \cdot, W ) \sigma : \Omega \rightarrow \CL ( \CA^m, ( \C^r )^m )_{ nc } $ by 
\be \label{coordinate free kernel}
K ( \cdot, W ) \sigma : = \sum_{ i, j =1 }^m  K_{ W, \se \otimes \varepsilon_i, \varepsilon_j^* \otimes \sigma } \otimes \varepsilon_i^* \varepsilon_j
\ee
where $ \{ \varepsilon_1, \hdots, \varepsilon_m \} $ is the standard ordered basis for $ \C^{ 1 \times m } $. Note that $ K ( \cdot, W ) \sigma $ is an $ m \times m $ matrix over $ \CH ( K ) $ whose $ i j $-th entry is the nc function $ K_{ W, \se \otimes \varepsilon_i, \varepsilon_j^* \otimes \sigma } $ on $ \Omega $ taking values in $ \CL ( \CA, \C^r )_{ nc } $. We call these vectors $ \{ K ( \cdot, W ) \sigma \in \CH_m : \sigma \in \C^{ r \times 1 } \} $ for $ W \in \Omega_m $ as the \textbf{generalized kernel elements}. For every $ \sigma \in \C^{ r \times 1 } $, $ W \in \Omega_m $, $ \SA \in \C^{ s \times m } $ and $ \SB \in \C^{ m \times t } $, observe that $ \SA [ K ( \cdot, W ) \sigma ] \SB \in \CH ( K )^{ s \times t } $ where by $ \SA [ K ( \cdot, W ) \sigma ] \SB $ we mean that
$$ \SA [ K ( \cdot, W ) \sigma ] \SB = \sum_{ i, j =1 }^m  K_{ W,  \se \otimes \varepsilon_i, \varepsilon_j^* \otimes \sigma } \otimes \SA ( \varepsilon_i^* \varepsilon_j ) \SB. $$
On the other hand, the expression for $ \SA [ K ( \cdot, W ) \sigma ] \SB $ has following equivalent presentation
$$ \SA [ K ( \cdot, W ) \sigma ] \SB = \sum_{ i =1 }^s \sum_{ j = 1 }^t  K_{ W, \se \otimes e_i \SA, \SB \epsilon_j^* \otimes \sigma } \otimes e_i^* \epsilon_j, $$
which in turn implies that 
\be \label{interpretation of kernel elements}
K_{ W,  \se \otimes e_{ \ell } \SA, \SB \epsilon_k^* \otimes \sigma } = e_{ \ell } ( \SA [ K ( \cdot, W ) \sigma ] \SB ) \epsilon_k^*, \quad 1 \leq \ell \leq s,~~~ 1 \leq k \leq t, ~~~ \sigma \in \C^r .
\ee
Here, $ \{ e_1, \hdots e_s \} $ and $ \{ \epsilon_1, \hdots, \epsilon_t \} $ are the standard ordered bases for $ \C^{ 1 \times s } $ and $ \C^{ 1 \times t } $, respectively. The left $ \CA^{ m \times m } $ module action on the generalized kernel elements turns out to be 
\be \label{C^* action on generalized kernel elements}
\TA \cdot K( \cdot, W ) \sigma =  \sum_{ i, j =1 }^m  K_{ W,  ( \se \otimes \varepsilon_i ) \TA, \varepsilon_j^* \otimes \sigma } \otimes \varepsilon_i^* \varepsilon_j, ~~~ \TA \in \CA^{ m \times m }.
\ee

Likewise the kernel elements (see \eqref{kernel element}), the generalized kernel elements give rise to nc functions on $ \Omega $ as shown in the following lemma.

\begin{lem}
Let $ \Omega^* = \{ W : W^* \in \Omega \} $ and $ \sigma \in \C^r $ be an arbitrary but fixed vector. Then the function $ \gamma : \Omega^* \rightarrow \CH ( K )_{ nc } $ defined by $ \gamma ( W ) : = K ( \cdot, W^* ) \sigma $ is uniformly analytic nc function.
\end{lem}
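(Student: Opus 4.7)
The plan is to verify the two defining properties of a uniformly analytic nc function: that $\gamma$ is an nc function and that it is uniformly locally bounded on $\Omega^*$. By Theorem \ref{locally bounded = nc analytic}, these together yield uniform analyticity, so no separate verification of G-differentiability will be needed.

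For the nc property, the grading $\gamma(W) \in \CH_m$ for $W \in \Omega^*_m$ is immediate from the definition \eqref{coordinate free kernel}. To verify that $\gamma$ respects intertwinings, suppose $\tilde{W}\SA = \SA W$ with $\tilde{W} \in \Omega^*_{\tilde{n}}$, $W \in \Omega^*_n$, and $\SA \in \C^{\tilde{n} \times n}$. Taking adjoints yields $\SA^* \tilde{W}^* = W^* \SA^*$. Using \eqref{interpretation of kernel elements}, I read off the $(\ell, k)$-entries of $\SA \gamma(W)$ and $\gamma(\tilde{W}) \SA$ as the kernel elements $K_{W^*, \se \otimes e_\ell \SA,\, \epsilon_k^* \otimes \sigma}$ and $K_{\tilde{W}^*, \se \otimes e_\ell,\, \SA \epsilon_k^* \otimes \sigma}$ respectively. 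Evaluating both nc functions at an arbitrary $Z \in \Omega_{n_0}$ on $u \in \CA^{n_0 \times 1}$ and inserting the definition \eqref{kernel element}, the required equality $\SA \gamma(W) = \gamma(\tilde{W}) \SA$ reduces to the identity
\[
K(Z, \tilde{W}^*)\bigl(u(\se \otimes e_\ell)\bigr)\, \SA \;=\; K(Z, W^*)\bigl(u(\se \otimes e_\ell \SA)\bigr),
\]
which is exactly the nc kernel intertwining condition \eqref{intertwining condition for kernel*} applied with identity on the first argument and $\SB = \SA^*$ on the second argument. This completes the verification that $\gamma$ is an nc function.

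For uniform local boundedness, fix $Y \in \Omega^*_s$. By Proposition \ref{properties of kernel elements}(i), the function $K_* : \Omega \times \Omega^* \to \CL(\CA_{nc}, \C^{r \times r}_{nc})$ defined by $K_*(Z, W) = K(Z, W^*)$ is a uniformly analytic nc function of order $1$ on $\Omega \times \Omega^*$, and in particular uniformly locally bounded. Since the adjoint $W \mapsto W^*$ is continuous between $\Omega^*$ and $\Omega$ in the uniformly-open topology, the map $W \mapsto (W^*, W)$ is continuous from $\Omega^*$ into $\Omega \times \Omega^*$ and sends $Y$ to $(Y^*, Y)$. Hence there exist $\rho > 0$ and $M > 0$ such that $\|K(X^*, X^*)\| \leq M$ uniformly for all $m \in \N$ and $X \in B_{nc}(Y, \rho)_{ms} \subset \Omega^*$. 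Applying the reproducing property \eqref{reproducing property} to $f = K_{V, v, y}$ with $V = X^*$ yields $\|K_{V, v, y}\|^2_{\CH(K)} = \langle K(V, V)(v^* v)\, y,\, y\rangle_{\CY^{m \times 1}}$; using this entry-wise for $\gamma(X)$ with $v = \se \otimes \varepsilon_i$ and $y = \varepsilon_j^* \otimes \sigma$ bounds the norm of $\gamma(X)$ in $\CH_{ms}$ (with its natural Hilbert $C^*$-module norm) by a constant multiple of $M \cdot \|\sigma\|^2$ independent of $X$. Uniform local boundedness then follows, and Theorem \ref{locally bounded = nc analytic} delivers uniform analyticity.

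I expect the main technical hurdle to lie in the bookkeeping of the intertwining verification: the nc-kernel intertwining \eqref{intertwining condition for kernel*} involves adjoints on one side only, so one must be careful about when $\SA$ versus $\SA^*$ appears and about matching the $s$, $t$ conventions of \eqref{interpretation of kernel elements}. By contrast, once the uniform bound on $K_*$ is in hand, the passage from there to uniform local boundedness of $\gamma$ is routine.
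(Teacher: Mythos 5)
Your proof is correct and follows essentially the same route as the paper's: verify grading, reduce the intertwining of $\gamma$ to the nc kernel intertwining condition \eqref{intertwining condition for kernel*} (with the trivial intertwiner on the first argument and $\SB = \SA^*$ on the second), and obtain uniform analyticity from Proposition~\ref{properties of kernel elements}(i). The only difference is that you spell out the uniform local boundedness step — the reproducing-property bound $\|K_{V,v,y}\|^2 = \langle K(V,V)(v^*v)y,\,y\rangle$ and the continuity of $W \mapsto W^*$ — where the paper dispatches this in a single appeal to Proposition~\ref{properties of kernel elements}(i).
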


\begin{proof}
We begin by pointing out from the definition of $ K ( \cdot, W^* ) \sigma $ in \eqref{coordinate free kernel} that $ \gamma $ is a graded function. Next, we check that $ \gamma $ satisfies the intertwining. Let $ W \in \Omega^*_n, \widetilde{ W } \in \Omega^*_m $ and $ \SS \in \C^{ m \times n } $ so that $ \SS W = \widetilde{ W } \SS $. Also, denote the standard ordered bases for $ \C^{ 1 \times n } $ and $ \C^{ 1 \times m } $ as $ \{ \varepsilon_1, \hdots, \varepsilon_n \} $ and $ \{ \epsilon_1, \hdots, \epsilon_m \} $, respectively. Then note that
$$ \sum_{ i, j = 1 }^n K_{ W^*, \se \otimes \varepsilon_i, \epsilon_j^* } \otimes \SS \varepsilon_i^* \epsilon_j  =   \sum_{ i = 1 }^n \sum_{ j = 1 }^m K_{ W^*, \se \otimes \varepsilon_i \SS, \epsilon_j^* } \otimes \varepsilon_i^* \epsilon_j = \sum_{ i = 1 }^n \sum_{ j = 1 }^m K_{ \widetilde{ W }^*, \se \otimes \varepsilon_i, \SS \epsilon_j^* } \otimes \varepsilon_i^* \epsilon_j $$
where the second equality holds since
\begin{align*} 
&K_{ W^*, \se \otimes \varepsilon_i \SS, \epsilon_j^* } ( Z ) u  =  K ( Z, W^* ) ( u ( \se \otimes \varepsilon_i \SS ) ) \epsilon_j^* \\
&=  K ( Z, \widetilde{ W }^* ) ( u ( \se \otimes \varepsilon_i ) ) \SS \epsilon_j^*  =  K_{ \widetilde{ W }^*, \se \otimes \varepsilon_i, \SS \epsilon_j^* } ( Z ) u, 
\end{align*}
for any $ Z \in \Omega_s $, $ u \in \CA^{ s \times 1 } $ and $ s \in \N $. Thus it verifies that
$$ \SS \gamma ( W ) =  \gamma ( \widetilde{ W } ) \SS. $$
Finally, uniformly analyticity of $ \gamma $ as a nc function follows from part (i) in Proposition \ref{properties of kernel elements}.
\end{proof}

We now discuss the reproducing property of these \textbf{generalized kernel elements} for which we  equip the space $ \CH_m $ of $ m \times m $ matrices over $ \CH ( K ) $, $ m \in \N $, with an hermitian product making it a Hilbert space as follows. For $ F =(\!( f_{ i j } )\!)_{ i, j = 1 }^m, \, G =(\!( g_{ i j } )\!)_{ i, j = 1 }^m \in \CH_m $, define
\be \label{inner product of matrix spaces}
\left\langle F, G \right\rangle_{HS} : = \mbox{tr} \left( F \ast G ^{ \top } \right). 
\ee
Here, $ F \ast G $ is an element in $ \CH_m $ whose $ i j $-th entry is defined as follows:
\be \label{product structure on matrix spaces}
( F \ast G )_{ i j } : = \sum_{ \ell =1 }^m \left\langle f_{ i \ell }, g_{ \ell j } \right\rangle.
\ee
Equivalently, for any $ f \otimes \SA, g \otimes \SB \in \CH_m $, the formula above yields that
$$ \left\langle f \otimes \SA, g \otimes \SB \right\rangle_{HS} = \langle f, g \rangle_{ \CH ( K ) } \langle \SA, \SB \rangle_{ \C^{ m \times m } }, $$
where $ \langle \cdot, \cdot \rangle_{ \C^{ m \times m } } $ is the usual Hilbert-Schmidt inner product on $ \C^{ m \times m } $. 

\begin{prop} 
Let $ m \in \N $, $ W \in \Omega_m $, $ \xi = ( \xi_1, \hdots, \xi_r ) \in \C^{ r \times 1 } $ and $ F = ( F_1, \hdots, F_r ) $ in $ \CH_m $. 
\begin{itemize}
\item[(i)] For any $ \SA, \SB \in \C^{ m \times m } $,
\be \label{generalized reproducing property 0}
\left\langle F, \SA [ K ( \cdot, W ) \xi ] \SB \right\rangle_{ HS }  = \sum_{ t = 1 }^r \overline{ \xi }_t \left\langle  F_t ( W ) ( \SA^* ), \SB \right\rangle_{ \C^{ m \times m } }.
\ee
In particular, for the standard ordered basis $ \{ \sigma_1, \hdots, \sigma_r \} $ of $ \C^r $, we have that
\be \label{generalized reproducing property}
\left\langle F, \SA [ K ( \cdot, W ) \sigma_t ] \SB \right\rangle_{ HS }  = \left\langle  F_t ( W ) ( \SA^* ), \SB \right\rangle_{ \C^{ m \times m } }, ~~~ 1 \leq t \leq r.
\ee
\item[(ii)] For $ \TA \in \CA^{ m \times m } $, $ \SX \in \C^{ m \times m } $,
\be \label{C^* action via reproducing property}
\left\langle F, \TA [ K ( \cdot, W ) \xi ] \SX \right\rangle_{ HS }  = \sum_{ t = 1 }^r \overline{ \xi }_t \left\langle  F_t ( W ) ( \TA^* ), \SX \right\rangle_{ \C^{ m \times m } }.
\ee
\end{itemize}
\end{prop}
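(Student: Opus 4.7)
The plan is to handle both parts (i) and (ii) uniformly, viewing (ii) as the generalization of (i) in which the scalar matrix $\SA \in \C^{m \times m}$ is replaced by the algebra-valued matrix $\TA \in \CA^{m \times m}$. The computation rests on three ingredients: the alternate expansion of $\SA[K(\cdot, W)\xi]\SB$ displayed just before Equation \eqref{interpretation of kernel elements}, the orthonormality of matrix units in $\C^{m \times m}$ under the Hilbert-Schmidt inner product, and the reproducing property \eqref{reproducing property} of the ordinary kernel elements.

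For (i), I would first specialize the alternate expansion to $s = t = m$, writing
\begin{equation*}
\SA[K(\cdot, W)\xi]\SB = \sum_{p, q = 1}^m K_{W, \se \otimes \varepsilon_p\SA,\, \SB\varepsilon_q^* \otimes \xi} \otimes E_{pq},
\end{equation*}
where $E_{pq} = \varepsilon_p^*\varepsilon_q$. Expanding $F = \sum_{p, q} f_{pq} \otimes E_{pq}$ and using orthonormality of the matrix units together with the factorization $\langle f \otimes \SC, g \otimes \SD\rangle_{HS} = \langle f, g\rangle_{\CH(K)}\langle\SC, \SD\rangle_{\C^{m \times m}}$, the Hilbert-Schmidt pairing collapses to $\sum_{p, q} \langle f_{pq}, K_{W, \se \otimes \varepsilon_p\SA, \SB\varepsilon_q^* \otimes \xi}\rangle_{\CH(K)}$. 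Invoking the reproducing property with $v = \se \otimes \varepsilon_p\SA$ and $y = \SB\varepsilon_q^* \otimes \xi$, and using the involution $(\se \otimes \varepsilon_p\SA)^* = \se \otimes \SA^*\varepsilon_p^*$, each summand becomes $\langle f_{pq}(W)(\se \otimes \SA^*\varepsilon_p^*), \SB\varepsilon_q^* \otimes \xi\rangle_{\CY^m}$. Decomposing the $\CY^m = (\C^r)^m$ inner product along the $r$-fold $\C^r$ factor isolates the coefficient $\overline{\xi}_t$, and the column-wise evaluation convention for $F_t(W)$ described in the paragraph preceding \eqref{coordinate free kernel} identifies the remaining sum $\sum_{p, q}\langle f_{pq}^t(W)(\SA^*\varepsilon_p^*), \SB\varepsilon_q^*\rangle_{\C^m}$ with $\langle F_t(W)(\SA^*), \SB\rangle_{\C^{m \times m}}$. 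This establishes \eqref{generalized reproducing property 0}, and \eqref{generalized reproducing property} follows by specializing to $\xi = \sigma_t$.

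For (ii), the same argument applies once I derive, from Equation \eqref{C^* action on generalized kernel elements} by direct matrix manipulation, the analogous expansion
\begin{equation*}
\TA[K(\cdot, W)\xi]\SX = \sum_{p, q = 1}^m K_{W, (\se \otimes \varepsilon_p)\TA,\, \SX\varepsilon_q^* \otimes \xi} \otimes E_{pq}.
\end{equation*}
The reproducing property with $v = (\se \otimes \varepsilon_p)\TA$ now produces $v^* = \TA^*(\se \otimes \varepsilon_p^*)$, which is just the $p$-th column of $\TA^* \in \CA^{m \times m}$ interpreted as an element of $\CA^{m \times 1}$, and the remainder of the argument proceeds verbatim, producing $f_{pq}^t(W)(\TA^*\varepsilon_p^*)$ in place of $f_{pq}^t(W)(\SA^*\varepsilon_p^*)$.

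I expect the main obstacle to be notational rather than conceptual: the verification that $\sum_{p, q}\langle f_{pq}^t(W)(\TA^*\varepsilon_p^*), \SX\varepsilon_q^*\rangle_{\C^m}$ equals $\langle F_t(W)(\TA^*), \SX\rangle_{\C^{m \times m}}$ requires careful tracking of the column-wise action of $f_{pq, t}(W) \in \CL(\CA, \C)^{m \times m}$ on the algebra-valued matrix $\TA^*$, and the bookkeeping around the involutions $(\se \otimes \varepsilon_p\SA)^*$ and $((\se \otimes \varepsilon_p)\TA)^*$ has to be performed with precision to land the $\SA^*$ (respectively $\TA^*$) in the right slot.
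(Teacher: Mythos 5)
Your proof is correct, and for part (i) it takes a visibly different route from the paper's. The paper works with elementary tensors $F = f\otimes\SS$, keeps the scalar matrices $\SA,\SB$ attached to the $\C^{m\times m}$ side of the tensor (i.e.\ it uses \eqref{coordinate free kernel} directly, writing $\SA[K(\cdot,W)\xi]\SB = \sum_{i,j}K_{W,\,\se\otimes\varepsilon_i,\,\varepsilon_j^*\otimes\xi}\otimes\SA\varepsilon_i^*\varepsilon_j\SB$), passes $\SA,\SB$ across the $\C^{m\times m}$-pairing by adjointness, $\langle\SS,\SA\varepsilon_i^*\varepsilon_j\SB\rangle=\langle\SA^*\SS\SB^*,\varepsilon_i^*\varepsilon_j\rangle$, and collapses the double sum to $\operatorname{trace}(f_t(W)\SA^*\SS\SB^*)$. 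You instead absorb $\SA,\SB$ into the kernel-element parameters via the re-indexed form \eqref{interpretation of kernel elements}, so the $\C^{m\times m}$ factor is reduced to bare matrix units $E_{pq}$, the Hilbert--Schmidt pairing collapses by orthonormality, and the matrices resurface through the reproducing property inside $f_{pq}(W)\big((\se\otimes\varepsilon_p\SA)^*\big)$. The paper's version is a little slicker for (i) because the final step is a one-line trace identity, but your version is more uniform across (i) and (ii): absorbing the matrix into the kernel-element parameter is exactly what the paper's unstated ``similar computation'' for (ii) must do on the $\TA$-side via \eqref{C^* action on generalized kernel elements}, since $\TA\in\CA^{m\times m}$ acts through the representation $\rho$ and cannot be pulled across the $\C^{m\times m}$-inner product. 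Your needed intermediate expansion $\TA[K(\cdot,W)\xi]\SX=\sum_{p,q}K_{W,\,(\se\otimes\varepsilon_p)\TA,\,\SX\varepsilon_q^*\otimes\xi}\otimes E_{pq}$ does follow from \eqref{C^* action on generalized kernel elements} together with linearity of $K_{W,v,y}$ in $y$, so no gap there; your caution about the involutions $(\se\otimes\varepsilon_p\SA)^*=\se\otimes\SA^*\varepsilon_p^*$ and $((\se\otimes\varepsilon_p)\TA)^*=\TA^*(\se\otimes\varepsilon_p^*)$ and about matching the result with the column-wise evaluation convention is well placed, since that is where all the remaining work lives.
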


\begin{proof}
(i) It is enough to verify the identity given in Equation \eqref{generalized reproducing property} for any elementary tensors $ f \otimes \SS \in \CH_m $. Note that 
\Bea
\left\langle f \otimes \SS,  \SA [ K ( \cdot, W ) \xi ] \SB \right\rangle_{ HS } & = & \left\langle f \otimes \SS, \sum_{ i, j = 1 }^m K_{ W, \varepsilon_i, \varepsilon_j^* \otimes \xi } \otimes \SA ( \varepsilon_i^* \varepsilon_j ) \SB \right\rangle_{ HS }\\
& = & \sum_{ i, j =1 }^m \left\langle f \otimes \SS, K_{ W, \varepsilon_i, \varepsilon_j^* \otimes \xi } \otimes \SA ( \varepsilon_i^* \varepsilon_j ) \SB \right\rangle_{ HS }\\
& = & \sum_{ i, j =1 }^m \left[ \left\langle f, K_{ W, \varepsilon_i, \varepsilon_j^* \otimes \xi } \right\rangle_{ \CH ( K ) } \left\langle \SS, \SA ( \varepsilon_i^* \varepsilon_j ) \SB \right\rangle_{ \C^{ m \times m } } \right]\\
& = & \sum_{ i, j =1 }^m \left[ \left\langle f ( W ) ( \varepsilon_i^* ), \varepsilon_j^* \otimes \xi \right\rangle_{ ( \C^r )^m } \left\langle \SA^* \SS \SB^*,  \varepsilon_i^* \varepsilon_j \right\rangle_{ \C^{ m \times m } } \right]\\
& = & \sum_{ i, j =1 }^m \left[ \sum_{ t = 1 }^r \overline{ \xi }_t \left\langle f_t ( W ) ( \varepsilon_i^* ), \varepsilon_j^*  \right\rangle_{ \C^m } \left\langle \SA^* \SS \SB^*,  \varepsilon_i^* \varepsilon_j \right\rangle_{ \C^{ m \times m } } \right]\\
& = & \sum_{ t = 1 }^r \overline{ \xi }_t \operatorname{trace} ( f_t ( W ) \SA^* \SS \SB^* )\\
& = & \sum_{ t = 1 }^r \overline{ \xi }_t \left\langle ( f_t ( W ) \otimes \SS ) ( \SA^* ), \SB \right\rangle_{ \C^{ m \times m } }
\Eea
completing the proof. The second statement follows directly from this computation with $ \xi = \sigma_t $, $ 1 \leq t \leq r $.

(ii) A similar computation with the help of Equation \eqref{C^* action on generalized kernel elements} yields the desired identity.
\end{proof}

\begin{rem} \label{remark on generalized kernel elements}
(i) When $ \CA = \C $, the linear span of the set of vectors 
$$ \{ \SA [ K ( \cdot, W ) \sigma_t ] \SB : W \in \Omega_m, \,\, \SA, \SB \in \C^{ m \times m },\, 1 \leq t \leq r \} $$ 
is dense in $ \CH_m $ for every $ m \in \N $. Indeed, it follows from Equation \eqref{generalized reproducing property} that only zero vector is perpendicular to $ \SA [ K ( \cdot, W ) \sigma_t ] \SB $ for all $ 1 \leq t \leq r $, $ \SA, \SB \in \C^{ m \times m } $ and $ W \in \Omega_m $.

(ii) For a general unital $ C^* $ algebra $ \CA $, it follows from part (ii) of the proposition above that the set 
$$ \{ \TA [ K ( \cdot, W ) \sigma_t ] \SX : W \in \Omega_m, \,\, \TA \in \CA^{ m \times m },\, \SX \in \C^{ m \times m }, \, 1 \leq t \leq r \} $$ turns out to be a dense subset of $ \CH_m $, $ m \in \N $.

(iii) Using the proposition above we have that 
$$ \TA K ( \cdot, W ) = 0 \quad \text{if and only if} \quad K ( W, W ) ( \TA^* \TA ) = 0 $$ 
for all $ W \in \Omega_m $, $ \TA \in \CA^{ m \times m } $ and $ m \in \N $. 
\end{rem}

\subsubsection{Adjoint of multiplication operators} \label{Action of amplified multiplication operator} In this subsection, we restrict ourselves to the case when $ \CV = \C^d $ and describe the action of the adjoint of the $ d $-tuple of multiplication operators by nc coordinate functions on $ \CH_m $ for $ m \in \N $. 

Recall that $ \boldsymbol{ M } = ( M_{ Z_1 }, \hdots, M_{ Z_d } ) $ is the $ d $-tuple of multiplication operators by the nc coordinate functions on $ \CH ( K ) $ with $ M_{ Z_i } : \CH ( K ) \rightarrow \CH ( K ) $ defined by $ f \mapsto M_{ Z_i } ( f ) $ and $ M_{ Z_i } ( f ) ( W ) = W_i f ( W ) $, $ W \in \Omega $. Here, we mean $ ( W_i f_1 ( W ), \hdots, W_i f_r ( W ) ) $ by $ W_i f ( W ) $ for $ i = 1, \hdots, d $ and $ f = ( f_1, \hdots, f_r ) \in \CH ( K ) $. Then $ \boldsymbol{ M } $ gives rise to a linear operator $ D_{ \boldsymbol{ M }^* - W^* } : \CH_m \rightarrow \CH_m ^{ \oplus d } $ for every $ W \in \Omega_m $ and  $ m \in \N $ defined by 
$$ D_{ \boldsymbol{ M }^* - W^* } : = ( M^*_{ Z_1 } \otimes \mathrm{Id}_{ \C^{ m \times m } } - \mathrm{Id}_{ \CH ( K ) } \otimes R_{ W _1^* }, \hdots, M^*_{ Z_d } \otimes \mathrm{Id}_{ \C^{ m \times m } } - \mathrm{Id}_{ \CH ( K ) } \otimes R_{ W _d^* } ). $$
For $ 1 \leq j \leq d $, $ \xi \in \C^r $, $ m \in \N $, $ \TA \in \CA^{ m \times m } $ and $ \ST \in \C^{ m \times m } $, we have that
$$ ( M^*_{ Z_j } \otimes \mathrm{Id}_{ \C^{ m \times m } } - \mathrm{Id}_{ \CH ( K ) } \otimes R_{ W _j^* } ) ( \TA [ K ( \cdot, W ) \xi ] \ST ) = \TA ( K ( \cdot, W ) \xi ) [ W_j^*, \ST ] . $$
Indeed,
\Bea
M_{ Z_j }^* \otimes \mathrm{Id}_{ \C^{ m \times m } } ( \TA [K ( \cdot, W ) \xi ] \ST ) & = &  M_{ Z_j }^* \otimes \mathrm{Id}_{ \C^{ m \times m } } \left( \sum_{ i, k =1 }^m K_{ W,  ( \se \otimes \varepsilon_{ i } ) \TA, \ST \varepsilon_k^* \otimes \xi } \otimes \varepsilon_{ i }^* \varepsilon_k \right)\\
& = &  \sum_{ i, k =1 }^m K_{ W,  ( \se \otimes \varepsilon_{ i } ) \TA, W_j^*\ST \varepsilon_k^* \otimes \xi } \otimes \varepsilon_{ i }^* \varepsilon_k \\
& = &  \TA [ K ( \cdot, W ) \xi ] W_j^* \ST;
\Eea
and $ \mathrm{Id}_{ \CH ( K ) } \otimes R_{ W _j^* } ( \TA [ K ( \cdot, W ) \xi ] \ST ) = \TA [ K ( \cdot, W ) \xi ] \ST W_j^* $. Here, the validation of the second equality is guaranteed by Equation \eqref{adjoint of mult op on ker element}. It thus verifies the following inclusion: For $ m \in \N $ and $ W \in \Omega_m $,
\be \label{joint kernel of adjoint of mult operator}
N_{ W } : = \{ \TA K ( \cdot, W ) \xi : \TA \in \CA^{ m \times m }, \, \xi \in \C^r \} \subset \ker D_{ \boldsymbol{ M }^* - W^* }.
\ee
Note that $ N_W $ is a left module over $ \CA^{ m \times m } $ (in particular, over $ \C^{ m \times m } $) for each $ W \in \Omega_m $ and $ m \in \N $. 

As before, we now describe the action of $ D_{ \boldsymbol{ M }^* - W^* } $ on the derivatives of the \textbf{generalized kernel elements}. Since $ Z \mapsto K ( \cdot, Z ) \xi $, for any $ \xi \in \C^r $, is a uniformly anti-analytic nc function on $ \Omega $ taking values in $ ( \CH_r )_{ nc } $, it follows, for $ W^0 \in \Omega_{ n_0 }, \hdots, W^{ \ell } \in \Omega_{ n_{ \ell } } $, that the function 
$$ _{ 1 * } \Delta_R^{ \ell } K ( \cdot, W^0, \hdots, W^{ \ell } ) \xi : ( \C^d )^{ n_0 \times n_1 } \times \cdots \times ( \C^d )^{ n_{ { \ell } -1 } \times n_{ \ell } } \rightarrow \CH^{ n_0 \times n_{ \ell } } $$
is a multi-linear mapping over $ \C $. Consequently, in view of the identification made in Equation \eqref{coordinate free kernel}, when $ n_0 = \cdots = n_{ \ell } = m $ and $ W \in \Omega_m $, we have that
\be \label{derivative of coordinate free kernel} 
( _{ 1 * } \Delta_R^{ \ell } K ( \cdot, W, \hdots, W ) \xi ) ( X^1, \hdots, X^{ \ell } ) = \sum_{ i, j =1 }^{ m } K_{ _{ ( X^1, \hdots, X^{ \ell } ) } W^{ \oplus { \ell } }, \left[ \begin{smallmatrix} \se \otimes \varepsilon_i & 0 \end{smallmatrix}\right], \left[ \begin{smallmatrix} 0 \\ \varepsilon_j^* \otimes \xi \end{smallmatrix} \right] } \otimes \varepsilon_i^* \varepsilon_j
\ee
where $ _{ ( X^1, \hdots, X^{ \ell } ) } W^{ \oplus { \ell } } $ denotes the point $ \left[ \begin{smallmatrix}
W & 0 & 0 & \cdots & 0 \\
X^1     & W & 0 & \cdots & 0 \\
0 & X^2 & W &  \cdots & 0 \\
\vdots & \vdots & \ddots & \ddots & \vdots \\
0      &       0     & \cdots   & X^{ \ell } & W
\end{smallmatrix} \right]$. In a similar manner as in Equation \eqref{interpretation of kernel elements}, we also have the following identities involving derivatives:
\be \label{module action on derivative of kernel elements 1}
\SA  ( _{ 1 * } \Delta_R^{ \ell } K ( \cdot, W, \hdots, W ) \xi ) ( X^1, \hdots, X^{ \ell } ) \SB = \sum_{ i, j  =1 }^{ m } K_{ _{ ( X^1, \hdots, X^{ \ell } ) } W^{ \oplus { \ell } }, \left[ \begin{smallmatrix} \se \otimes \varepsilon_i &  0 \end{smallmatrix}\right], \left[ \begin{smallmatrix}  0 \\ \varepsilon_j^* \otimes \xi \end{smallmatrix} \right] } \otimes \SA ( \varepsilon_i^* \varepsilon_j ) \SB,
\ee
or equivalently,
\be \label{module action on derivative of kernel elements 2}
\SA  ( _{ 1 * } \Delta_R^{ \ell } K ( \cdot, W, \hdots, W ) \xi ) ( X^1, \hdots, X^{ \ell } ) \SB = \sum_{ i, j =1 }^{ m } K_{ _{ ( X^1, \hdots, X^{ \ell } ) } W^{ \oplus { \ell } }, \left[ \begin{smallmatrix}  \varepsilon_i \SA & 0 \end{smallmatrix}\right], \left[ \begin{smallmatrix}  0 \\ \SB \varepsilon_j^* \otimes \xi \end{smallmatrix} \right] } \otimes \varepsilon_i^* \varepsilon_j ,
\ee
for $ \xi \in \C^{ r \times 1 } $, $ \SA, \SB \in \C^{ m \times m } $, $ W \in \Omega_m $ and $ X^1, \hdots, X^{ \ell } \in \C^{ m \times m } $. The last equation thus yields the action of $ \C^{ m \times m } $ on these derivatives of generalized kernel elements. Finally, the action of $ \CA^{ m \times m } $ on these derivatives of generalized kernel elements show up as
\be \label{module action on derivative of kernel elements 3}
\TA  ( _{ 1 * } \Delta_R^{ \ell } K ( \cdot, W, \hdots, W ) \xi ) ( X^1, \hdots, X^{ \ell } ) \SB = \sum_{ i, j =1 }^{ m } K_{ _{ ( X^1, \hdots, X^{ \ell } ) } W^{ \oplus { \ell } }, \left[ \begin{smallmatrix} ( \se \otimes \varepsilon_i ) \TA & 0 \end{smallmatrix}\right], \left[ \begin{smallmatrix} 0 \\ \SB \varepsilon_j \otimes \xi \end{smallmatrix} \right] } \otimes \varepsilon_i^* \varepsilon_j .
\ee
In this set-up, the following proposition yields the reproducing property of the \textbf{generalized kernel elements} as well as the action of $ D_{ \boldsymbol{ M }^* - W^* } $ on them.

\begin{prop} \label{properties of generalized kernel elements}
Let $ \Omega \subset ( \C^d )_{ nc } $ and $ \CH ( K ) \subset \CN\CC\CO ( \Omega, \CL ( \CA, \C^r )_{ nc } ) $ be a nc reproducing kernel Hilbert space with the nc cp reproducing kernel $ K : \Omega \times \Omega \rightarrow \CL ( \CA_{ nc }, ( \C^r )_{ nc } ) $. Assume that $ \Omega $ is both left and right admissible. Denote $ \Omega^* = \{ W : W^* \in \Omega \} $. Then 
\begin{itemize}

\item[(i)] For $ m \in \N$, $ W \in \Omega_m $, $ \TA \in \CA^{ m \times m } $, $ \SB \in \C^{ m \times m } $, $ X^1, \hdots,  X^{ \ell } \in ( \C^d )^{ m \times m } $, $ F = ( F_1, \hdots, F_r ) \in \CH_m $ and the standard ordered basis $ \{ \sigma_t : 1 \leq t \leq r \} $ of  $ \C^{ r \times 1 } $,
\begin{align} \label{reproducing of derivatives}
& \left\langle F, \TA \, ( _{ 1 * } \Delta_R^{ \ell } K ( \cdot, W, \hdots, W ) \sigma_t ) ( X^1, \hdots, X^{ \ell } ) \, \SB \right\rangle_{ HS }\\
\nonumber & = \left\langle \Delta_L^{ \ell } F_t ( W, \hdots, W ) ( X^{ \ell }, X^2, \hdots, X^{ { \ell } - 1 }, X^1 ) ( \TA^* ), \SB \right\rangle_{ \C^{ m \times m } }, ~~~ t = 1, \hdots, r.
\end{align}

\item[(ii)] For every $ W \in \Omega_m $,  $ \TA \in \CA^{ m \times m } $, $ \SB \in \C^{ m \times m } $, $ X^1, \hdots, X^{ \ell } \in ( \C^d )^{ m \times m } $, $ \xi \in \C^{ r \times 1 } $ and $ 1 \leq k \leq d $,
\begin{align} \label{action of mult operator on derivative}
& \left( M_{ Z_k }^* \otimes \mathrm{Id}_{ m \times m } - \mathrm{Id}_{ \CH ( K ) } \otimes R_{ W_k^* } \right)  \left( \TA ( _{ 1 ^* } \Delta_R^{ \ell } K ( \cdot, W, \hdots, W ) \xi ) ( X^1, \hdots, X^{ \ell } ) \SB \right) \\
\nonumber & = ~ \TA ( _{1^*} \Delta_R^{ { \ell } - 1 } K ( \cdot, W, \hdots, W ) \xi ) ( X^1, \hdots, X^{ { \ell } -1 } )  ( X^{ \ell }_k )^* \SB \\
\nonumber & + ~ \TA ( _{1^*} \Delta_R^{ { \ell } } K ( \cdot, W, \hdots, W ) \xi ) ( X^1, \hdots, X^{ \ell } )  [ W_k^*, \SB ] .
\end{align}
\end{itemize}
\end{prop}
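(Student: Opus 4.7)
The plan for both parts is to expand the expression $\TA\,(_{1^*}\Delta_R^{\ell} K(\cdot, W, \ldots, W)\xi)(X^1, \ldots, X^{\ell})\,\SB$ via Equation~\eqref{module action on derivative of kernel elements 3} as a double sum over $i, j = 1, \ldots, m$ of ordinary kernel elements $K_{W', v_i, y_j} \otimes \varepsilon_i^* \varepsilon_j$, where $W' := {}_{(X^1,\ldots,X^{\ell})}W^{\oplus \ell}$ is the upper triangular bi-diagonal block matrix, $v_i := [(\se \otimes \varepsilon_i)\TA,\, 0]$ is supported in the first block, and $y_j := [0,\, \SB\varepsilon_j^* \otimes \xi]^{\top}$ is supported in the last block. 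This reduces each assertion about derivatives of generalized kernel elements to $m^2$ assertions about ordinary kernel elements, which can then be handled by the reproducing formula~\eqref{reproducing property for derivative} in part (i) or by the adjoint multiplier identity~\eqref{adjoint of mult op on ker element} in part (ii), and finally reassembled.

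For part (i), take $\xi = \sigma_t$ in the expansion. By the tensor decomposition of the Hilbert--Schmidt inner product~\eqref{inner product of matrix spaces}, each summand collapses to $\langle F^{ij}, K_{W', v_i, y_j}\rangle_{\CH(K)}$, where $F^{ij} \in \CH(K)$ is the $(i,j)$-entry of $F$. By Proposition~\ref{properties of kernel elements}(ii), this kernel element coincides with ${}_{1^*}\Delta_R^{\ell} K(\cdot, W, \ldots, W)\bigl((\cdot)(\se \otimes \varepsilon_i)\TA,\, X^1, \ldots, X^{\ell}\bigr)(\SB\varepsilon_j^* \otimes \sigma_t)$, so the reproducing formula~\eqref{reproducing property for derivative} evaluates it to $\langle \Delta_R^{\ell} F^{ij}(W, \ldots, W)(X^{\ell}, X^2, \ldots, X^1)(\TA^*(\se \otimes \varepsilon_i^*)),\, \SB\varepsilon_j^* \otimes \sigma_t\rangle_{(\C^r)^m}$. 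Pairing with $\sigma_t$ extracts the $t$-th component, and summing over $(i,j)$ repackages the resulting expression as the Hilbert--Schmidt inner product $\langle \Delta_R^{\ell} F_t(W, \ldots, W)(X^1, X^2, \ldots, X^{\ell})(\TA^*),\, \SB\rangle_{\C^{m \times m}}$. Finally, Equation~\eqref{left vs right dd operators} with all basepoints equal to $W$ converts $\Delta_R^{\ell}$ to $\Delta_L^{\ell}$ with the permuted directions $(X^{\ell}, X^2, \ldots, X^{\ell-1}, X^1)$, matching the claim.

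For part (ii), apply $M_{Z_k}^* \otimes \mathrm{Id} - \mathrm{Id} \otimes R_{W_k^*}$ term-by-term to the expansion. The second summand merely post-multiplies the matrix factor $\varepsilon_i^* \varepsilon_j$ by $W_k^*$, whereas \eqref{adjoint of mult op on ker element} sends $K_{W', v_i, y_j}$ to $K_{W', v_i, (W')_k^* y_j}$. Since $(W')_k^*$ is upper triangular bi-diagonal, with diagonal blocks $W_k^*$ and super-diagonal blocks $(X_k^1)^*, \ldots, (X_k^{\ell})^*$, and since $y_j$ is supported in the last block, the vector $(W')_k^* y_j$ has exactly two non-zero contributions: the last block $W_k^* \SB \varepsilon_j^* \otimes \xi$, and the second-to-last block $(X_k^{\ell})^* \SB \varepsilon_j^* \otimes \xi$. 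The last-block contributions, taken together with the $-\mathrm{Id} \otimes R_{W_k^*}$ terms, reassemble by \eqref{module action on derivative of kernel elements 3} into $\TA\,(_{1^*}\Delta_R^{\ell} K\xi)(X^1, \ldots, X^{\ell})\,[W_k^*, \SB]$. The second-to-last-block contributions can be identified, upon restricting the block structure of $W'$ to its first $\ell$ blocks (which is precisely ${}_{(X^1,\ldots,X^{\ell-1})}W^{\oplus \ell - 1}$), as kernel elements of the form appearing in \eqref{derivative of coordinate free kernel} for the $(\ell-1)$-th order derivative, and their reassembly yields $\TA\,(_{1^*}\Delta_R^{\ell-1} K\xi)(X^1, \ldots, X^{\ell-1})\,(X_k^{\ell})^* \SB$.

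The principal obstacle is the index bookkeeping in part (ii): one must unambiguously match the ``last-block'' and ``second-to-last-block'' contributions arising from $(W')_k^* y_j$ with the two distinct terms on the right-hand side, while correctly accounting for the change in size of the block matrix $W'$ when passing from the $\ell$-th to the $(\ell-1)$-th order derivative. The underlying algebraic mechanism is an amplification of the non-derivative identity $(M_{Z_k}^* \otimes \mathrm{Id} - \mathrm{Id} \otimes R_{W_k^*})(\TA[K(\cdot, W)\xi]\SB) = \TA[K(\cdot, W)\xi][W_k^*, \SB]$ already verified in the discussion preceding \eqref{joint kernel of adjoint of mult operator}; what is genuinely new is the super-diagonal of $(W')_k^*$, which produces the extra lower-order derivative term absent from that simpler identity.
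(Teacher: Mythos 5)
Your proof of part (i) follows essentially the paper's route (which is only sketched there by reference to Proposition~\ref{properties of kernel elements}(iii) and Equation~\eqref{generalized reproducing property}): expand the generalized kernel element, invoke the reproducing formula for derivatives on each ordinary kernel element, and reassemble. That is fine.

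For part (ii) you take a genuinely different, dual route to the paper's. The paper tests the LHS of \eqref{action of mult operator on derivative} against an elementary tensor $f\otimes \SS\in\CH_m$, moves the operator to the left by adjointness, and evaluates $(M_{Z_k}f)(W')=W'_k\,f(W')$, where $W'$ is the bi-diagonal block matrix; the bi-diagonal structure of $W'_k$ then produces the two terms via the block structure of $f(W')$. You instead apply $M_{Z_k}^*\otimes\mathrm{Id}$ directly to the kernel-element expansion using \eqref{adjoint of mult op on ker element}, so that the $y$-argument of each $K_{W',v_i,y_j}$ is hit by $(W'_k)^*$, and the two summands emerge from the bi-diagonal structure of $(W'_k)^*$ acting on a vector supported in the last block. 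Both arguments hinge on the same structural fact about $W'_k$; the paper works ``function-side'' while you work ``kernel-element-side.'' Your route is arguably cleaner in that it avoids threading the inner-product bookkeeping through the density argument, since the identity on kernel elements is exact rather than being established through a limit. One step you compress but should make explicit is the identification $K_{W',v_i,y_j^{(1)}}=K_{W'',\tilde v_i,\tilde y_j}$ for the second-to-last-block contribution: since $y_j^{(1)}$ lies in the image of the coisometry $\pi:\C^{(\ell+1)m}\to\C^{\ell m}$ onto the first $\ell$ blocks and $\pi\,W'=W''\,\pi$, the nc kernel intertwining condition \eqref{intertwining condition for kernel*} gives $K(Z,W')(uv_i)\pi^*=K(Z,W'')(uv_i\pi^*)$, from which the desired identity follows with $\tilde v_i=v_i\pi^*$ and $\tilde y_j$ the truncation of $y_j^{(1)}$. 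With that spelled out, your argument is complete.
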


\begin{proof}
Since a similar proof as in the part (iii) in Proposition \ref{properties of kernel elements} together with the reproducing property of the generalized kernel elements given in Equation \eqref{generalized reproducing property} yield the part (i) we only prove part (ii).

Let $ f = ( f_1, \hdots, f_r ) \in \CH ( K ) $ and $ \SS \in \C^{ m \times m } $ so that $ f \otimes \SS \in \CH_m $. Then, for $ 1 \leq k \leq d $,
\begin{align*}
& \left\langle f \otimes \SS,  \left( M_{ Z_k }^* \otimes \mathrm{Id}_{ m \times m } - \mathrm{Id}_{ \CH ( K ) } \otimes R_{ W_k^* } \right)  \left( \TA ( _{ 1 ^* } \Delta_R^{ \ell } K ( \cdot, W, \hdots, W ) \xi ) ( X^1, \hdots, X^{ \ell } ) \SB \right) \right\rangle_{ HS } \\
& = \left\langle \! \! \left( M_{ Z_k } \otimes \mathrm{Id}_{ m \times m } \! - \!\mathrm{Id}_{ \CH ( K ) } \otimes R_{ W_k } \right) \! ( f \otimes \SS ), \! \sum_{ i, j =1 }^{ m } \! K_{\! _{ ( X^1, \hdots, X^{ \ell } ) } W^{ \oplus { \ell } }, \left[ \begin{smallmatrix} ( \se \otimes \varepsilon_i ) \TA & 0 \end{smallmatrix}\right], \left[ \begin{smallmatrix} 0 \\ \varepsilon_j^* \otimes \xi \end{smallmatrix} \right] } \otimes \varepsilon_i^* \varepsilon_j \SB \! \right\rangle_{ HS } \\
& = \sum_{ i, j = 1 }^m \left[ \left\langle M_{ Z_k } \otimes \mathrm{Id}_{ m \times m } ( f \otimes \SS ), K_{ _{ ( X^1, \hdots, X^{ \ell } ) } W^{ \oplus { \ell } }, \left[ \begin{smallmatrix}( \se \otimes \varepsilon_i ) \TA & 0 \end{smallmatrix}\right], \left[ \begin{smallmatrix} 0 \\ \varepsilon_j^* \otimes \xi \end{smallmatrix} \right] } \otimes \varepsilon_i^* \varepsilon_j \SB \right\rangle_{ HS } \right] \\
& - \sum_{ i, j = 1 }^m \left[ \left\langle \mathrm{Id}_{ \CH ( K ) } \otimes R_{ W_j } ( f \otimes \SS ), K_{ _{ ( X^1, \hdots, X^{ \ell } ) } W^{ \oplus { \ell } }, \left[ \begin{smallmatrix} ( \se \otimes \varepsilon_i ) \TA & 0 \end{smallmatrix}\right], \left[ \begin{smallmatrix} 0 \\ \varepsilon_j^* \otimes \xi \end{smallmatrix} \right] } \otimes \varepsilon_i^* \varepsilon_j \SB \right\rangle_{ HS } \right] \\
& = \sum_{ i, j = 1 }^m \left[ \left\langle _{ ( X_k^1, \hdots, X_k^{ \ell } ) } W_k^{ \oplus { \ell } } f \left( _{ ( X^1, \hdots, X^{ \ell } ) } W^{ \oplus { \ell } } \right) \left[ \begin{smallmatrix} \TA^* ( \se \otimes \varepsilon_i^* ) \\ 0 \end{smallmatrix}\right], \left[ \begin{smallmatrix} 0 \\ \varepsilon_j^* \otimes \xi \end{smallmatrix} \right] \right\rangle_{ ( \C^r )^{ \ell m } } \langle \SS, \varepsilon_i^* \varepsilon_j \SB \rangle_{ \C^{ m \times m } } \right] \\
& - \sum_{ i, j = 1 }^m \left[ \left\langle f \left( _{ ( X^1, \hdots, X^{ \ell } ) } W^{ \oplus { \ell } } \right) \left[ \begin{smallmatrix} \TA^* ( \se \otimes \varepsilon_i^* ) \\ 0 \end{smallmatrix}\right], \left[ \begin{smallmatrix} 0 \\ \varepsilon_j^* \otimes \xi \end{smallmatrix} \right] \right\rangle_{ ( \C^r )^{ { \ell } m } } \langle \SS W_k, \varepsilon_i^* \varepsilon_j \SB \rangle_{ \C^{ m \times m } } \right] \\
& = \sum_{ i, j = 1 }^m \left[ \left\langle X^{ \ell }_k \Delta_L^{ { \ell } - 1 } f ( W, \hdots W ) ( X^1, \hdots, X^{ { \ell } - 1 } ) ( \TA^* ( \se \otimes \varepsilon_i^* ) ) , \varepsilon_j^* \otimes \xi \right\rangle_{ ( \C^r )^m } \langle \SS, \varepsilon_i^* \varepsilon_j \SB \rangle_{ \C^{ m \times m } } \right] \\
& + \sum_{ i, j = 1 }^m \left[ \left\langle W_k \Delta_L^{ \ell } f ( W, \hdots, W ) ( X^1, \hdots, X^{ \ell } ) ( \TA^* ( \se \otimes \varepsilon_i^* ) ), \varepsilon_j^* \otimes \xi \right\rangle_{ ( \C^r )^m} \langle \SS, \varepsilon_i^* \varepsilon_j \SB \rangle_{ \C^{ m \times m } } \right] \\
& - \sum_{ i, j = 1 }^m \left[ \left\langle \Delta_L^{ \ell } f ( W, \hdots, W ) ( X^1, \hdots, X^{ \ell } ) ( \TA^* ( \se \otimes \varepsilon_i^* ) ), \varepsilon_j^* \otimes \xi \right\rangle_{ ( \C^r )^m } \langle \SS W_k, \varepsilon_i^* \varepsilon_j \SB \rangle_{ \C^{ m \times m } } \right] \\
& = \left\langle f \otimes \SS, \TA ( _{1^*} \Delta_R^{ { \ell } - 1 } K ( \cdot, W, \hdots, W ) \xi ) ( X^1, \hdots, X^{ { \ell } -1 } )  ( X^{ \ell }_k )^* \SB  \right\rangle_{ HS } \\
& +  \left\langle f \otimes \SS, \TA ( _{1^*} \Delta_R^{ { \ell } } K ( \cdot, W, \hdots, W ) \xi ) ( X^1, \hdots, X^{ \ell } )  [ W_k^*, \SB ]  \right\rangle_{ HS }  
\end{align*}
where the last equality holds due to the following identities
\begin{align*} 
& \sum_{ i, j = 1 }^m \left[ \left\langle W_k \Delta_L^{ \ell } f ( W, \hdots, W ) ( X^1, \hdots, X^{ \ell } ) ( \TA^* ( \se \otimes \varepsilon_i^* ) ), \varepsilon_j^* \otimes \xi \right\rangle_{ ( \C^r )^m} \langle \SS, \varepsilon_i^* \varepsilon_j \SB \rangle_{ \C^{ m \times m } } \right] \\
& -  \sum_{ i, j = 1 }^m \left[ \left\langle \Delta_L^{ \ell } f ( W, \hdots, W ) ( X^1, \hdots, X^{ \ell } ) ( \TA^* ( \se \otimes \varepsilon_i^* ) ), \varepsilon_j^* \otimes \xi \right\rangle_{ ( \C^r )^m } \langle \SS W_k, \varepsilon_i^* \varepsilon_j \SB \rangle_{ \C^{ m \times m } } \right] \\
& = \sum_{ t =1 }^r \overline{ \xi }_t \left\langle \left(\Delta_L^{ \ell } f_t ( W, \hdots, W ) ( X^1, \hdots, X^{ \ell } ) \otimes \SS \right) ( \TA^* ), [ W_k^*, \SB ] \right\rangle_{ \C^{ m \times m } } . 
\end{align*}
Since $ \CH_m $ is the dense linear span of elementary tensors of the form $ f \otimes \SS $ with $ f \in \CH ( K ) $ and $ \SS \in \C^{ m \times m } $, the proof of part (ii) follows from the computation above.
\end{proof}

\begin{cor} \label{nc generalized eigenspaces for mult op}
Let $ \Omega \subset \C^d_{ nc } $ be a uniformly open set and $ \CH ( K ) $ be a nc reproducing kernel Hilbert space with the cp nc kernel $ K : \Omega \times \Omega \to \CL ( \C_{ nc }, \C^{ r \times r }_{ nc } ) $. For $ m \in \N $ and $ W \in \Omega_m $, let $ \mathfrak{ L }_{ \ell } $ and $ \mathfrak{ B }_{ \ell } $, respectively, denote the left module and bi-module generated by the set $ \{ K ( \cdot, W ) \sigma_s, $ $ ( _{ 1 * } \Delta_R^j K ( \cdot, W, \hdots, W ) \sigma_t ) ( X^1, \hdots, X^j ) : 1 \leq j \leq { \ell }, 1 \leq s, t \leq r \} $ over $ \C^{ m \times m } $. Then
\begin{itemize}
\item[(i)]  $ \mathfrak{ L }_{ \ell } = \bigcap_{ j = 1 }^d \left( M^*_{ Z_j } \otimes \mathrm{Id}_{ \C^{ m \times m } } - \mathrm{Id}_{\C^{ m \times m } } \otimes R_{ W^*_j } \right)^{ - 1 } ( \mathfrak{ B }_{ { \ell } - 1 } )  $
where $ \mathfrak{ B }_0 $ is the bi-module generated by the set $ \{ K ( \cdot, W ) \sigma_s : 1 \leq s \leq r \} $.\\
\item[(ii)] For each $ \ell \in \N $, $ \mathfrak{ B }_{ \ell } $ is an invariant subspace of $ M^*_{ Z_j } \otimes \mathrm{Id}_{ \C^{ m \times m } } - \mathrm{Id}_{\C^{ m \times m } } \otimes R_{ W^*_j } $, $ 1 \leq j \leq d $.\\
\item[(iii)] $ \overline{ \bigvee_{ \ell \in \N \cup \{ 0 \} } \left\{ h_{ i j } : ( \! ( h_{ i j } ) \! )_{ i, j = 1 }^m \in \mathfrak{ B }_{ \ell } \right\} } = \CH ( K ) $.
\end{itemize}
\end{cor}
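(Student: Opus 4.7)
The plan is to handle the three parts in the order (ii), (i), (iii), since Proposition \ref{properties of generalized kernel elements}(ii) is the key computational input for the first two, and (iii) is an independent reproducing-kernel argument. For brevity, write $D_k := M_{Z_k}^* \otimes \mathrm{Id}_{\C^{m \times m}} - \mathrm{Id}_{\CH(K)} \otimes R_{W_k^*}$. For (ii), Proposition \ref{properties of generalized kernel elements}(ii) applied to an arbitrary generator $G = \mathsf{A}\bigl( _{1^*}\Delta_R^j K(\cdot, W, \ldots, W)\sigma_t \bigr)(X^1, \ldots, X^j)\mathsf{B}$ of $\mathfrak{B}_\ell$ (with $0 \leq j \leq \ell$) decomposes $D_k G$ as the sum of a generator of order $j-1$ with right multiplier $(X_k^j)^* \mathsf{B}$ and a generator of order $j$ with right multiplier $[W_k^*, \mathsf{B}]$; both lie in $\mathfrak{B}_\ell$, and linearity completes the claim. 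For the inclusion $\mathfrak{L}_\ell \subseteq \bigcap_k D_k^{-1}(\mathfrak{B}_{\ell-1})$ in (i), applying the same formula to a generator of $\mathfrak{L}_\ell$, whose right multiplier is $\mathrm{Id}$, kills the commutator $[W_k^*, \mathrm{Id}] = 0$, and only the lowering term of order $j-1 \leq \ell-1$ remains, belonging to $\mathfrak{B}_{\ell-1}$.

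The reverse inclusion in (i) is the substantive step. From the reproducing identity in Proposition \ref{properties of generalized kernel elements}(i), taking $\mathsf{B} = \mathrm{Id}$ and letting $\mathsf{A}$ range over $\C^{m \times m}$ yields, via non-degeneracy of the trace pairing, that $h \in \mathfrak{L}_\ell^\perp$ precisely when $\Delta_L^j h_t(W, \ldots, W)$ vanishes identically as a multilinear map for all $0 \leq j \leq \ell$ and $1 \leq t \leq r$. Given $h \in \bigcap_k D_k^{-1}(\mathfrak{B}_{\ell-1})$, decompose $h = h_1 + h_2$ with $h_1 \in \mathfrak{L}_\ell$ and $h_2 \in \mathfrak{L}_\ell^\perp$; the forward direction applied to $h_1$ gives $D_k h_2 \in \mathfrak{B}_{\ell-1}$ for every $k$. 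Iterating the invariance of part (ii) applied to $\mathfrak{B}_{\ell-1}$ shows that every composition $D_{k_1} \cdots D_{k_p}(h_2)$ also lies in $\mathfrak{B}_{\ell-1}$; pairing these iterates against the generalized kernel elements and their derivatives via Proposition \ref{properties of generalized kernel elements}, one extracts in succession the vanishing of the higher-order coefficients $\Delta_L^p (h_2)_t(W, \ldots, W)$ for every $p \geq 0$. Theorem \ref{convergence of TT series} then forces $h_2 \equiv 0$ on a uniformly open nc ball around $W$, whence $h_2 = 0$ by the uniqueness of nc analytic continuation across the connected levels of $\Omega$.

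For (iii), suppose $f \in \CH(K)$ is orthogonal in $\CH(K)$ to every entry of every matrix in $\bigcup_\ell \mathfrak{B}_\ell$. By Equation \eqref{module action on derivative of kernel elements 1}, each such entry is, up to scalars, a kernel element $K_{_{(X^1, \ldots, X^\ell)} W^{\oplus \ell}, v, y}$ which, by Proposition \ref{properties of kernel elements}(ii), equals a value of $_{1^*}\Delta_R^\ell K(\cdot, W, \ldots, W)$ for an appropriate choice of parameters. The reproducing formula in Proposition \ref{properties of kernel elements}(iii) then translates the orthogonality of $f$ into $\Delta_R^\ell f(W, \ldots, W) \equiv 0$ for every $\ell \geq 0$, as the parameters $v, y, X^1, \ldots, X^\ell$ vary freely. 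The Taylor-Taylor expansion (Theorem \ref{convergence of TT series}) yields $f \equiv 0$ on a uniformly open nc ball around $W$, and hence identically on $\Omega$ by the uniqueness of nc analytic continuation. The main obstacle is the reverse inclusion in (i): the induction on the filtration $\{\mathfrak{B}_\ell\}$ must be organized so that the right-action commutators $[W_k^*, \mathsf{B}]$ — which have no commutative counterpart in the classical Cowen-Douglas setting — do not create uncontrolled higher-order tails when descending through the bi-module hierarchy.
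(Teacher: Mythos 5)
Your treatments of parts (ii) and (iii) are correct and match the paper's in substance: (ii) is a direct application of Proposition~\ref{properties of generalized kernel elements}(ii) to a generic bi-module generator, and (iii) combines the reproducing formula for derivatives of generalized kernel elements with the Taylor–Taylor expansion and the identity theorem. The forward inclusion $\mathfrak{L}_\ell \subseteq \bigcap_k D_k^{-1}(\mathfrak{B}_{\ell-1})$ in (i) is also argued exactly as in the paper, via the vanishing of $[W_k^*, \mathrm{Id}]$ in the lowering formula.

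The reverse inclusion in (i) is where you depart from the paper, and your argument has a gap. The paper establishes equality by a dimension count: it records $\dim_\C \mathfrak{L}_\ell = rm^2 \tfrac{(\ell+1)(\ell+2)}{2}$ and $\dim_\C \mathfrak{B}_\ell = rm^2 \tfrac{\ell(\ell+3)}{2}$, uses that $\dim_\C \ker D_{\boldsymbol{M}^* - W^*} = rm^2$, and matches dimensions --- no orthogonal decomposition of $h$ is ever invoked. You instead write $h = h_1 + h_2$ with $h_2 \in \mathfrak{L}_\ell^\perp$, iterate $D$, and claim that ``pairing these iterates against the generalized kernel elements and their derivatives ... one extracts in succession the vanishing of the higher-order coefficients.'' That step is not justified. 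Membership of $D_{k_1}\cdots D_{k_p}(h_2)$ in $\mathfrak{B}_{\ell-1}$ only gives $\langle D_{k_1}\cdots D_{k_p}(h_2), g\rangle = 0$ for $g \in \mathfrak{B}_{\ell-1}^\perp$, but the generalized kernel derivatives of order $\geq \ell$ are \emph{not} in $\mathfrak{B}_{\ell-1}^\perp$: derivatives of different orders are in general not mutually orthogonal in $\CH_m$, so there is no Parseval-type structure (as there is for the monomials $z^j$ in a Hardy-space model) under which $\mathfrak{B}_{\ell-1}^\perp$ would be the span of the high-order kernel derivatives. Consequently the reproducing formula applied to the iterates yields no information about $\Delta_L^p(h_2)_t(W,\ldots,W)$ for $p > \ell$, and the conclusion $h_2 = 0$ does not follow. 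To close the argument one would either have to control the Gram matrix of the kernel derivatives, or --- as the paper does --- avoid the orthogonal decomposition entirely and count dimensions, which requires using the identity $\dim_\C \ker D_{\boldsymbol{M}^* - W^*} = rm^2$ that your proof never invokes.
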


\begin{proof}
(i) First, we observe from Equation \eqref{action of mult operator on derivative} that 
$$ ( M^*_{ Z_j } \otimes \mathrm{Id}_{ \C^{ m \times m } } - \mathrm{Id}_{\C^{ m \times m } } \otimes R_{ W^*_j } ) ( \mathfrak{ L }_{ \ell } ) \subset \mathfrak{ B }_{ \ell - 1 }, $$ 
for every $ 1 \leq j \leq d $. Also, note that 
$$ \dim_{ \C } \left( \mathfrak{ L }_{ \ell } \right) = r m^2 \frac{ ( \ell +1 ) ( \ell + 2 ) }{ 2 } ~ \text{and} ~  \dim_{ \C } \left( \mathfrak{ B }_{ \ell } \right) = r m^2 \frac{ \ell ( \ell + 3 ) }{ 2 } . $$
Since the dimension of the joint kernel of the operators $ M^*_{ Z_j } \otimes \mathrm{Id}_{ \C^{ m \times m } } - \mathrm{Id}_{\C^{ m \times m } } \otimes R_{ W^*_j } $, $ 1 \leq j \leq d $ -- which is $ \ker D_{ \boldsymbol{ M }^* - W^* } $ -- is $ r m^2 $ as a vector space over $ \C $, it follows that  
$$ \dim_{ \C} ( \mathfrak{ L }_{ \ell } ) = \dim_{ \C } \left( \bigcap_{ j = 1 }^d \left( M^*_{ Z_j } \otimes \mathrm{Id}_{ \C^{ m \times m } } - \mathrm{Id}_{\C^{ m \times m } } \otimes R_{ W^*_j } \right)^{ - 1 } ( \mathfrak{ B }_{ \ell - 1 } ) \right) $$  
verifying the desired equality.

(ii) The proof follows from part (i) and Equation \eqref{action of mult operator on derivative}.

(iii) Let $ f \in \CH ( K ) $ be perpendicular to the subspace $ \overline{ \bigvee_{ l \in \N \cup \{ 0 \} } \left\{ h_{ i j } : ( \! ( h_{ i j } ) \! )_{ i, j = 1 }^m \in \mathfrak{ B }_{ \ell } \right\} } $. Then $ F : = f \otimes I_m $ is perpendicular to $ \mathfrak{ B }_{ \ell } $ for all $ \ell \in \N \cup \{ 0 \} $ and consequently, it follows from Equation \eqref{reproducing of derivatives} that $ F $ is identically zero on a uniformly-open neighbourhood of $ W $. Thus $ f $ is identically zero on a uniformly-open neighbourhood of $ W $ verifying that $ f = 0 $ in $ \CH ( K ) $. 
\end{proof}

\begin{rem}
    Note that the subspaces $ \mathfrak{ B }_{ \ell } $, for $ \ell \in \N $, can be viewed as matricial or nc analogues of the generalized eigenspaces of the tuple 
    $$ \big( M_{ Z_1 }^* \otimes \mathrm{Id}_{ \C^{ m \times m } }, \hdots,  M_{ Z_d }^* \otimes \mathrm{Id}_{ \C^{ m \times m } } \big) $$ 
    of order $ \ell $ associated to the matricial eigenvalue $ W^* = ( W_1^*, \hdots, W_d^* ) \in \Omega^*_m $, $ m \in \N $. We make use of this corollary later in subsection \ref{local operators} to study the local operators associated to an element in the nc Cowen-Douglas class.  
\end{rem}

\subsection{Hilbert \texorpdfstring{$ C^* $}{C^*} module structures on \texorpdfstring{$ \CH ( K ) \otimes \C^{ m \times m } $}{CH (K) \otimes \C^{ m \times m } }} \label{Hilbert module structure}
As in the preceding subsection, let us consider a cp nc kernel $ K : \Omega \times \Omega \to \CL ( \C_{ nc }, \C^{ r \times r }_{ nc } ) $ and $ m \in \N $. We now discuss canonical Hilbert $ C^* $ left and right module structures on the space $ \CH_m $ of $ m \times m $ matrices over $ \CH ( K ) $.

We define the mappings $ \langle \cdot, \cdot \rangle_L : \CH_m \times \CH_m \to \C^{ m \times m } $ and $ \langle \cdot, \cdot \rangle_R : \CH_m \times \CH_m \to \C^{ m \times m } $ by 
$$ \langle F, G \rangle_L : = \bigg( \!\! \bigg( \sum_{ \ell = 1 }^m \langle f_{ i \ell }, g_{ j \ell } \rangle_{ \CH ( K ) } \bigg) \!\! \bigg)_{ i, j = 1 }^m \quad \text{and} \quad \langle F, G \rangle_R : = \bigg( \! \! \bigg( \sum_{ \ell = 1 }^m \langle f_{ \ell j }, g_{ i \ell } \rangle_{ \CH ( K ) } \bigg) \!\! \bigg)_{ i, j = 1 }^m, $$
respectively, where $ F = (\!( f_{ ij } ) \! )_{ i, j = 1 }^m $ and $ G = (\!( g_{ ij } ) \! )_{ i, j = 1 }^m $ in $ \CH_m $. It can be seen that $ \langle \cdot, \cdot \rangle_L $ and $ \langle \cdot, \cdot \rangle_R $ equip $ \CH_m $ a left and right Hilbert $ C^* $ module over $ \C^{ m \times m } $, respectively. We also point out that these two Hilbert module structures give rise to the Hilbert space structure on $ \CH_m $ introduced in Equation \eqref{inner product of matrix spaces}, when we take their traces. Namely, for $ F = (\!( f_{ ij } ) \! )_{ i, j = 1 }^m $ and $ G = (\!( g_{ ij } ) \! )_{ i, j = 1 }^m $ in $ \CH_m $,
$$ \mathrm{trace} \langle F, G \rangle_L = \mathrm{trace} \langle F, G \rangle_R = \langle F, G \rangle_{HS} . $$
We now study left and right Hilbert $ C^* $ module structures on $ \CH_m $ separately in the following two subsections.

\subsubsection{Left Hilbert $ C^* $ module structures on $ \CH_m $} Define the \textbf{left generalized evaluation functional} $ \ev_W^L : \CH_m \to ( \C^{ m \times m } )^{ r \times 1 } $ as follows:
$$ \ev_W^L ( F ) : = \sum_{ j = 1 }^n \SA_j f_j ( W ), $$
where $ F = \sum_{ j = 1 }^n f_j \otimes \SA_j $ for some $ f_1, \hdots, f_n \in \CH ( K ) $ and $ \SA_1, \hdots, \SA_n \in \C^{ m \times m } $. Since elements in $ \CH ( K ) $ are nc functions on $ \Omega $ taking values in $ \C^r_{ nc } $, we consider, for each $ 1 \leq s \leq r $, the $ s $-th \textbf{left generalized evaluation functional} 
$$ \ev_{W, s}^L : \CH_m \to \C^{ m \times m } \quad \text{as} \quad \ev_{W, s}^L ( F ) : = \sum_{ j = 1 }^n \SA_j f_{ j, s } ( W ) , $$
where $ F \in \CH_m $ as above with each $ f_j = ( f_{ j, 1 }, \hdots, f_{ j, r } ) $, and $ f_{ j, s } $ is a nc function on $ \Omega $ taking values in $ \C_{ nc } $, for $ 1 \leq j \leq n $, $ 1 \leq s \leq r $. We now study the elementary properties of these left generalized evaluation functionals.

\begin{prop} \label{left evaluation}
    Let $ \ev_W^L : \CH_m \to ( \C^{ m \times m } )^{ r \times 1 } $ be as above.
\begin{enumerate}
    \item Reproducing property: For $ F \in \CH_m $, $ W \in \Omega_m $ and $ 1 \leq s \leq r $,
    $$ \langle F, K ( \cdot, W ) \sigma_s \rangle_L = \ev_{ W, s }^L ( F ) . $$
     \item For $ \boldsymbol{ \SA } = \begin{bmatrix} \SA_1 \\ \vdots \\ \SA_r \end{bmatrix} \in ( \C^{ m \times m } )^{ r \times 1 } $, 
    $ ( \ev_W^L )^* ( \boldsymbol{ \SA } ) = \sum_{ s = 1 }^r \SA_s [K ( \cdot, W ) \sigma_s ] .$ In particular, both $ \ev_W^L $ and $ ( \ev_W^L )^* $ are left module maps over $ \C^{ m \times m } $.
     \item $ \ev_W^L \circ ( \ev_W^L )^* : ( \C^{ m \times m } )^{ r \times 1 } \to ( \C^{ m \times m } )^{ r \times 1 } $ is the linear operator given by the formula:
    $$  \ev_W^L \circ ( \ev_W^L )^* ( \boldsymbol{ \SA } ) : = \boldsymbol{ \SA } \big( \! \big( \SE ( W )_{ s t } \big) \! \big)_{ s, t = 1 }^r : = \begin{bmatrix}
       \sum_{ s = 1 }^r \SA_s \SE ( W )_{ 1 s } \\ \vdots \\ \sum_{ s = 1 }^r \SA_s \SE ( W )_{ r s } 
       \end{bmatrix}, $$
       where each $ \SE ( W )_{ s t } $ is an $ m \times m $ matrix defined as
       $$ \SE ( W )_{ s t } : = \big( \! \big( \operatorname{trace} \big( K_{ s t } ( W, W ) ( E_{ i j }^* ) \big) \big) \! \big)_{ i, j = 1 }^m . $$
       In particular, when $ r = 1 $, $ \ev_W^L \circ ( \ev_W^L )^* ( \SA ) = \SA K ( W, W )^* ( I_m )^{ \top } $.
\end{enumerate}
\end{prop}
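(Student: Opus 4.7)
The three parts build sequentially, with the reproducing identity of Part (1) doing most of the work; Parts (2) and (3) then follow by direct computation, with Part (3) requiring some care with multi-index bookkeeping.

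For Part (1), I would write a general element of $\CH_m$ as a finite sum $F = \sum_k g_k \otimes \SB_k$ of elementary tensors and compute the $(i,j)$-entry of $\langle F, K(\cdot, W) \sigma_s \rangle_L$ directly from the definition of the left Hilbert $C^*$-module inner product. Using the expansion \eqref{coordinate free kernel}, this entry takes the form $\sum_{\ell} \langle f_{i\ell},\, K_{W,\, \se \otimes \varepsilon_j,\, \varepsilon_\ell^* \otimes \sigma_s} \rangle_{\CH(K)}$, where $f_{i\ell}$ is the $(i, \ell)$-entry of $F$. Applying the scalar reproducing property \eqref{reproducing property} from Theorem \ref{nc rkhs} identifies each such inner product with the $(\ell, j)$-entry of the $s$-th component $[f_{i\ell}]_s(W) \in \C^{m \times m}$, after which the sum over $\ell$ collapses to the $(i, j)$-entry of $\sum_k \SB_k [g_k]_s(W)$, which is $\ev_{W, s}^L(F)$.

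For Part (2), the formula for $(\ev_W^L)^*$ is a formal consequence of Part (1) together with the identity $\operatorname{trace}(\langle F, G \rangle_L \SA^*) = \langle F, \SA G \rangle_{HS}$, itself immediate from the definition of $\langle \cdot, \cdot \rangle_L$ and the relation $\operatorname{trace}\langle F, G \rangle_L = \langle F, G \rangle_{HS}$. Unfolding $\langle \ev_W^L(F), \boldsymbol{\SA} \rangle = \sum_s \operatorname{trace}(\langle F, K(\cdot, W)\sigma_s \rangle_L \SA_s^*)$ through this identity yields $\langle F, \sum_s \SA_s [K(\cdot, W)\sigma_s] \rangle_{HS}$, forcing $(\ev_W^L)^*(\boldsymbol{\SA}) = \sum_s \SA_s [K(\cdot, W)\sigma_s]$ by uniqueness of the adjoint. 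The left $\C^{m \times m}$-module property of $(\ev_W^L)^*$ is immediate from its explicit formula; that of $\ev_W^L$ follows from Part (1) combined with left-linearity of $\langle \cdot, \cdot \rangle_L$ in its first slot.

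For Part (3), I would reduce, via the module property of $\ev_W^L$ and Part (2), to computing the $t$-th component $\sum_s \SA_s \, \ev_{W, t}^L(K(\cdot, W) \sigma_s)$, so the essential identity is $\ev_{W, t}^L(K(\cdot, W)\sigma_s) = \SE(W)_{ts}$. To establish it, expand $K(\cdot, W) \sigma_s$ using \eqref{coordinate free kernel}, apply $\ev_{W, t}^L$ term-by-term via $\ev_{W, t}^L(f \otimes \SS) = \SS [f]_t(W)$, and unfold $[K_{W, \se \otimes \varepsilon_i, \varepsilon_j^* \otimes \sigma_s}]_t(W)$ using $K_{W, v, y}(Z)(u) = K(Z, W)(uv)y$. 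After the matrix-unit sums collapse, the $(i, k)$-entry of $\ev_{W, t}^L(K(\cdot, W)\sigma_s)$ becomes the trace over the internal index of the $(t, s)$-scalar projection of $K(W, W)$ evaluated at $E_{ki} = E_{ik}^*$, which is precisely $\operatorname{trace}(K_{ts}(W, W)(E_{ik}^*))$. The main obstacle throughout is the bookkeeping in this step: correctly tracking the $\CA$-valued matrix units $E_{ij}$, the scalar projection from the $\C^{r \times r}$-valued entries of $K(W, W)$, and the internal trace, and verifying the transpose that appears in the degenerate $r = 1$ formula. Parts (1) and (2) are otherwise formal consequences of the scalar reproducing property and the trace relation between the two inner products.
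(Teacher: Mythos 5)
Your proposal is correct and follows essentially the same approach as the paper's proof: expand $K(\cdot,W)\sigma_s$ via \eqref{coordinate free kernel}, apply the scalar reproducing property \eqref{reproducing property} entrywise, and exploit the Hilbert $C^*$-module axiom $\langle F, G\rangle_L\SA^* = \langle F,\SA G\rangle_L$ together with $\operatorname{trace}\langle\cdot,\cdot\rangle_L = \langle\cdot,\cdot\rangle_{HS}$ to pass to the adjoint. The only cosmetic difference is that you establish Part (2) at the scalar Hilbert--Schmidt level and Part (3) via the module property of $\ev^L_{W,t}$, whereas the paper works directly with the $\C^{m\times m}$-valued pairing and unfolds the adjoint pairing $\langle(\ev_W^L)^*(\boldsymbol{\SA}),(\ev_W^L)^*(I_m\otimes\sigma_t)\rangle_L$; both reduce to computing $\langle K(\cdot,W)\sigma_s,K(\cdot,W)\sigma_t\rangle_L = \SE(W)_{ts}$ and are interchangeable.
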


\begin{proof}
    Let $ f = ( f_1, \hdots, f_r ) \in \CH ( K ) $ and $ \SA \in \C^{ m \times m } $ so that $ f \otimes \SA \in \CH_m $. Then we compute, for $ 1 \leq s \leq r $ and $ W \in \Omega_m $,
   \begin{align*}
         &\langle f \otimes \SA, K ( \cdot, W ) \sigma_s \rangle_L = \sum_{ i, j = 1 }^m \big\langle f \otimes \SA, K_{ W, \varepsilon_i, \varepsilon_j^* \otimes \sigma_s } \otimes \varepsilon_i^* \varepsilon_j \big\rangle_L \\
         &= \sum_{ i, j = 1 }^m \big\langle f, K_{ W, \varepsilon_i, \varepsilon_j^* \otimes \sigma_s } \big\rangle_{ \CH ( K ) } \SA \varepsilon_j^* \varepsilon_i = \sum_{ \ell, k = 1 }^m \sum_{ i, j = 1 }^m \big\langle f ( W ) ( \varepsilon_i^* ), \varepsilon_j^* \otimes \sigma_s \big\rangle_{ \C^{ m r \times 1 } } a_{ \ell k } \varepsilon_{ \ell }^* \varepsilon_k \varepsilon_j^* \varepsilon_i \hspace{0.5in}\\
         &= \sum_{ \ell = 1 }^m \sum_{ i, j = 1 }^m \big\langle f ( W ) ( \varepsilon_i^* ), \varepsilon_j^* \otimes \sigma_s \big\rangle_{ \C^{ m r \times 1 } } a_{ \ell j } \varepsilon_{ \ell }^* \varepsilon_i = \sum_{ i, \ell = 1 }^m \big\langle f ( W ) ( \varepsilon_i^* ), \sum_{ j = 1 }^m  \overline{ a_{ \ell j } } \varepsilon_j^* \otimes \sigma_s \big\rangle_{ \C^{ m r \times 1 } } \varepsilon_{ \ell }^* \varepsilon_i \hspace{0.1in}\\
         &=  \sum_{ i, \ell = 1 }^m \big\langle f ( W ) ( \varepsilon_i^* ), \SA^* \varepsilon_{ \ell }^* \otimes \sigma_s \big\rangle_{ \C^{ m r \times 1 } } \varepsilon_{ \ell }^* \varepsilon_i = \sum_{ i, \ell = 1 }^m \big\langle \SA f ( W ) ( \varepsilon_i^* ), \varepsilon_{ \ell }^* \otimes \sigma_s \big\rangle_{ \C^{ m r \times 1 } } \varepsilon_{ \ell }^* \varepsilon_i \hspace{0.65in} \\
         &= \SA f_s ( W ) = \ev_{ W, s }^L ( f \otimes \SA ). \hspace{4.05in}
    \end{align*}
This completes the proof of part (1).

    For the proof of (2), let $ \boldsymbol{ \SA } \in ( \C^{ m \times m } )^{ r \times 1 } $ be as given and $ F = \sum_{ i = 1 }^n f_i \otimes \SB_i $ with $ \SB_1, \hdots, \SB_n \in \C^{ m \times m } $ and each $ f_i = \sum_{ s = 1 }^r f_{ i, s } \otimes \sigma_s $. Then we compute
    \begin{multline*}
        \langle \ev_W^L ( F ), \boldsymbol{ \SA } \rangle  =  \sum_{ i = 1 }^n \sum_{ s = 1 }^r \langle \SB_i f_{ i, s } ( W ), \SA_s \rangle 
         =  \sum_{ i = 1 }^n \sum_{ s = 1 }^r \SB_i f_{ i, s } ( W ) \SA_s^* 
         =  \sum_{ s = 1 }^r \big( \sum_{ i = 1 }^n \SB_i f_{ i, s } ( W ) \big)  \SA_s^* \\
         =  \sum_{ s = 1 }^r \ev_{ W, s }^R ( F ) \SA_s^* 
         =  \sum_{ s = 1 }^r \langle F, K ( \cdot, W ) \sigma_s \rangle_L \SA_s^* 
         = \big\langle F, \sum_{ s = 1 }^r \SA_s K ( \cdot, W ) \sigma_s \big\rangle_L,
    \end{multline*}
    where the second last equality holds since $ \langle \cdot, \cdot \rangle_L $ defines a left Hilbert $ C^* $ module structure on $ \CH_m $. It thus verifies that
    $$ ( \ev_W^L )^* ( \boldsymbol{ \SA } ) = \sum_{ s = 1 }^r \SA_s [K ( \cdot, W ) \sigma_s ] . $$

    Finally, for the proof of part (3), we compute, for $ 1 \leq t \leq r $ and $ \boldsymbol{ \SA } \in ( \C^{ m \times m } )^{ r \times 1 } $,
    \begin{align*}
        & \big\langle \ev_W^L \circ ( \ev_W^L )^* ( \boldsymbol{ \SA } ), I_m \otimes \sigma_t \big\rangle 
        =  \big\langle ( \ev_W^L )^* ( \boldsymbol{ \SA } ), ( \ev_W^L )^* ( I_m \otimes \sigma_t ) \big\rangle_L \\
        & =  \sum_{ s = 1 }^r \big\langle \SA_s [K ( \cdot, W ) \sigma_s ], I_m [K ( \cdot, W ) \sigma_t ] \big\rangle_L 
        =  \sum_{ s = 1 }^r \SA_s \big\langle K ( \cdot, W ) \sigma_s, K ( \cdot, W ) \sigma_t \big\rangle_L \\
        & =  \sum_{ s = 1 }^r \sum_{ i, j = 1 }^m \sum_{ \ell, k = 1 }^m \langle K_{ W, \varepsilon_i, \varepsilon_j^* \otimes \sigma_s }, K_{ W, \varepsilon_{ \ell }, \varepsilon_k^* \otimes \sigma_t } \rangle_{ \CH ( K ) } \SA_s \varepsilon_i^* \varepsilon_j \varepsilon_k^* \varepsilon_{ \ell } \\
        & =  \sum_{ s = 1 }^r \sum_{ i, j = 1 }^m \sum_{ \ell = 1 }^m \langle K_{ W, \varepsilon_i, \varepsilon_j^* \otimes \sigma_s }, K_{ W, \varepsilon_{ \ell }, \varepsilon_j^* \otimes \sigma_t } \rangle_{ \CH ( K ) } \SA_s \varepsilon_i^* \varepsilon_{ \ell } \\
        & =  \sum_{ s = 1 }^r \sum_{ i, j = 1 }^m \sum_{ \ell = 1 }^m \langle K_{ W, \varepsilon_i, \varepsilon_j^* \otimes \sigma_s } ( W ) ( \varepsilon_{ \ell }^* ), \varepsilon_j^* \otimes \sigma_t \rangle  \SA_s \varepsilon_i^* \varepsilon_{ \ell } \\
        & =  \sum_{ s = 1 }^r \sum_{ i, j = 1 }^m \sum_{ \ell = 1 }^m \langle K ( W, W ) ( \varepsilon_{ \ell }^* \varepsilon_i ) ( \varepsilon_j^* \otimes \sigma_s ), \varepsilon_j^* \otimes \sigma_t \rangle \SA_s \varepsilon_i^* \varepsilon_{ \ell } \\
        & =  \sum_{ s = 1 }^r \SA_s \big[ \sum_{ i, \ell = 1 }^m \!\! \big( \sum_{ j = 1 }^m \langle K ( W, W ) ( \varepsilon_{ \ell }^* \varepsilon_i ) ( \varepsilon_j^* \otimes \sigma_s ), \varepsilon_j^* \otimes \sigma_t \rangle \big) \varepsilon_i^* \varepsilon_{ \ell } \big] \\
        & =  \sum_{ s = 1 }^r \SA_s \big[ \sum_{ i, \ell = 1 }^m  \text{trace} \big( K ( W, W )_{ t s } ( \varepsilon_{ \ell }^* \varepsilon_i ) \big) \varepsilon_i^* \varepsilon_{ \ell } \big] 
         =  \sum_{ s = 1 }^r  \SA_s \SE ( W )_{ t s }.
    \end{align*}
    This validates the desired formula for $  \ev_W^L \circ ( \ev_W^L )^* ( \boldsymbol{ \SA } ) $.
\end{proof}

\begin{cor}
    For any $ \SA, \SB \in \C^{ m \times m } $, $ W \in \Omega_m $ and $ 1 \leq s, t \leq r $, 
    $$ \langle \SA K ( \cdot, W ) \sigma_s, \SB K ( \cdot, W ) \sigma_t \rangle_L = \SA  \SE ( W )_{ s t } \SB^*  $$
    where $  \SE ( W )_{ s t } $ is as defined in Proposition \ref{left evaluation}.
\end{cor}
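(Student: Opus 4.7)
The plan is to deduce this identity as an immediate consequence of Proposition~\ref{left evaluation}, by expressing both scaled generalized kernel elements as images of distinguished columns in $(\C^{m\times m})^{r\times 1}$ under $(\ev_W^L)^*$ and then invoking adjointness of $\ev_W^L$ and $(\ev_W^L)^*$ with respect to the left $\C^{m\times m}$-valued inner products on $\CH_m$ and $(\C^{m\times m})^{r\times 1}$.

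Concretely, let $E_s \in (\C^{m\times m})^{r\times 1}$ denote the column with $I_m$ in the $s$-th slot and zeros elsewhere, and similarly for $E_t$. Part (2) of Proposition~\ref{left evaluation} gives $(\ev_W^L)^*(E_s) = K(\cdot,W)\sigma_s$; since $(\ev_W^L)^*$ is a left $\C^{m\times m}$-module map, it follows that
$$
(\ev_W^L)^*(\SA E_s) \;=\; \SA\,K(\cdot,W)\sigma_s, \qquad (\ev_W^L)^*(\SB E_t) \;=\; \SB\,K(\cdot,W)\sigma_t.
$$
Applying the defining adjointness identity between $\ev_W^L$ and $(\ev_W^L)^*$ with respect to the natural left $C^*$-module inner products (on $(\C^{m\times m})^{r\times 1}$ this is the standard pairing $\langle\boldsymbol{\SC},\boldsymbol{\SD}\rangle = \sum_{k}\SC_k\SD_k^*$), we obtain
$$
\bigl\langle \SA\,K(\cdot,W)\sigma_s,\; \SB\,K(\cdot,W)\sigma_t\bigr\rangle_L
\;=\; \bigl\langle \ev_W^L\!\circ (\ev_W^L)^*(\SA E_s),\; \SB E_t\bigr\rangle .
$$

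The last step is to substitute the explicit formula from part (3) of Proposition~\ref{left evaluation} for the operator $\ev_W^L\!\circ (\ev_W^L)^*$, which shows that the $k$-th coordinate of $\ev_W^L\!\circ (\ev_W^L)^*(\SA E_s)$ is $\SA\,\SE(W)_{ks}$. Extracting the $t$-th coordinate and pairing against $\SB$ (contributing $\SB^*$ from the adjoint) yields the claimed expression $\SA\,\SE(W)_{st}\,\SB^*$. There is no genuine obstacle: the corollary is essentially a bookkeeping statement making the module action of $\SA$ and $\SB$ explicit in the already-established formula of Proposition~\ref{left evaluation}(3); the only care required is in tracking the indices $s,t$ and the slot conventions through the two applications of the module-map property of $(\ev_W^L)^*$.
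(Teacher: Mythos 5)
Your approach is correct and essentially matches the paper's: the paper's one-line proof simply reads the identity $\langle K(\cdot,W)\sigma_s,\, K(\cdot,W)\sigma_t\rangle_L = \SE(W)_{ts}$ directly off the intermediate step in the proof of Proposition~\ref{left evaluation}(3) and then applies the $\C^{m\times m}$-(bi)linearity of $\langle\cdot,\cdot\rangle_L$, whereas you route through the conclusion of part (3) together with the adjointness of $\ev_W^L$ and $(\ev_W^L)^*$ --- the same content, packaged differently. One index caveat: carrying your last step through carefully, the $t$-th coordinate of $\ev_W^L\circ(\ev_W^L)^*(\SA E_s)$ is $\SA\,\SE(W)_{ts}$, so the paired expression comes out as $\SA\,\SE(W)_{ts}\,\SB^*$ rather than $\SA\,\SE(W)_{st}\,\SB^*$; the same transposition is present between the paper's stated corollary and its own computation in the proof of part (3), so you appear to have reproduced a typo in the corollary's statement.
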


\begin{proof}
The proof follows directly from the proof of part (3) in the preceding proposition.    
\end{proof}

We now express certain non-degeneracy criteria of the cp nc kernel $ K $ in terms of the left generalized evaluation functional in the theorem below.

\begin{thm} \label{equivalent criterion for non-degeneracy}
    The following are equivalent for $ m \in \N $ and $ W \in \Omega_m $.
    \begin{enumerate}
        \item $ \ev^L_W : \CH_m \to ( \C^{ m \times m } )^{ 1 \times r } $ is a surjective.
        \item The matrix $$ \big( \! \big( K_{ t s } ( W, W )^* ( I_m ) \big) \! \big)_{ s, t = 1 }^r $$ is invertible, where $  K_{ t s } ( W, W )^* ( I_m ) $ is $ ( s, t ) $-th block, $ 1 \leq s, t \leq r $ and $ K_{ t s } ( W, W )^* $ is the adjoint of the linear mapping $  K_{ t s } ( W, W ): \C^{ m \times m } \to \C^{ m \times m } $ with respect to the Hilbert-Schmidt product on $ \C^{ m \times m } $. 
        \item $ K ( W, W ) : \C^{ m \times m } \to ( \C^{ m \times m } )^{ r \times r } $ satisfies: For $ P \in \C^{ m \times m } $ with $ P \geq 0 $,
        $$ K ( W, W ) ( P ) = 0 \hspace{0.1in} \text{if and only if} \hspace{0.1in} P = 0 . $$
        \item $ \CH_K $ has the following property:
        $$ \bigcap_{ f \in \CH_K } \ker f ( W ) = \{ 0 \} . $$
    \end{enumerate}
\end{thm}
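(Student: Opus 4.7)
The plan is to establish the equivalences via the hub $(1) \iff (2)$ together with $(1) \iff (4)$ and $(2) \iff (3)$, using Proposition \ref{left evaluation} as the primary tool throughout.

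For $(1) \iff (2)$: Since $\CH_m$ is a Hilbert space and the codomain of $\ev^L_W$ is finite-dimensional, the bounded operator $\ev^L_W$ is surjective if and only if the Gram operator $\ev^L_W \circ (\ev^L_W)^*$ is invertible on its codomain. Proposition \ref{left evaluation}(3) expresses this composition as right multiplication by the block matrix $\boldsymbol{\SE}(W) = (\!(\SE(W)_{st})\!)_{s,t=1}^r$, and a direct computation using the identity $\operatorname{trace}(E_{ij}^* A) = A_{ij}$ together with the Hilbert--Schmidt adjoint relation identifies each entry $\SE(W)_{st}$ with $K_{st}(W,W)^*(I_m)^*$. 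Since the invertibility of a block matrix is preserved under taking adjoints and swapping the $r \times r$ index structure $s \leftrightarrow t$, this matches up with the block matrix appearing in (2).

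For $(1) \iff (4)$: Surjectivity of $\ev^L_W$ from a Hilbert space onto a finite-dimensional target is equivalent to injectivity of the adjoint $(\ev^L_W)^*$, and by Proposition \ref{left evaluation}(2) the latter amounts to the statement that $\sum_{s=1}^r \SA_s\, K(\cdot, W)\sigma_s = 0$ in $\CH_m$ forces $\SA_s = 0$ for every $s$. Testing this vanishing against vectors of the form $F = f \otimes I_m$ with $f \in \CH(K)$ and using $\langle F, \SA G\rangle_L = \langle F, G\rangle_L \SA^*$, the condition unwinds to $\sum_{s=1}^r f_s(W)\SA_s^* = 0$ in $\C^{m \times m}$ for every $f \in \CH(K)$. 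Reading this equality column by column and identifying the $r$-tuple of components $(f_1(W), \dots, f_r(W))$ with the linear map $f(W) : \C^{m \times 1} \to (\C^r)^{m \times 1}$, the existence of a nontrivial solution is precisely the negation of (4).

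For $(2) \iff (3)$, which I expect to be the main obstacle: condition (3) is a non-degeneracy property of the completely positive map $K(W,W)$ on the positive cone of $\C^{m \times m}$, and the plan is to convert it into the invertibility of the Choi-type block matrix appearing in (2). The standard trace--duality identity $\operatorname{trace}(K(W,W)(P)) = \langle P, K(W,W)^*(I_{rm})\rangle_{HS}$ combined with testing against rank-one positives $P = vv^*$ reduces the faithfulness in (3) to a strict positivity condition on an object built from $K(W,W)^*$; the delicate step is then to match this object with $(\!(K_{ts}(W,W)^*(I_m))\!)_{s,t=1}^r$ by unpacking the $M_r(M_m)$ block decomposition $K(W,W)(P) = \sum_{s,t} E_{st} \otimes K_{st}(W,W)(P)$ and computing the Hilbert--Schmidt adjoint block-by-block. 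The complete positivity of $K(W,W)$ is what ultimately allows one to upgrade scalar-level non-degeneracy on rank-one positives into the full matricial invertibility statement, by forcing the off-diagonal blocks $K_{st}(W,W)^*(I_m)$ for $s \neq t$ into the correct compatibility with the diagonal ones.
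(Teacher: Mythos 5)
Your legs $(1) \iff (2)$ and $(1) \iff (4)$ follow the paper's route: both rest on the Gram operator $\ev_W^L \circ (\ev_W^L)^*$ from Proposition~\ref{left evaluation}(3) and on the injectivity of $(\ev_W^L)^*$ via Proposition~\ref{left evaluation}(2), and the index-swap and conjugation remark you make to match $\SE(W)$ with the block matrix in (2) is exactly the observation the paper uses.

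The proposed leg $(2) \iff (3)$, however, has a genuine gap at the very step you flag as delicate, and it cannot be closed by the argument you sketch. The trace--duality identity, once you unpack the block structure of the Hilbert--Schmidt adjoint, produces
$$ K(W,W)^*(I_{rm}) \;=\; \sum_{s=1}^r K_{ss}(W,W)^*(I_m) \;\in\; \C^{m \times m}, $$
which is the \emph{partial trace} of the $rm \times rm$ block matrix in (2), not the matrix itself. Testing against rank-one positives $P = vv^*$ then shows that (3) is equivalent to strict positivity of this single $m \times m$ partial trace, and complete positivity of $K(W,W)$ does \emph{not} upgrade positivity of the partial trace to invertibility of the whole block matrix: already for $r=2$, $m=1$ a positive semidefinite matrix $\left[\begin{smallmatrix} a & b \\ \bar{b} & c \end{smallmatrix}\right]$ with $ac = |b|^2$ has strictly positive trace $a+c$ yet is singular, and such degenerate Gram matrices do occur for cp nc kernels (for instance when one block of the kernel is identically zero, so that the generalized kernel elements $K(\cdot,W)\sigma_s$ are left-linearly dependent). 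The ``compatibility of the off-diagonal blocks'' you invoke at the end is therefore not a consequence of CP; it is the substance of the equivalence being proved, and the Choi-type route cannot supply it on its own. The paper avoids this entirely by proving $(3) \iff (4)$ instead: it first identifies $\bigcap_{f} \ker f(W)$ with $\{v^* : K_{W,v,y} = 0 \ \forall y\}$, then writes $P = Q^*Q$, expands $K(W,W)(Q^*Q)$ row by row as $\sum_{\ell} \norm{K_{W, \varepsilon_{\ell} Q, y}}^2$, and reads off $Q=0$ from (4) without ever touching the $rm \times rm$ matrix of (2). You should replace the proposed $(2) \iff (3)$ leg by $(3) \iff (4)$ along those lines, letting $(1) \iff (2)$ close the cycle.
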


\begin{proof}
 (1) $ \Leftrightarrow $ (2): This equivalence follows from part (3) of Proposition \ref{left evaluation} along with the following observation: For $ 1 \leq s, t \leq r $ and $ 1 \leq i, j \leq m $, the $ ( i, j ) $-th entry of the matrix $ \SE ( W )_{ st } $ is 
 \begin{eqnarray*} 
 \overline{ \text{trace} \big( K_{ s t } ( W, W ) ( \varepsilon_j^* \varepsilon_i ) \big) } & = & \sum_{ \ell = 1 }^m \langle K_{ W, \varepsilon_j, \varepsilon_{ \ell }^* \otimes \sigma_s }, K_{ W, \varepsilon_i, \varepsilon_{ \ell }^* \otimes \sigma_t } \rangle_{ \CH ( K ) } \\
 & = & \langle K_{ t s } ( W, W ) ( \varepsilon_i^* \varepsilon_j ), I_m \rangle_{ \C^{ m \times m } }, \end{eqnarray*}
which is the $ ( i, j ) $-th entry of the matrix $ K_{ t s } ( W, W )^* ( I_m ) $.

\noindent (3) $ \Leftrightarrow $ (4): First, we observe that 
$$ \bigcap_{ f \in \CH_K } \ker f ( W ) = \big\{ v^* \in \C^{ m \times 1 } : K_{ W, v, y } = 0, ~ \text{for all} ~ y \in \C^{ m r \times 1 } \big\} . $$
Indeed, if $ K_{ W, v, y } = 0 $ for all $ y \in \C^{ m r \times 1 } $, then $$ \langle f, K_{ W, v, y } \rangle_{ \CH_K } = \langle f ( W ) ( v^*), y \rangle_{ \C^{ m r \times 1 } } = 0 , $$
for all $ y \in \C^{ m r \times 1 } $ implying that $ v^* \in \ker f ( W ) $. Conversely, if $ v^* \in \ker f ( W ) $, the reverse to this argument shows that  $ K_{ W, v, y } = 0 $ for all $ y \in \C^{ m r \times 1 } $.

Now assuming (3) and applying it for rank $ 1 $ non-negative definite matrices, it can be seen that 
$$ \big\{ v^* \in \C^{ m \times 1 } : K_{ W, v, y } = 0, ~ \text{for all} ~ y \in \C^{ m r \times 1 } \big\} = \{ 0 \} $$
which verifies (4). Conversely, if $ K ( W, W ) ( P ) = 0 $ for $ P \geq 0 $, writing $ P = Q^* Q $, we notice that
\begin{align*} & K ( W, W ) ( P ) = K ( W,W ) ( Q^* Q ) \\ 
& = K ( W, W ) \left( Q^* \big( \sum_{ \ell = 1 }^m \varepsilon_{ \ell }^* \varepsilon_{ \ell } \big) Q \right)  = \sum_{ \ell = 1 }^m K ( W, W ) \big( Q^*\varepsilon_{ \ell }^* \varepsilon_{ \ell } Q \big) . \end{align*}
Consequently, for any $ y \in \C^{ mr \times 1 } $, we have that
\begin{multline*}
    \langle K ( W, W ) ( P ) y, y \rangle  = \langle K ( W,W ) ( Q^* Q ) y, y \rangle 
     =  \sum_{ \ell = 1 }^m \langle K ( W, W ) \big( Q^*\varepsilon_{ \ell }^* \varepsilon_{ \ell } Q \big) y, y \rangle \\
     =  \sum_{ \ell = 1 }^m \langle K_{ W, \varepsilon_{ \ell } Q, y }, K_{ W, \varepsilon_{ \ell } Q, y } \rangle 
     =  \sum_{ \ell = 1 }^m \norm{ K_{ W, \varepsilon_{ \ell } Q, y } }^2.
\end{multline*}
Since $ K ( W, W ) ( P ) = 0 $, it follows from the equation above that $ K_{ W, \varepsilon_{ \ell } Q, y } = 0 $ for all $ \ell = 1, \hdots, m $. Now applying (4), we see that $ Q = 0 $.

\noindent (1) $ \Leftrightarrow $ (4): Since $ \ev^L_W : \CH_m \to ( \C^{ m \times m } )^{ 1 \times r } $ is subjective, observe that its adjoint mapping $ ( \ev^L_W )^* : ( \C^{ m \times m } )^{ 1 \times r } \to \CH_m $ defined by $$ ( \ev_W^L )^* ( \boldsymbol{ \SA } ) = \sum_{ s = 1 }^r \SA_s [K ( \cdot, W ) \sigma_s ], \hspace{0.1in} \text{for} ~~ \boldsymbol{ \SA } = \begin{bmatrix} \SA_1 \\ \vdots \\ \SA_r \end{bmatrix} \in ( \C^{ m \times m } )^{ r \times 1 }, $$
is injective. We first show that $ K_{ W, v, y } = 0 $ for all $ y \in \C^{ rm \times 1 } $ only if $ v = 0 $. For $ v \in \C^{ 1 \times m } $, let $ K_{ W, v, y } = 0 $ for all $ y \in \C^{ rm \times 1 } $ and $ \SA \in \C^{ m \times m } $ be such that $ \varepsilon_1 \SA = v $ and $ \varepsilon_j \SA = 0 $ for $ j = 2, \hdots, m $. Then we consider the element $ \boldsymbol{ \SA } : = \SA \otimes \sigma_1 $ (recall that $ \{ \sigma_1, \hdots, \sigma_r \} $ is the standard ordered basis for $ \C^{ r \times 1 } $) and note that 
$$  ( \ev_W^L )^* ( \boldsymbol{ \SA } ) = \SA [K ( \cdot, W ) \sigma_1 ] = \begin{bmatrix}
    K_{ W, \varepsilon_1 \SA, \varepsilon_1^* \otimes \sigma_1 } & \cdots & K_{ W, \varepsilon_1 \SA, \varepsilon_m^* \otimes \sigma_1 } \\
    0 & \cdots & 0 \\
    \vdots & \ddots & \vdots \\
    0 & \cdots & 0
\end{bmatrix} = 0, $$
since $ K_{ W, \varepsilon_1 \SA, \varepsilon_j^* \otimes \sigma_1 } = K_{ W, v, \varepsilon_j^* \otimes \sigma_1 } $, for $ j =1, \hdots, m $, and $ K_{ W, v, y } = 0 $ for all $ y \in \C^{ rm \times 1 } $. Consequently, from the injectivity of $ ( \ev^L_W )^* $ we see that $ \SA = 0 $ implying that $ v = 0 $.

Conversely, we prove that $ ( \ev^L_W )^* $ is injective assuming that $ \bigcap_{ f \in \CH_K } \ker f ( W ) = \{ 0 \} $. So let $ \boldsymbol{ \SA } \in ( \C^{ m \times m } )^{ r \times 1 } $ and $  ( \ev^L_W )^*  ( \boldsymbol{ \SA } ) = 0 $. Then, for any $ F = f \otimes I_m \in \CH_m $ with $ f = \sum_{ s = 1 }^r f_s \otimes \sigma_s $, we have from the computation in the proof of part (2) in Proposition \ref{left evaluation} that 
$$ \langle F, ( \ev^L_W )^*  ( \boldsymbol{ \SA } ) \rangle = \sum_{ s = 1 }^r f_s ( W ) A_s^* = 0 . $$
In other words, the range of the adjoint of linear mapping $ \begin{bmatrix} A_1 & \cdots & A_r \end{bmatrix} : \C^{ mr } \to \C^m $ is contained in the kernel of the mapping $ \begin{bmatrix} f_1 ( W ) & \cdots & f_r ( W ) \end{bmatrix} : \C^{ mr } \to \C^m $, for every $ f \in \CH ( K ) $. Since $ \bigcap_{ f \in \CH_K } \ker f ( W ) = \{ 0 \} $, it follows that $ A_j = 0 $ for $ 1 \leq j \leq m $. It thus concludes that $ ( \ev^L_W )^* $ is injective. 
\end{proof}

\subsubsection{Right Hilbert $ C^* $ module structures on $ \CH_m $} Define the \textbf{right generalized evaluation functional} $ \ev_W^R : \CH_m \to ( \C^{ m \times m } )^{ r \times 1 } $ as follows:
$$ \ev_W^R ( F ) : = \sum_{ j = 1 }^n f_j ( W ) \SA_j , $$
where $ F = \sum_{ j = 1 }^n f_j \otimes \SA_j $ for some $ f_1, \hdots, f_n \in \CH ( K ) $ and $ \SA_1, \hdots, \SA_n \in \C^{ m \times m } $. Since elements in $ \CH ( K ) $ are nc functions on $ \Omega $ taking values in $ \C^r_{ nc } $, we consider, for each $ 1 \leq s \leq r $, the $ s $-th \textbf{right generalized evaluation functional} $ \ev_{W, s}^R : \CH_m \to \C^{ m \times m } $ as
$$ \ev_{W, s}^R ( F ) : = \sum_{ j = 1 }^n f_{ j, s } ( W ) \SA_j , $$
where $ F \in \CH_m $ as above with each $ f_j = ( f_{ j, 1 }, \hdots, f_{ j, r } ) $, and $ f_{ j, s } $ is an uniformly analytic nc function on $ \Omega $ taking values in $ \C_{ nc } $, for $ 1 \leq j \leq n $, $ 1 \leq s \leq r $. As in the preceding subsection, we now demonstrate some elementary properties of the right generalized evaluation functionals.

\begin{prop} \label{right evaluation}
    Let $ \ev_W^R : \CH_m \to ( \C^{ m \times m } )^{ r \times 1 } $ be as above. Then
\begin{enumerate}
    \item Reproducing property: For $ F \in \CH_m $, $ W \in \Omega_m $ and $ 1 \leq s \leq r $,
    $$ \langle F, K ( \cdot, W ) \sigma_s \rangle_R = \ev_{ W, s }^R ( F ) . $$
    \item For $ \boldsymbol{ \SA } = \begin{bmatrix} \SA_1 \\ \vdots \\ \SA_r \end{bmatrix} \in ( \C^{ m \times m } )^{ r \times 1 } $, 
    $ ( \ev_W^R )^* ( \boldsymbol{ \SA } ) = \sum_{ s = 1 }^r [K ( \cdot, W ) \sigma_s ] \SA_s .$ In particular, both $ \ev_W^R $ and $ ( \ev_W^R )^* $ are right module maps over $ \C^{ m \times m } $.
     \item $ \ev_W^R \circ ( \ev_W^R )^* : ( \C^{ m \times m } )^{ r \times 1 } \to ( \C^{ m \times m } )^{ r \times 1 } $ is the linear operator given by the formula:
    $$  \ev_W^R \circ ( \ev_W^R )^* ( \boldsymbol{ \SA } )  = K ( W, W ) ( I_m ) \boldsymbol{ \SA }  : = \begin{bmatrix}
       \sum_{ s = 1 }^r K_{ 1 s } ( W, W ) ( I_m ) \SA_s \\ \vdots \\ \sum_{ s = 1 }^r K_{ r s } ( W, W ) ( I_m ) \SA_s
       \end{bmatrix} $$
\end{enumerate}
\end{prop}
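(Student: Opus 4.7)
The proof closely mirrors that of Proposition \ref{left evaluation}, replacing the left matricial inner product $\langle \cdot, \cdot \rangle_L$ by the right one $\langle \cdot, \cdot \rangle_R$ and swapping the sides on which module elements act. For part (1), I would reduce to elementary tensors $F = f \otimes \SA \in \CH_m$, expand $K(\cdot, W)\sigma_s$ via \eqref{coordinate free kernel}, and apply the definition of $\langle \cdot, \cdot \rangle_R$ (which pairs the summation index $\ell$ between $f_{\ell j}$ and $g_{i\ell}$) entry by entry together with the reproducing property \eqref{reproducing property} of the scalar kernel elements $K_{W, \varepsilon_i, \varepsilon_j^* \otimes \sigma_s}$. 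Collecting terms, the resulting sum rearranges to $f_s(W)\SA = \ev_{W, s}^R(f \otimes \SA)$.

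For part (2), given $\boldsymbol{\SA} = (\SA_1, \ldots, \SA_r)^{\top}$ and $F = \sum_i f_i \otimes \SB_i$, I would compute the natural pairing $\langle \ev_W^R(F), \boldsymbol{\SA} \rangle = \sum_{i, s} f_{i, s}(W) \SB_i \SA_s^*$ directly from the definition of $\ev_W^R$, and then rewrite each term via part (1) combined with the right $C^*$-module linearity $\langle F, G \cdot \SX \rangle_R = \langle F, G \rangle_R \SX$. This identifies the pairing with $\langle F, \sum_s [K(\cdot, W)\sigma_s] \SA_s \rangle_R$ paired against $\boldsymbol{\SA}$ in the natural way, yielding the claimed formula $(\ev_W^R)^*(\boldsymbol{\SA}) = \sum_s [K(\cdot, W)\sigma_s]\SA_s$ by non-degeneracy of the pairing.

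For part (3), I would substitute the formula from part (2) and evaluate each $\ev_W^R([K(\cdot, W)\sigma_s]\SA_s)$ directly. Expanding $K(\cdot, W)\sigma_s$ via \eqref{coordinate free kernel} turns $[K(\cdot, W)\sigma_s]\SA_s$ into a sum of elementary tensors of the form $K_{W, \varepsilon_\ell, \varepsilon_p^* \otimes \sigma_s} \otimes \varepsilon_\ell^* \varepsilon_k (\SA_s)_{pk}$ indexed by $(\ell, p, k)$. Applying $\ev_W^R$ and invoking the definition \eqref{kernel element} to express $K_{W, \varepsilon_\ell, \varepsilon_p^* \otimes \sigma_s}(W)$ in terms of the blocks $K_{ts}(W, W)(\cdot)$ of the kernel at $W$, a direct matrix computation collapses the sum over $\ell$ via the identity $\sum_\ell \varepsilon_\ell^* \varepsilon_\ell = I_m$. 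This yields $K_{ts}(W, W)(I_m)\SA_s$ in the $t$-th $\C^r$-component, and summing over $s$ produces the stated formula. Notably, unlike the left case, no traces appear in this formula; this reflects the fact that the right action of $\C^{m \times m}$ on $\CH_m$ interacts more directly with the matrix-valued kernel $K(W, W)(\cdot)$. The main technical obstacle throughout is careful bookkeeping of the matrix indices and tensorial conventions; no conceptual input beyond that already used in Proposition \ref{left evaluation} is required.
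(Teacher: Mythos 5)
Your parts (1) and (3) are fine. Part (1) follows the paper exactly. Part (3) is a valid, slightly more computational alternative: you apply $\ev_W^R$ directly to the formula from part (2) and collapse via the $\varepsilon$-contraction identities, whereas the paper dualizes against the test vectors $I_m \otimes \sigma_t$ and reduces to the Gram-type pairing $\langle K(\cdot, W)\sigma_s, K(\cdot, W)\sigma_t\rangle_R$. Both routes carry out the same underlying expansion, so there is no real gain or loss either way.

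Part (2), however, has a genuine error. You invoke the ``right $C^*$-module linearity'' $\langle F, G \cdot \SX \rangle_R = \langle F, G \rangle_R \SX$. This identity is false for the paper's $\langle \cdot, \cdot \rangle_R$: since $\langle F, G\rangle_{R, ij} = \sum_{\ell} \langle f_{\ell j}, g_{i \ell}\rangle_{\CH(K)}$ is \emph{linear} in $F$ and conjugate-linear in $G$, a direct index expansion gives $\langle F \SX, G\rangle_R = \langle F, G\rangle_R \SX$ (the module element travels through the \emph{first} slot), while $\langle F, G\SX\rangle_R$ does not simplify to $\langle F, G\rangle_R \SX$. If one applies your stated rule to your pairing computation $\sum_s \ev_{W,s}^R(F) \SA_s^* = \sum_s \langle F, K(\cdot, W)\sigma_s\rangle_R \SA_s^*$, one obtains $\langle F, \sum_s K(\cdot, W)\sigma_s \SA_s^*\rangle_R$ and hence the conclusion $(\ev_W^R)^*(\boldsymbol{\SA}) = \sum_s [K(\cdot, W)\sigma_s]\SA_s^*$ -- which carries a spurious adjoint on $\SA_s$ and contradicts the proposition. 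The paper instead arranges the scalar pairing so that $\SA_s^*$ appears on the \emph{left}, $\sum_s \SA_s^* \ev_{W,s}^R(F)$, and the transfer into the second slot is $\SA_s^* \langle F, K(\cdot, W)\sigma_s\rangle_R = \langle F, K(\cdot, W)\sigma_s \SA_s\rangle_R$. That identity is not a generic $C^*$-module axiom; it has to be justified either by unwinding the tensor structure $(K(\cdot, W)\sigma_s \SA)_{i\ell} = K_{W, \varepsilon_i, \SA\varepsilon_\ell^* \otimes \sigma_s}$ of the generalized kernel elements, or more transparently by passing to the scalar Hilbert--Schmidt pairing $\langle F, G\rangle_{HS} = \sum_{i,\ell} \langle f_{i\ell}, g_{i\ell}\rangle_{\CH(K)}$ and matching $\sum_s \operatorname{trace}(\ev_{W,s}^R(F)\SA_s^*)$ with $\langle F, \sum_s K(\cdot, W)\sigma_s\SA_s\rangle_{HS}$ directly. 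As written, your argument does not close and needs this correction.
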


\begin{proof}
    Let $ f = ( f_1, \hdots, f_r ) \in \CH ( K ) $ and $ \SA \in \C^{ m \times m } $ so that $ f \otimes \SA \in \CH_m $. Then we compute, for $ 1 \leq s \leq r $ and $ W \in \Omega_m $,
    \begin{align*}
         &\langle f \otimes \SA, K ( \cdot, W ) \sigma_s \rangle_R = \sum_{ i, j = 1 }^m \big\langle f \otimes \SA, K_{ W, \varepsilon_i, \varepsilon_j^* \otimes \sigma_s } \otimes \varepsilon_i^* \varepsilon_j \big\rangle_R \\
         &= \sum_{ i, j = 1 }^m \big\langle f, K_{ W, \varepsilon_i, \varepsilon_j^* \otimes \sigma_s } \big\rangle_{ \CH ( K ) } \varepsilon_j^* \varepsilon_i \SA 
         = \sum_{ \ell, k = 1 }^m \sum_{ i, j = 1 }^m \big\langle f ( W ) ( \varepsilon_i^* ), \varepsilon_j^* \otimes \sigma_s \big\rangle_{ \C^{ m r \times 1 } } a_{ \ell k } \varepsilon_j^* \varepsilon_i \varepsilon_{ \ell }^* \varepsilon_k \\
        & = \sum_{ k = 1 }^m \sum_{ i, j = 1 }^m \big\langle f ( W ) ( \varepsilon_i^* ), \varepsilon_j^* \otimes \sigma_s \big\rangle_{ \C^{ m r \times 1 } } a_{ i k } \varepsilon_j^* \varepsilon_k 
         = \sum_{ j, k = 1 }^m \big\langle f ( W ) \big( \sum_{ i = 1 }^m a_{ i k } \varepsilon_i^* \big), \varepsilon_j^* \otimes \sigma_s \big\rangle_{ \C^{ m r \times 1 } } \varepsilon_j^* \varepsilon_k \\
        & = \sum_{ j, k = 1 }^m \big\langle f ( W ) ( \SA \varepsilon_k^* ), \varepsilon_j^* \otimes \sigma_s \big\rangle_{ \C^{ m r \times 1 } } \varepsilon_j^* \varepsilon_k
         = \sum_{ j, k = 1 }^m \big\langle f ( W ) \SA ( \varepsilon_k^* ), \varepsilon_j^* \otimes \sigma_s \big\rangle_{ \C^{ m r \times 1 } } \varepsilon_j^* \varepsilon_k \\
         &= f_s ( W ) \SA = \ev_{ W, s }^R ( f \otimes \SA ).
    \end{align*}
    This completes the proof of part (1).

    For the proof of (2), let $ \boldsymbol{ \SA } \in ( \C^{ m \times m } )^{ r \times 1 } $ be as given and $ F = \sum_{ i = 1 }^n f_i \otimes \SB_i $ with $ \SB_1, \hdots, \SB_n \in \C^{ m \times m } $ and each $ f_i = \sum_{ s = 1 }^r f_{ i, s } \otimes \sigma_s $. Then we compute
    \begin{multline*}
        \langle \ev_W^R ( F ), \boldsymbol{ \SA } \rangle = \sum_{ i = 1 }^n \sum_{ s = 1 }^r \langle f_{ i, s } ( W ) \SB_i, \SA_s \rangle 
         =  \sum_{ i = 1 }^n \sum_{ s = 1 }^r \SA_s^* f_{ i, s } ( W ) \SB_i 
         =  \sum_{ s = 1 }^r \SA_s^* \big( \sum_{ i = 1 }^n f_{ i, s } ( W ) \SB_i \big) \\
         =  \sum_{ s = 1 }^r \SA_s^* \ev_{ W, s }^R ( F ) 
         =  \sum_{ s = 1 }^r \SA_s^* \langle F, K ( \cdot, W ) \sigma_s \rangle_R 
         =  \big\langle F, \sum_{ s = 1 }^r K ( \cdot, W ) \sigma_s \SA_s \big\rangle_R,
    \end{multline*}
    where the second last equality holds since $ \langle \cdot, \cdot \rangle_R $ defines a right Hilbert $ C^* $ module structure on $ \CH_m $. Consequently, we have
    $$ ( \ev_W^R )^* ( \boldsymbol{ \SA } ) = \sum_{ s = 1 }^r [K ( \cdot, W ) \sigma_s ] \SA_s . $$

    Finally, for $ m \in \N $ and $ W \in \Omega_m $, the computation below proves the claim in part (3).

    \begin{align*}
        & \big\langle \ev_W^R \circ ( \ev_W^R )^* ( \boldsymbol{ \SA } ), I_m \otimes \sigma_t \big\rangle = \big\langle ( \ev_W^R )^* ( \boldsymbol{ \SA } ), ( \ev_W^R )^* ( I_m \otimes \sigma_t ) \big\rangle_R \\
        & = \sum_{ s = 1 }^r \big\langle [K ( \cdot, W ) \sigma_s ] \SA_s, [K ( \cdot, W ) \sigma_t ] I_m \big\rangle_R = \sum_{ s = 1 }^r \big\langle K ( \cdot, W ) \sigma_s, K ( \cdot, W ) \sigma_t \big\rangle_R \SA_s \\
        & = \sum_{ s = 1 }^r \sum_{ i, j = 1 }^m \sum_{ \ell, k = 1 }^m \langle K_{ W, \varepsilon_i, \varepsilon_j^* \otimes \sigma_s }, K_{ W, \varepsilon_{ \ell }, \varepsilon_k^* \otimes \sigma_t } \rangle_{ \CH ( K ) } \varepsilon_k^* \varepsilon_{ \ell } \varepsilon_i^* \varepsilon_j \SA_s \\
        & = \sum_{ s = 1 }^r \sum_{ i, j = 1 }^m \sum_{ k = 1 }^m \langle K_{ W, \varepsilon_i, \varepsilon_j^* \otimes \sigma_s }, K_{ W, \varepsilon_i, \varepsilon_k^* \otimes \sigma_t } \rangle_{ \CH ( K ) } \varepsilon_k^* \varepsilon_j \SA_s \\
        & = \sum_{ s = 1 }^r \sum_{ i, j = 1 }^m \sum_{ k = 1 }^m \langle K_{ W, \varepsilon_i, \varepsilon_j^* \otimes \sigma_s } ( W ) ( \varepsilon_i^* ), \varepsilon_k^* \otimes \sigma_t \rangle \varepsilon_k^* \varepsilon_j \SA_s \\
        & = \sum_{ s = 1 }^r \sum_{ i, j = 1 }^m \sum_{ k = 1 }^m \langle K ( W, W ) ( \varepsilon_i^* \varepsilon_i ) ( \varepsilon_j^* \otimes \sigma_s ), \varepsilon_k^* \otimes \sigma_t \rangle \varepsilon_k^* \varepsilon_j \SA_s \\
        & = \sum_{ s = 1 }^r \sum_{ j, k = 1 }^m \langle K ( W, W ) ( I_m ) ( \varepsilon_j^* \otimes \sigma_s ), \varepsilon_k^* \otimes \sigma_t \rangle \varepsilon_k^* \varepsilon_j \SA_s\\
        & = \sum_{ s = 1 }^r \sum_{ j, k = 1 }^m \langle K ( W, W )_{ t s } ( I_m ) \varepsilon_j^*, \varepsilon_k^* \rangle \varepsilon_k^* \varepsilon_j \SA_s = \sum_{ s = 1 }^r K ( W, W )_{ t s } ( I_m ) \SA_s.
    \end{align*}
\end{proof}

\begin{cor}
    For any $ \SA, \SB \in \C^{ m \times m } $, $ W \in \Omega_m $ and $ 1 \leq s, t \leq r $, 
    $$ \langle \SA K ( \cdot, W ) \sigma_s, \SB K ( \cdot, W ) \sigma_t \rangle_R = K_{ t s } ( W, W ) ( \SB^* \SA )  $$
    where $ K_{ t s } ( W, W ) ( \SB^* \SA ) $ is the $ ( t, s ) $-th block of the block matrix $ K ( W, W ) ( \SB^* \SA ) $.
\end{cor}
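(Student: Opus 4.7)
The plan is to mimic the calculation in the proof of part~(3) of the preceding proposition, carried out for arbitrary matrices $ \SA, \SB \in \C^{ m \times m } $ in place of the specific choices ($ \SA_s $ and $ I_m $) considered there. The key ingredients are the expansion formula for the generalized kernel elements, namely $ \SA [ K ( \cdot, W ) \sigma ] \SB = \sum_{ i, j = 1 }^m K_{ W, \varepsilon_i, \varepsilon_j^* \otimes \sigma } \otimes \SA \varepsilon_i^* \varepsilon_j \SB $, together with the reproducing property of the underlying kernel elements $ K_{ W, v, y } $.

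First, I would expand $ \SA K ( \cdot, W ) \sigma_s = \sum_{ i, j = 1 }^m K_{ W, \varepsilon_i, \varepsilon_j^* \otimes \sigma_s } \otimes \SA \varepsilon_i^* \varepsilon_j $ and similarly $ \SB K ( \cdot, W ) \sigma_t = \sum_{ \ell, k = 1 }^m K_{ W, \varepsilon_\ell, \varepsilon_k^* \otimes \sigma_t } \otimes \SB \varepsilon_\ell^* \varepsilon_k $, and then substitute these into the right Hilbert $ C^* $-module pairing $ \langle \cdot, \cdot \rangle_R $. By bilinearity, the computation reduces to the scalar pairings $ \langle K_{ W, \varepsilon_i, \varepsilon_j^* \otimes \sigma_s }, K_{ W, \varepsilon_\ell, \varepsilon_k^* \otimes \sigma_t } \rangle_{ \CH ( K ) } $ weighted by matrix coefficients built from $ \SA \varepsilon_i^* \varepsilon_j $ and $ \SB \varepsilon_\ell^* \varepsilon_k $. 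Evaluating each scalar pairing via the reproducing property yields
$ \langle K_{ W, \varepsilon_i, \varepsilon_j^* \otimes \sigma_s }, K_{ W, \varepsilon_\ell, \varepsilon_k^* \otimes \sigma_t } \rangle_{ \CH ( K ) } = ( K_{ t s } ( W, W ) ( \varepsilon_\ell^* \varepsilon_i ) )_{ k j } $, exactly as in the proof of part~(3).

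Finally, I would collect the resulting quadruple sum and collapse it using the resolution of identity $ \sum_i \varepsilon_i^* \varepsilon_i = I_m $ applied on both summation indices, together with the linearity of $ K_{ t s } ( W, W ) $ in its matrix argument. This collapse identifies the combined matrix fed into $ K_{ t s } ( W, W ) $ as $ \SB^* \SA $, producing the claimed identity; the adjoint $ \SB^* $ emerges from the sesquilinearity of the inner product in the second slot. The main obstacle is purely bookkeeping: tracking through the tensor expansions how the left matrix factors $ \SA $ and $ \SB $ propagate and, after collapsing via the two resolutions of identity, reassemble exactly into the product $ \SB^* \SA $. Once these indices are properly aligned, no conceptual input beyond the proof of part~(3) of the preceding proposition is required, matching the pattern by which the analogous corollary for $ \langle \cdot, \cdot \rangle_L $ was derived.
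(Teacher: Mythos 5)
Your proposal is correct and captures exactly what the paper's one-line proof intends: rerun the chain of manipulations from the proof of part (3), now with the matrix factors $\SA$ and $\SB$ inserted. The expansion $\SA K(\cdot,W)\sigma_s = \sum_{i,j}K_{W,\varepsilon_i,\varepsilon_j^*\otimes\sigma_s}\otimes\SA\varepsilon_i^*\varepsilon_j$, the reduction to scalar pairings via the defining formula of $\langle\cdot,\cdot\rangle_R$ on elementary tensors, the reproducing-property evaluation $\langle K_{W,\varepsilon_i,\varepsilon_j^*\otimes\sigma_s},K_{W,\varepsilon_\ell,\varepsilon_k^*\otimes\sigma_t}\rangle = (K_{ts}(W,W)(\varepsilon_\ell^*\varepsilon_i))_{kj}$, and the final assembly into $K_{ts}(W,W)(\SB^*\SA)$ all go through exactly as you sketch, so this is essentially the paper's approach. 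One small caveat worth being precise about: the actual ``collapse'' is not quite a resolution of identity but rather the matrix identity $(\SB E_{\ell k})^*(\SA E_{ij}) = E_{k\ell}\SB^*\SA E_{ij} = (\SB^*\SA)_{\ell i}E_{kj}$, after which linearity of $K_{ts}(W,W)$ reassembles $\sum_{\ell,i}(\SB^*\SA)_{\ell i}\,\varepsilon_\ell^*\varepsilon_i = \SB^*\SA$; also, in the paper's proof of part (3) the factors $\SA_s$ and $I_m$ sit on the \emph{right} of the generalized kernel elements (pulled out via the right-module property), whereas the corollary places $\SA,\SB$ on the \emph{left} --- so you are genuinely redoing the computation rather than literally substituting, but the outcome and method are the same.
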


\begin{proof}
    The proof follows directly from the proof of part (3) in the preceding proposition.
\end{proof}

In the following theorem, we obtain certain strict positivity criteria of the nc cp kernel $ K $ in terms of the right generalized evaluation functional.

\begin{thm} \label{equivalent criterion for strict positivity}
    The following are equivalent for $ m \in \N $ and $ W \in \Omega_m $.
    \begin{enumerate}
        \item $ \ev^R_W : \CH_m \to ( \C^{ m \times m } )^{ 1 \times r } $ is a surjective.
        \item $ K ( W, W ) ( I_m ) \in ( \C^{ m \times m } )^{ r \times r } $ is an invertible matrix.
        \item $ K ( W, W ) : \C^{ m \times m } \to ( \C^{ m \times m } )^{ r \times r } $ satisfies: For $ P \in \C^{ m \times m } $, 
        $$ P > 0 \hspace{0.1in} \text{implies} \hspace{0.1in} K ( W, W ) ( P ) > 0 . $$
        \item $ \CH_K $ has the following property:
        $$ \bigcap_{ f \in \CH_K } \ker f ( W )^* = \{ 0 \} . $$
    \end{enumerate}
\end{thm}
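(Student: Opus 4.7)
The plan is to establish the four equivalences by mirroring the proof of Theorem~\ref{equivalent criterion for non-degeneracy} and exploiting complete positivity of $K$. For $(1) \Leftrightarrow (2)$, I would invoke part~(3) of Proposition~\ref{right evaluation}: the composition $\ev_W^R \circ (\ev_W^R)^*$ acts on the finite-dimensional space $(\C^{m\times m})^{r\times 1}$ as left block-matrix multiplication by $K(W,W)(I_m) \in (\C^{m\times m})^{r\times r}$. In finite dimensions, surjectivity of $\ev_W^R$ is equivalent to invertibility of $\ev_W^R \circ (\ev_W^R)^*$, which is precisely invertibility of $K(W,W)(I_m)$ viewed as a block matrix in $\C^{mr\times mr}$.

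The equivalence $(2) \Leftrightarrow (3)$ follows from complete positivity of $K$. The direction $(3) \Rightarrow (2)$ is immediate on taking $P = I_m$. Conversely, cp forces $K(W,W)(I_m) \succeq 0$, so invertibility upgrades to $K(W,W)(I_m) > 0$; then for any $P > 0$ there is $c > 0$ with $P \geq c I_m$, so $K(W,W)(P) - c K(W,W)(I_m) = K(W,W)(P - cI_m) \succeq 0$ by cp, giving $K(W,W)(P) \geq c K(W,W)(I_m) > 0$.

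For $(3) \Leftrightarrow (4)$, the preliminary step is the identification
\begin{equation*}
\bigcap_{f \in \CH(K)} \ker f(W)^* = \{\, y : K(W,W)(P)\, y = 0 \text{ for all } P \in \C^{m\times m} \,\}.
\end{equation*}
By the reproducing property $\langle f, K_{W,v,y}\rangle_{\CH(K)} = \langle f(W)(v^*), y\rangle$, the vector $y$ lies in the left-hand side iff $K_{W,v,y} = 0$ in $\CH(K)$ for every $v \in \C^{1\times m}$; computing $\|K_{W,v,y}\|^2 = \langle K(W,W)(v^* v) y, y\rangle$ and using positive semidefiniteness of $K(W,W)(v^* v)$, this becomes $K(W,W)(v^*v)y = 0$ for every $v$, which extends to $K(W,W)(P)y=0$ for all $P$ by linearity since rank-one positive matrices $v^*v$ span $\C^{m\times m}$. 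Then $(3) \Rightarrow (4)$ is immediate by taking $P = I_m$ and invoking invertibility of $K(W,W)(I_m)$. For $(4) \Rightarrow (3)$, if $P > 0$ and $K(W,W)(P) y = 0$ for some $y$, the positive linear functional $\phi(Q) := \langle K(W,W)(Q) y, y\rangle$ on $\C^{m\times m}$ vanishes at $P$; since $P > 0$, every $Q \succeq 0$ satisfies $Q \leq \lambda P$ for some $\lambda > 0$, so positivity forces $\phi(Q) = 0$ on the positive cone, hence $\phi \equiv 0$ on $\C^{m\times m}$ by Hermitian and real/imaginary decomposition; positive semidefiniteness of $K(W,W)(Q)$ for $Q \succeq 0$ then upgrades $\phi(Q) = 0$ to $K(W,W)(Q) y = 0$, placing $y$ in the intersection and forcing $y = 0$ by~(4), so $K(W,W)(P) > 0$. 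The main obstacle will be the careful bookkeeping in step one to identify the operator-theoretic surjectivity of $\ev_W^R$ with the purely linear-algebraic invertibility of $K(W,W)(I_m)$ as a $mr \times mr$ block matrix, together with the spanning and positive-functional arguments in step three needed to pass from rank-one positive matrices $v^*v$ to arbitrary matrices in $\C^{m\times m}$.
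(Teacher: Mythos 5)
Your proposal is correct, and the core ingredients are the same as the paper's: the identity $\ev_W^R\circ(\ev_W^R)^* = K(W,W)(I_m)$ from Proposition~\ref{right evaluation}(3) for $(1)\Leftrightarrow(2)$, the $P\geq\|P^{-1}\|^{-1}I_m$ bound for $(2)\Leftrightarrow(3)$, and the reproducing property plus the identity $\|K_{W,v,y}\|^2=\langle K(W,W)(v^*v)y,y\rangle$ to tie in $(4)$. The one genuine divergence is that the paper closes the chain by proving $(2)\Leftrightarrow(4)$ directly: it first identifies $\bigcap_{f}\ker f(W)^*$ with $\{y:K_{W,\varepsilon_j,y}=0,\ 1\leq j\leq m\}$ and then observes that $K(W,W)(I_m)y=0$ is equivalent to $\sum_j\|K_{W,\varepsilon_j,y}\|^2=0$, which collapses both implications into a one-line computation using only $P=I_m$. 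You instead prove $(3)\Leftrightarrow(4)$, and for the harder direction $(4)\Rightarrow(3)$ you introduce the positive linear functional $\phi(Q)=\langle K(W,W)(Q)y,y\rangle$, bound an arbitrary $Q\succeq0$ by $\lambda P$, and deduce $\phi\equiv0$ by an order argument before promoting $\phi(Q)=0$ back to $K(W,W)(Q)y=0$ via positive semidefiniteness. This works, and it has the aesthetic merit of arguing for a general $P>0$ rather than routing through $I_m$ and the already-established $(2)$, but it is noticeably longer; the paper's choice of $(2)\Leftrightarrow(4)$ lets the invertibility of a single fixed matrix $K(W,W)(I_m)$ do all the work. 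Your identification of $\bigcap_f\ker f(W)^*$ with $\{y:K(W,W)(P)y=0\ \forall P\}$ is a harmless strengthening of the paper's characterization (linearity and the fact that rank-one positives span $\C^{m\times m}$ make the two descriptions equal), and the polarization and span-of-positives bookkeeping you flag as the main obstacle is handled correctly.
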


\begin{proof}
    (1) $ \Leftrightarrow $ (2): This equivalence follows from part (3) of Proposition \ref{right evaluation}. 

    (2) $ \Leftrightarrow $ (3): Note that (3) trivially implies (2). So we prove $ K ( W, W ) ( P ) > 0 $ with $ P > 0 $, assuming that $ K ( W, W ) ( I_m ) $ is an invertible matrix. For $ P > 0 $, we have that
    $$ P \geq \norm{ P^{ - 1 } }^{ - 1 } I_m > 0, $$
    since being strictly positive, $ P $ is invertible. Consequently, 
    $$ K ( W, W ) ( P ) \geq K ( W, W ) \big( \norm{ P^{ - 1 } }^{ - 1 } I_m \big) = \norm{ P^{ - 1 } }^{ - 1 } K ( W, W ) ( I_m ) > 0 . $$

    (2) $ \Leftrightarrow $ (4): We first prove that the matrix $ K ( W, W ) ( I_m ) $ is an invertible if and only if 
    $$ \big\{ y \in \C^{ mr \times 1 } : K_{ W, \varepsilon_j, y } = 0, ~ 1 \leq j \leq m \big\} = \{ 0 \} . $$
    If $ K_{ W, \varepsilon_j, y } = 0 $ for all $ 1 \leq j \leq m $, then for any $ x \in \C^{ mr \times 1 } $, we have that
    $$ \langle K ( W, W ) ( I_m ) y, x \rangle_{ \C^{ mr \times 1 } } = \sum_{ j = 1 }^m \langle K_{ W, \varepsilon_j, y }, K_{ W, \varepsilon_j, x } \rangle_{ \CH ( K ) } = 0 . $$
    Consequently, $ K ( W, W ) ( I_m ) y = 0 $, implying that $ y = 0 $ since $ K ( W, W ) ( I_m ) $ is invertible.

    On the other hand, if $ K ( W, W ) ( I_m ) y = 0 $ for some $ y \in \C^{ mr \times 1 } $, then 
    $$ \sum_{ j = 1 }^m \langle K_{ W, \varepsilon_j, y }, K_{ W, \varepsilon_j, y } \rangle_{ \CH ( K ) } = \langle K( W, W ) ( I_m ) y, y \rangle_{ \C^{ mr \times 1 } } = 0 . $$
    But this implies that $ \norm{ K_{ W, \varepsilon_j, y } } = 0 $ for all $ 1 \leq j \leq m $. Therefore, $ y = 0 $. 

    Finally, we observe the following realization
    $$ \big\{ y \in \C^{ mr \times 1 } : K_{ W, \varepsilon_j, y } = 0, ~ 1 \leq j \leq m \big\} = \bigcap_{ f \in \CH_K } \ker f ( W )^* . $$
    Indeed, the condition, $ K_{ W, \varepsilon_j, y } = 0 $ for all $ 1 \leq j \leq m $, is equivalent to the fact that
    $$ \langle f ( W ) ( \varepsilon_j^* ), y \rangle_{ \C^{ mr \times 1 } } = \langle f, K_{ W, \varepsilon, y } \rangle_{ \CH ( K ) } = 0, $$
    for all $ 1 \leq j \leq m $ and $ f \in \CH ( K ) $. In other words, $ y \perp \text{range} f ( W ) $, for all $ f \in \CH ( K ) $, verifying the claim.
\end{proof}


\section{Noncommutative Cowen-Douglas class and vector bundles} \label{Noncommutative Cowen-Douglas class and vector bundles}
In this section, we introduce a class of noncommuting tuples of bounded linear operators on a complex separable Hilbert space that exhibit properties reminiscent of those in the classical Cowen-Douglas class. We refer to this class as the noncommutative (nc) Cowen-Douglas class. We then explore various examples that illustrate and justify our definition.

Next, we define noncommutative (nc) Hermitian holomorphic vector bundles over a nc domain and, using Theorem \ref{closed range operators general version}, establish that every tuple in the nc Cowen-Douglas class naturally gives rise to such a nc vector bundle. Furthermore, the unitary invariants of operator tuples in this class are entirely determined by the invariants of the associated nc vector bundles. This connection paves the way for a deeper study of invariants of nc vector bundle, which we wish to pursue in subsequent work.

\subsection{Noncommutative Cowen-Douglas class} \label{nc CD class}

We begin by fixing some notations. Recall that $ \CH $ is a complex separable Hilbert space. Throughout this subsection we let $ \CV = \C^d $ endowed with an operator space structure and $ \Omega \subset \C^d_{ nc } $ be a nc domain in $ \C^d_{ nc } $. 

Suppose $ \boldsymbol{ T } = ( T_1, \hdots, T_d ) $ is a $ d $-tuple of bounded linear operators  on $ \CH $. For $ m \in \N $ and $ W = (W_1, \hdots, W_d) \in \Omega_m $, we define the associated operator 
$$ D_{ \boldsymbol{T} - W } : \CH^{ m \times m} \rightarrow \CH^{ m \times m } \oplus \cdots \oplus \CH^{ m \times m } $$ by 
\be \label{D_T} 
D_{ \boldsymbol{T} - W } := ( T_1 \otimes \mathrm{Id}_{ \C^{ m \times m } } - \mathrm{Id}_{ \CH } \otimes R_{ W_1 }, \hdots, T_d \otimes \mathrm{Id}_{ \C^{ m \times m } } - \mathrm{Id}_{ \CH } \otimes R_{ W_d } ) . 
\ee
Note that the kernel of $ D_{ \boldsymbol{T} - W } $ is nothing else but the joint kernel of the operators $ T_1 \otimes \mathrm{Id}_{ \C^{ m \times m } } - \mathrm{Id}_{ \CH } \otimes R_{ W_1 }, \hdots, T_d \otimes \mathrm{Id}_{ \C^{ m \times m } } - \mathrm{Id}_{ \CH } \otimes R_{ W_d } $.

\begin{defn} \label{Definition of nc CD class}
A $ d $-tuple $ \boldsymbol{ T } = ( T_1, \hdots, T_d ) $ of  bounded linear operators  on $ \CH $ is said to be in the noncommutative (nc) Cowen-Douglas class $ \mathrm B_r ( \Omega )_{nc} $ of rank $ r $ over $ \Omega $ if for all $ m \in \N $ and $ W = (W_1, \hdots, W_d) \in \Omega_m $, the operators  $ D_{ \boldsymbol{T} - W } $ satisfy the following three conditions:
\begin{itemize}
\item[(i)] $ \dim_{ \C } ( \ker D_{ \boldsymbol{T} - W } )= r m^2 $,
\item[(ii)] $ \overline{ \bigvee_{ W \in \Omega_m, m \in \N} \left\{ h_{ i j } : ( \! ( h_{ i j } ) \! )_{ i, j = 1 }^m \in \ker D_{ \boldsymbol{T} - W } \right\} } = \CH $,
\item[(iii)] range of $ D_{ \boldsymbol{T} - W } $ is closed in $ \CH^{ m \times m } \oplus \cdots \oplus \CH^{ m \times m } $. 
\end{itemize}
\end{defn}

\begin{rem}
(i) Note that unlike the classical Cowen-Douglas class over a domain in $ \C^m $, the tuples of operators in the nc Cowen-Douglas class do not have the same dimensional joint eigenspaces. However, they do have a constant dimensional kernel of the associated operator $ D_{ \boldsymbol{T} - W } $ associated to all $ W \in \Omega_m $ for each fixed $ m \in \N $, which is essentially due to the free noncommutative phenomenon. 

(ii) Also, observe that for any bounded domain $ \CD \subset \C^d $, every $ d $-tuple of operators in the classical Cowen-Douglas class of rank $ r $ over $ \CD $ induces an operator tuple in the nc Cowen-Douglas class of rank $ r $ over the nc domain $ \Omega = \coprod_{ n = 1 }^{ \infty } \CD^{ \oplus n } $, where $ \CD^{ \oplus n } = \{ z^1 \oplus \cdots \oplus z^n : z^1, \hdots, z^n \in \CD \} $ and $ z \oplus w $ is the $ d $-tuple of $ 2 \times 2 $ diagonal matrix $ \left[ \begin{smallmatrix} z & 0 \\ 0 & w \end{smallmatrix} \right] $.
\end{rem}

We now discuss some non-trivial examples of tuples of operators in the nc Cowen-Douglas class which naturally appear in the study of noncommutative function theory on the noncommutative balls and poly-discs. We call the unit ball in the row operator Hilbert space $ \C^d $ as nc unit ball in $ \C^d_{ nc } $ which is, by definition,
$$ \B^d_{ nc } : = \coprod_{ m = 1 }^{ \infty } \left\{ ( X_1, \hdots, X_d ) \in \left( \C^{ m \times m } \right)^d : \sum_{ i = 1 }^d X_i X_i^* < I_m \right\} . $$
Similarly, we define the nc unit poly-disc in $ \C^d_{ nc } $ to be the unit ball in $ \C^d $ equipped with the maximum norm operator space structure -- $ \| ( X_1, \hdots, X_d ) \|_{ \infty } = \mathrm{max} \{ \| X_1 \|, \hdots, \| X_d \| \} $, $ \| \cdot \| $ is the usual matrix norm on $ \C^{ m \times m } $, $ m \in \N $ -- namely,
$$ \D^d_{ nc } : = \coprod_{ m = 1 }^{ \infty } \left\{ ( X_1, \hdots, X_d ) \in \left( \C^{ m \times m } \right)^d : \| X_i \| < 1,~ i = 1, \hdots, d \right\} . $$

\begin{ex} \label{nc Hardy space}
Let $ \CG^d $ be the free monoid (unital semi-group) of all words in the $ d $ letters $ \{ 1, \hdots, d \} $ with the empty word $ \emptyset $ (the word containing no letters) as the unit. For a noncommuting indeterminate $ x = ( x_1, \hdots, x_d ) $ and a word $ \alpha = \alpha_1 \cdots \alpha_k \in \CG^d $, set $ x^{ \alpha } = x_{ \alpha_1 } \cdots x_{ \alpha_k } $. Also, write $ | \alpha | $ for the length of the word $ \alpha \in \CG^d $. Then we define the \textit{noncommutative $ \ell^2 $-space} $ \ell^2 ( \CG^d ) $ over $ \CG^d $ as 
$$ \ell^2 ( \CG^d ) : = \bigg\{ f : \CG^d  \rightarrow \C : \| f \|_2^2 : = \sum_{ \alpha \in \CG^d } | f ( \alpha ) |^2 < + \infty \bigg\} . $$
As in the commutative case $ \ell^2 ( \CG^d ) $ turns out to be a Hilbert space with the hermitian product 
$$ \langle f, g \rangle_{ \ell^2 ( \CG^d ) } : = \sum_{ \alpha \in \CG^d } f ( \alpha ) \overline{ g ( \alpha ) } . $$
The set $ \{ e_{ \alpha } : \alpha \in \CG^d \} $ then turns out to be the canonical orthonormal basis for $ \ell^2 ( \CG^d ) $, where $ e_{ \alpha } : \CG^d \rightarrow \C $ is defined as $ e_{ \alpha } ( \beta ) = \delta_{ \alpha \beta } $. The \textit{right creation operators} $ R_1, \hdots, R_d $ are the linear operators on $ \ell^2 ( \CG^d ) $ acting on the orthonormal basis $ \{ e_{ \alpha } : \alpha \in \CG^d \} $ as follows:
$$ R_j ( e_{ \alpha } ) = e_{ \alpha j }, \quad 1 \leq j \leq d,~~~ \alpha \in \CG^d . $$
Note that the action of their adjoints $ R_1^*, \hdots, R_d^* $ on $ \{ e_{ \alpha } : \alpha \in \CG^d \} $ become
$$ R_j^* ( e_{ \alpha_1 \cdots \alpha_k } ) = \left\{ \begin{array}{lll} e_{ \alpha_1 \cdots \alpha_{ k - 1 } } & \mbox{if} & \alpha_d = j \\ 0 &  & \mbox{otherwise} \end{array} \right. , ~~~ 1 \leq j \leq d, ~~~ \alpha = \alpha_1 \cdots \alpha_k \in \CG^d, k \in \N . $$

For $ n \in \N $ and $ X = ( X_1, \hdots, X_d ) \in ( \B^d_{ nc } )_n $, it can be seen that 
$$ \{ X^{ \alpha } \}_{ \alpha \in \CG^d } : = \sum_{ \alpha \in \CG^d } e_{ \alpha } \otimes X^{ \alpha } \in \left( \ell^2 ( \CG^d ) \right)^{ n \times n } .$$ 
Consequently, for each $ 1 \leq j \leq d $, we have 
$$ R_j^* \otimes \mathrm{Id}_{ \C^{ n \times n } } \bigg( \sum_{ \alpha \in \CG^d } e_{ \alpha } \otimes X^{ \alpha } \bigg) = \sum_{ \alpha \in \CG^d } e_{ \alpha } \otimes X^{ \alpha } X_j . $$
A straightforward computation further reveals that 
\be \label{joint kernel of right shift} 
\ker ( D_{ \boldsymbol{R}^* - X } ) = \bigg\{ \SA \bigg( \sum_{ \alpha \in \CG^d } e_{ \alpha } \otimes X^{ \alpha } \bigg)  : \SA \in \C^{ n \times n } \bigg\}, 
\ee
where $ \boldsymbol{ R }^* =( R^*_1, \hdots, R^*_d ) $. Thus, for $ X \in ( \B^d_{ nc } )_n $ and $ n \in \N $, it confirms 
$$ \dim \ker ( D_{ \boldsymbol{ R }^* - X } ) = n^2 . $$  
Moreover, the operator $ D_{ \boldsymbol{ R }^* - X } $ has closed range for all $ X \in ( \B^d_{ nc } )_n $ and $ n \in \N $. The $ d $-tuple of operators $ \boldsymbol{ R }^* =( R^*_1, \hdots, R^*_d ) $ thus satisfies all three condition of Definition \ref{Definition of nc CD class} verifying that $ \boldsymbol{R}^* \in \mathrm B_1 ( \B^d_{ nc } )_{ nc }  $.

The Hilbert space $ \ell^2 ( \CG^d ) $ can also be viewed as Full Fock space $ \CH^2 ( \B^d_{ nc } ) $ over $ \B^d_{ nc } $, which by definition is 
$$ \CH^2 ( \B^d_{ nc } ) : = \bigg\{ f : \B^d_{ nc } \rightarrow \C_{ nc } : f ( X ) = \sum_{ \alpha \in \CG^d } f_{ \alpha } X^{ \alpha }, \{ f_{ \alpha } \}_{ \alpha \in \CG^d } \in \ell^2 ( \CG^d ) \bigg\} . $$
The inner product on $  \CH^2 ( \B^d_{ nc } ) $ is given by the formula: For $ f, g \in  \CH^2 ( \B^d_{ nc } ) $ with $ f ( X ) = \sum_{ \alpha \in \CG^d } f_{ \alpha } X^{ \alpha } $ and $ g ( X ) = \sum_{ \alpha \in \CG^d } g_{ \alpha } X^{ \alpha } $ on $ \B^d_{ nc } $,
$$ \langle f, g \rangle_{ \CH^2 ( \B^d_{ nc } ) } : =  \sum_{ \alpha \in \CG^d } f ( \alpha ) \overline{ g ( \alpha ) } . $$
With this inner product, $ \CH^2 ( \B^d_{ nc } ) $ turns out to be a nc reproducing kernel Hilbert space (cf. Theorem \ref{equivalent condition for cp nc kernel}) of uniformly analytic nc functions on $ \B^d_{ nc } $ with the cp nc reproducing kernel $$ K :  \B^d_{ nc } \times \B^d_{ nc } \rightarrow \coprod_{ n, m = 1 }^{ \infty }\CL ( \C^{ n \times m }, \C^{ n \times m } ) $$ defined, for any $ Z \in ( \B^d_{ nc } )_n $, $ W \in ( \B^d_{ nc } )_m $ and $ \SP \in \C^{ n \times m } $, by
$$ K ( Z, W ) ( P ) : = \sum_{ \alpha \in \CG^d } Z^{ \alpha } P ( W^{ \alpha } )^* . $$
In this set up, it is seen that the vectors $  \{ X^{ \alpha } \}_{ \alpha \in \CG^d } \in \ell^2 ( \CG^d ) $ defined above correspond to the \textit{generalized kernel elements} in $ \CH^2 ( \B^d_{ nc } )_{ nc } $ (cf. Section \ref{coordinate free presentation}). Moreover, the $ d $-tuple $ ( M_{ Z_1 }^*, \hdots, M_{ Z_d }^* ) $ of adjoint of the left multiplication operators by the nc coordinate functions on  $ \CH^2 ( \B^d_{ nc } )_{ nc } $ becomes the model for the tuple $ \boldsymbol{ R }^* $ as described in Subsection \ref{model}.
\end{ex} 

\begin{ex}
Other important examples of noncommuting tuples of operators in the nc Cowen-Douglas class are the $ d $-tuple $ \boldsymbol{ M }^* = ( M_{ Z_1 }^*, \hdots, M_{ Z_d }^* ) $ of the adjoint of left multiplication operators $ M_{ Z_1 }, \hdots, M_{ Z_d } $ by the nc coordinate functions on nc Hardy space over $ \CB_{ \B^d }$ and $ \CB_{ \D^d } $ (cf. \cite{Popa-Vinnikov}). In particular, it can be shown that $ \boldsymbol{ M }^* $ belongs to $ \mathrm B_1 \big( \frac{ 1 }{ \sqrt{ d } } \B^d_{ nc } \big)_{ nc } $ and $ \mathrm B_1 \big( \frac{ 1 }{ \sqrt{ d } } \D^d_{ nc } \big)_{ nc } $, respectively (cf. \cite[Remark 3.10, pp 960]{Popa-Vinnikov}).
\end{ex}

\begin{rem}
From Equation \eqref{joint kernel of right shift} it is evident that the kernel of the operator $ D_{ \boldsymbol{ R }^* - X } $ with $ X \in ( \B^d_{ nc } )_n $ is not only a complex vector space of dimension $ n^2 $, but also a free left module of rank $ 1 $ over the ring $ \C^{ n \times n } $. 
\end{rem}

The property of being a left module over $ \C^{ n \times n } $ is not an isolated phenomenon for the kernel of $ D_{ \boldsymbol{ R }^* - X } $ with $ X \in ( \B^d_{ nc } )_n $. Indeed, for any $ d $-tuple of operators $ \boldsymbol{ T } = ( T_1, \hdots, T_d ) \in \mathrm B_r ( \Omega )_{ nc } $ and $ X \in \Omega_n $, the kernel of $ D_{ \boldsymbol{ T } - X } $ turns out to be free left module of rank $ r $ over $ \C^{ n \times n } $ as is proved below (see Corollary \ref{joint kernel}). In accomplishing our goal we recall the following non-trivial result in the study of spaces of matrices of low rank.

\begin{thm}[Flanders' theorem] \cite[Theorem 1]{Flanders}\label{Flanders' theorem}
Let $ n, p, r \in \N $. If all matrices in a subspace of $ \C^{ n \times p } $ have rank less than or equal to $ r $ then the subspace has dimension less than or equal to $ nr $. 
\end{thm}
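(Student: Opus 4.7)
The plan is to proceed by induction on the rank bound $r$. The base case $r = 0$ is immediate, since then $V = \{0\}$.

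For the inductive step, let $V \subset \C^{n \times p}$ be a subspace in which every matrix has rank at most $r$. If every element of $V$ actually has rank strictly less than $r$, the inductive hypothesis applied with bound $r-1$ yields $\dim V \leq n(r-1) < nr$, so I may assume there exists $A \in V$ with $\mathrm{rank}(A) = r$. Since both $\dim V$ and the rank condition are invariant under $V \mapsto PVQ$ for $(P, Q) \in \mathrm{GL}(n, \C) \times \mathrm{GL}(p, \C)$, I can normalize $A = \left[\begin{smallmatrix} I_r & 0 \\ 0 & 0 \end{smallmatrix}\right]$ compatibly with $\C^n = \C^r \oplus \C^{n-r}$ and $\C^p = \C^r \oplus \C^{p-r}$.

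Writing a general $B \in V$ in the compatible blocks $B = \left[\begin{smallmatrix} B_{11} & B_{12} \\ B_{21} & B_{22} \end{smallmatrix}\right]$, the constraint $\mathrm{rank}(A + tB) \leq r$ for every $t \in \C$ is analyzed via a Schur complement: for $|t|$ sufficiently small, $I_r + tB_{11}$ is invertible, and
\[
\mathrm{rank}(A + tB) \;=\; r + \mathrm{rank}\bigl( t B_{22} - t^{2} B_{21} (I_r + tB_{11})^{-1} B_{12} \bigr).
\]
Requiring this Schur complement to vanish identically in $t$ for small $t$ and expanding as a power series yields the structural identities $B_{22} = 0$, then $B_{21} B_{12} = 0$, followed by an infinite chain of bilinear relations of the form $B_{21} B_{11}^{k} B_{12} = 0$ for all $k \geq 0$.

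Once $B_{22} \equiv 0$ is established on $V$, I split the dimension via the projection $\pi : V \to \C^{r \times p}$ onto the top $r$ rows. Its kernel $K = \ker \pi$ consists of matrices of the form $\left[\begin{smallmatrix} 0 & 0 \\ B_{21} & 0 \end{smallmatrix}\right]$ with $B_{21} \in \C^{(n-r) \times r}$, giving $\dim K \leq (n-r) r$. The main obstacle is then to bound $\dim \pi(V) \leq r^{2}$: the bilinear orthogonality $B_{21} B_{12} = 0$, coupling elements of $K$ with elements of $V$, forces the last $p - r$ columns of $\pi(V)$ to lie in a subspace determined by the span of the $B_{21}$-blocks appearing in $K$. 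The cleanest route, following Flanders, is to pass to the quotient of $\pi(V)$ by this constrained part and apply the inductive hypothesis to a subspace of rank at most $r - s$ matrices (where $s$ is the rank of the span of the $B_{21}$-blocks), closing the dimension count to $(n-r) r + r^{2} = nr$. The technically delicate step, and the heart of the proof, is this combinatorial orchestration of the Schur-complement identities to extract a sharp bound on $\pi(V)$ rather than the naive $rp$; I expect this dimension accounting, rather than the Schur expansion itself, to be the main obstacle.
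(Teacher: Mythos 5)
The paper itself gives no proof of this statement---it simply cites Flanders' original article---so you are supplying an independent argument, and there is no paper-side proof to compare against. Your Schur-complement reduction is correct: after normalizing $A = \left[\begin{smallmatrix} I_r & 0 \\ 0 & 0\end{smallmatrix}\right]$, the vanishing of the Schur complement of $A + tB$ for small $t$ does give $B_{22} = 0$ and the chain $B_{21} B_{11}^{k} B_{12} = 0$ for all $k$. The concrete gap is the claim $\dim\pi(V) \leq r^{2}$, where $\pi$ projects onto the top $r$ rows. That inequality is false: take $n = p$, any $r < n$, and $V$ the space of all matrices whose last $n-r$ rows vanish; it contains $A$, all its elements have rank at most $r$, $\ker\pi = \{0\}$, yet $\dim\pi(V) = \dim V = rn > r^{2}$. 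The quantities $\dim\ker\pi$ and $\dim\pi(V)$ trade off against each other---only their \emph{sum} is bounded by $nr$, not each term separately---and that trade-off must be extracted from the polarized identities $B_{21}C_{12} + C_{21}B_{12} = 0$ for all $B, C \in V$, which your sketch alludes to but never actually exploits. This is not bookkeeping; it is the core of Flanders' argument, and until it is carried out the proof is incomplete.

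You should also observe that, as printed, the statement is not literally true: for $n < p$ the subspace of $\C^{n \times p}$ consisting of matrices supported in the first $r$ rows has every element of rank at most $r$ but dimension $rp > nr$. The correct bound is $r\max(n,p)$; one must either assume $n \geq p$ or first transpose before the $nr$ bound is available, and the normalization of $A$ already requires $r \leq \min(n,p)$, so this reduction should be stated at the outset rather than left implicit.
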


\begin{prop} \label{application of Flanders' theorem}
Let $ n, r \in \N $, $ \CH $ be a complex separable Hilbert space and $ \CS $ be a left submodule of the module $ \CH^{ n \times n } $ over $ \C^{ n \times n } $. Assume that $ \CS $ has dimension $ n^2 r $ over $ \C $ as a vector subspace of $ \CH^{ n \times n } $. Then $ \CS $ is a free left submodule of $ \CH^{ n \times n } $ of rank $ r $. 
\end{prop}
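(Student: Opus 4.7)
The plan is to leverage the fact that the matrix units $\mathsf{E}_{ij} \in \C^{n \times n}$ allow one to shuffle entire rows of elements of $\CS$ into arbitrary row positions, thereby reducing the problem to understanding a single ``row subspace'' $\CR \subseteq \CH^{1 \times n}$; a free basis can then be read off by a direct dimension count.

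As a first step, I would observe that, for any $H \in \CS$ and any indices $i, j$, the element $\mathsf{E}_{ij}\, H \in \CS$ has every row zero except that its $i$-th row equals the $j$-th row of $H$. Writing $R_i \colon \CH^{n \times n} \to \CH^{1 \times n}$ for the $i$-th row projection, this immediately yields $R_i(\CS) = R_j(\CS)$ for all $i, j$; call this common subspace $\CR$. I would then prove that $\CS$ coincides with $\CR^{\oplus n}$, the set of all $n \times n$ matrices over $\CH$ whose every row lies in $\CR$. One inclusion is automatic; for the reverse, given $(r_1, \ldots, r_n) \in \CR^n$, pick $H^{(i)} \in \CS$ with $R_1(H^{(i)}) = r_i$ and form $\sum_{i=1}^n \mathsf{E}_{i1}\, H^{(i)} \in \CS$, which has $i$-th row $r_i$.

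Comparing $\C$-dimensions in the identity $\CS = \CR^{\oplus n}$ forces $\dim_\C \CR = n r$. Fix a $\C$-basis $\{u_1, \ldots, u_{nr}\}$ of $\CR$, group it into $r$ consecutive blocks of length $n$, and for each $k = 1, \ldots, r$ let $H_k \in \CS$ be the matrix whose $i$-th row is $u_{(k-1)n + i}$. For an arbitrary $H \in \CS$, expanding each row as $R_i(H) = \sum_{k, \ell} c_{i, (k-1)n + \ell} \, u_{(k-1)n + \ell}$ and setting $(A_k)_{i\ell} := c_{i, (k-1)n + \ell}$ gives $H = \sum_{k=1}^r A_k H_k$ by a direct row-by-row computation; $\C^{n \times n}$-linear independence of the $H_k$ follows at once from the $\C$-linear independence of $\{u_j\}$. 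This establishes $\CS \cong (\C^{n \times n})^{\oplus r}$ as a left module, which is precisely the claimed freeness of rank $r$.

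The argument is largely routine and I do not anticipate a substantive obstacle; the step requiring the most care is the verification $\CS = \CR^{\oplus n}$, where the full left $\C^{n \times n}$-module structure (and not merely invariance under scalars) is essential. Flanders' theorem, invoked in the paper's preceding discussion, can alternatively be used via a finite-dimensional reduction to produce a generator of maximal column rank and then to proceed inductively, but the row-space description above makes such machinery unnecessary for this abstract algebraic statement.
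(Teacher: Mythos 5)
Your proof is correct, and it takes a route genuinely different from both arguments in the paper. The paper offers two proofs: the first invokes Flanders' theorem (a nontrivial result bounding the dimension of spaces of matrices of bounded rank) plus semisimplicity of $\C^{n\times n}$-modules to peel off rank-one free summands inductively; the second is a one-line appeal to the structure theorem for finite-dimensional modules over the simple algebra $\C^{n\times n}$ (every such module is a multiple of $\C^n$), followed by a dimension count. Your argument is more elementary and constructive than either: the observation that left multiplication by matrix units $\mathsf{E}_{ij}$ shuffles rows establishes directly that $\CS = \CR^{\oplus n}$ for the common row space $\CR = R_i(\CS)$, and from a $\C$-basis of $\CR$ you then exhibit an explicit free module basis $H_1, \dots, H_r$ of $\CS$. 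This buys you a concrete basis rather than merely an abstract isomorphism type, and it avoids both Flanders' theorem and the Wedderburn/Artin machinery, at the cost of the row-by-row bookkeeping you already flagged as the delicate step. In effect, you have made the paper's second proof explicit and self-contained. One small remark: your identity $\CS = \CR^{\oplus n}$ is itself a clean intermediate fact (any left $\C^{n\times n}$-submodule of $\CH^{n\times n}$ is determined by its row space) that could be stated separately; it dispenses with the dimension hypothesis and is the real content, with freeness of rank $r$ then being pure bookkeeping from $\dim_\C \CR = nr$.
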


\begin{proof}
We provide two proofs of this proposition, beginning with the first:

Consider the case when $ r =1 $. Suppose, for every $ H \in \CS \subseteq \CH^{ n \times n } $, that there exists a nonzero matrix $ \SC \in \C^{ n \times n } $ (depending on $ H $) such that 
$$ \SC H = 0. $$ 
Since the entries of all the matrices in $ \CS $ lie in a finite dimensional subspace of $ \CH $, we can pick up a basis thereof $ h_1, \ldots, h_N $ and identify $ \CS $ with a subspace of $ ( \C^{ n \times n } )^N \cong \C^{ n \times nN } $ (where $ ( \SA_1, \ldots, \SA_N ) $ corresponds to $ \SA_1 h_1 + \cdots + \SA_N h_N $). Every matrix in this subspace necessarily has a nontrivial left kernel (given by a nonzero row of the corresponding matrix $ \SC $), implying that it's rank is at most $ n - 1 $. By Theorem \ref{Flanders' theorem}, the dimension of the subspace is at most $ n ( n - 1 ) = n^2 - n < n^2 $. Consequently, there exists $ H \neq 0 $ in $ \CS $ such that $ \SC H = 0 $ if and only if $ \SC = 0 $ for $ \SC \in \C^{ n \times n } $. 

Note that the dimension of the subspace $ \CE = \{ \SC H : \SC \in \C^{ n \times n } \} \subset \CS $ as a complex vector space is $ n^2 $. Indeed, thinking of $ H $ as an element in $ ( \CH^{ \oplus n } )^{ n \times 1 } $ as $ H = ( R_1 \cdots R_n )^{ \top } $, observe that $ \{ B_{ i j } : 1 \leq i, j \leq n \} $ is a basis of $ \CE $ over $ \C $ where for any $ 1 \leq i, j \leq n $, $ B_{ i j } $ is the matrix in $ \CH^{ n \times n } $ whose only non-zero row is the $ i $-th row which is $ R_j $. This implies that $ \CS = \CE  $.

For a general $ r $, we proceed as above to obtain a non-zero element $ H_1 \in \CS $ such that the subspace $ \CE_1 := \{ \SC H_1 : \SC \in \C^{ n \times n } \} $ has complex dimension $ n^2 $. Since $ \C^{ n \times n } $ is a semi-simple ring, so is every $ \C^{ n \times n } $ module, verifying that $ \CS $ is a semi-simple left $ \C^{ n \times n } $ module. Consequently, there exists a left $ \C^{ n \times n } $ submodule $ \CS' $ of $ \CS $ such that $ \CE_1 \oplus \CS' = \CS $ as left $\C^{ n \times n } $ module. But $ \CS' $ is then a vector subspace of $ \CS $ as well and the complex dimension of $ \CS' $ is $ n^2 r - n^2 = n^2 ( r - 1 ) $. Since $ \CS $ is a left semi-simple module over $ \C^{ n \times n } $, so is $ \CS / \CE_1 \cong \CS' $. Therefore, we run this process until the complex dimension of $ \CS' $ becomes $ 0 $ to obtain a decomposition of $ \CS $ as direct sum of $ r $ many left $ \C^{ n \times n } $ submodules $ \CE_1, \hdots, \CE_r $ where $\CE_j := \{ \SC H_j : \SC \in \C^{ n \times n } \} $ for some $ H_j \neq 0 $ in $ \CS $ for $ j = 1, \hdots, r $. Finally, observe that $ \{ H_1, \hdots, H_r \} $ forms a free left module basis for $ \CS $ over $ \C^{ n \times n } $. This completes the proof.

The second proof proceeds as follows: 

Any left $ \C^{ n \times n } $ module is isomorphic to a direct sum of finitely many copies of $ \C^n $, say $ \ell $ many copies. Equivalently, any representation of $ \C^{ n \times n } $ decomposes into a direct sum of copies of the standard representation. Counting the dimension of $ \CS $ as a vector space over $ \C $, we see that 
$$ \CS \simeq \bigoplus_1^{ nr } \C^n \simeq \bigoplus_1^r \bigg( \bigoplus_1^n { \C }^n \bigg). $$ 
Since $ \bigoplus_1^n \C^n $ is isomorphic to $ \C^{n \times n} $, which serves as the left regular representation and is of course a free left $ \C^{ n \times n } $ module of rank 1, it follows that $ \CS $ is a free left module of rank $ r $, completing the proof.
\end{proof}

Applying this result to a tuple of operators in $ \mathrm B_r ( \Omega )_{ nc } $ and using the definition of nc Cowen-Douglas class we have the following corollary.

\begin{cor} \label{joint kernel}
For $ \boldsymbol{T} \in \mathrm B_r ( \Omega )_{nc} $ and $ X \in \Omega_n $, the kernel of the operator $ D_{\boldsymbol{T} - X } : \CH^{ n \times n } \rightarrow ( \CH^{ n \times n } )^{ \oplus d } $ in $ \CH^{ n \times n } $ is a free left submodule of $ \CH^{ n \times n } $ of rank $ r $.
\end{cor}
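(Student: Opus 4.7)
My plan is to verify the hypotheses of Proposition \ref{application of Flanders' theorem} and then invoke it directly. Condition (i) of Definition \ref{Definition of nc CD class} immediately gives the complex dimension count: $\dim_{\C}(\ker D_{\boldsymbol{T} - X}) = n^2 r$. The only remaining point is to check that $\ker D_{\boldsymbol{T} - X}$ is a left $\C^{n \times n}$-submodule of $\CH^{n \times n}$.

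The verification of the submodule property is a short computation that I would carry out as follows. The operator $T_j \otimes \mathrm{Id}_{\C^{n \times n}}$ acts on $H \in \CH^{n \times n}$ by applying $T_j$ entrywise, while $\mathrm{Id}_{\CH} \otimes R_{X_j}$ acts by right matrix multiplication by $X_j \in \C^{n \times n}$. For $\SC \in \C^{n \times n}$ and $H \in \CH^{n \times n}$, the entrywise action of $T_j$ on $\CH^{n \times n}$ commutes with left multiplication by the scalar matrix $\SC$, and right multiplication by $X_j$ also commutes with left multiplication by $\SC$. Hence
\[
  (T_j \otimes \mathrm{Id}_{\C^{n \times n}} - \mathrm{Id}_{\CH} \otimes R_{X_j})(\SC H) = \SC \cdot (T_j \otimes \mathrm{Id}_{\C^{n \times n}} - \mathrm{Id}_{\CH} \otimes R_{X_j})(H),
\]
which vanishes if $H \in \ker D_{\boldsymbol{T} - X}$. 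Thus $\SC H \in \ker D_{\boldsymbol{T} - X}$, confirming that this kernel is a left $\C^{n \times n}$-submodule of $\CH^{n \times n}$.

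With both hypotheses in place, Proposition \ref{application of Flanders' theorem} applies directly and yields that $\ker D_{\boldsymbol{T} - X}$ is a free left $\C^{n \times n}$-submodule of $\CH^{n \times n}$ of rank $r$. There is no serious obstacle here: the heavy lifting (via Flanders' theorem and the semisimplicity of $\C^{n \times n}$-modules) has already been absorbed into Proposition \ref{application of Flanders' theorem}, so this corollary is essentially a packaging result. The only thing worth pausing over is just the bookkeeping to make sure the scalar-matrix action from the left on $\CH^{n \times n}$ is indeed the correct module structure being used in Proposition \ref{application of Flanders' theorem}, which it is by construction.
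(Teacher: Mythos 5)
Your proposal is correct and takes the same approach as the paper, which derives the corollary immediately from Proposition \ref{application of Flanders' theorem} together with the dimension condition in Definition \ref{Definition of nc CD class}. You simply spell out the (routine) verification that $\ker D_{\boldsymbol{T}-X}$ is a left $\C^{n\times n}$-submodule, which the paper leaves implicit.
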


\subsection{Noncommutative vector bundles}

As mentioned earlier in this section, the objective of the present subsection is to introduce a concept of noncommutative hermitian holomorphic vector bundles over a nc uniformly open subset of $ \C^d_{ nc } $. We begin with the notion of the \textit{trivial noncommutative (nc) vector bundle} over a nc domain.

\begin{defn} \label{trivial nc bundle}
Let $ \Omega \subset \C^d_{ nc } $ be a nc domain and $ \CV $ be an operator space. Then a \textit{trivial noncommutative (nc) vector bundle} $ \mathfrak{ T }_{ \Omega } ( \CV ) $ over $ \Omega $ is a uniformly open subset of $ ( \C^d \times \CV )_{ nc } $ equipped with a uniformly analytic nc surjective mapping $ \pi : \mathfrak{ T }_{ \Omega } ( \CV ) \rightarrow \Omega $ such that $ \pi^{ - 1 } ( Z ) \simeq \CV^{ n \times n } $ both as a vector space as well as a free left $ \C^{ n \times n } $ module for all $ Z \in \Omega_n $ and $ n \in \N $. 

Such a trivial nc vector bundle is denoted by $ \mathfrak{ T }_{ \Omega } ( \CV ) \overset{ \pi }{ \rightarrow } \Omega $, the mapping $ \pi $ is called the canonical projection, $ \Omega $ is the base, and the set $ \pi^{ - 1 } ( Z ) $ is called the fibre of the bundle $ \mathfrak{ T }_{ \Omega } ( \CV ) \overset{ \pi }{ \rightarrow } \Omega $ over $ Z $ denoted by $ \mathfrak{ T }_Z $. The \textit{rank} of a trivial nc vector bundle is defined as the dimension of $ \CV $. When $ \dim \CV = 1 $, we call it as \textit{trivial nc line bundle over $ \Omega $} and denote it by $ \mathfrak{ L }_{ \Omega } $.
The total space of a trivial nc vector bundle $ \mathfrak{ T }_{ \Omega } ( \CV ) \overset{ \pi }{ \rightarrow } \Omega $ is the set 
$ \mathfrak{ T } : = \prod_{ n = 1 }^{ \infty } \Omega_n \times \CV^{ n \times n } $.
\end{defn}

For $ X \in \Omega_n $ and $ Y \in \Omega_m $, we denote the direct sum of the fibres over $ X $ and $ Y $ as 
$$ \mathfrak{ T }_X \oplus \mathfrak{ T }_Y : = \left\{ \bigg( \begin{bmatrix} X & 0 \\ 0 & Y \end{bmatrix} , \begin{bmatrix} A & 0 \\ 0 & B \end{bmatrix} \bigg) : A \in \CV^{ n \times n },~ B \in \CV^{ m \times m } \right\}, $$
and observe that 
$$ \mathfrak{ T }_X \oplus \mathfrak{ T }_Y \overset{ \iota }{\hookrightarrow } \mathfrak{ T }_{ X \oplus Y } = \{ X \oplus Y \} \times \CV^{ ( n + m ) \times ( n + m ) } . $$
Further, for each $ n \in \N $, $ \SS \in \text{GL} ( n, \C ) $ and $ X \in \Omega_n $, whenever $ \SS X \SS^{ - 1 } \in \Omega_n $, the mapping $ \mathfrak{ S } : \mathfrak{ T }_X \rightarrow \mathfrak{ T }_{ \SS X \SS^{ - 1 } } $ defined by
$$ \mathfrak{ S } ( X, A ) = ( S X S^{ - 1 }, S A S^{ - 1 } ) $$
turns out to be an isomorphism. 

We now introduce the notion of noncommutative (nc) vector bundles over 
$ \Omega $. Given that the presheaf of nc functions on an open set is not necessarily a sheaf, we choose to define nc vector bundles as those that are embedded in a trivial nc vector bundle over $ \Omega $. Moreover, we note that in the free noncommutative setting, the notions of local boundedness and analytic functions are equivalent (recall Theorem \ref{locally bounded = nc analytic}). As a result, the only relevant notion in this context is that of holomorphic vector bundles unlike the classical case.

\begin{defn} \label{nc vector bundle}
Given a trivial nc holomorphic vector bundle $ \mathfrak{ T }_{ \Omega } ( \CV ) \overset{ \pi }{ \rightarrow } \Omega $, let $ E $ be a nc subset of $ \mathfrak{ T }_{ \Omega } ( \CV ) $ equipped with the relative uniformly-open topology. Then we say that $ E $ is a \textit{noncommutative holomorphic vector bundle} of rank $ r $ over $ \Omega $ if
\begin{itemize}
\item[(1)]  $ \pi |_{ E } : E \rightarrow \Omega $ is surjective;
\item[(2)] there exists an open cover $ \{ \CU_{ \alpha } \}_{ \alpha \in \Lambda } $ of $ \Omega $ by uniformly open sets $ \CU_{ \alpha } \subset \Omega $ for $ \alpha \in \Lambda $ and uniformly analytic nc bijective function $ \phi_{ \alpha } : \mathfrak{ T }_{ \CU_{ \alpha } } ( \C^r ) \rightarrow ( \pi |_{ E } )^{ - 1 } ( \CU_{ \alpha } ) $ satisfying
\begin{itemize}
\item[(2a)] $ \pi|_E \circ \phi_{ \alpha } = \pi $;
\item[(2b)] for every $ s \in \N $ and $ Z \in \CU_{ \alpha } \cap \C^{ s \times s } $, $ \phi_{ \alpha } $ on the fibre $ \pi^{ - 1 } \{ Z \} $ is linear as well as free left $ \C^{ s \times s } $ module isomorphism onto its image.
\end{itemize}
We denote a nc holomorphic vector bundle by $ E \overset{ \pi }{ \rightarrow } \Omega $. The collection $ \{ ( \CU_{ \alpha }, \phi_{ \alpha } ) \}_{ \alpha \in \Lambda } $ is called a local trivialization of the bundle $ E \overset{ \pi }{ \rightarrow } \Omega $.
\end{itemize}
\end{defn}
Since each $ \phi_{ \alpha } $ is locally uniformly bounded on $ \mathfrak{ T }_{ \CU_{ \alpha } } ( \C^r ) $, it is evident that so is the mapping $ \phi_{ \alpha }^{ - 1 } $ with respect to the relative uniformly-open topology on $ E $. 
\begin{rem}
Note that the conditions (2a) and (2b) together with Proposition \ref{application of Flanders' theorem} imply that each fibre of a nc vector bundle $ E \overset{ \pi }{ \rightarrow } \Omega $ of rank $ r $ over a point in $ \Omega_n $ is not only a complex vector space of dimension $ n^2 r $ but also it is a free left module of rank $ r $ over the ring $ \C^{ n \times n } $. 

We also point out that existence of local trivialization $ ( \CU_{ \alpha }, \phi_{ \alpha } ) $ is equivalent to the fact that there exist uniformly analytic nc functions $ \gamma_1, \hdots, \gamma_r : \CU_{ \alpha } \rightarrow \CV_{ nc } $ such that for every $ Z \in \CU_{ \alpha } \cap \C^{ s \times s } $, $ \{ \gamma_1( Z ), \hdots, \gamma_r ( Z ) \} $ forms a free left $ \C^{ s \times s } $ module basis of the fibre $ ( \pi |_{ E } )^{ - 1 } \{ Z \} $ over $ Z $. 
\end{rem}

\subsubsection{Sections and homomorphisms of nc vector bundles} \label{section and homomorphism}

Let $ E \overset{ \pi }{ \rightarrow } \Omega $ be a nc holomorphic vector bundle of rank $ r $ embedded in $ \mathfrak{ T }_{ \Omega } ( \CV ) $. Then a \textit{nc section} $ \sigma $ of $ E \overset{ \pi }{ \rightarrow } \Omega $ is a mapping $ \sigma : \Omega \rightarrow E $ satisfying following properties:
\begin{itemize}
\item[(i)] $ \pi \circ \sigma = \mathrm{Id}_{ \Omega } $ where $ \mathrm{Id}_{ \Omega } : \Omega \rightarrow \Omega $ is the nc identity mapping on $ \Omega $;
\item[(ii)] $ \sigma ( X \oplus Y ) = \sigma ( X ) \oplus \sigma ( Y ) $ for all $ X, Y \in \Omega $;
\item[(iii)] $ \sigma ( \SS X \SS^{ - 1 } ) = \mathfrak{ S } (  \sigma ( \SS X \SS^{ - 1 } ) ) = \SS \sigma ( X ) \SS^{ - 1 } $ for all $ X \in \Omega_n $, $ \SS \in \text{GL} ( n, \C ) $ and $ n \in \N $.
\end{itemize} 
A nc section of $ E \overset{ \pi }{ \rightarrow } \Omega $ of the above kind is said to be uniformly analytic if it, viewed as a mapping $ \sigma : \Omega \rightarrow \CV_{ nc } $, is locally bounded.

 A local \textit{uniformly analytic ordered nc frame} of a nc holomorphic vector bundle $ E \overset{ \pi }{ \rightarrow } \Omega $ is an ordered $ r $-tuple $ \sigma_1, \hdots, \sigma_r $ of uniformly analytic nc sections of $ E \overset{ \pi }{ \rightarrow } \Omega $ over an uniformly open subset $ \CU $ of $ \Omega $ such that for every $ Z \in \CU_n $, $ \{ \sigma_1 ( Z ), \hdots, \sigma_r ( Z ) \} $ is free left $ \C^{ n \times n } $ module basis for the fibre $ E_Z : = \pi^{ - 1 } \{ Z \} $ of $ E \overset{ \pi }{ \rightarrow } \Omega $ over $ Z $.

\begin{defn} \label{nc homomorphism}
Suppose that $ E \overset{ \pi }{ \rightarrow } \Omega $ and $ E' \overset{ \pi' }{ \rightarrow } \Omega' $ are two nc holomorphic vector bundles. A \textit{nc vector bundle homomorphism} from $ E $ to $ E' $ is a uniformly analytic nc mapping $ \widetilde{ f } : E \rightarrow E' $ --- that is, $ \widetilde{ f } $ is locally uniformly bounded with respect to the relative uniformly-open topology on $ E $ and $ E' $ --- which descends to a uniformly analytic nc map $ f : \Omega \rightarrow \Omega' $ and satisfies 
$$ \pi' \circ \widetilde{ f } = f \circ \pi , $$
along with the property that the restriction $ \widetilde{ f }|_{ E_Z } : E_Z \rightarrow E_{ f ( Z ) } $ is a homomorphism as complex vector spaces and left modules over $ \C^{ n \times n } $ for all $ Z \in \Omega_n $ and $ n \in \N $.

Further, $ \widetilde{ f } $ is called an isomorphism of nc vector bundles if, in addition, $ f^{ - 1 } $ exists and is locally uniformly bounded with respect to the relative uniformly-open topology on $ E' $ and $ E $, and the restriction $ \widetilde{ f }|_{ E_Z } : E_Z \rightarrow E_{ f ( Z ) } $ is an isomorphism as complex vector spaces and left modules over $ \C^{ n \times n } $ for all $ Z \in \Omega_n $ and $ n \in \N $.
\end{defn}

%


\subsubsection{Transition data for nc vector bundles} \label{transition data}

Suppose $ E \overset{ \pi }{ \rightarrow } \Omega $ is a nc holomorphic vector bundle of rank $ r $ embedded in $ \mathfrak{ T }_{ \Omega } ( \CV ) $. From the definition above (Definition \ref{nc vector bundle}) it follows that there exists an open cover $ \{ \CU_{ \alpha } \}_{ \alpha \in \Lambda } $ of $ \Omega $ by uniformly open subsets that trivialize the bundle $ E \overset{ \pi }{ \rightarrow } \Omega $. That is, for each $ \CU_{ \alpha } $, there exists a nc uniformly bi-holomorphic mapping 
$$ \varphi_{ \alpha } : E |_{ \CU_{ \alpha } } \rightarrow \mathfrak{ T }_{ \CU_{ \alpha } } ( \C^r ) $$ 
such that $ \pi \circ \varphi_{ \alpha } =\pi $ on $ E|_{ \CU_{ \alpha } } $. Furthermore, the restriction 
$$ \varphi_{ \alpha }|_{ E_Z } : E_Z \to \mathfrak{ T }_{ \CU_{ \alpha} } ( \C^r )_Z $$ 
to the corresponding fibres over $ Z $ is an isomorphism as complex vector spaces as well as of left modules over $ \C^{ m \times m } $ for all $ m \in \N $ and $ Z \in \CU_{ \alpha } \cap \C^{ m \times m } $. 

For $ \alpha, \beta \in \Lambda $, the transition map
$$ \varphi_{ \alpha \beta } : = \varphi_{ \alpha } \circ \varphi_{ \beta }^{ - 1 } : \mathfrak{ T }_{ \CU_{ \alpha } \cap \CU_{ \beta } } ( \C^r) \rightarrow \mathfrak{ T }_{ \CU_{ \alpha } \cap \CU_{ \beta } } ( \C^r ), $$
is a nc uniformly bi-holomorphic mapping satisfying $ \pi_1 \circ \varphi_{ \alpha \beta } = \pi_1 $, which ensures that $ \varphi_{ \alpha \beta } $ preserves the fibres, mapping 
$$ \{ Z \} \times ( \C^{ n \times n } )^r \to \{ Z \} \times ( \C^{ n \times n } )^r , $$ 
for each $ n \in \N $ and $ Z \in ( \CU_{ \alpha } \cap \CU_{ \beta } ) \cap \C^{ n \times n } $. Moreover, the restriction of $ \varphi_{ \alpha \beta } $ to each fibre induces an isomorphism of vector spaces and free left modules. When $ Z \in ( \CU_{ \alpha } \cap \CU_{ \beta } ) \cap \C^{ n \times n } $, such an isomorphism necessarily takes the form 
$$ ( Z, ( A_1, \hdots, A_r ) ) \mapsto \left( Z, \begin{bmatrix} A_1 & \cdots & A_r \end{bmatrix} g_{ \alpha \beta } ( Z ) \right) \quad \text{for all} \quad A_1, \hdots, A_r \in \C^{ n \times n } $$
for a uniquely determined $ g_{ \alpha \beta } ( Z ) \in \text{GL} ( nr, \C ) $, where $ g_{ \alpha \beta } ( Z ) $ is viewed as $ r \times r $ block matrix with $ n \times n $ blocks acting via right matrix multiplication. This shows that the transition map $ \varphi_{ \alpha \beta } $ is explicitly given by 
$$  ( Z, ( A_1, \hdots, A_r ) ) \overset{ \varphi_{ \alpha \beta } }{ \longmapsto } \left( Z, \begin{bmatrix} A_1 & \cdots & A_r \end{bmatrix} g_{ \alpha \beta } ( Z ) \right), $$
for $ Z \in ( \CU_{ \alpha } \cap \CU_{ \beta } )_n $, $ A_1, \hdots, A_r \in \C^{ n \times n } $, $ n \in \N $. The map $ \varphi_{ \alpha \beta } $ is thus entirely determined by
$$ g_{ \alpha \beta } : \CU_{ \alpha } \cap \CU_{ \beta } \rightarrow  \coprod_{ n = 1 }^{ \infty } \mathrm{GL} ( nr, \C ) . $$ 
Since $ \varphi_{ \alpha \beta } $ is nc uniformly bi-holomorphic mapping, it follows that $ g_{ \alpha \beta } $ is uniformly analytic nc mapping. We refer to these nc mappings as \textit{transition mappings}.

Given a nc holomorphic vector bundle $ E \overset{ \pi }{ \rightarrow } \Omega $ of rank $ r $ with $ E \subset \mathfrak{ T }_{ \Omega } ( \C^r ) $, we thus obtain a cover $ \{ \CU_{ \alpha } \}_{ \alpha \in \Lambda } $ of $ \Omega $ by uniformly open sets and a collection of uniformly analytic nc mappings 
$$ \left\{ g_{ \alpha \beta } : \CU_{ \alpha } \cap \CU_{ \beta } \rightarrow \coprod_{ n = 1 }^{ \infty } \text{GL} ( nr, \C ) \right\}_{ \alpha, \beta \in \Lambda } .$$
These mappings satisfy following properties:
\begin{itemize}
\item[(i)] $ g_{ \alpha \alpha } \equiv \mathrm{Id}_{nc } $, the nc constant function taking the value as the identity matrix of correct size over $ \C^{ r \times r } $, since $ g_{ \alpha \alpha } \equiv \varphi_{ \alpha } \circ \varphi_{ \alpha }^{ - 1 } $ is the identity homomorphism of the nc trivial bundle of rank $ r $ over $ \CU_{ \alpha } $;
\item[(ii)] $ g_{ \alpha \beta } g_{ \beta \alpha } \equiv \mathrm{Id}_{ nc } $, since $ \varphi_{ \alpha \beta } \varphi_{ \beta \alpha } $ is the identity homomorphism of the nc trivial bundle of rank $ r $ over $ \CU_{ \alpha } \cap \CU_{ \beta } $;
\item[(iii)] $ g_{ \alpha \beta } g_{ \beta \gamma } g_{ \gamma \alpha } \equiv  \mathrm{Id}_{ nc } $, since $ \varphi_{ \alpha \beta } \varphi_{ \beta \gamma } \varphi_{ \gamma \alpha } $ is 
the identity homomorphism of the nc trivial bundle of rank $ r $ over $ \CU_{ \alpha } \cap \CU_{ \beta } \cap \CU_{ \gamma } $.
\end{itemize}
Note that the last condition itself derives the other two. So it is enough to have only condition, namely, the last one for getting all of the conditions above. We refer to the last condition -- condition (iii) -- as \textit{nc (\v{C}ech) cocycle condition} for the family $ \{ g_{ \alpha \beta } \} $ of uniformly analytic maps relative to the uniformly open cover $ \{ \CU_{ \alpha } \} $ of $ \Omega $. The family of maps $ \{ g_{ \alpha \beta } \} $ is then called a \textit{nc cocycle} with values in $  \coprod_{ n = 1 }^{ \infty } \text{GL} ( nr, \C ) $ relative to the cover $ \{ \CU_{ \alpha } \} $. 

Next, consider the direct summand similarity extension $ \mathfrak{ T }_{ \Omega } ( \CV )_{ \text{d.s.e} } $ of the trivial nc holomorphic bundle $ \mathfrak{ T }_{ \Omega } ( \CV ) $ defined as:
$$ \mathfrak{ T }_{ \Omega } ( \CV )_{ \text{d.s.e} } : = \{ ( X, A ) \in ( \C^d \times \CV )_{ nc } : ( X \oplus Y, A \oplus B ) \in \mathfrak{ T }_{ \Omega } ( \CV ) ~ \text{for some} ~ ( Y, B ) \in ( \C^d \times \CV )_{ nc } \} . $$
Since $ \mathfrak{ T }_{ \Omega } ( \CV ) $ is a uniformly open subset of $ ( \C^d \times \CV )_{ nc } $, so is $  \mathfrak{ T }_{ \Omega } ( \CV )_{ \text{d.s.e} }  $ (cf. \cite[Proposition 9.1]{Verbovetskyi-Vinnikov}). The uniformly analytic nc surjection $ \pi : \mathfrak{ T }_{ \Omega } ( \CV ) \rightarrow \Omega $ uniquely extends to a uniformly analytic nc surjective mapping (cf. \cite[Proposition 9.2 (I), Proposition A.3]{Verbovetskyi-Vinnikov})
$$ \widetilde{ \pi } : \mathfrak{ T }_{ \Omega } ( \CV )_{ \text{d.s.e} } \rightarrow \Omega_{ \text{d.s.e} } . $$
For $ Z \in ( \Omega_{ \text{d.s.e} } )_n $, $ n \in \N $, the fibre $ \widetilde{ \pi }^{ - 1 } ( Z )$ is isomorphic to $ \CV^{ n \times n } $ both as complex vector space and free left $ \C^{ n \times n } $ module, yielding the trivial nc vector bundle
$$ \mathfrak{ T }_{ \Omega } ( \CV )_{ \text{d.s.e} } \overset{ \widetilde{ \pi } }{ \rightarrow } \Omega_{ \text{d.s.e} } . $$
Indeed, for $ Z \in ( \Omega_{ \text{d.s.e} } )_n $, there exist $ m \in \N $ and $ W \in \Omega_m $ such that $ Z \oplus W \in \Omega_{ n + m } $. Consequently, we have the following inclusion:
$$ \CV^{ n \times n } \oplus \CV^{ m \times m } \subset \CV^{ ( n + m ) \times ( n + m ) } \simeq \pi^{ - 1 } ( Z \oplus W ) . $$
This implies that $ \{ Z \oplus W \} \times ( \CV^{ n \times n } \oplus \CV^{ m \times m } ) $ is contained in $ \mathfrak{ T }_{ \Omega } ( \CV ) $ verifying $ \{ Z \} \times \CV^{ n \times n } \subset \mathfrak{ T }_{ \Omega } ( \CV )_{ \text{d.s.e} } $. We record this discussion in the following proposition.

\begin{prop} \label{d.s.e trivial bundle}
    Given the trivial nc holomorphic vector bundle $ \mathfrak{ T }_{ \Omega } ( \CV ) \overset{ \pi }{ \rightarrow } \Omega $, the direct summand similarity extension $ \mathfrak{ T }_{ \Omega } ( \CV )_{ \text{d.s.e} } $ of $ \mathfrak{ T }_{ \Omega } ( \CV ) $ is the trivial nc holomorphic vector bundle over $ \Omega_{ \text{d.s.e} } $. That is, $$ \mathfrak{ T }_{ \Omega } ( \CV )_{ \text{d.s.e} } =  \mathfrak{ T }_{ \Omega_{ \text{d.s.e} } } ( \CV ) $$
\end{prop}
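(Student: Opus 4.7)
The plan is to verify the set-theoretic equality $ \mathfrak{T}_{\Omega}(\CV)_{\text{d.s.e}} = \mathfrak{T}_{\Omega_{\text{d.s.e}}}(\CV) $, after which the triviality of the bundle structure will be immediate by transporting the projection and the fibrewise algebraic data. The uniform openness of $ \mathfrak{T}_{\Omega}(\CV)_{\text{d.s.e}} $ and the existence of the uniformly analytic nc extension $ \widetilde{\pi} $ are already recorded in the discussion preceding the proposition by invoking \cite[Propositions 9.1, 9.2 (I), A.3]{Verbovetskyi-Vinnikov}, so what remains is bookkeeping at the level of underlying sets and fibres.

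For the forward inclusion, I would take an arbitrary $ (X, A) \in \mathfrak{T}_{\Omega}(\CV)_{\text{d.s.e}} $ with $ X \in (\C^d)^{n \times n} $ and $ A \in \CV^{n \times n} $. Unpacking the definition of the d.s.e.\ yields a pair $ (Y, B) \in (\C^d \times \CV)_{nc} $ with $ (X \oplus Y, A \oplus B) \in \mathfrak{T}_{\Omega}(\CV) $. Applying $ \pi $ to this point forces $ X \oplus Y \in \Omega $, which is precisely what it means for $ X $ to lie in $ (\Omega_{\text{d.s.e}})_n $. Since $ \mathfrak{T}_{\Omega_{\text{d.s.e}}}(\CV) $ contains every pair $ (Z, C) $ with $ Z \in (\Omega_{\text{d.s.e}})_k $ and $ C \in \CV^{k \times k} $, this delivers $ (X, A) \in \mathfrak{T}_{\Omega_{\text{d.s.e}}}(\CV) $.

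For the reverse inclusion, I would follow the explicit construction already appearing in the preamble: given $ X \in (\Omega_{\text{d.s.e}})_n $ and arbitrary $ A \in \CV^{n \times n} $, pick $ W \in \Omega_m $ with $ X \oplus W \in \Omega_{n+m} $ and any $ B \in \CV^{m \times m} $. Then $ A \oplus B \in \CV^{(n+m) \times (n+m)} $, which is exactly the fibre of the trivial bundle over $ X \oplus W $, so $ (X \oplus W, A \oplus B) \in \mathfrak{T}_{\Omega}(\CV) $, witnessing that $ (X, A) \in \mathfrak{T}_{\Omega}(\CV)_{\text{d.s.e}} $.

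With both inclusions in hand the underlying sets coincide, and by the uniqueness of the nc extension guaranteed by \cite[Proposition A.3]{Verbovetskyi-Vinnikov} the projection $ \widetilde{\pi} $ must act as $ (X, A) \mapsto X $, matching the canonical projection of $ \mathfrak{T}_{\Omega_{\text{d.s.e}}}(\CV) $. Each fibre $ \widetilde{\pi}^{-1}(X) = \{X\} \times \CV^{n \times n} $ then inherits the canonical complex vector space and free left $ \C^{n \times n} $ module structure from $ \CV^{n \times n} $, thus realising the trivial nc vector bundle structure on $ \mathfrak{T}_{\Omega_{\text{d.s.e}}}(\CV) $. There is essentially no conceptual obstacle here; the proposition is a summary statement whose content is entirely assembled from the discussion preceding it, with the only nontrivial external inputs being the cited results of \cite{Verbovetskyi-Vinnikov} used for uniform openness and the existence of the extended analytic projection.
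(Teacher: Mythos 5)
Your proof is correct and follows essentially the same route as the paper: the reverse inclusion you give is precisely the argument the paper makes with the auxiliary point $ W \in \Omega_m $ and the block embedding $ \CV^{n \times n} \oplus \CV^{m \times m} \subset \CV^{(n+m) \times (n+m)} $, while the forward inclusion you spell out is left implicit in the paper but is the immediate unwinding of the definitions. The invocation of \cite[Propositions 9.1, 9.2 (I), A.3]{Verbovetskyi-Vinnikov} for uniform openness and the unique analytic extension of $ \pi $ matches the paper exactly.
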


We now observe in the proposition below that the direct summand similarity extension $ E_{ \text{d.s.e} } $ of the nc subset $ E \subset \mathfrak{ T }_{ \Omega } ( \CV ) $ turns out to be a nc holomorphic vector bundle of rank $ r $ over $ \Omega_{ \text{d.s.e} } $ which we define to be the \emph{direct summand similarity extension bundle} of the given nc vector bundle $ E \overset{ \pi }{ \rightarrow } \Omega $. 

\begin{prop} \label{d.s.e nc vector bundle}
    Given a nc holomorphic vector bundle $ E \overset{ \pi }{ \rightarrow } \Omega $ of rank $ r $ embedded in $ \mathfrak{ T }_{ \Omega } ( \CV ) $, the direct summand similarity extension $ E_{ \text{d.s.e} } $ of $ E $ is an nc holomorphic vector bundle over $ \Omega_{ \text{d.s.e} } $ of rank $ r $.
\end{prop}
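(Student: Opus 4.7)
The strategy is to transport the local trivializations of $E$ over $\Omega$ to $E_{\text{d.s.e}}$ over $\Omega_{\text{d.s.e}}$ via the unique extension of uniformly analytic nc functions to the direct summand similarity envelope (cf.\ \cite[Proposition 9.2, Proposition A.3]{Verbovetskyi-Vinnikov}, the same tool used in the proof of Proposition \ref{d.s.e trivial bundle}).

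By Proposition \ref{d.s.e trivial bundle}, the ambient space $\mathfrak{T}_{\Omega}(\CV)_{\text{d.s.e}}$ equals $\mathfrak{T}_{\Omega_{\text{d.s.e}}}(\CV)$, so $E_{\text{d.s.e}}$ naturally sits inside this trivial nc bundle as a nc subset. Surjectivity of the projection $\pi|_{E_{\text{d.s.e}}}\colon E_{\text{d.s.e}}\to\Omega_{\text{d.s.e}}$ is immediate: given $Z\in\Omega_{\text{d.s.e}}$, choose $Y$ with $Z\oplus Y\in\Omega$; since every fibre of $E$ is a left module and hence contains the zero element $0=0\oplus 0$, the definition of $E_{\text{d.s.e}}$ yields $(Z,0)\in E_{\text{d.s.e}}$.

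For local trivializations, fix $Z_0\in(\Omega_{\text{d.s.e}})_s$ and choose $W_0$ of some size $t$ with $Z_0\oplus W_0\in\Omega$. Pick a uniformly open nc neighbourhood $\CU\subset\Omega$ of $Z_0\oplus W_0$ on which $E|_{\CU}$ admits a uniformly analytic frame $\sigma_1,\ldots,\sigma_r\colon\CU\to\CV_{nc}$. Each $\sigma_i$, being uniformly analytic on $\CU$, extends uniquely to a uniformly analytic nc function $\widetilde{\sigma}_i$ on $\CU_{\text{d.s.e}}$, which contains a uniformly open nc neighbourhood $\widetilde{\CU}\subset\Omega_{\text{d.s.e}}$ of $Z_0$ on which $Z\oplus W_0\in\CU$. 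By nc-respect of direct sums,
\[
\sigma_i(Z\oplus W_0)=\widetilde{\sigma}_i(Z)\oplus\widetilde{\sigma}_i(W_0),\qquad Z\in\widetilde{\CU}.
\]
Together with $(Z\oplus W_0,\sigma_i(Z\oplus W_0))\in E$ and the definition of $E_{\text{d.s.e}}$, this identity shows that $\widetilde{\sigma}_i$ is a section of $E_{\text{d.s.e}}$ over $\widetilde{\CU}$. Linear independence of $\{\widetilde{\sigma}_i(Z)\}_{i=1}^r$ over $\C^{n\times n}$ at $Z\in\widetilde{\CU}\cap\C^{n\times n}$ follows by lifting any relation $\sum_iC_i\widetilde{\sigma}_i(Z)=0$ to $\sum_i(C_i\oplus 0_{t\times t})\sigma_i(Z\oplus W_0)=0$ and invoking the frame property of $\{\sigma_i\}$ on $\CU$.

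The principal technical obstacle is the spanning property: showing that every $(Z,A)\in E_{\text{d.s.e}}$ with $Z\in\widetilde{\CU}$ satisfies $A\in\mathrm{span}_{\C^{n\times n}}\{\widetilde{\sigma}_i(Z)\}_{i=1}^r$. The plan is first to upgrade the witness in the definition of $E_{\text{d.s.e}}$: starting from an arbitrary witness $A\oplus B'\in E_{Z\oplus Y}$, one forms the direct sum $(Z\oplus Y)\oplus(Z\oplus W_0)\in\Omega$ with element $(A\oplus B')\oplus 0\in E$, applies a permutation similarity to pass to a fibre over a point similar to $Z\oplus Z\oplus Y\oplus W_0$, and then uses the left action of a suitable block-diagonal scalar matrix to extract a witness of the form $A\oplus B\in E_{Z\oplus W_0}$. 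Expanding this with the frame gives
\[
A\oplus B=\sum_iC_i\,\sigma_i(Z\oplus W_0)=\sum_iC_i\bigl(\widetilde{\sigma}_i(Z)\oplus\widetilde{\sigma}_i(W_0)\bigr);
\]
the vanishing of the off-diagonal blocks forces each $C_i$ to be block diagonal, via the already-established linear independence of $\{\widetilde{\sigma}_i(W_0)\}$ over $\C^{t\times t}$ and of $\{\widetilde{\sigma}_i(Z)\}$ over $\C^{n\times n}$, whence $A=\sum_i C_i^{11}\widetilde{\sigma}_i(Z)$. The resulting maps $(Z,(C_1,\ldots,C_r))\mapsto(Z,\sum_iC_i\widetilde{\sigma}_i(Z))$ from $\mathfrak{T}_{\widetilde{\CU}}(\C^r)$ to $E_{\text{d.s.e}}|_{\widetilde{\CU}}$ then furnish the required local trivializations, exhibiting $E_{\text{d.s.e}}$ as a nc holomorphic vector bundle of rank $r$ over $\Omega_{\text{d.s.e}}$.
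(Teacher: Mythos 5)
The overall strategy—transporting local trivializing data from $E$ to $E_{\text{d.s.e}}$ via the unique direct-summand extension of uniformly analytic nc functions from \cite[Propositions 9.1, 9.2, A.3]{Verbovetskyi-Vinnikov}—is the same one the paper takes. The paper extends the trivialization maps $\phi_\alpha$ directly, whereas you extend a local frame $\sigma_1,\ldots,\sigma_r$ and verify the frame properties by hand; in principle these are equivalent, since a frame determines and is determined by a trivialization map. However, your hands-on verification contains two genuine gaps.

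First, the claim that $\CU_{\text{d.s.e}}$ contains a uniformly open nc neighbourhood $\widetilde{\CU}$ of $Z_0$ on which $Z\oplus W_0\in\CU$ is false as stated, because of a level mismatch. If $Z_0\in\C^{s\times s}$ and $W_0\in\C^{t\times t}$, then a nc ball $\CU$ around $Z_0\oplus W_0$ has points only at levels $m(s+t)$, whereas a nc neighbourhood of $Z_0$ contains points $Z$ at every level $ms$, for which $Z\oplus W_0$ has size $ms+t\neq m(s+t)$ when $m>1$; in general $Z\oplus W_0$ will not lie in $\CU$. The witness must be allowed to depend on $Z$ (and must grow in size), which is precisely what \cite[Proposition 9.2]{Verbovetskyi-Vinnikov} does level by level; your identity $\sigma_i(Z\oplus W_0)=\widetilde\sigma_i(Z)\oplus\widetilde\sigma_i(W_0)$ only holds at level $s$. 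Second, the witness-upgrade step in the spanning argument is not justified: after forming the direct sum $(Z\oplus Y)\oplus(Z\oplus W_0)$ and applying a permutation similarity, you cannot pass from the fibre of $E$ over $Z\oplus Z\oplus Y\oplus W_0$ to a fibre over $Z\oplus W_0$ by left multiplication with a scalar block-diagonal matrix—that operation preserves the base point, it does not change its size—and $E$ being a nc subset (closed under direct sums) is not assumed to be closed under similarities or under restriction to direct summands, so both the permutation step and the ``extraction'' step lack justification. (Minor: the last equation does not actually require each $C_i$ to be block diagonal; the top-left block identity $A=\sum_iC_i^{11}\widetilde\sigma_i(Z)$ follows directly from reading off the $(1,1)$-block.) These two gaps are precisely the places where the paper leans on the uniqueness and consistency provided by the cited machinery rather than arguing at the level of frames.
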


\begin{proof}
    Since $ \pi|_{ E } : E \rightarrow \Omega $ is surjective, so is the induced mapping $ \widetilde{ \pi }|_{ E_{ \text{d.s.e} } } : E_{ \text{d.s.e} } \rightarrow \Omega_{ \text{d.s.e} } $. Let $ \{ ( \CU_{ \alpha }, \phi_{ \alpha } ) \}_{ \alpha \in \Lambda } $ be a local trivialization of $ E \overset{ \pi }{ \rightarrow } \Omega $. By \cite[Proposition 9.1, Proposition 9.2]{Verbovetskyi-Vinnikov}, the collection $ \{ ( \CU_{ \alpha } )_{ \text{d.s.e} }  \}_{ \alpha \in \Lambda } $ becomes an open cover for $ \Omega_{ \text{d.s.e} } $ and, for each $ \alpha \in \Lambda $, the uniformly analytic bijective mapping 
    $$ \phi_{ \alpha } : \mathfrak{ T }_{ \Omega } ( \C^{ r } ) \to ( \pi|_E )^{ - 1 } ( \CU_{ \alpha } ) $$ 
    extends uniquely to the uniformly analytic bijective mapping 
$$ \widetilde{ \phi_{ \alpha } } : \mathfrak{ T }_{ \Omega_{ \text{d.s.e} } } ( \C^{ r } ) \to ( \widetilde{ \pi }|_{ E_{ \text{d.s.e} } } )^{ - 1 } ( ( \CU_{ \alpha } )_{ \text{d.s.e} } ), $$ 
which satisfies $ \widetilde{ \pi }|_{ E_{ \text{d.s.e} } } \circ \widetilde{ \phi_{ \alpha } } = \widetilde{ \pi } $; here $ \widetilde{ \pi } $ is the canonical projection $ \widetilde{ \pi } : \mathfrak{ T }_{ ( \CU_{ \alpha } )_{ \text{d.s.e} } } ( \C^{ r } ) \to ( \CU_{ \alpha } )_{ \text{d.s.e} } $. 

Moreover, from $ \widetilde{ \phi_{ \alpha } } $ in \cite[Proposition 9.2]{Verbovetskyi-Vinnikov}, for any $ Z \in ( \Omega_{ \text{d.s.e} } )_n $, we have 
$$ \widetilde{ \phi_{ \alpha } }\big|_{ \widetilde{ \pi }^{ - 1 } \{ Z \} } \equiv \bigg( \phi_{ \alpha } \big|_{ \pi^{ - 1 } \{ Z \oplus W \} } \bigg) \bigg|_{ ( \C^r )^{ n \times n } \times \{ 0 \} }, $$
for some $ m \in \N $ and $ W \in ( \C^d )^{ m \times m } $. This definition of $ \widetilde{ \phi_{ \alpha } }\big|_{ \widetilde{ \pi }^{ - 1 } \{ Z \} } $ is independent of the choices of $ m $ and $ W $ (see \cite[Proposition 9.2]{Verbovetskyi-Vinnikov}). 

Since $ \phi_{ \alpha } $  is a linear and free left $ \C^{ ( n + m ) \times ( n + m ) } $ module isomorphism on the fibre $ \pi^{ - 1 } \{ Z \oplus W \} $, $ \widetilde{ \phi_{ \alpha } } $ on $ \widetilde{ \pi }^{ - 1 } \{ Z \} $ is also a linear and free left $ \C^{ n \times n } $ module isomorphism. As a consequence, $ \{ ( ( \CU_{ \alpha } )_{ \text{d.s.e} }, \widetilde{ \phi_{ \alpha } } ) \}_{ \alpha \in \Lambda } $ defines a local trivialization of $ E_{ \text{d.s.e} } $ over $ \Omega_{ \text{d.s.e} } $, thereby establishing that $ E_{ \text{d.s.e} } $ is a nc vector bundle over $ \Omega_{ \text{d.s.e} } $ of rank $ r $. This completes the proof.
\end{proof}

Finally, it can be seen that the equivalence classes of nc holomorphic vector bundles up to isomorphism are preserved under the direct summand similarity extensions. In particular, we have the following result.

\begin{thm} \label{d.s.e of vector bundles}
Two nc holomorphic vector bundles $ E \overset{ \pi }{ \rightarrow } \Omega $ and $ F \overset{ \pi }{ \rightarrow } \Omega $ embedded into a trivial nc vector bundle $ \mathfrak{ T }_{ \Omega } ( \CV ) \overset{ \pi }{ \rightarrow } \Omega $ are isomorphism if and only if so are the corresponding direct summand similarity extensions $ E_{ \text{d.s.e} } \overset{ \widetilde{ \pi } }{ \rightarrow } \Omega_{ \text{d.s.e} } $ and $ F_{ \text{d.s.e } } \overset{ \widetilde{ \pi } }{ \rightarrow } \Omega_{ \text{d.s.e} } $ of $ E $ and $ F $, respectively.
\end{thm}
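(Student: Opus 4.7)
The plan is to prove the equivalence by handling the two implications separately, with the forward direction relying on the uniqueness-of-extension mechanism from Proposition 9.2 of \cite{Verbovetskyi-Vinnikov} (already invoked in Proposition \ref{d.s.e nc vector bundle}), and the reverse direction relying on the natural inclusion $\Omega \hookrightarrow \Omega_{\text{d.s.e}}$ together with the corresponding inclusions $E \hookrightarrow E_{\text{d.s.e}}$ and $F \hookrightarrow F_{\text{d.s.e}}$. The crucial observation in both directions is that a nc bundle isomorphism consists of three pieces of data---uniform analyticity, base-compatibility (namely, descending to $\mathrm{Id}$ on the base), and fibrewise linear plus left-module isomorphism---and I will verify each piece separately under extension and restriction.

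For the forward direction, I would start with a bundle isomorphism $\widetilde{f}\colon E \to F$ with inverse $\widetilde{f}^{-1}\colon F \to E$, both descending to $\mathrm{Id}_\Omega$. Since $\widetilde{f}$ and $\widetilde{f}^{-1}$ are uniformly analytic nc maps on the uniformly open nc sets $E$ and $F$, they admit unique uniformly analytic extensions $\widetilde{F}\colon E_{\text{d.s.e}} \to F_{\text{d.s.e}}$ and $\widetilde{G}\colon F_{\text{d.s.e}} \to E_{\text{d.s.e}}$ by the cited extension result. Uniqueness of extensions forces $\widetilde{G} \circ \widetilde{F} = \mathrm{Id}_{E_{\text{d.s.e}}}$ and $\widetilde{F} \circ \widetilde{G} = \mathrm{Id}_{F_{\text{d.s.e}}}$, as well as $\widetilde{\pi} \circ \widetilde{F} = \widetilde{\pi}$ by extending the identity $\pi \circ \widetilde{f} = \pi$. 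For the fibrewise isomorphism property at $Z \in (\Omega_{\text{d.s.e}})_n$, I would choose $W$ with $Z \oplus W \in \Omega_{n+m}$ and use the explicit formula from the proof of Proposition \ref{d.s.e nc vector bundle} which expresses $\widetilde{F}|_{\widetilde{\pi}^{-1}\{Z\}}$ as the restriction of $\widetilde{f}|_{\pi^{-1}\{Z \oplus W\}}$ to the direct summand indexed by $Z$; since $\widetilde{f}$ on $\pi^{-1}\{Z \oplus W\}$ is simultaneously a $\C$-linear and free left $\C^{(n+m)\times(n+m)}$-module isomorphism that is fibre-preserving, its restriction to the block corresponding to $Z$ is a $\C$-linear and free left $\C^{n \times n}$-module isomorphism onto the analogous block of $F_{Z \oplus W}$, which is precisely $(F_{\text{d.s.e}})_Z$.

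For the reverse direction, given an isomorphism $\widetilde{g}\colon E_{\text{d.s.e}} \to F_{\text{d.s.e}}$, I would exploit that for any $X \in \Omega$, the direct sum $X \oplus X$ lies in $\Omega$ (since $\Omega$ is a nc set), placing $X$ in $\Omega_{\text{d.s.e}}$; hence $\Omega \subset \Omega_{\text{d.s.e}}$ and similarly $E \subset E_{\text{d.s.e}}$, $F \subset F_{\text{d.s.e}}$. The restriction $\widetilde{g}|_E$ is then a nc map $E \to F$ (the codomain lies in $F$ because fibres over $\Omega$ map to fibres over $\Omega$), remains uniformly analytic upon restriction to the uniformly open nc subset $E$, still satisfies $\pi \circ \widetilde{g}|_E = \pi$, and inherits the fibrewise linear and module-isomorphism property directly from $\widetilde{g}$. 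The inverse is furnished by $\widetilde{g}^{-1}|_F$. The main obstacle---which is mostly bookkeeping rather than conceptual---is the forward direction's fibrewise verification: one must track the identification, inside the proof of Proposition \ref{d.s.e nc vector bundle}, between $(E_{\text{d.s.e}})_Z$ and the $Z$-summand of $E_{Z \oplus W}$ and confirm it is independent of the auxiliary choice of $W$ (which follows from the same independence statement already established for the trivializations $\widetilde{\phi}_\alpha$). Once this identification is firmly in hand, all three structural properties of a nc bundle isomorphism transfer to $\widetilde{F}$ with no further work.
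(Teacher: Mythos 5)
Your forward direction matches the paper's proof step for step: extend the isomorphism to $E_{\text{d.s.e}}$ via \cite[Proposition 9.2]{Verbovetskyi-Vinnikov}, obtain the base-compatibility identity from uniqueness of extension, and check the fibrewise property over $Z \in (\Omega_{\text{d.s.e}})_n$ by restricting $\widetilde{f}$ on $E_{Z \oplus W}$ to the $Z$-summand --- exactly the formula $\widetilde{f}\,\big|_{E_{\text{d.s.e}}|_Z} \equiv \big(f|_{E_{Z\oplus W}}\big)\big|_{E_{\text{d.s.e}}|_Z}$ that the paper invokes.

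What you add is the converse, which the paper does not spell out despite the ``if and only if'' in the statement. Your restriction argument is correct, but the key fact making it work deserves an explicit sentence: for $X \in \Omega_n$, one has $(F_{\text{d.s.e}})_X = F_X$. The inclusion $F_X \subseteq (F_{\text{d.s.e}})_X$ is immediate because $F$ is nc (take $(Y,B)=(X,A)$), and equality follows since both fibres are free left $\C^{n\times n}$-modules of rank $r$, hence of common complex dimension $n^2 r$ (Definition \ref{nc vector bundle} for $F_X$, Proposition \ref{d.s.e nc vector bundle} for $(F_{\text{d.s.e}})_X$). Without this, ``$\widetilde{g}$ sends fibres over $\Omega$ to fibres over $\Omega$'' only yields $\widetilde{g}(E_X) \subseteq (F_{\text{d.s.e}})_X$ rather than $\subseteq F_X$; with it, $\widetilde{g}|_E$ lands in $F$, remains locally bounded in the relative topology, descends to $\mathrm{Id}_\Omega$, and is a fibrewise isomorphism as you claim. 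One cosmetic correction: $E$ and $F$ are nc subsets carrying the relative uniformly-open topology, not necessarily uniformly open subsets of $(\C^d\times\CV)_{nc}$, but this does not affect the extension machinery you invoke, so the argument goes through.
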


\begin{proof}
    Let $ E \overset{ \pi }{ \rightarrow } \Omega $ and $ F \overset{ \pi }{ \rightarrow } \Omega $ be isomorphic as nc vector bundles. Then there is a bijective nc mapping $ f : E \to F $ such that both $ f $ and $ f^{ - 1 } $ are locally uniformly bounded with respect to the relative uniformly open topologies on $ E $ and $ F $ induced from that on $ ( \C^d \times \CV )_{ nc } $, respectively, and $ f $ satisfies
    $$ \pi \circ f = \pi  $$
    along with the property that the restriction $ f|_{ E_Z } : E_Z \to F_Z $ is an isomorphism as complex vector spaces as well as left modules over $ \C^{ m \times m } $ for $ Z \in \Omega_m $ and $ m \in \N $. 
    
    Following \cite[Proposition 9.2]{Verbovetskyi-Vinnikov}, we now define the bijective nc mapping $ \widetilde{ f } : E_{ \text{d.s.e} } \to F_{ \text{d.s.e} } $ extending the given isomorphism $ f : E \to F $ such that both $ \widetilde{ f } $ and $ \widetilde{ f }^{ - 1 } $ are uniformly analytic. Evidently, $ \widetilde{ f } $ satisfies the identity 
    $$ \widetilde{ \pi } \circ \widetilde{ f } = \widetilde{ \pi } . $$

    Finally, for $ Z \in ( \Omega_{ \text{d.s.e} } )_n $, there exists $ W \in ( \C^d )^{ m \times m } $ for some $ m \in \N $, such that $ Z \oplus W \in \Omega $. But this implies that $ W \in ( \Omega_{ \text{d.s.e} } )_m $ as well. Consequently, we have from the construction of $ E_{ \text{d.s.e} } $ described above that $ E_{ \text{d.s.e} } |_Z \oplus E_{ \text{d.s.e} } |_W \subset E_{ Z \oplus W } $ where $ E_Z $ and $ E_{ \text{d.s.e} } |_Z $ are the fibres of $ E $ and $ E_{ \text{d.s.e} } $ over $ Z $, respectively. Moreover, it follows from the definition of $ \widetilde{ f } $ that
    $$ \widetilde{ f } \big |_{ E_{ \text{d.s.e} } |_Z } \equiv ( f |_{ E_{ Z \oplus W } } ) \big|_{ E_{ \text{d.s.e} } |_Z } . $$
    Therefore, the restriction $ \widetilde{ f } |_{ E_{ \text{d.s.e} } |_Z } : E_{ \text{d.s.e} } |_Z \to F_{ \text{d.s.e} } |_Z $ is an isomorphism as complex vector spaces as well as left modules over $ \C^{ n \times n } $ for $ Z \in ( \Omega_{ \text{d.s.e} } )_n $ and $ n \in \N $.
\end{proof}

In view of Theorem \ref{d.s.e of vector bundles}, one can define an equivalence relation on nc cocycles over the same uniformly open cover $ \{ \CU_{ \alpha } \} $ of $ \Omega $ by saying $ \{ g_{ \alpha \beta } \} $ is \emph{equivalent} or \emph{cohomologous} to another nc cocycle $ \{ h_{ \alpha \beta } \} $ if there exist uniformly analytic functions $ \lambda_{ \alpha } : ( \CU_{ \alpha } )_{\text{d.s.e}} \rightarrow  \coprod_{ n = 1 }^{ \infty } \text{GL} ( nr, \C ) $ satisfying
\begin{equation} \label{cohomologous cocycles}
\widetilde{ g_{ \alpha \beta } } ( Z ) \lambda_{ \beta } ( Z ) = \lambda_{ \alpha } ( Z ) \widetilde{ h_{ \alpha \beta } } ( Z ), \quad Z \in ( \CU_{ \alpha } )_{\text{d.s.e}} \cap ( \CU_{ \beta } )_{\text{d.s.e}}, 
\end{equation}
for every $ \alpha, \beta \in \Lambda $ such that $ ( \CU_{ \alpha } )_{\text{d.s.e}} \cap ( \CU_{ \beta } )_{\text{d.s.e}} \neq \emptyset $, where $ \{ \widetilde{ g_{ \alpha \beta } } \} $ and $ \{ \widetilde{ h_{ \alpha \beta } } \} $ are nc cocycles with respect to $ \{ ( \CU_{ \alpha } )_{\text{d.s.e}} \} $ obtained by extending the cocycles $ \{ g_{ \alpha \beta } \} $ and $ \{ h_{ \alpha \beta } \} $, respectively, to the direct summand similarity extension $ \{ ( \CU_{ \alpha } )_{\text{d.s.e}} \} $ of the cover $ \{ \CU_{ \alpha } \} $. A cocycle $ \{ g_{ \alpha \beta } \} $ is said to be cohomologous to the trivial cocycle if there exists uniformly analytic functions $ \lambda_{ \alpha } : ( \CU_{ \alpha } )_{ \text{d.s.e} } \rightarrow  \coprod_{ n = 1 }^{ \infty } \text{GL} ( nr, \C ) $ satisfying 
$$ \widetilde{ g_{ \alpha \beta } } (Z ) = \lambda_{ \alpha } ( Z ) \circ \lambda_{ \beta } ( Z )^{ - 1 } \quad \text{for} ~~ Z \in ( \CU_{ \alpha } )_{\text{d.s.e}} \cap ( \CU_{ \beta } )_{\text{d.s.e}} . $$ 
It turns out that equivalent nc cocycles give rise to equivalent nc vector bundles, as recorded in the theorem below.

\begin{thm} \label{Equivalent cocycle vs bundle isomorphism}
Let $ \Omega $ be a uniformly open nc set which is invariant under similarity and direct summands. Suppose $ E \overset{ \pi }{ \rightarrow } \Omega $ and $ F \overset{ \pi }{ \rightarrow } \Omega $ are two nc holomorphic vector bundles embedded in the trivial bundle $ \mathfrak{ T }_{ \Omega } ( \CV ) \overset{ \pi }{ \rightarrow } \Omega $. Then $ E $ and $ F $ are isomorphic if and only if there exists a uniformly open cover $ \{ \CU_{ \alpha } \} $ -- consisting of uniformly open sets invariant under direct summands and similarities -- of $ \Omega $ trivializing both $ E $ and $ F $ such that the cocycles associated to the transition functions with respect to this cover are equivalent. 
\end{thm}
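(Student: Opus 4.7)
The plan is to establish the standard correspondence between bundle isomorphism and cocycle equivalence in the classical category, adapted to the nc setting with crucial use of the direct summand similarity extensions developed in Proposition \ref{d.s.e nc vector bundle} and Theorem \ref{d.s.e of vector bundles}.

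For the forward direction, suppose $\widetilde{f}\colon E \to F$ is a nc vector bundle isomorphism. Starting from local trivializations $\{(\CU^E_\alpha, \varphi_\alpha)\}$ of $E$ and $\{(\CV^F_\beta, \psi_\beta)\}$ of $F$, I would form the common refinement $\{\CU^E_\alpha \cap \CV^F_\beta\}$ and then replace each member by its direct summand similarity extension inside $\Omega$. Since $\Omega$ is invariant under direct summands and similarities and uniformly open sets remain uniformly open under d.s.e.\ (by Propositions 9.1--9.2 of \cite{Verbovetskyi-Vinnikov}), the resulting collection $\{\CU_\alpha\}$ is a uniformly open cover of $\Omega$ of the required form, and the trivializations $\varphi_\alpha,\psi_\alpha$ extend uniquely to uniformly analytic trivializations over each $\CU_\alpha$. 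Using this common cover, I would define
\[
\lambda_\alpha := \psi_\alpha \circ \widetilde{f}\big|_{E|_{\CU_\alpha}} \circ \varphi_\alpha^{-1},
\]
which is a uniformly analytic nc automorphism of $\mathfrak{T}_{\CU_\alpha}(\C^r)$, so it is given by right multiplication by a uniformly analytic nc function $\lambda_\alpha\colon \CU_\alpha \to \coprod_{n=1}^{\infty}\mathrm{GL}(nr,\C)$. The identity $\widetilde{f}_\alpha = \widetilde{f}_\beta$ on overlaps, combined with the definitions $g_{\alpha\beta} = \varphi_\alpha \circ \varphi_\beta^{-1}$ and $h_{\alpha\beta} = \psi_\alpha \circ \psi_\beta^{-1}$, yields directly the cocycle equivalence relation \eqref{cohomologous cocycles}.

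For the converse, assume $\{\CU_\alpha\}$ trivializes both bundles with cocycles $\{g_{\alpha\beta}\}$ (for $E$) and $\{h_{\alpha\beta}\}$ (for $F$) cohomologous via $\{\lambda_\alpha\}$. On each $\CU_\alpha$, I would define the local bundle isomorphism
\[
\widetilde{f}_\alpha := \psi_\alpha^{-1} \circ M_{\lambda_\alpha} \circ \varphi_\alpha \colon E|_{\CU_\alpha} \to F|_{\CU_\alpha},
\]
where $M_{\lambda_\alpha}$ denotes the nc automorphism of the trivial bundle induced by $\lambda_\alpha$. Since each $\CU_\alpha$ is already invariant under direct summands and similarities, $\lambda_\alpha$ is defined on all of $\CU_\alpha$ without any need for further extension. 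The cocycle equivalence relation $g_{\alpha\beta}\lambda_\beta = \lambda_\alpha h_{\alpha\beta}$ is precisely what is needed so that $\widetilde{f}_\alpha = \widetilde{f}_\beta$ on $\CU_\alpha \cap \CU_\beta$. The local pieces therefore glue to a globally defined uniformly analytic map $\widetilde{f}\colon E \to F$, which is a nc map (as a gluing of nc maps), satisfies $\pi\circ\widetilde{f}=\pi$, and restricts on each fibre to a complex linear and $\C^{n\times n}$-left module isomorphism by construction.

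The main technical obstacle, in the forward direction, is ensuring that the intermediate steps (common refinement, d.s.e., extension of trivializations) interact properly so that the extended $\varphi_\alpha$, $\psi_\alpha$, and $\widetilde{f}|_{\CU_\alpha}$ produce an honest uniformly analytic $\lambda_\alpha$ on the extended domain; this is where the uniqueness of d.s.e.\ extensions of nc maps from Proposition 9.2 of \cite{Verbovetskyi-Vinnikov} (and hence the fact that the extension of a composition equals the composition of extensions) is essential. In the converse direction, the only non-routine point is verifying fibre-wise that $M_{\lambda_\alpha}$ preserves the free left $\C^{n\times n}$-module structure, which follows because $\lambda_\alpha$ acts by right matrix multiplication commuting with left scalar multiplication.
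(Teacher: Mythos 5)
Your approach is essentially the paper's: in the forward direction, pass to a common cover whose members are invariant under direct summands and similarities (using d.s.e.\ plus similarity envelop and uniqueness of extensions from \cite[Proposition 9.2, Proposition A.3]{Verbovetskyi-Vinnikov}), read the isomorphism off in local trivializations as right multiplication by nc functions $\lambda_\alpha$, and derive the relation \eqref{cohomologous cocycles}; in the converse direction, glue the local maps $\psi_\alpha^{-1}\circ\lambda_\alpha\circ\varphi_\alpha$. The only difference in the forward direction is the order of operations (you refine first and then take d.s.e., the paper takes d.s.e.\ of each cover first and then refines), but either order works by uniqueness of extensions.

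There is, however, one justification you should not leave as a parenthetical. You write that the glued map is ``a nc map (as a gluing of nc maps),'' but a gluing of nc maps over an arbitrary uniformly open cover need \emph{not} respect direct sums or similarities: if $P\in E\vert_{\CU_\alpha}$, $Q\in E\vert_{\CU_\beta}$, and $P\oplus Q\in E\vert_{\CU_\gamma}$, there is no reason in general for $P$ and $Q$ to lie in $E\vert_{\CU_\gamma}$, so $f(P\oplus Q)=f_\gamma(P\oplus Q)$ cannot immediately be compared to $f(P)\oplus f(Q)=f_\alpha(P)\oplus f_\beta(Q)$. This is exactly where the invariance of each $\CU_\gamma$ under direct summands is used: it forces $P,Q\in E\vert_{\CU_\gamma}=\pi^{-1}(\CU_\gamma)$, so $f_\gamma(P\oplus Q)=f_\gamma(P)\oplus f_\gamma(Q)=f_\alpha(P)\oplus f_\beta(Q)$ by agreement on overlaps. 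The similarity case is analogous and uses the similarity invariance of the cover sets. You do set up your cover with these invariance properties, so the argument goes through; but the general principle you invoke is false, and the step deserves the explicit argument the paper gives.
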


\begin{proof}
Since $ \Omega $ is invariant under the similarity and direct summands, we may assume, without loss of generality, that the uniformly open sets $ \CU_{ \alpha } $ in any nc trivialization $ \{ ( \CU_{ \alpha }, \varphi_{ \alpha } ) \} $ of a nc vector bundle over $ \Omega $ are also invariant under these operations.  

Indeed, given a local trivialization $ \varphi : ( \pi \vert_E )^{ - 1 } ( \CU ) \rightarrow \mathfrak{ T }_{ \CU } ( \C^r ) $ over a uniformly open subset $ \CU $ of $ \Omega $, we first consider the direct summand extension $ \CU_{ \text{d.s.e} } $ of $ \CU $, followed by the similarity envelop $ \widetilde{ \CU_{ \text{d.s.e} } } $ of $ \CU_{ \text{d.s.e} } $. It follows from the proof of \cite[Proposition 9.2 (i) and Proposition A.3 ]{Verbovetskyi-Vinnikov} that there exists unique trivialization $ \widetilde{ \varphi_{ \text{d.s.e} } } : ( \pi \vert_E )^{ - 1 } ( \widetilde{ \CU_{ \text{d.s.e} } } ) \rightarrow \mathfrak{ T }_{ \widetilde{ \CU_{ \text{d.s.e} } } } ( \C^r ) $ extending $ \varphi $.

Suppose that $ E $ and $ F $ are isomorphic nc holomorphic vector bundles. Let $ \{ \CU_i \} $ and $ \{ \widetilde{ \CU }_j \} $ be coverings of $ \Omega $ consisting of uniformly open subsets invariant under direct summands and similarities, which trivialize $ E $ and $ F $, respectively. The common refinement 
$$ \CU : = \{ \CU_i \cap \widetilde{ \CU }_j \} $$ 
remains a covering of $ \Omega $ consisting of uniformly open subsets that are similarly invariant under direct summands and similarities. Furthermore, the restrictions of local trivializations for $ E $ and $ F $ to this refinement trivialize them. Denote these nc trivializations for $ E $ by $ \{ ( \CU_{ \alpha }, \varphi_{ \alpha } ) \} $ and for $ F $ by $ \{ ( \CU_{ \alpha }, \psi_{ \alpha } ) \} $.

Since $ E $ and $ F $ are isomorphic as nc vector bundles, there is a uniformly bi-holomorphic mapping $ f : E \rightarrow F $ that preserves the fibres. For each $ n \in \N $, consider a point $ ( Z, ( A_1, \hdots, A_r ) ) \in ( \CU_{ \alpha } \cap \C^{ n \times n } ) \times ( \C^{ n \times n } )^r $. Applying $ \psi_{ \alpha } \circ f \circ \varphi_{ \alpha }^{ - 1 } $ to it, we obtain the point 
$$ \psi_{ \alpha } \circ f \circ \varphi_{ \alpha }^{ - 1 } ( Z, ( A_1, \hdots, A_r ) ) \in ( \CU_{ \alpha } \cap \C^{ n \times n } ) \times ( \C^{ n \times n } )^r . $$  Because each of these nc maps preserves the fibres and their restrictions to the fibres are free left module (over $ \C^{ n \times n } $) isomorphisms, it follows as in the discussion in Subsection \ref{transition data} that there exists a uniformly analytic function $ \lambda_{ \alpha } : \CU_{ \alpha } \rightarrow  \coprod_{ n = 1 }^{ \infty } \text{GL} ( nr, \C ) $ such that  
$$ \psi_{ \alpha } \circ f \circ \varphi_{ \alpha }^{ - 1 } ( Z, ( A_1, \hdots, A_r ) ) = ( Z, \left[ \begin{smallmatrix} A_1 & \cdots & A_r \end{smallmatrix} \right] \lambda_{ \alpha } ( Z ) ) . $$ 
Now on a non-empty intersection $ \CU_{ \alpha } \cap \CU_{ \beta } $, we have (with a slight abuse of notation)
$$ \lambda_{ \alpha } ( Z ) g^E_{ \alpha \beta } ( Z )  = \psi_{ \alpha } \circ f \circ \varphi_{ \alpha }^{ - 1 } \varphi_{ \alpha } \circ \varphi_{ \beta }^{ - 1 } = \psi_{ \alpha } \circ f \circ \varphi_{ \beta }^{ - 1 }, $$
and 
$$ g^F_{ \alpha \beta } ( Z ) \lambda_{ \beta } ( Z ) = \psi_{ \alpha } \circ \psi_{ \beta }^{ - 1 } \psi_{ \beta } \circ f \circ \varphi_{ \beta }^{ - 1 } = \psi_{ \alpha } \circ f \circ \varphi_{ \beta }^{ - 1 } $$
verifying that the nc cocycles $ \{ g^E_{ \alpha \beta } \} $ for $ E $ and $ \{ g^F_{ \alpha \beta } \} $ for $ F $ over the same uniformly open cover $ \{ \CU_{ \alpha } \} $ of $ \Omega $ are equivalent.

Conversely, suppose that $ E $ and $ F $ have equivalent nc cocycles $ \{ g^E_{ \alpha \beta } \} $ and $ \{ g^F_{ \alpha \beta } \} $ of uniformly analytic transition functions over some cover $ \{ \CU_{ \alpha } \} $ of $ \Omega $ consisting of direct summands and similarities invariant uniformly open sets trivializing both $ E $ and $ F $. Then by the definition of equivalent cocycles (cf. Equation \eqref{cohomologous cocycles}), it follows that there exists a family of uniformly analytic functions $ \big\{ \lambda_{ \alpha } : \CU_{ \alpha } \rightarrow  \coprod_{ n = 1 }^{ \infty } \text{GL} ( nr, \C ) \big\} $ such that 
$$ \lambda_{ \alpha } ( Z ) g^E_{ \alpha \beta } ( Z ) = g^F_{ \alpha \beta } ( Z ) \lambda_{ \beta } ( Z ) , \quad Z \in \CU_{ \alpha } \cap \CU_{ \beta } . $$

Let $ \{ ( \CU_{ \alpha }, \varphi_{ \alpha } ) \} $ and $ \{ ( \CU_{ \alpha }, \psi_{ \alpha } ) \} $ be nc trivializations for $ E $ and $ F $, respectively. On each $ \CU_{ \alpha } $, we consider the mapping $ f_{ \alpha } : E \vert_{ \CU_{ \alpha } } \rightarrow F \vert_{ \CU_{ \alpha } } $ defined by 
$$ f_{ \alpha } : = \psi^{ - 1 }_{ \alpha } \circ \lambda_{ \alpha } \circ \varphi_{ \alpha } . $$
Here, by the composition with $ \lambda_{ \alpha } $, we mean the map $$ ( Z, ( A_1, \hdots, A_r ) ) \mapsto \big( Z, \begin{bmatrix} A_1 & \cdots & A_r \end{bmatrix} \lambda_{ \alpha } ( Z ) \big) $$ for $ ( Z, ( A_1, \hdots, A_r ) ) \in ( \CU_{ \alpha } \cap \C^{ n \times n } ) \times ( \C^{ n \times n } )^r $, $ n \in \N $. It thus defines an isomorphism between the nc holomorphic vector bundles $ E $ and $ F $ restricted to the uniformly open set $ \CU_{ \alpha } $.

Note that since the nc cocycles $ \{ g^E_{ \alpha \beta } \} $ and $ \{ g^F_{ \alpha \beta } \} $ are equivalent via $ \{ \lambda_{ \alpha } \} $, $ f_{ \alpha } $ agrees with $ f_{ \beta } $ on $ \CU_{ \alpha } \cap \CU_{ \beta } $, whenever it is non-empty. Consequently, the association $ f : E \rightarrow F $ given by 
$$ f \vert_{ \CU_{ \alpha } }. : = f_{ \alpha } $$
defines a well defined function on $ E $. We now show that $ f $ is a nc function.

Evidently, $ f $ is graded. Let $ P \in E \vert_{ \CU_{ \alpha } } $ and $ Q \in E \vert_{ \CU_{ \beta } } $. Assume that $ P \oplus Q \in E \vert_{ \CU_{ \gamma } } $. Because the uniformly open sets $ \{ \CU_{ \alpha } \} $ are invariant under direct summands and similarities and $ \pi $ is a nc function, so are the uniformly open sets $ \{ E \vert_{ \CU_{ \alpha } } = \pi^{ - 1 } ( \CU_{ \alpha } ) \} $. Therefore, both $ P $ and $ Q $ are in $ E \vert_{ \CU_{ \gamma } } $. Consequently,
$$ f ( P \oplus Q ) = f_{ \gamma } ( P \oplus Q ) = f_{ \gamma } ( P ) \oplus f_{ \gamma } ( Q ) = f_{ \alpha } ( P ) \oplus f_{ \beta } ( Q ) = f ( P ) \oplus f ( Q ) $$
implying that $ f $ respects the direct sums. Since each $ \CU_{ \alpha } $ is invariant under similarities and each $ f_{ \alpha } $ is a nc function, it follows that $ f $ respects the similarities as well. Furthermore, since each $ f_{ \alpha } : E \vert_{ \CU_{ \alpha } } \rightarrow F \vert_{ \CU_{ \alpha } } $ is an isomorphism of nc holomorphic vector bundles, so is the nc function $ f $. 
\end{proof}

\begin{cor}
    Any nc holomorphic vector bundle with trivial cocycle is isomorphic to the trivial nc vector bundle.
\end{cor}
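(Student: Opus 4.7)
The plan is to deduce this corollary directly from Theorem \ref{Equivalent cocycle vs bundle isomorphism}, using the trivial nc vector bundle itself as the comparison object. Specifically, given a nc holomorphic vector bundle $E \overset{\pi}{\to} \Omega$ of rank $r$ whose cocycle is cohomologous to the trivial cocycle, I would compare $E$ with $\mathfrak{T}_\Omega(\C^r) \overset{\pi}{\to} \Omega$.

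First I would note that $\mathfrak{T}_\Omega(\C^r)$ can be trivialized by the identity map over any uniformly open cover $\{\CU_\alpha\}$ of $\Omega$, yielding the constant cocycle $\{g^{\text{triv}}_{\alpha\beta}\} \equiv \mathrm{Id}_{nc}$. Pick any uniformly open cover $\{\CU_\alpha\}$ over which $E$ is trivialized with cocycle $\{g^E_{\alpha\beta}\}$; then this same cover trivializes $\mathfrak{T}_\Omega(\C^r)$ with the trivial cocycle. By hypothesis there exist uniformly analytic $\lambda_\alpha : (\CU_\alpha)_{\text{d.s.e}} \to \coprod_{n} \mathrm{GL}(nr,\C)$ satisfying $\widetilde{g^E_{\alpha\beta}}(Z) = \lambda_\alpha(Z)\lambda_\beta(Z)^{-1}$, which is precisely the condition $\widetilde{g^E_{\alpha\beta}}(Z)\lambda_\beta(Z) = \lambda_\alpha(Z)\widetilde{g^{\text{triv}}_{\alpha\beta}}(Z)$ in Equation \eqref{cohomologous cocycles}. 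Hence the cocycles of $E$ and $\mathfrak{T}_\Omega(\C^r)$ are equivalent.

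The remaining issue, which I expect to be the only mild obstacle, is that Theorem \ref{Equivalent cocycle vs bundle isomorphism} requires the base to be invariant under similarity and direct summands, whereas $\Omega$ is only assumed to be a nc domain. The remedy is to pass to the direct summand similarity extension: by Proposition \ref{d.s.e nc vector bundle} we obtain a nc holomorphic vector bundle $E_{\text{d.s.e}} \overset{\widetilde{\pi}}{\to} \Omega_{\text{d.s.e}}$, while Proposition \ref{d.s.e trivial bundle} identifies $\mathfrak{T}_\Omega(\C^r)_{\text{d.s.e}}$ with $\mathfrak{T}_{\Omega_{\text{d.s.e}}}(\C^r)$. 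The trivializations $\{\phi_\alpha\}$ and the cohomology-implementing functions $\lambda_\alpha$ extend uniquely to $(\CU_\alpha)_{\text{d.s.e}}$ by \cite[Proposition 9.2]{Verbovetskyi-Vinnikov}, so the extended cocycles remain equivalent over the cover $\{(\CU_\alpha)_{\text{d.s.e}}\}$ of $\Omega_{\text{d.s.e}}$, which is now invariant under similarity and direct summands. Applying Theorem \ref{Equivalent cocycle vs bundle isomorphism} in this enlarged setting yields $E_{\text{d.s.e}} \cong \mathfrak{T}_{\Omega_{\text{d.s.e}}}(\C^r)$, and finally Theorem \ref{d.s.e of vector bundles} descends this to the desired isomorphism $E \cong \mathfrak{T}_\Omega(\C^r)$ over $\Omega$ itself.
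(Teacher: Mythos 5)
Your proof is correct and uses the same key ingredient as the paper: the preceding theorem, Theorem~\ref{Equivalent cocycle vs bundle isomorphism}, applied with $F=\mathfrak{T}_\Omega(\C^r)$ (whose cocycle is the identity over any cover) and the observation that ``cohomologous to trivial'' is exactly the equivalence condition \eqref{cohomologous cocycles} against the identity cocycle. The paper's proof is just the one line ``direct consequence of the preceding theorem,'' and the natural reading is that the corollary inherits the hypotheses of that theorem (namely, $\Omega$ invariant under similarities and direct summands), in which case your d.s.e.\ detour via Propositions \ref{d.s.e trivial bundle}--\ref{d.s.e nc vector bundle} and Theorem~\ref{d.s.e of vector bundles} is unnecessary. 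That said, it is not wrong, and it does yield the statement for a general nc domain $\Omega$ without assuming similarity/direct-summand invariance, so it strengthens the result rather than merely restating it. One small point worth checking if you keep the d.s.e.\ route: Theorem~\ref{Equivalent cocycle vs bundle isomorphism} as stated requires the two bundles being compared to sit inside the same ambient trivial bundle $\mathfrak{T}_\Omega(\CV)$; you should say explicitly that $\mathfrak{T}_\Omega(\C^r)$ can be embedded in the same $\mathfrak{T}_\Omega(\CV)$ that contains $E$ (via any linear injection $\C^r\hookrightarrow\CV$) so that the hypotheses of the theorem are literally met.
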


\begin{proof}
    This is a direct consequence of the preceding theorem.
\end{proof}

\subsubsection{Noncommutative hermitian structures}

Given a nc vector bundle, we now define the notion of a hermitian metric on each of its fibres what we call \textit{nc hermitian structure}. Recall the notation: $ \CL ( \C_{ nc }, \C_{ nc } ) = \coprod_{ n, m \geq 1 } \CL ( \C^{ n \times m } ) $.

\begin{defn} \label{hermitian metric}
A nc hermitian structure on a nc holomorphic vector bundle $ E \overset{ \pi }{ \rightarrow } \Omega $ of rank $ r $ over $ \Omega $ with $ E \subset \mathfrak{ T }_{ \Omega } ( \CV ) $ is an assignment 
$$ \mathsf{ Her } : E \times E \rightarrow \CL ( \C_{ nc }, \C_{ nc } ), $$ 
satisfying following properties:
\begin{itemize}
\item[(1)] For every $ n, m \in \N $, $ Z \in \Omega_n $, $ W \in \Omega_m $, and $ f \in E_Z $, $ g \in E_W $,
$$ \mathsf{ Her } ( f, g ) \in \CL ( \C^{ n \times m }, \C^{ n \times m } ) . $$
\item[(2)] $ \mathsf{ Her } ( f, g ) $ is $ \C $-linear in $ f $ for every $ g \in E_W $ and $ W \in \Omega $.
\item[(3)] For every uniformly open subset $ \CU \subset \Omega $ and any two uniformly analytic nc sections $ \sigma_1, \sigma_2 : \CU \rightarrow E $, the nc function 
$$ \mathsf{ Her } ( \sigma_1 ( \cdot ), \sigma_2 ( \cdot ) ) : \CU \times \CU \rightarrow \CL ( \C_{ nc }, \C_{ nc } ) $$ 
of order $ 1 $ is a cp nc kernel.
\end{itemize}
A nc holomorphic vector bundle $ E \overset{ \pi }{ \rightarrow } \Omega $ equipped with a nc hermitian structure $ \mathsf{ Her } $ is called a nc hermitian holomorphic vector bundle.
\end{defn}

\begin{rem} \label{remark on nc hermitian structure}
For a trivial nc holomorphic vector bundle $ \mathfrak{ T }_{ \Omega } ( \CH ) \overset{ \pi } { \rightarrow } \Omega $ with some Hilbert space $ \CH $, note that $ \mathfrak{ T }_{ \Omega } ( \CH ) \overset{ \pi } { \rightarrow } \Omega $ turns out to be a nc hermitian holomorphic vector bundle equipped with the canonical hermitian structure obtained from the matrix sesquilinear form (cf. \cite[Chapter 3, Section 3.5]{Effros-Ruan}): The given sesquilinear form $ \langle \cdot, \cdot \rangle $ on $ \CH $ induces a \textit{matrix sesquilinear form} $ \langle \langle \cdot, \cdot \rangle \rangle : \CH^{ n \times n } \times \CH^{ m \times m } \rightarrow \C^{ n \times n } \otimes \C^{ m \times m } $  for all $ n, m \in \N $ as follows:
\be \label{matrix sesquilinear form}
 ( A, B ) \mapsto \langle \! \langle A, B \rangle \! \rangle : = \left( \! \left( \langle a_{ i j }, b_{ l k } \rangle \right) \! \right).
\ee
In consequence, a nc holomorphic vector bundle $  E \overset{ \pi }{ \rightarrow } \Omega $ embedded in $ \mathfrak{ T }_{ \Omega } ( \CH ) $ inherits a canonical hermitian structure from the trivial bundle $ \mathfrak{ T }_{ \Omega } ( \CH ) \overset{ \pi } { \rightarrow } \Omega $.

In general, for any uniformly open subset $ \CU \subset \Omega $ and a local frame $ \gamma_1, \hdots, \gamma_r : \CU \rightarrow E $, note that the nc hermitian structure on $ E \overset{ \pi }{ \rightarrow } \Omega $ gives rise to the cp nc kernel 
$$ K_{ \mathsf{ Her } } : \CU \times \CU \rightarrow \CL ( \C_{ nc }, ( \C^{ r \times r } )_{ nc } ) $$ 
defined as
$$ K_{ \mathsf{ Her } } ( Z, W ) ( P ) = \left( \! \left( \mathsf{ Her } ( \gamma_i ( Z ), \gamma_j ( W ) ) ( P ) \right) \! \right)_{ i, j = 1 }^r , $$
for $ Z \in \CU \cap \C^{ n \times n } $, $ W \in \CU \cap \C^{ m \times m } $, $ P \in \C^{ n \times m } $ and $ n, m \in \N $.
\end{rem}

\begin{defn} \label{automorphism of hermitian bundle}
Two nc hermitian holomorphic vector bundles $ E \overset{ \pi }{ \rightarrow } \Omega $ and $ E' \overset{ \pi' }{ \rightarrow } \Omega' $ with hermitian structures $ \mathsf{ Her } $ and $ \mathsf{ Her }' $, respectively, are said to be isomorphic if there exists a nc vector bundle isomorphism $ \widetilde{ \psi } : E \rightarrow E' $ which also preserves the hermitian structures: For every $ n, m \in \N $, $ Z \in \Omega_n $, $ W \in \Omega_m $, and $ f \in E_Z $, $ g \in E_W $, $ \widetilde{ \psi } $ satisfies 
$$ \mathsf{ Her }' \bigg( \widetilde{ \psi } ( f ), \widetilde{ \psi } ( g ) \bigg) = \mathsf{ Her } ( f, g ) . $$
\end{defn}

Let $ \widetilde{ \psi } : E \rightarrow E' $ be a nc vector bundle isomorphism preserving the hermitian structures and $ \CU \subset \Omega $ be a uniformly open subset over which $ \gamma_1, \hdots, \gamma_r $ is a local frame for $ E $. Then $ \psi ( \CU ) $ is a uniformly open subset of $ \Omega' $, and assume that $ \gamma'_1, \hdots, \gamma'_r $ is  a local frame for $ E' $. For $ 1 \leq i \leq r $ and $ Z \in \CU \cap \C^{ n \times n } $,
$$ \widetilde{ \psi } ( \gamma_i ( Z ) ) = \Psi_{ i 1 } ( Z ) \gamma'_1 ( Z ) + \cdots + \Psi_{ i r } ( Z ) \gamma'_r ( Z ) $$
implying that $ \widetilde{ \psi } $ gives rise to a uniformly analytic nc mapping $ \Psi : \CU \rightarrow ( \C^{ r \times r } )_{ nc } $. Since $ \widetilde{ \psi } $ is a nc vector bundle isomorphism, it follows that $ \Psi $ is a $  \coprod_{ n = 1 }^{ \infty } \text{GL} ( nr, \C ) $ valued uniformly analytic nc function on $ \CU $. Thus, under a local trivialization the condition for $ \widetilde{ \psi } $ to preserve the hermitian structures of $ E $ and $ E' $ turns out to be the existence of a uniformly analytic nc function $ \Psi : \CU \rightarrow  \coprod_{ n = 1 }^{ \infty } \text{GL} ( nr, \C ) $ such that
$$ K_{ \mathsf{ Her }' } ( \psi ( Z ), \psi ( W ) ) ( P ) = \Psi ( Z ) K_{ \mathsf{ Her } } ( Z, W ) ( P ) \Psi ( W )^*  , $$
for $ Z \in \CU \cap \C^{ n \times n }, W \in \CU \cap \C^{ m \times m } $ and $ P \in \C^{ n \times m } $, where $ K_{ \mathsf{ Her } } $ and $ K_{ \mathsf{ Her }' } $ are the cp nc kernels obtained from the hermitian structures of $ E \overset{ \pi }{ \rightarrow } \Omega $ and $ E' \overset{ \pi' }{ \rightarrow } \Omega' $, respectively, with respect to the local frames $ \{ \gamma_i : 1 \leq i \leq r \} $ and $ \{ \gamma'_i : 1 \leq i \leq r \} $.

\subsection{Noncommutative Cowen-Douglas bundles}

In this subsection, we first show that every tuple in the nc Cowen-Douglas class $ \mathrm{ B } ( \Omega )_{ nc } $ gives rise to a nc hermitian holomorphic vector bundle over $ \Omega $. Then we prove that the unitary equivalence class of any nc Cowen-Douglas tuple is the same as the unitary equivalence class of the associated nc hermitian holomorphic vector bundle. We begin with the following technical lemma.

\begin{lem} \label{continuity of frame}
Let $ \CH_r $ be a row Hilbert operator space obtained from a complex separable Hilbert space $ \CH $ and for $ n \in \N $, $ \CM_n $ be the module $ \CH^{ n \times n } $ over $ \C^{ n \times n } $ with the module structure obtained by the left multiplication by matrices in $ \C^{ n \times n } $. Assume that a subset $ \{ H_1, \hdots, H_k \} $ of $ \CM_s $ is linearly independent over $ \C^{ s \times s } $ for some $ k, s \in \N $. Then there exists $ \delta > 0 $ such that the set $ \{ A_1, \hdots, A_k \} $ with $ A_j \in B_{nc} ( H_j, \delta )_{ ms }, 1 \leq j \leq k $, is linearly independent over $ \C^{ ms \times ms } $ for all $ m \in \N $.
\end{lem}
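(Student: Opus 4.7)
\emph{Proof plan.} The plan is to reduce this to a classical finite-dimensional perturbation argument by confining the entries of $ H_1, \hdots, H_k $ to a finite-dimensional subspace $ V_0 \subset \CH $ and analysing the problem in coordinates adapted to $ V_0 $. I would let $ V_0 $ be the finite-dimensional subspace spanned by all entries of $ H_1, \hdots, H_k $, fix an orthonormal basis $ \{ e_1, \hdots, e_{ n_0 } \} $ of $ V_0 $, and expand $ H_j = \sum_{ \alpha = 1 }^{ n_0 } H_j^{ ( \alpha ) } \otimes e_{ \alpha } $ with $ H_j^{ ( \alpha ) } \in \C^{ s \times s } $. Assembling these coefficients into a block matrix $ \mathbf{ H } \in \C^{ sk \times s n_0 } $ with $ ( j, \alpha ) $-block $ H_j^{ ( \alpha ) } $, the linear independence of $ \{ H_j \} $ over $ \C^{ s \times s } $ reads exactly $ [ \SC_1 \,|\, \cdots \,|\, \SC_k ] \mathbf{ H } = 0 \Rightarrow \SC_j = 0 $, i.e., $ \mathbf{ H } $ has full row rank $ sk $. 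Set $ c := \sigma_{ \min } ( \mathbf{ H } ) > 0 $.

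The second step is to upgrade this to a lower bound uniform in the amplification level $ m $. Let $ \mathbf{ H }^{ ( m ) } \in \C^{ msk \times ms n_0 } $ be the block matrix with $ ( j, \alpha ) $-block $ ( H_j^{ ( \alpha ) } )^{ \oplus m } $. Under the natural permutation identifying $ \C^k \otimes \C^m \otimes \C^s \simeq \C^{ msk } $ (and similarly on the target), one checks that $ \mathbf{ H }^{ ( m ) } $ is unitarily equivalent to $ I_m \otimes \mathbf{ H } $; consequently the nonzero singular values of $ \mathbf{ H }^{ ( m ) } $ coincide with those of $ \mathbf{ H } $, so $ \mathbf{ H }^{ ( m ) } $ has full row rank $ msk $ and admits a Moore--Penrose right inverse $ ( \mathbf{ H }^{ ( m ) } )^{ \dagger } $ of operator norm at most $ 1 / c $, \emph{independently of $ m $}.

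The third step translates the row-norm perturbation into coordinate-wise operator-norm perturbations. For any $ A \in \CH^{ ms \times ms } $, decompose $ A = \sum_{ \alpha } A^{ ( \alpha ) } \otimes e_{ \alpha } + A^{ V_0^{ \perp } } $ via $ \CH = V_0 \oplus V_0^{ \perp } $ with $ A^{ ( \alpha ) } \in \C^{ ms \times ms } $; expanding $ \norm{ v A }_{ \CH^{ 1 \times ms } }^2 $ for $ v \in \C^{ 1 \times ms } $ in the basis $ \{ e_{ \alpha } \} $ and retaining only the contribution of a single coordinate yields $ \norm{ A^{ ( \alpha ) } }_{ \mathrm{ op } } \leq \norm{ A }_{ ms, \CH_r } $ for every $ \alpha $. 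Applied to $ A_j - H_j^{ \oplus m } $, this gives $ \norm{ A_j^{ ( \alpha ) } - ( H_j^{ ( \alpha ) } )^{ \oplus m } }_{ \mathrm{ op } } < \delta $, and arranging the $ A_j^{ ( \alpha ) } $ into $ \mathbf{ A }^{ ( m ) } \in \C^{ msk \times ms n_0 } $ together with the elementary block-matrix bound $ \norm{ ( M_{ ij } ) }_{ \mathrm{ op } } \leq \sqrt{ \sum_{ i, j } \norm{ M_{ ij } }_{ \mathrm{ op } }^2 } $ produces $ \norm{ \mathbf{ A }^{ ( m ) } - \mathbf{ H }^{ ( m ) } }_{ \mathrm{ op } } < \delta \sqrt{ k n_0 } $.

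Now I would fix $ \delta < c / \sqrt{ k n_0 } $ once and for all; a Neumann-series argument applied to $ \mathbf{ A }^{ ( m ) } ( \mathbf{ H }^{ ( m ) } )^{ \dagger } = I_{ msk } + E_m $ with $ \norm{ E_m }_{ \mathrm{ op } } < 1 $ shows $ \mathbf{ A }^{ ( m ) } $ has a right inverse at every $ m $, so $ \SD \mathbf{ A }^{ ( m ) } = 0 $ forces $ \SD = 0 $ for every $ \SD \in \C^{ ms \times msk } $. Finally, if $ \sum_j \SD_j A_j = 0 $ in $ \CH^{ ms \times ms } $, projecting onto each $ V_0 $-coordinate yields $ \sum_j \SD_j A_j^{ ( \alpha ) } = 0 $ for every $ \alpha $, which is exactly $ [ \SD_1 \,|\, \cdots \,|\, \SD_k ] \mathbf{ A }^{ ( m ) } = 0 $, so every $ \SD_j $ vanishes. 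The main subtlety---which dictates the whole strategy---is that a single $ \delta $ must work \emph{simultaneously for every $ m $}: producing a lower bound on $ \mathbf{ H }^{ ( m ) } $ independent of $ m $ is precisely what the tensor identification $ \mathbf{ H }^{ ( m ) } \simeq I_m \otimes \mathbf{ H } $ in the second step supplies, while the inequality $ \norm{ A^{ ( \alpha ) } }_{ \mathrm{ op } } \leq \norm{ A }_{ ms, \CH_r } $ is what allows a row-norm ball of radius $ \delta $ to control the individual coordinate matrices on which the rank argument operates.
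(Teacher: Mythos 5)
Your proof is correct, and it follows the same overarching strategy as the paper's proof---reduce linear independence over $ \C^{ s \times s } $ to a full-rank (equivalently, singular-value lower bound) condition on a \emph{fixed} finite complex matrix, observe that $ m $-th amplification simply tensors that matrix with $ I_m $ and therefore preserves the bound, and then show that the row operator norm controls the relevant perturbation---but the implementation differs in a way worth noting. The paper works with the $ sk \times sk $ Gramian $ G_0 $ of the Hilbert-space rows of the stacked matrix $ \widetilde{ H } = \left[ \begin{smallmatrix} H_1 \\ \vdots \\ H_k \end{smallmatrix} \right] \in \CH^{ sk \times s } $ and uses $ \lambda_{ \min } ( G_0 ) $; you instead fix an orthonormal basis of the finite-dimensional span $ V_0 $ of the entries, form the $ sk \times s n_0 $ coefficient matrix $ \mathbf{ H } $, and use $ \sigma_{ \min } ( \mathbf{ H } ) $ together with a Neumann series. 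These are the same quantity in disguise: one checks that $ G_0 = \mathbf{ H } \mathbf{ H }^* $ (after the obvious reindexing), so $ \lambda_{ \min } ( G_0 ) = \sigma_{ \min } ( \mathbf{ H } )^2 $ and your $ c $ is the paper's $ \sqrt{ \lambda } $. The Gramian route is more intrinsic (no basis of $ V_0 $ is chosen) and shorter, and it reads the row-norm--Gramian relation directly off the definition of $ \mathsf{ Row }_{ ms } $; your route is more explicit about the finite-dimensional reduction and spells out in detail how a row-norm ball of radius $ \delta $ controls each coordinate matrix $ A^{ ( \alpha ) } $ (via the clean inequality $ \norm{ A^{ ( \alpha ) } }_{ \mathrm{ op } } \leq \norm{ A }_{ ms, \CH_r } $), and it treats the general-$ k $ case uniformly rather than by the brief reduction ``replace $ \CH $ by $ \CH^{ k \times 1 } $'' that the paper sketches. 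Both the central $ m $-uniformity observation and the final perturbation step are equivalent; the accounting is just organized differently.
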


\begin{proof}
First, we prove the desired result for $ k = 1 $. Denote $ H_1 = H $ and let $ R^j $ be the $ j $-th row of the matrix $ H $, for $ 1 \leq j \leq s $. Note that the linearly independence of the set $ \{ H \} $ of $ \CM_s $ over $ \C^{ s \times s } $ is equivalent to the linearly independence (over $ \C $) of the set of vectors $ \{ R^j : 1 \leq j \leq s \} $ in $ \CH^{ 1 \times s } \cong \CH \oplus \cdots \oplus \CH $. In other words, $ \{ H \} $ is linearly independent over $ \C^{ s \times s } $ if and only if the Gramian matrix $ G_0 $ of the set of vectors $ \{ R^j : 1 \leq i \leq k, 1 \leq j \leq s \} $ in $ \CH^{ 1 \times s } $ is invertible. 

Since $ \{ H \} $ is linearly independent over $ \C^{ s \times s } $, $ G_0 $ is invertible and hence is a strictly positive matrix. Let $ \lambda > 0 $ be the minimum eigenvalue of $ G_0 $ and observe that $ \lambda $ remains be the minimum eigenvalue of $ G_0^{ \oplus m } $, which is the Gramian matrix of the set of rows of the $ m $-th amplification $ H^{ \oplus m } $ of $ H $. Now, recall from the definition of the row operator space norm that
$$ \norm{ G_0^{ \oplus m } } = \norm{ H^{ \oplus m } }^2_{ ms, \CH_r }, ~~~ m \in \N . $$
Consequently, for any $ 0 < \delta < \frac{ \sqrt{ \lambda } }{ 2 } $, the Gramian of the rows of any matrix $ A \in B_{ nc } ( H, \delta ) $ turns out to be invertible verifying that the set $ \{ A \} $ with $ A \in B_{ nc } ( H, \delta )_{ ms } $ is linearly independent over $ \C^{ ms \times ms } $ for all $ m \in \N $.

For general $ k > 1 $, a similar argument with $ \CH^{ k \times 1 } \cong \CH \oplus \cdots \oplus \CH $ ($k$ times) instead of $ \CH $ together with the observation that $ ( \CH^{ k \times 1 } )_r \cong \CH_r^{ k \times 1 } $ yield the desired result. 
\end{proof}

Next, we use this lemma in the following proposition to prove the existence of local frame of the desired vector bundle associate to an element in the nc Cowen-Douglas class.

\begin{prop} \label{existence of local nc frame}
Let $ \boldsymbol{T} \in \mathrm B_r ( \Omega )_{nc} $ and $ Y \in \Omega_s $ be a semi-simple point. Then there exist a uniformly open neighbourhood $ \Omega_0 \subseteq \Omega $ containing $ Y $ and uniformly analytic functions $ \gamma_1, \hdots, \gamma_r : \Omega_0 \rightarrow ( \CH_r )_{nc} $ such that, for each $ m \in \N $ and $ W \in ( \Omega_0 )_{ms} $, $ \{ \gamma_1 ( W ), \hdots, \gamma_r ( W ) \} $ is a basis for the free left submodule $ \ker 
D_{ \boldsymbol{T} - W } $ of the module $ \CH^{ ms \times ms } $ over $ \C^{ms \times ms } $.
\end{prop}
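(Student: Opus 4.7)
The plan is to combine the solvability theorem for parametrised noncommutative equations with the module-theoretic description of joint kernels from Corollary \ref{joint kernel} and the stability result just established in Lemma \ref{continuity of frame}.

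\textbf{Step 1 (choice of initial data).} First, I invoke Corollary \ref{joint kernel} at the point $Y$: since $\boldsymbol{T}\in \mathrm{B}_r(\Omega)_{nc}$, the space $\ker D_{\boldsymbol{T}-Y}$ is a free left $\C^{s\times s}$-submodule of $\CH^{s\times s}$ of rank $r$, so I fix a free module basis $H_1,\dots,H_r \in \CH^{s\times s}$. Because $Y$ is a semi-simple point in $(\C^d)^{s\times s}$, Remark \ref{rem on solving nc eqn}(1) guarantees that the inner-derivation range $\CC_1 \subset (\C^d)^{s\times s}$ is a complemented sub-bi-module over the commutant $\CC(Y)$ -- precisely the algebraic hypothesis required to invoke the solvability theorem.

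\textbf{Step 2 (local analytic extension of the frame).} Next, I regard the uniformly analytic nc function $\CT(W) := D_{\boldsymbol{T}-W}$ as an operator-valued nc function on $\Omega$ and set $\CF \equiv 0$. For each $j$, I apply Theorem \ref{closed range operators} (or, to accommodate the codomain $\CH^{\oplus d}$ directly, its parent Theorem \ref{closed range operators general version}) with initial datum $H=H_j$ at $W=Y$: the closed-range hypothesis is furnished by Definition \ref{Definition of nc CD class}(iii) together with the fact that $D_{\boldsymbol{T}-Y^{\oplus m}}=(D_{\boldsymbol{T}-Y})^{\oplus m}$, and the completely-complemented-kernel hypothesis is automatic in the row Hilbert operator-space setting by Lemma \ref{nearest point approximation}. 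This produces a uniformly open nc neighbourhood $U_j$ of $Y$ and a uniformly analytic nc function $\gamma_j : U_j \to (\CH_r)_{nc}$ with $\gamma_j(Y)=H_j$ and $D_{\boldsymbol{T}-W}(\gamma_j(W))=0$ for every $W \in U_j$. Intersecting the finitely many $U_j$ yields a single uniformly open nc neighbourhood $U \subset \Omega$ of $Y$ on which $\gamma_1,\dots,\gamma_r$ are simultaneously defined.

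\textbf{Step 3 (stability of linear independence).} Lemma \ref{continuity of frame} applied to the linearly independent family $\{H_1,\dots,H_r\} \subset \CH^{s\times s}$ produces $\delta>0$ such that every tuple $(A_1,\dots,A_r)$ with $A_j \in B_{nc}(H_j,\delta)_{ms}$ is linearly independent over $\C^{ms\times ms}$ for all $m\in\N$. Each $\gamma_j$, being uniformly analytic, is continuous in the uniformly-open topology by Theorem \ref{locally bounded = nc analytic}, so $\gamma_j^{-1}(B_{nc}(H_j,\delta))$ contains a nc ball $B_{nc}(Y,\eta_j)$. Setting $\Omega_0 := B_{nc}(Y,\min_j \eta_j) \cap U$, I obtain a uniformly open nc neighbourhood of $Y$ on which $\gamma_j(W)\in B_{nc}(H_j,\delta)_{ms}$ for every $W \in (\Omega_0)_{ms}$ and every $m$, whence $\{\gamma_1(W),\dots,\gamma_r(W)\}$ is linearly independent over $\C^{ms\times ms}$.

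\textbf{Step 4 (basis conclusion) and main obstacle.} Finally, for each $W \in (\Omega_0)_{ms}$, Corollary \ref{joint kernel} applied at $W\in\Omega_{ms}$ tells me $\ker D_{\boldsymbol{T}-W}$ is a free left $\C^{ms\times ms}$-module of rank exactly $r$; since $\gamma_1(W),\dots,\gamma_r(W)$ are $r$ elements of this kernel that are linearly independent over $\C^{ms\times ms}$, they are forced to be a free module basis, completing the argument. The step I expect to require the most care is Step 2: verifying that the operator-space framework of Theorem \ref{closed range operators} (stated for $\CT(W):\CH\to\CH$) accommodates the map $D_{\boldsymbol{T}-W}: \CH \to \CH^{\oplus d}$ and that the closed-range property in Definition \ref{Definition of nc CD class}(iii) transfers intact to every amplification $Y^{\oplus m}$; this is handled either by passing to Theorem \ref{closed range operators general version} with $\CX = \CB(\CH,\CH^{\oplus d})$, $\CY=\CH_r$, $\CZ=(\CH^{\oplus d})_r$, or by absorbing the $d$ components into a single enlarged Hilbert space.
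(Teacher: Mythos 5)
Your argument is correct and follows the paper's proof of Proposition \ref{existence of local nc frame} step for step: fix a free module basis $H_1,\dots,H_r$ of $\ker D_{\boldsymbol{T}-Y}$ via Corollary \ref{joint kernel}, apply Theorem \ref{closed range operators} with $\CT(W)=D_{\boldsymbol{T}-W}$ and $\CF\equiv 0$ to propagate each $H_j$ to a uniformly analytic nc function $\gamma_j$ with values in the joint kernel, and then use Lemma \ref{continuity of frame} to shrink the domain so that the $\gamma_j(W)$ stay linearly independent. Your Step 4 (invoking Corollary \ref{joint kernel} at each $W$ and the $\C$-dimension count to pass from linear independence to a free-module basis) and your observation that the codomain $\CH^{\oplus d}$ of $D_{\boldsymbol{T}-W}$ strictly requires either Theorem \ref{closed range operators general version} with $\CX=\CB(\CH,\CH^{\oplus d})$ or the identification $\CH^{\oplus d}_r$ as a row Hilbert operator space are slightly more explicit than the paper's terse citation, but both fill in exactly the details the paper leaves implicit.
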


\begin{proof}
First, we observe from  Corollary \ref{joint kernel} that $ \ker D_{ \boldsymbol{ T } - W } $ is a free left module over $ \C^{ n \times n } $ for $ W \in \Omega_n $, $ n \in \N$. Fix a free left module basis $ \{ H_1, \hdots, H_r \} $ of $ \ker D_{ \boldsymbol{ T } - Y } $ over $ \C^{ s \times s } $. We now use Theorem \ref{closed range operators}, when $ \CX = \CB ( \CH ) $ endowed with the natural operator space structure, $ \CY = \CZ = \CH $ with the row operator Hilbert space structure, the uniformly analytic nc functions $ \CT : \Omega \to \CB ( \CH )_{ nc } $ and $ \CF : \Omega \to \CH_{ nc } $, respectively, are given by 
$$ \CT ( W ) = D_{ \boldsymbol{ T } - W } \quad \text{and} \quad \CF ( W ) = 0 , \quad W \in \Omega . $$

Since $ Y \in \Omega_s $ is a semi-simple point, it follows from Theorem \ref{closed range operators} that there exist $ \rho > 0 $ and uniformly analytic functions $ \gamma_1, \hdots, \gamma_r : B_{ nc} ( Y, \rho ) \rightarrow ( \CH_r )_{ nc } $ such that 
$$ \gamma_j ( Y ) = H_j, ~ 1 \leq j \leq r \quad \text{and} \quad \{ \gamma_1 ( W ), \hdots, \gamma_r ( W ) \} \subset \ker D_{ \boldsymbol{ T } - W }, $$ 
for $ W \in B_{ nc } ( Y, \rho )_{ m s } $, $ m \in \N $. Also, we have from Lemma \ref{continuity of frame} that there exists $ \delta > 0 $ such that the set $ \{ A_1, \hdots, A_r : A_j \in B_{ nc } ( H_j, \delta )_{ ms }, 1 \leq j \leq r \} $ are linearly independent over $ \C^{ ms \times ms } $ for all $ m \in \N $. The theorem then follows for any uniformly open nc set $ \Omega_0 \subset \cap_{ j = 1 }^r \gamma_j^{ - 1 } ( B_{ nc } ( H_j, \delta ) ) $.
\end{proof}

We now prove the existence of nc hermitian holomorphic vector bundle associated to an element $ \boldsymbol{ T } \in \mathrm B_r ( \Omega )_{ nc } $, for which the following technical result is needed.

\begin{lem} \label{open cover of bi-full set}
Let $ \Omega \subset \C^d_{ nc } $ be a bi-full uniformly open nc set and  $ \{ Y_{ \alpha } \in \Omega_{ m_{ \alpha } } : \alpha \in \Lambda \} $ be the set of all semi-simple points in $ \Omega $, $ \Lambda $ be an indexing set and for every $ \alpha \in \Lambda $, $ B_{ nc} ( Y_{ \alpha }, \rho_{ \alpha } ) $ be a nc open ball in $ \Omega $. Then $ \left\{ \widetilde{ B_{ nc } ( Y_{ \alpha }, \rho_{ \alpha } ) } : \alpha \in \Lambda \right\} $ forms an nc open cover of $ \Omega $ where $ \widetilde{ B_{ nc } ( Y_{ \alpha }, \rho_{ \alpha } ) } $ is the similarity envelop of $ B_{ nc } ( Y_{ \alpha }, \rho_{ \alpha } ) $ for each $ \alpha \in \Lambda $.
\end{lem}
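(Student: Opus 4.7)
The plan is to show that any point $W \in \Omega_n$ can be brought into a prescribed ball $B_{nc}(Y_{\alpha_0}, \rho_{\alpha_0})$ by a single similarity transformation for a suitable choice of semi-simple $Y_{\alpha_0}$. The strategy has two stages: first, use the bi-full hypothesis together with the Jordan--Hölder decomposition for the algebra generated by $W_1, \ldots, W_d$ to produce a similarity $S_1$ putting $W$ into block upper triangular form with irreducible diagonal blocks; second, apply a diagonal scaling similarity $T_t$ which leaves the diagonal blocks fixed but sends the strictly upper triangular part to zero as $t \to \infty$.

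More concretely, fix $W \in \Omega_n$ and let $\mathcal{A}$ denote the unital subalgebra of $\C^{n \times n}$ generated by $W_1, \ldots, W_d$. The $\mathcal{A}$-module $\C^n$ admits a composition series $0 = V_0 \subsetneq V_1 \subsetneq \cdots \subsetneq V_k = \C^n$ with simple quotients. A basis adapted to this chain furnishes $S_1 \in \mathrm{GL}(n,\C)$ such that each $S_1 W_i S_1^{-1}$ is block upper triangular of shape $(n_1, \ldots, n_k)$ with irreducible diagonal blocks; denote the full tuple by
$$
S_1 W S_1^{-1} \;=\; \begin{bmatrix} I_1 & * & \cdots & * \\ 0 & I_2 & \ddots & \vdots \\ \vdots & \ddots & \ddots & * \\ 0 & \cdots & 0 & I_k \end{bmatrix}.
$$
Since $\Omega$ is bi-full, it is closed under restriction to both invariant and co-invariant subspaces (Subsection~\ref{bifull}); iterating down the composition series places every $I_j$ in $\Omega_{n_j}$, and closure under direct sum then places $Y := I_1 \oplus \cdots \oplus I_k$ in $\Omega_n$. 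As a direct sum of irreducible representations of the free algebra, $Y$ is a semi-simple point, so $Y = Y_{\alpha_0}$ for some $\alpha_0 \in \Lambda$ with $m_{\alpha_0} = n$.

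To finish, consider the block-diagonal invertible scaling $T_t := \mathrm{diag}(I_{n_1},\, t\, I_{n_2},\, t^2\, I_{n_3},\, \ldots,\, t^{k-1}\, I_{n_k})$ for $t > 0$. A direct computation shows that the $(i,j)$-block of $T_t (S_1 W S_1^{-1}) T_t^{-1}$ equals $t^{i-j}$ times the corresponding block of $S_1 W S_1^{-1}$; the diagonal blocks are preserved, while every strictly upper triangular block is multiplied by a strictly negative power of $t$. Consequently,
$$
T_t S_1 W S_1^{-1} T_t^{-1} \;\longrightarrow\; Y_{\alpha_0} \qquad (t \to \infty),
$$
in the ambient matrix norm, so that for $t$ large enough the conjugate $(T_t S_1) W (T_t S_1)^{-1}$ lies in $B(Y_{\alpha_0}, \rho_{\alpha_0}) \subset B_{nc}(Y_{\alpha_0}, \rho_{\alpha_0})_n$. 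This places $W \in \widetilde{B_{nc}(Y_{\alpha_0}, \rho_{\alpha_0})}$ and proves the covering claim.

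The main obstacle is the bookkeeping at the interface between two notions of ``semi-simplicity'': the module-theoretic one used to produce $Y$ as a direct sum of irreducibles, and the intrinsic notion of a semi-simple point as invoked in the paper (see the remark following Theorem~\ref{closed range operators general version}). Bridging them amounts to observing that for $Y$ equal to a direct sum of irreducible $d$-tuples, the commutant $\mathcal{C}(Y)$ is, by Schur's lemma and the Wedderburn structure theorem, a direct sum of matrix algebras, so that every $\mathcal{C}(Y)$-bi-module (in particular $\C^{n \times n}$) is semi-simple. The remaining point, the iterated application of bi-fullness along the composition series, is routine once the characterization in Subsection~\ref{bifull} is in hand.
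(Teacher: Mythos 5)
Your proof is correct and takes essentially the same approach as the paper: conjugate $W$ to block upper triangular form with irreducible diagonal blocks, identify the associated block-diagonal $Y$ as a semi-simple point lying in $\Omega$ via bi-fullness, and use a one-parameter family of diagonal scaling similarities to push the strictly upper triangular part to zero. The paper phrases this with a $2\times 2$ block decomposition and a ``without loss of generality'' that your composition-series argument spells out in full; your version is a careful elaboration rather than a different route.
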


\begin{proof}
Let $ W \in \Omega_s $ be a reducible point. Then $ W $ is similar to $ \left[ \begin{smallmatrix} A & B \\ 0 & C \end{smallmatrix} \right] $ for some $ A \in ( \C^{ s_1 \times s_1 } )^d, C \in ( \C^{ s_2 \times s_2 } )^d $ and $ B \in ( \C^{ s_1 \times s_2 } )^d $. Assume without loss of generality that both $ A $ and $ C $ are irreducible so that $ W' : = \left[ \begin{smallmatrix} A & 0 \\ 0 & C \end{smallmatrix} \right] $ is a semi-simple point. Being bi-full $ \Omega $ is similarity invariant and therefore, $ W' = Y_{ \alpha } $ for some $ \alpha \in \Lambda $. Further, for $ \rho > 0 $, note that $ W $ is similar to $ W_{ \rho } : = \left[ \begin{smallmatrix} A & \rho B \\ 0 & C \end{smallmatrix} \right] $ as $ W_{ \rho } = \SS_{ \rho } W \SS_{ \rho }^{ - 1 } $ with $ \SS_{ \rho } = \left[ \begin{smallmatrix} \rho & 0 \\ 0 & 1 \end{smallmatrix} \right] $. Thus, there exists $ \epsilon > 0 $ such that $ W_{ \epsilon } \in B_{ nc } ( Y_{ \alpha }, \rho_{ \alpha } ) $ because $ W_{ \rho } \rightarrow W' = Y_{ \alpha } $ as $ \rho \rightarrow 0 $. Consequently, $ W \in  \widetilde{ B_{ nc } ( Y_{ \alpha }, \rho_{ \alpha } ) } $. 
\end{proof} 

\begin{thm} \label{Existence of Cowen-Douglas bundle}
Let $ \Omega \subset \C^d_{ nc } $ be a uniformly open bi-full nc domain, $ \boldsymbol{ T } \in \mathrm B_r ( \Omega )_{ nc } $ and $ \mathfrak{ T }_{ \Omega } ( \CH_r ) \overset{ \pi }{ \rightarrow } \Omega $ be the trivial nc hermitian holomorphic vector bundle. Then 
\be \label{Cowen-Douglas bundle}
E_{ \boldsymbol{ T } } : = \{ ( Z, F ) \in \mathfrak{ T }_{ \Omega } ( \CH_r ) : F \in \ker D_{ \boldsymbol{ T } - Z } \},
\ee
along with the canonical projection $ \pi $ restricted to $ E_{ \boldsymbol{ T } } $ defines a nc hermitian holomorphic vector bundle $ E_{ \boldsymbol{ T } } \overset{ \pi }{ \rightarrow } \Omega $ of rank $ r $ over $ \Omega $.
\end{thm}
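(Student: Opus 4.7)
The plan is to verify the two conditions of Definition \ref{nc vector bundle} for $E_{\boldsymbol{T}}$ with the projection $\pi|_{E_{\boldsymbol{T}}}$, and then to install the nc hermitian structure by restriction from the ambient trivial bundle $\mathfrak{T}_\Omega(\mathcal{H}_r)$. To begin, $E_{\boldsymbol{T}}$ is a nc subset of $\mathfrak{T}_\Omega(\mathcal{H}_r)$: it is graded by construction, and closure under direct sums is immediate from the block-diagonal structure of $D_{\boldsymbol{T}-(Z \oplus W)}$, which gives $F \oplus G \in \ker D_{\boldsymbol{T}-(Z \oplus W)}$ whenever $F \in \ker D_{\boldsymbol{T}-Z}$ and $G \in \ker D_{\boldsymbol{T}-W}$. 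Surjectivity of $\pi|_{E_{\boldsymbol{T}}}$ follows from $\dim_{\mathbb{C}} \ker D_{\boldsymbol{T}-Z} = r n^2 \geq 1$ for $Z \in \Omega_n$ (Definition \ref{Definition of nc CD class}(i)).

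To supply local trivializations, I would invoke Lemma \ref{open cover of bi-full set}: since $\Omega$ is bi-full, the similarity envelopes $\{\widetilde{B_{nc}(Y_\alpha, \rho_\alpha)}\}_{\alpha \in \Lambda}$ of nc balls centered at semi-simple points $Y_\alpha$ form a uniformly-open cover of $\Omega$. For each $Y_\alpha \in \Omega_{m_\alpha}$, Proposition \ref{existence of local nc frame} supplies a uniformly analytic nc frame $\gamma_1^\alpha, \ldots, \gamma_r^\alpha$ on a uniformly-open neighborhood $\Omega_\alpha$ of $Y_\alpha$; shrinking $\rho_\alpha$ if necessary, I may assume $B_{nc}(Y_\alpha, \rho_\alpha) \subseteq \Omega_\alpha$. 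Since each $\gamma_j^\alpha$ is a nc function respecting similarities, it extends uniquely to a uniformly analytic nc section over $\widetilde{B_{nc}(Y_\alpha, \rho_\alpha)}$, and the free left $\mathbb{C}^{n \times n}$-module basis property of $\{\gamma_j^\alpha(W)\}$ persists under conjugation (the intertwining $SFS^{-1} \mapsto \ker D_{\boldsymbol{T}-SWS^{-1}}$ being a module isomorphism). With these frames in hand, I define
\[
\phi_\alpha(Z, (A_1, \ldots, A_r)) := \bigl(Z, \sum_{j=1}^r A_j \gamma_j^\alpha(Z)\bigr),
\]
which is a uniformly analytic nc bijection $\mathfrak{T}_{\widetilde{B_{nc}(Y_\alpha, \rho_\alpha)}}(\mathbb{C}^r) \to (\pi|_{E_{\boldsymbol{T}}})^{-1}(\widetilde{B_{nc}(Y_\alpha, \rho_\alpha)})$ intertwining the base projections, whose restriction to each fibre is simultaneously $\mathbb{C}$-linear and a free left $\mathbb{C}^{n \times n}$-module isomorphism onto $\ker D_{\boldsymbol{T}-Z}$.

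The main technical obstacle I anticipate is verifying that $\phi_\alpha^{-1}$ is locally uniformly bounded with respect to the relative uniformly-open topology on $E_{\boldsymbol{T}}$: the coefficient matrices $(A_j)$ expressing $F \in \ker D_{\boldsymbol{T}-Z}$ in the frame $\{\gamma_j^\alpha(Z)\}$ must depend locally boundedly on $(Z, F)$ near $(Y_\alpha^{\oplus m}, \gamma_j^\alpha(Y_\alpha^{\oplus m}))$. I would attack this by a continuity argument modelled on Lemma \ref{continuity of frame}: the Gramian matrix built from the rows of the amplification of $(\gamma_1^\alpha(Z), \ldots, \gamma_r^\alpha(Z))$ remains invertible with a lower bound on its smallest eigenvalue that is uniform across amplification levels and varies continuously in $Z$ in the uniformly-open topology, from which uniform local boundedness of the coefficient extraction $(Z, F) \mapsto (A_1, \ldots, A_r)$ will follow. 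Finally, the nc hermitian structure on $E_{\boldsymbol{T}}$ is inherited from the canonical one on $\mathfrak{T}_\Omega(\mathcal{H}_r)$ induced by the matrix sesquilinear form (cf. Remark \ref{remark on nc hermitian structure}); the cp nc kernel condition on $\mathsf{Her}(\sigma_1(\cdot), \sigma_2(\cdot))$ for uniformly analytic local sections $\sigma_1, \sigma_2$ then follows from the pointwise positivity of the matrix sesquilinear pairing together with uniform analyticity of the sections viewed as $(\mathcal{H}_r)_{nc}$-valued nc functions.
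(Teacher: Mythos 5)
Your proposal follows essentially the same route as the paper: semi-simple points together with Proposition \ref{existence of local nc frame} give local nc frames $\gamma_1^\alpha,\ldots,\gamma_r^\alpha$; Lemma \ref{open cover of bi-full set} and bi-fullness produce the covering by similarity envelopes of nc balls; the trivialization $\phi_\alpha$ is defined via the frame; and the hermitian structure is inherited from the ambient trivial bundle exactly as in Remark \ref{remark on nc hermitian structure}. Your extension of the sections to the similarity envelope using the intertwining relation ($F\in\ker D_{\boldsymbol{T}-W}\Rightarrow \SS F \SS^{-1}\in\ker D_{\boldsymbol{T}-\SS W\SS^{-1}}$) is correct and corresponds to the paper's appeal to \cite[Proposition 9.2]{Verbovetskyi-Vinnikov} to extend $\varphi_W$ to $\widetilde{\CU_W}$.

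The one place where you go into more detail than the paper is worth noting. The paper defines $\varphi_W:\pi^{-1}(\CU_W)\rightarrow\mathfrak{T}_{\CU_W}(\C^r)$ (the coefficient-extraction map, i.e.\ your $\phi_\alpha^{-1}$) and justifies its uniform analyticity by the single observation that $\pi_1\circ\varphi_W=\pi$ with $\pi,\pi_1$ uniformly analytic. That controls only the base component of $\varphi_W$; the analyticity of the fibre component $(Z,F)\mapsto(\Xi_1,\ldots,\Xi_r)$ is what actually needs an argument, and the paper leaves it implicit. Your proposed Gramian-continuity argument, modelled on Lemma \ref{continuity of frame}, is the right way to fill that in: the smallest eigenvalue of the Gramian of the amplified rows of $(\gamma_1^\alpha(Z),\ldots,\gamma_r^\alpha(Z))$ is bounded below uniformly in the amplification level and locally in $Z$, so the linear map extracting $(\Xi_1,\ldots,\Xi_r)$ from $F$ is uniformly locally bounded, hence (being also graded and respecting intertwinings) uniformly analytic by Theorem \ref{locally bounded = nc analytic}. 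So your argument is not merely parallel to the paper's but is a slightly more explicit version of it at exactly the spot where the paper is terse.
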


\begin{proof}
First, observe from the definition of the nc Cowen-Douglas class $ \mathrm B_r ( \Omega )_{ nc } $ that the canonical projection $ \pi : E_{ \boldsymbol{ T } } \rightarrow \Omega $ is surjective. Note that the fibre of $ E_{ \boldsymbol{ T } } \overset{ \pi }{ \rightarrow } \Omega $ over $ Z \in \Omega $ is $ \ker D_{ \boldsymbol{ T } - Z } $. 

Let $ W \in \Omega_s $ be a semi-simple point for some $ s \in \N $. It follows from Proposition \ref{existence of local nc frame} that there exists an uniformly open neighbourhood $ \CU_W \subset \Omega $ around $ W $ and uniformly analytic functions $ \gamma_1, \hdots, \gamma_r : \CU_W \rightarrow ( \CH_r )_{ nc } $ such that for each $ m \in \N $ and $ Z \in \CU_W \cap \C^{ ms \times ms } $, $ \{ \gamma_1 ( W ), \hdots, \gamma_r ( W ) \} $ forms a free left module basis for $ \ker D_{ \boldsymbol{ T } - Z } $ over $ \C^{ ms \times ms } $. Consequently, for any $ F \in \ker D_{ \boldsymbol{ T } - Z } $ with $ Z \in \CU_W \cap \C^{ ms \times ms } $, there exist unique $ \Xi_1, \hdots, \Xi_r \in \C^{ ms \times ms } $ such that
$$ F = \Xi_1 \gamma_1 ( Z ) + \cdots + \Xi_r \gamma_r ( Z ) . $$
Now define $ \varphi_W : \pi^{ - 1 } ( \CU_W ) \rightarrow \mathfrak{ T }_{ \CU_W } ( \C^r ) $ as 
$$ \varphi_W ( Z, F ) : = ( Z, \Xi_1, \hdots, \Xi_r ), $$
where $ \mathfrak{ T }_{ \CU_W } ( \C^r ) \overset{ \pi_1 }{ \rightarrow } \CU_W $ is the trivial bundle of rank $ r $. Note that $ \pi_1 \circ \varphi_W = \pi $ and since both $ \pi $ and $ \pi_1 $ are uniformly analytic nc functions, so is $ \varphi_W $. For each $ Z \in \CU_W \cap \C^{ ms \times ms } $, $ \varphi_W ( Z, . ) : \ker D_{ \boldsymbol{ T } - Z } \rightarrow ( \C^{ ms \times ms } )^r $ is $ \C $-linear as well as free left $ \C^{ ms \times ms } $ module isomorphism. 

Since $ \Omega $ is bi-full, it follows from Lemma \ref{open cover of bi-full set} that the family $$ \{ \widetilde{ \CU_W } : W ~ \text{is semi-simple},~ W \in \Omega \} $$ forms an open cover for $ \Omega $. In consequence, $ \{ ( \widetilde{ \CU_W }, \widetilde{ \phi_W } ) \} $ constitutes a nc local trivialization for $ E_{ \boldsymbol{ T } } \overset{ \pi }{ \rightarrow } \Omega $. Here, $ \widetilde{ \CU_W } $ is the similarity envelop of $ \CU_W $ and $ \widetilde{ \phi_W } $ is the unique extension of $ \phi_W $ to $ \widetilde{ \CU_W } $.

Further, $ E_{ \boldsymbol{ T } } \overset{ \pi }{ \rightarrow } \Omega  $ is equipped with the hermitian structure induced from the canonical one on the trivial bundle $ \mathfrak{ T }_{ \Omega } ( \CH_r ) \overset{ \pi }{ \rightarrow } \Omega $ (cf. Remark \ref{remark on nc hermitian structure}). It thus verifies that $ E_{ \boldsymbol{ T } } \overset{ \pi }{ \rightarrow } \Omega $ is a nc hermitian holomorphic vector bundle of rank $ r $ over $ \Omega $.
\end{proof}

We conclude this section by proving that the nc vector bundles associated to the nc Cowen-Douglas tuples completely determine them up to unitary equivalence. Before continuing with this result we record the following basic property of nc Cowen-Douglas class.

\begin{prop} \label{local property of CD class}
Let $ \Omega \subset \C^d_{nc} $ be a bi-full uniformly open nc domain and $ Y \in \Omega_s $ be a semi-simple point. Then $ \mathrm B_r ( \Omega )_{nc} \subset \mathrm B_r ( B_{nc} ( Y, \rho ) )_{nc} $, whenever $ B_{nc} ( Y, \rho ) \subset \Omega $.
\end{prop}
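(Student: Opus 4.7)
Conditions (i) and (iii) of Definition \ref{Definition of nc CD class} are pointwise in the matricial eigenvalue $W$; since $B_{nc}(Y,\rho) \subset \Omega$, they are inherited by $\boldsymbol{T}$ on the smaller ball. The only nontrivial condition to verify is the density condition (ii). Letting $\CH_0$ be the closed linear span of all entries of all elements of $\ker D_{\boldsymbol{T}-W}$ as $W$ ranges over $B_{nc}(Y,\rho)$ and all matricial levels, my plan is to show $\CH_0 = \CH$ by picking $g \in \CH_0^\perp$ and proving that $g$ is orthogonal to the entries of every element of $\ker D_{\boldsymbol{T}-W_0}$ for every $W_0 \in \Omega$. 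The density assumption on $\boldsymbol{T}$ over $\Omega$ will then force $g = 0$.

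The vehicle for the propagation is the nc holomorphic vector bundle $E_{\boldsymbol{T}}$ constructed in Theorem \ref{Existence of Cowen-Douglas bundle}. Proposition \ref{existence of local nc frame} produces a uniformly analytic local frame $\gamma_1, \ldots, \gamma_r$ on a uniformly open neighborhood $\Omega_0 \subset B_{nc}(Y,\rho)$ of $Y$; since the $\{\gamma_j(W)\}_j$ generate $\ker D_{\boldsymbol{T}-W}$ as a free $\C^{ms \times ms}$-module for $W \in (\Omega_0)_{ms}$, the condition $g \in \CH_0^\perp$ says exactly that $\langle g, \gamma_j(W)_{ik}\rangle = 0$ for all $j,i,k,m$ and all $W \in (\Omega_0)_{ms}$. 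Because $\Omega$ is bi-full, Lemma \ref{open cover of bi-full set} covers it by similarity envelopes $\widetilde{B_{nc}(Y_\alpha,\rho_\alpha)}$ of small nc balls around semi-simple points, each of which carries---after possibly shrinking $\rho_\alpha$ and extending to the envelope by nc functoriality---a uniformly analytic frame $\gamma^{(\alpha)}_j$. On a non-empty overlap the two frames are related by right multiplication by a uniformly analytic $GL$-valued nc transition cocycle (Subsection \ref{transition data}); since this action is by complex matrices on the $\CH$-valued frame entries, orthogonality of $g$ to the entries of one frame is transferred to orthogonality to the entries of the other. Within a single chart, the scalar holomorphic function $W \mapsto \langle g, \gamma^{(\alpha)}_j(W)_{ik}\rangle$ that vanishes on a non-empty uniformly open subset must vanish throughout the chart by analytic continuation at each matricial level, since every level of the similarity envelope of a connected nc ball is itself connected.

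The remaining issue---and the main technical obstacle---is that the chart at a semi-simple $Y_\alpha \in \Omega_{s_\alpha}$ touches only matricial levels that are multiples of $s_\alpha$, so distinct charts do not automatically meet at a common level. To reach a target $W_0 \in \Omega_n$ from the starting chart at $Y \in \Omega_s$ I use amplification: $W_0^{\oplus s}$ and $Y^{\oplus n}$ both belong to $\Omega_{ns}$, and the entries of elements of $\ker D_{\boldsymbol{T}-W_0^{\oplus s}}$ span exactly the same subspace of $\CH$ as the entries of elements of $\ker D_{\boldsymbol{T}-W_0}$, since any kernel element at $W_0^{\oplus s}$ decomposes into $s \times s$ blocks that individually lie in $\ker D_{\boldsymbol{T}-W_0}$, and conversely any element of $\ker D_{\boldsymbol{T}-W_0}$ placed as a single block of an otherwise zero $s \times s$ array gives a kernel element at $W_0^{\oplus s}$. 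Since $\Omega_{ns}$ is connected, a finite chain of overlapping charts of the cover at level $ns$ connects $Y^{\oplus n}$ to $W_0^{\oplus s}$, and the frame-to-frame propagation from the previous paragraph transports $g$'s orthogonality along the chain. This yields $g \perp$ every entry of every element of $\ker D_{\boldsymbol{T}-W_0^{\oplus s}}$, hence of $\ker D_{\boldsymbol{T}-W_0}$; since $W_0 \in \Omega$ was arbitrary, condition (ii) for $\boldsymbol{T}$ on $\Omega$ forces $g = 0$ and completes the proof.
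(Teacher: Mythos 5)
Your proof is correct and follows essentially the same strategy as the paper's: cover $\Omega$ by similarity envelopes of nc balls around semi-simple points (Lemma \ref{open cover of bi-full set}), each carrying a uniformly analytic local frame (Proposition \ref{existence of local nc frame}), and propagate orthogonality of $g$ to the frame entries across the cover via analytic continuation. The paper organizes the propagation as an open/closed argument: it splits the indexing set into those charts where $h$ is already orthogonal to the frame entries ($\Lambda'$) and the rest, forms the unions $\Omega'$ and $\Omega''$, and derives a contradiction from connectedness of $\Omega$ together with the nc identity theorem. You organize it as a chain argument at a fixed matricial level $ns$, connecting $Y^{\oplus n}$ to $W_0^{\oplus s}$ by finitely many overlapping charts. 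Both arguments depend on exactly the same two ingredients—transition cocycles on overlaps (so orthogonality of $g$ to one frame's entries transfers to the other's, since the transition matrices have complex scalar entries), and analytic continuation within each chart at a single level, lifted to all levels by the amplification compatibility of Taylor--Taylor coefficients. Your version makes explicit two points the paper compresses into a single appeal to ``the identity theorem for nc analytic functions'': (a) the transition-cocycle transfer, and (b) the level-matching issue, which you handle cleanly by passing to $\Omega_{ns}$ and observing that the entries of elements of $\ker D_{\boldsymbol{T}-W_0^{\oplus s}}$ span the same subspace of $\CH$ as those of $\ker D_{\boldsymbol{T}-W_0}$. These are refinements of presentation rather than a genuinely different route.
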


\begin{proof}
From the definition of nc Cowen-Douglas class it is enough to show that 
$$ \CH_Y : = \bigvee \{ h_{ i j } : ( \!( h_{ i j } ) \! )_{ i, j = 1 }^n \in \ker D_{ \boldsymbol{ T } - W }, W \in B_{ nc } ( Y, \rho ) \} = \CH . $$
If possible suppose that $ h \in \CH $ is perpendicular to $ \CH_Y $. Let $ \{ Y_{ \alpha } \in \Omega_{ m_{ \alpha } } : \alpha \in \Lambda \} $ be the set of all semi-simple points in $ \Omega $, $ \Lambda $ be an indexing set and for every $ \alpha \in \Lambda $, $ B_{ nc } ( Y_{ \alpha }, \rho_{ \alpha } ) $ be a nc open ball in $ \Omega $ on which there is a local nc frame $ \{ \gamma_1^{ \alpha }, \hdots, \gamma_r^{ \alpha } \} $ for $ E_{ \boldsymbol{ T } } \overset{ \pi }{ \rightarrow } \Omega $. Since $ \Omega $ is bi-full, it follows from Lemma \ref{open cover of bi-full set} that $ \{ B_{ nc } ( Y_{ \alpha }, \rho_{ \alpha } ) : \alpha \in \Lambda \} $ forms an nc open cover of $ \Omega $. Set 
$$ \CH_{ \alpha } : =  \bigvee_{ W \in B_{ nc} ( Y_{ \alpha }, \rho_{ \alpha } ) } \left\{ \gamma_t ( W )_{ i j } : 1 \leq t \leq r \right\}, ~~~ \alpha \in \Lambda $$
and $ \Lambda' : = \{ \alpha \in \Lambda : h \perp \CH_{ \alpha } \} $. Then consider the uniformly open subsets 
$$ \Omega' : = \bigcup_{ \alpha \in \Lambda' } B_{ nc } ( Y_{ \alpha }, \rho_{ \alpha } ) \quad \text{and} \quad \Omega'' : = \bigcup_{ \alpha \in \Lambda \setminus \Lambda' } B_{ nc } ( Y_{ \alpha }, \rho_{ \alpha } ) $$ 
of $ \Omega $. Since $ \Omega $ is connected, $ \Omega' \cap \Omega'' \neq \emptyset $. Let $ \hat{ \alpha } \in \Lambda \setminus \Lambda' $ be such that $ B_{ nc } ( Y_{ \hat{ \alpha } }, \rho_{ \hat{ \alpha } } ) \cap \Omega' \neq \emptyset $. We now show that $ h \perp \CH_{ \hat{ \alpha } } $.

Since $ h \perp \CH_Y $, it follows that 
$$ \langle \! \langle h \otimes \mathrm{Id}_{ \C^{ ms \times ms } }, \gamma_t ( W ) \rangle \! \rangle = 0, \quad 1 \leq t \leq r, ~~ W \in B_{ nc } ( Y, \rho )_{ ms }, ~~ m \in \N , $$
where $ \langle \! \langle \cdot, \cdot \rangle \! \rangle $ is the matrix sesquilinear form associated to $ \CH $ (cf. Equation \eqref{matrix sesquilinear form}). Then for each $ 1 \leq t \leq r $, the uniformly nc analytic function $ F_t : B_{ nc } ( Y_{ \hat{ \alpha } }, \rho_{ \hat{ \alpha } } ) \rightarrow \C_{ nc } $ defined by 
$$ W \mapsto  \langle \! \langle \gamma_t^{ \alpha } ( W ), h \otimes \mathrm{Id}\rangle \! \rangle $$ 
is identically zero on $ B_{ nc } ( Y_{ \hat{ \alpha } }, \rho_{ \hat{ \alpha } } ) \cap \Omega' $. Consequently, by the identity theorem for nc analytic functions, $ F_t $ vanishes on $ B_{ nc } ( Y_{ \hat{ \alpha } }, \rho_{ \hat{ \alpha } } ) $, $ 1 \leq t \leq r $. But this contradicts the choice of $ \hat{ \alpha } $ unless $ h = 0 $. Thus, $ \CH_Y = \CH $.
\end{proof}

\begin{thm} \label{unitary equivalence}
Let $ \Omega \subset \C^d_{ nc } $ be a uniformly open bi-full nc domain and $ \boldsymbol{ T }, \widetilde{ \boldsymbol{ T } } \in \mathrm B_r ( \Omega )_{ nc } $. Then $ \boldsymbol{ T } $ and $\widetilde{ \boldsymbol{ T } } $ are unitarily equivalent if and only if $ E_{ \boldsymbol{ T } } \overset{ \pi }{ \rightarrow } \Omega $ is isomorphic to $ E_{ \widetilde{ \boldsymbol{ T } } } \overset{ \pi }{ \rightarrow } \Omega $ as nc hermitian holomorphic vector bundles.
\end{thm}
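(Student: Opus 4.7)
The plan is to argue both implications by translating between the operator level (on $\CH$, $\widetilde\CH$) and the bundle level (in $\mathfrak{T}_\Omega(\CH_r)$, $\mathfrak{T}_\Omega(\widetilde\CH_r)$) through the entrywise action on matricial kernel elements. Throughout, for any operator $A$ on $\CH$ and $m\in\N$, write $A^{(m)}=A\otimes\mathrm{Id}_{\C^{m\times m}}$ acting on $\CH^{m\times m}$.

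For the forward direction, suppose $U:\CH\to\widetilde\CH$ is a unitary with $U T_j=\tilde T_j U$ for $j=1,\dots,d$. Then $U^{(m)}$ intertwines $D_{\boldsymbol T-W}$ and $D_{\widetilde{\boldsymbol T}-W}$ for every $W\in\Omega_m$, so $U^{(m)}$ maps $\ker D_{\boldsymbol T-W}$ bijectively onto $\ker D_{\widetilde{\boldsymbol T}-W}$, and this map is simultaneously $\C$-linear and a left $\C^{m\times m}$-module isomorphism. The map $\widetilde U:E_{\boldsymbol T}\to E_{\widetilde{\boldsymbol T}}$ defined by $\widetilde U(W,F)=(W,U^{(m)}F)$ is therefore a fiberwise linear and free-module isomorphism that covers the identity on $\Omega$; since $U^{(m)}$ is uniformly bounded of norm $1$, $\widetilde U$ is locally uniformly bounded and hence uniformly analytic by Theorem \ref{locally bounded = nc analytic}, and the same holds for its inverse $U^{*\,(m)}$. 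Finally, $U$ being unitary means the entrywise inner products are preserved, so the induced matrix sesquilinear form on $E_{\boldsymbol T}$ is carried to that on $E_{\widetilde{\boldsymbol T}}$, i.e.\ $\widetilde U$ preserves the nc hermitian structures (cf.\ Remark \ref{remark on nc hermitian structure}).

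For the converse, assume we are given a nc hermitian holomorphic vector bundle isomorphism $\widetilde\psi:E_{\boldsymbol T}\to E_{\widetilde{\boldsymbol T}}$ covering the identity on $\Omega$. For each $m$ and each $W\in\Omega_m$, the restriction $\widetilde\psi_W:\ker D_{\boldsymbol T-W}\to\ker D_{\widetilde{\boldsymbol T}-W}$ is a $\C^{m\times m}$-linear bijection preserving the matrix sesquilinear form $\langle\!\langle\,\cdot\,,\cdot\,\rangle\!\rangle$. Write $\widetilde\psi_W(F)=(\!(\tilde h_{ij}(F))\!)$ when $F=(\!(h_{ij})\!)$, and let
\[
\CD:=\bigvee_{m\in\N,\,W\in\Omega_m}\bigl\{h_{ij}:(\!(h_{ij})\!)\in\ker D_{\boldsymbol T-W}\bigr\}\subset\CH.
\]
By condition (ii) of Definition \ref{Definition of nc CD class}, $\CD$ is dense in $\CH$. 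Define $U_0:\CD\to\widetilde\CH$ on generators by $U_0(h_{ij}):=\tilde h_{ij}(F)$ whenever $h_{ij}$ appears as the $(i,j)$-entry of some $F\in\ker D_{\boldsymbol T-W}$, and extend by linearity. The main check of this step is that $U_0$ is well-defined and isometric: if $F\in\ker D_{\boldsymbol T-W}$ and $G\in\ker D_{\boldsymbol T-W'}$, then
\[
\langle\tilde h_{ij}(F),\tilde h_{ab}(G)\rangle=\langle h_{ij},h_{ab}\rangle
\]
is obtained by reading off the $(ij,ab)$-entry of the matrix identity $\langle\!\langle\widetilde\psi_W(F),\widetilde\psi_{W'}(G)\rangle\!\rangle=\langle\!\langle F,G\rangle\!\rangle$; symmetrically using density in $\widetilde\CH$ (applied to $\widetilde\psi^{-1}$) yields both well-definedness and isometry. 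Thus $U_0$ extends to a unitary $U:\CH\to\widetilde\CH$.

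It remains to verify that $U$ intertwines $\boldsymbol T$ and $\widetilde{\boldsymbol T}$. For any $F=(\!(h_{ij})\!)\in\ker D_{\boldsymbol T-W}$ the defining identity $T_k^{(m)}F=F\cdot W_k$ reads entrywise as $T_k h_{ij}=\sum_\ell h_{i\ell}(W_k)_{\ell j}$; applying $U$ and using the definition of $U$,
\[
U T_k h_{ij}=\sum_\ell \tilde h_{i\ell}(F)(W_k)_{\ell j}=\bigl(\widetilde\psi_W(F)\cdot W_k\bigr)_{ij}=\bigl(\tilde T_k^{(m)}\widetilde\psi_W(F)\bigr)_{ij}=\tilde T_k U h_{ij},
\]
since $\widetilde\psi_W(F)\in\ker D_{\widetilde{\boldsymbol T}-W}$. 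By density, $UT_k=\tilde T_k U$ for each $k$. The main subtlety in the whole argument, and what I would take most care over, is the well-definedness of $U$ on $\CD$; everything else is a routine consequence of the matrix sesquilinear form formalism, the density assumption in Definition \ref{Definition of nc CD class}, and Theorem \ref{locally bounded = nc analytic}.
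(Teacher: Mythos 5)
Your forward direction is essentially the same as the paper's: conjugate by $U\otimes\mathrm{Id}_{\C^{m\times m}}$, note this carries $\ker D_{\boldsymbol T-W}$ onto $\ker D_{\widetilde{\boldsymbol T}-W}$, and that unitarity of $U$ is exactly preservation of the matrix sesquilinear form entrywise. Your converse is correct but takes a genuinely shorter route than the paper's. The paper picks a semi-simple point $Y\in\Omega_s$, invokes Proposition \ref{existence of local nc frame} to obtain a uniformly analytic nc frame $\gamma_1,\dots,\gamma_r$ on a nc ball $B_{nc}(Y,\rho)$, defines $U$ on the span $\CH_Y$ of the entries of these frame vectors (the same isometry-on-a-spanning-set argument you use), and then must argue that $\CH_Y=\CH$ via Proposition \ref{local property of CD class}, which is where bi-fullness and the identity theorem for nc analytic functions enter. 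You instead define $U_0$ directly on the linear span $\CD$ of all entries of all kernel elements, which is dense by condition (ii) of Definition \ref{Definition of nc CD class} with no further argument; well-definedness and isometry again follow by reading off entries of the preserved matrix sesquilinear form, and the intertwining computation is the same entrywise identity $T_k h_{ij}=\sum_\ell h_{i\ell}(W_k)_{\ell j}$ that the paper exploits. This sidesteps both the local-frame construction (which rests on the technical Theorem \ref{closed range operators general version}) and the bi-fullness-based continuation; of course bi-fullness is still needed in the theorem's hypotheses so that $E_{\boldsymbol T}$ and $E_{\widetilde{\boldsymbol T}}$ exist as nc hermitian holomorphic vector bundles (Theorem \ref{Existence of Cowen-Douglas bundle}). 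Two minor points to tighten: first, when you say the preservation of $\langle\!\langle\cdot,\cdot\rangle\!\rangle$ "yields both well-definedness and isometry," it is cleaner to state the standard argument explicitly — any finite linear combination $\sum_k c_k h_{i_kj_k}(F_k)=0$ forces $\sum_k c_k\tilde h_{i_kj_k}(F_k)=0$ because its norm squared equals the (zero) norm squared of the original by the entrywise inner-product preservation — since the "symmetric density in $\widetilde\CH$" phrase is really only needed for surjectivity of the extension, not well-definedness; second, when extending by linearity you should do so on the algebraic span first and then extend by continuity to $\CH$.
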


\begin{proof}
Let $ \boldsymbol{ T } \in \CB ( \CH ) $ and $ \widetilde{ \boldsymbol{ T } } \in \CB ( \widetilde{ \CH } ) $ be unitarily equivalent via a unitary operator $ U : \CH \rightarrow \widetilde{ \CH } $. Then for each $ m \in \N $ and $ W \in \Omega_m $, $ U \otimes \mathrm{Id}_{ \C^{ m \times m } } $ intertwines $ D_{ \boldsymbol{ T } - W } $ and $ D_{ \widetilde{ \boldsymbol{ T } } - W } $. Furthermore, since $ U $ is unitary, 
$$ ( U \otimes \mathrm{Id}_{ \C^{ m \times m } } ) ( \ker D_{ \boldsymbol{ T } - W } ) = \ker D_{ \widetilde{ \boldsymbol{ T } } - W }, \quad W \in \Omega_m, ~~ m \in \N . $$ Thus, we have a nc vector bundle isomorphism $ \Phi : \mathfrak{ T }_{ \Omega } ( \CH ) \rightarrow \mathfrak{ T }_{ \Omega } ( \widetilde{ T } ) $ defined by 
$$ ( W, H ) \mapsto ( W, ( U \otimes \mathrm{Id}_{ \C^{ m \times m } } ) ( H ) ), \quad W \in \Omega_m, ~~ H \in \mathfrak{ T }_{ \Omega } ( \CH )|_W, ~~ m \in \N, $$
and $ \Phi |_{ E_{ \boldsymbol{ T } } } : E_{ \boldsymbol{ T } } \rightarrow E_{ \widetilde{ \boldsymbol{ T } } } $ is onto. 

For $ Z \in \Omega_n $, $ W \in \Omega_m $ and $ A \in \mathfrak{ T }_{ \Omega } ( \CH )|_Z $, $ B \in \mathfrak{ T }_{ \Omega } ( \CH )|_W $, note that
\Bea
\widetilde{ \mathsf{ Her } } \left( \Phi ( Z, A ), \Phi ( W, B ) \right) & = & \widetilde{ \mathsf{ Her } } \left( ( U \otimes \mathrm{Id}_{ \C^{ n \times n } }) ( A ), ( U \otimes \mathrm{Id}_{ \C^{ n \times n } }) ( B ) \right) \\
& = & \left( \! \left( \langle U ( a_{ i j } ), U ( b_{ l k } ) \rangle_{ \widetilde{ \CH } } \right) \! \right) \\
& = &  \left( \! \left( \langle a_{ i j } , b_{ l k }  \rangle_{ \CH } \right) \! \right) \\
& = & \mathsf{ Her } ( A, B )
\Eea
verifying that $ \Phi |_{ E_{ \boldsymbol{ T } } } : E_{ \boldsymbol{ T } } \rightarrow E_{ \widetilde{ \boldsymbol{ T } } } $ is a nc hermitian holomorphic vector bundle isomorphism.

Conversely, suppose that $ \Phi : E_{ \boldsymbol{ T } } \rightarrow E_{ \widetilde{ \boldsymbol{ T } } } $ is a nc hermitian holomorphic vector bundle isomorphism and $ Y $ be a semi-simple point in $ \Omega_s $, $ s \in \N $. It follows from Proposition \ref{existence of local nc frame} that there exists a local nc frame $ \gamma_1, \hdots, \gamma_r $ for the nc bundle $ E_{ \boldsymbol{ T } } \overset{ \pi }{ \rightarrow } \Omega $ over a nc ball $ B_{ nc } ( Y, \rho ) $ at $ Y $. Then $ \Phi \circ \gamma_1, \hdots, \Phi \circ \gamma_r $ form a frame for $ E_{ \boldsymbol{ \widetilde{ T } } } \overset{ \pi }{ \rightarrow } \Omega $ over $ B_{ nc } ( Y, r ) $. Also, for every $ W \in \Omega $, note that
$$ \Phi \left( \ker D_{ \boldsymbol{ T } - W } \right) = \ker D_{ \boldsymbol{ \widetilde{ T } } - W } . $$

Set $ \CH_Y : = \bigvee \{ h_{ i j } : ( \!( h_{ i j } ) \! )_{ i, j = 1 }^n \in \ker D_{ \boldsymbol{ T } - W }, W \in B_{ nc } ( Y, \rho ) \} $ and similarly $ \widetilde{ \CH }_Y $. Observe that
$$ \CH_Y = \bigvee_{ W \in B_{ nc } ( Y, r ) } \left\{ \gamma_t ( W )_{ i j } : 1 \leq t \leq r \right\} \quad \text{and} \quad \widetilde{ \CH }_Y = \bigvee_{ W \in B_{ nc } ( Y, r ) } \left\{ \Phi ( \gamma_t ( W ) )_{ i j } : 1 \leq t \leq r \right\} . $$
We now define a linear map $ U : \CH_Y \rightarrow \widetilde{ \CH }_Y $ by
$$ U \left( \gamma_t ( W )_{ i j } \right) : = \Phi ( \gamma_t ( W ) )_{ i j }, \quad 1 \leq t \leq r, ~~ W \in B_{ nc } ( Y, \rho ) . $$
Being a nc hermitian holomorphic vector bundle isomorphism, $ \Phi $ preserves the hermitian structures on the fibres. In particular, for $ Z \in \Omega_n $ and $ W \in \Omega_m $, $ 1 \leq s, t \leq r $, we have that 
$$ \mathsf{ Her } ( \gamma_s ( Z ), \gamma_t ( W ) ) = \widetilde{ \mathsf{ Her } } ( \Phi( \gamma_s ( Z ) ), \Phi( \gamma_t ( W ) ) ) . $$
Consequently, it follows from the definition of the nc hermitian structure (see Definition \ref{hermitian metric}) that 
$$ \langle \gamma_s ( Z )_{ i j }, \gamma_t ( W )_{ \ell k } \rangle_{ \CH } = \langle \Phi ( \gamma_s ( Z ) )_{ i j }, \Phi ( \gamma_t ( W ) )_{ \ell k } \rangle_{ \widetilde{ \CH } } , $$
verifying that $ U : \CH_Y \to \widetilde{ \CH }_Y $ is unitary.

So it remains to show that $ U $ intertwines $ \boldsymbol{ T } $ and $ \widetilde{ \boldsymbol{ T } } $. Since for $ W \in B_{ nc } ( Y, \rho )_{ m s } $ and $ m \in \N $, $ D_{ \boldsymbol{ \widetilde{ T } } - W } ( \Phi ( \gamma_t ( W ) ) ) = 0 $, $ 1 \leq t \leq r $, we have for each $ 1 \leq i \leq d $ and $ 1 \leq j, k \leq ms $,
\begin{multline*}
\left[ \left( \widetilde{ T }_i \otimes \mathrm{Id}_{ \C^{ ms \times ms } } \right) ( \Phi ( \gamma_t ( W ) ) ) \right]_{ j k }  =  \left[ \left( \mathrm{Id}_{ \CH } \otimes R_{ W_i } \right) ( \Phi ( \gamma_t ( W ) ) ) \right]_{ j k } 
 =  \left( \Phi ( \gamma_t ( W ) ) W_i \right)_{ j k }\\
 =  \sum_{ l = 1 }^{ ms } \Phi ( \gamma_t ( W ) )_{ j l } W^{ l k }_i 
 =  \sum_{ l = 1 }^{ ms } U ( \gamma_t ( W )_{ j l } ) W^{ l k }_i 
 =  U \left( ( \gamma_t ( W ) W_i )_{ j k } \right) 
 =  U \left( T_i ( \gamma_t ( W ) )_{ j k } \right)
\end{multline*}
where $ W_i = \big( \! \big( W^{ j k }_i \big)  \! \big)_{ j, k = 1 }^{ ms } $. As a result, for every $ 1 \leq t \leq r $, $ 1 \leq i \leq d $ and $ 1 \leq j, k \leq ms $, we have that
\begin{align*}
  & \widetilde{ T }_i \big( U \big( \gamma_t ( W )_{ j k } \big) \big) = \widetilde{ T }_i \big( \Phi \big( \gamma_t ( W ) )_{ j k } \big) \big) \\
  & =  \bigg[ \bigg( \widetilde{ T }_i \otimes \mathrm{Id}_{ \C^{ ms \times ms } } \bigg) \big( \Phi ( \gamma_t ( W ) ) \big) \bigg]_{ j k } = U \big( T_i ( \gamma_t ( W ) )_{ j k } \big) .  
\end{align*}
It thus verifies that $ U $ intertwines $ \boldsymbol{ T } $ and $ \widetilde{ \boldsymbol{ T } } $ on $ \CH_Y $. Finally, since $ \Omega $ is bi-full, the result follows from Proposition \ref{local property of CD class}.
\end{proof}

\begin{rem}
    In light of Theorem \ref{unitary equivalence}, the unitary invariants of a tuple in the noncommutative Cowen-Douglas class correspond to those of the associated noncommutative hermitian holomorphic vector bundle. This naturally leads to the investigation of invariants of noncommutative hermitian holomorphic vector bundles, which we intend to explore in future work.
\end{rem}

\section{Model for noncommutative Cowen-Douglas tuples} \label{model and local operators}

The objective of the first half of this section is to model a $ d $-tuple of operators in the nc Cowen-Douglas class $ \mathrm B_r ( \Omega )_{ nc } $ ($ \Omega \subset \C^d_{ nc} $) over a nc reproducing kernel Hilbert space, while in the later half, we introduce the notion of ``local operators" associated to a tuple in $ \mathrm B_r ( \Omega )_{ nc } $ for each $ W \in \Omega $. 

\subsection{Construction of model spaces} \label{model} Given an element $ \boldsymbol{ T } = ( T_1, \hdots, T_d ) $ in $ \mathrm B_r ( \Omega )_{ nc } $ with $ T_1, \hdots, T_d \in \CB ( \CH ) $, we construct a nc functional Hilbert space consisting of uniformly analytic nc functions on some uniformly open subset of $ \Omega $ taking values in $ \CL ( \C, \C^r )_{ nc } $ as demonstrated below. Here, we equip $ \CH $ with the row operator Hilbert space structure and denote it as $ \CH_r $ as before.

Since $ \boldsymbol{T} \in \mathrm B_r ( \Omega )_{nc} $, it follows from Proposition \ref{existence of local nc frame} that there exists a uniformly open sudomain $ \Omega_0 \subset \Omega $ and a uniformly analytic functions $ \gamma_j : \Omega_0 \rightarrow ( \CH_r )_{nc} $, $ j = 1, \hdots, r $ such that $ \{ \gamma_1 ( X ), \hdots, \gamma_r ( X ) \} $ is a basis for $ \ker D_{ \boldsymbol{T} - X } $ viewing $ \ker D_{ \boldsymbol{ T } - W } $ as a left $ \C^{ m \times m } $ module for all $ X \in ( \Omega_0 )_m $, $ m \in \N $, where $ D_{ \boldsymbol{ T } - W } $ is as defined in Equation \eqref{D_T}.

First, we define the uniformly analytic nc mapping $ \Gamma : \Omega_0 \rightarrow \CL ( \C^r, \CH )_{nc} $ as follows: For each $ m \in \N $, consider the mapping $ \Gamma_m : ( \Omega_0 )_m \rightarrow \CL ( ( \C^{ m \times 1 } )^{ r \times 1 }, \CH^{ \oplus m } ) $ defined by the formula 
$$ \Gamma_m ( X ) : ( \C^{ m \times 1 } )^{ r \times 1 } \rightarrow \CH^{ \oplus m } \quad \text{as} \quad \left[ \begin{smallmatrix} \xi_1 \\ \vdots \\ \xi_r \end{smallmatrix} \right] \mapsto \sum_{ j = 1 }^r \gamma_j ( X ) \xi_j, $$ $ \xi_1, \hdots, \xi_r \in \C^{ m \times 1 } $ and define $ \Gamma|_{ ( \Omega_0 )_m } : = \Gamma_m $. 

Evidently, $ \Gamma_m ( X ) $ is a bounded linear map for each $ m \in \N $ and $ X \in \Omega_0$, as it follows from the definition of the row operator Hilbert space structure on $ \CH $ that 
$$ \norm{ \sum_{ j = 1 }^r \gamma_j ( X ) \xi_j } = \norm{ \left[ \begin{smallmatrix} \gamma_1 ( X ) & \cdots & \gamma_r ( X ) \end{smallmatrix} \right] \cdot \xi } \leq \norm{ \left[ \begin{smallmatrix} \gamma_1 ( X ) & \cdots & \gamma_r ( X ) \end{smallmatrix} \right] } \cdot \norm{ \xi } , $$
where $ \xi \in ( \C^{ m \times 1 } )^{ r \times 1 } $ is the vector $ \begin{bmatrix} \xi_1 & \cdots & \xi_r \end{bmatrix}^{ \top } $.

Denote $ \Omega^*_0 := \{ W : W^* \in \Omega_0 \} $ and consider the Hilbert space $ \C^{ m \times m } $ endowed with the Hilbert-Schmidt inner product 
$$ \langle A, B \rangle_{ HS } = \mathrm{trace} ( A B^* ) , $$ 
denoted by $ ( \C^{ m \times m }, \mathrm{ tr } ) $. For $ m \in \N $, define $ \Lambda_m : ( \Omega^*_0 )_m \rightarrow \CL ( ( \C^{ m \times m }, \mathrm{tr} )^{ \oplus r }, \CH ) $ as follows:
$$ \Lambda_m ( W ) ( ( y_1 v_1, \hdots, y_r v_r ) ) := v_1 \gamma_1 ( W^* ) y_1 + \cdots + v_r \gamma_r ( W^* ) y_r, $$ for all $ y_1, \hdots, y_r \in \C^{ m \times 1 } $ and $ v_1, \hdots, v_r \in \C^{ 1 \times m } $. Observe, for each $ m \in \N $ and $ W \in \Omega^*_0 $, that $ \Lambda_m ( W ) $ is a bounded linear operator. Indeed, it follows from the definition of the row operator space structure on $ \CH $ that 
$$ \norm{ v_1 \gamma_1 ( W^* ) y_1 + \cdots + v_r \gamma_r ( W^* ) y_r } \leq M \sum_{ j = 1 }^r \norm{ y_j v_j }_{ HS } \leq r M \norm{  ( y_1 v_1, \hdots, y_r v_r ) }_{ ( \C^{ m \times m }, \text{tr} )^{ \oplus r } }, $$ 
where $ M $ is the maximum of $ \norm{ \gamma_1 ( W^* ) }, \hdots, \norm{ \gamma_r ( W^* ) } $, and we note, for any $ y \in \C^{ m \times 1 } $, $ v \in \C^{ 1 \times m } $, that $ \norm{ y v }_{ HS } = \norm{ y } \norm{ v } $. Finally, we define the bounded linear operator 
$$ U_{ \Gamma } : \CH \rightarrow \CN\CC\CO ( \Omega^*_0, ( \C^r )_{ nc } ) \hspace{0.1in} \text{as} \hspace{0.1in} h \mapsto U_{ \Gamma } ( h ), $$ 
where $ U_{ \Gamma } ( h ) : \Omega^*_0 \rightarrow ( \C^r )_{nc} $ is defined, for each $ m \in \N $ and $ W \in ( \Omega^*_0 )_m $, by
\be \label{unitary map for model}
U_{ \Gamma } ( h ) ( W ) := \Lambda_m ( W )^* ( h ).
\ee
Here, we think of $ f \in \CN\CC\CO ( \Omega^*_0, ( \C^r )_{ nc } ) $ as a nc function on $ \Omega^*_0 $ taking values in the nc space over $ \CL ( \C, \C^r ) \cong \C^r $. 

Note that $ U_{ \Gamma } $ is an injective linear map. Indeed, if $ U_{ \Gamma } ( h ) = 0 $, then $ h $ is perpendicular to $ v \gamma_j ( W ) y $ for all $ 1 \leq j \leq r $, $ W \in ( \Omega_0 )_m $, $ v \in \C^{ 1 \times m } $, $ y \in \C^{ m \times 1 } $ and $ m \in \N $. Consequently, it follows from part (ii) in Definition \ref{Definition of nc CD class} that $ h = 0 $. Let $ \CH_{ \Gamma } $ be the range of $ U_{ \Gamma } $. We now carry forward the hermitian product on $ \CH $ via $ U_{ \Gamma } $ to $ \CH_{ \Gamma } $ making $ U_{ \Gamma } $ an unitary operator from $ \CH $ onto $ \CH_{ \Gamma } $.

Staring with a noncommutative $ d $-tuple of bounded linear operators $ \boldsymbol{T} = ( T_1, \hdots, T_d ) $ in the nc Cowen-Douglas class $ \mathrm B_r ( \Omega )_{nc} $, we thus have constructed a Hilbert space $ \CH_{ \Gamma } $ of uniformly nc analytic $ ( \C^r )_{ nc } $ valued functions on some subdomain $ \Omega^*_0 \subset \Omega^* $ unitarily equivalent to $ \CH $ via the unitary operator $ U_{ \Gamma } $. Moreover, $ \CH_{ \Gamma } $ turns out to be a nc reproducing kernel Hilbert space and $ U_{ \Gamma } $ intertwines the operators $ T_1, \hdots, T_d $ on $ \CH $ and the adjoint $ M_{ Z_1 }^*, \hdots, M_{ Z_d }^* $ of the left multiplication operators by the nc coordinate functions on $ \CH_{ \Gamma } $ simultaneously as shown in the theorem below.

\begin{thm} \label{model space}
Let $ \Omega \subset \C^d_{ nc } $ be a uniformly open nc domain, $ \CH $ be a complex separable Hilbert space and $ T_1, \hdots, T_d \in \CB ( \CH ) $. Assume that the $ d $-tuple of operators $ \boldsymbol{ T } = ( T_1, \hdots, T_d ) $ is in $ \mathrm B_r ( \Omega )_{ nc } $. Then there exists a nc uniformly open subdomain $ \Omega_0 \subset \Omega $, a nc reproducing kernel Hilbert space $ \CH ( K ) \subset \CN\CC\CO ( \Omega_0^*, \CL ( \C, \C^r )_{ nc } ) $ with a cp nc kernel $ K : \Omega_0^* \times \Omega_0^* \rightarrow \CL ( \C_{nc }, \C^r_{ nc } ) $ and a unitary operator $ U : \CH \rightarrow \CH ( K ) $ such that 
\be \label{model operator}
U \circ T_j = M_{ Z_j }^* \circ U, ~~~ j = 1, \hdots, d
\ee
where $ \Omega_0^* = \{ W : W^* \in \Omega_0 \} $ and $ M_{ Z_1 }, \hdots M_{ Z_d } $ are the left multiplication operators by the nc coordinate functions on $ \CH ( K ) $.
\end{thm}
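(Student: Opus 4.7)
The construction in the paragraph preceding the statement already gives us the candidate model space, so the plan amounts to justifying that this candidate has all the required properties. My approach is to first extract a local nc frame for $\ker D_{\boldsymbol{T} - W}$ via Proposition \ref{existence of local nc frame} on a uniformly open neighbourhood $\Omega_0 \subset \Omega$ of some semi-simple point $Y$ (such a point exists because $\Omega$ contains, for example, scalar points $\lambda \operatorname{Id}$), giving uniformly analytic nc functions $\gamma_1, \dots, \gamma_r : \Omega_0 \to (\CH_r)_{nc}$ whose values form a free left $\C^{m \times m}$-module basis of $\ker D_{\boldsymbol{T} - W}$ on $\Omega_0$. With this in hand, I define $U_\Gamma : \CH \to \CN\CC\CO(\Omega_0^*, (\C^r)_{nc})$ by formula \eqref{unitary map for model}.

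The first substantive step is to verify that $U_\Gamma$ is injective. If $U_\Gamma(h) = 0$, then $\Lambda_m(W)^*(h) = 0$ for every $m \in \N$ and $W \in (\Omega_0^*)_m$; unpacking the definition of $\Lambda_m(W)$, this forces $h$ to be orthogonal to $v \gamma_j(W^*) y$ for every $1 \le j \le r$, $v \in \C^{1 \times m}$, $y \in \C^{m \times 1}$, and consequently to every entry of every element in $\ker D_{\boldsymbol{T} - W^*}$. By Proposition \ref{local property of CD class} (applied on a small nc ball around $Y$) and the density axiom (ii) of Definition \ref{Definition of nc CD class}, this implies $h = 0$. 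I then transfer the Hilbert structure from $\CH$ to the range $\CH(K) := U_\Gamma(\CH)$, rendering $U_\Gamma$ unitary by construction.

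Next I must identify $\CH(K)$ as a nc reproducing kernel Hilbert space. The natural candidate for the kernel element at $W \in (\Omega_0^*)_m$, $v \in \C^{1 \times m}$, $y = (y_1, \dots, y_r)^\top \in (\C^r)^{m \times 1}$ is
\begin{equation*}
K_{W,v,y} := U_\Gamma\bigl( \Lambda_m(W)(y v)\bigr) = U_\Gamma\Bigl( \sum_{j=1}^r v \gamma_j(W^*) y_j\Bigr).
\end{equation*}
Using the definition of $U_\Gamma$ and the Hilbert-space transfer, the reproducing property $\langle U_\Gamma h, K_{W,v,y}\rangle = \langle (U_\Gamma h)(W)(v^*), y\rangle$ is an immediate bookkeeping identity. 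Because these kernel elements span a dense subspace and the inner products can be written in terms of $\langle \gamma_i(W^*) y_i, \gamma_j(Z^*) y_j'\rangle_\CH$, the associated kernel $K$ is a well-defined graded function respecting intertwinings (inherited from the intertwining behavior of the frame $\gamma_j$) and is completely positive thanks to the Kolmogorov-type factorization visible from its definition; alternatively, since $\CH(K)$ carries a unital $*$-representation of $\CA = \C$ trivially and the kernel elements satisfy conditions (a)--(c) of Theorem \ref{nc rkhs}, that theorem produces $K$ and certifies its complete positivity.

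Finally, the intertwining $U_\Gamma \circ T_i = M_{Z_i}^* \circ U_\Gamma$ is the core computation and the step I expect to require the most care. For $h \in \CH$, $W \in (\Omega_0^*)_m$, $v \in \C^{1 \times m}$, $y_j \in \C^{m \times 1}$, I evaluate
\begin{equation*}
\bigl\langle T_i h,\, v \gamma_j(W^*) y_j\bigr\rangle_\CH = \bigl\langle h,\, T_i^*(v \gamma_j(W^*) y_j)\bigr\rangle_\CH,
\end{equation*}
and use that $\gamma_j(W^*) \in \ker D_{\boldsymbol{T} - W^*}$, i.e. $(T_i \otimes \mathrm{Id})\gamma_j(W^*) = \gamma_j(W^*) W_i^*$, to transfer the action of $T_i$ to multiplication by $W_i$ on the nc-function side. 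Comparing with the action of $M_{Z_i}$ on kernel elements (which is dual to the formula \eqref{adjoint of mult op on ker element}) then yields $U_\Gamma T_i = M_{Z_i}^* U_\Gamma$. A last small point is to check that $M_{Z_i}$ is indeed a bounded operator on $\CH(K)$, which follows automatically from $T_i \in \CB(\CH)$ and the unitarity of $U_\Gamma$. The hardest bookkeeping is keeping track of the transpose/adjoint conventions in the matrix sesquilinear form so that both the kernel formula and the intertwining align consistently.
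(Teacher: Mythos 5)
Your overall plan matches the paper's proof step for step: obtain a local nc frame $\gamma_1,\dots,\gamma_r$ on $\Omega_0$ via Proposition \ref{existence of local nc frame}, build $U_\Gamma$ from it by formula \eqref{unitary map for model}, transport the inner product to the range to get $\CH(K)$, identify the kernel elements as $K_{W,v,y}=U_\Gamma(\Lambda_m(W)(yv))$, certify complete positivity via the explicit Kolmogorov form $K(Z,W)(P)=\Gamma(Z^*)^*(P)\Gamma(W^*)$ (or Theorem \ref{nc rkhs}), and use the eigenvalue relation for the frame to intertwine. Your injectivity argument is also the paper's, with the helpful addition of making explicit the appeal to Proposition \ref{local property of CD class} (the paper invokes axiom (ii) directly, leaving the localization step implicit).

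The one place where the sketch as written would not go through is the intertwining computation. You pass from $\langle T_i h,\ v\gamma_j(W^*)y_j\rangle_\CH$ to $\langle h,\ T_i^*(v\gamma_j(W^*)y_j)\rangle_\CH$ and then invoke $(T_i\otimes\mathrm{Id})\gamma_j(W^*)=\gamma_j(W^*)W_i^*$. But that relation governs the action of $T_i$, not $T_i^*$, on the frame, so it gives you no control over $T_i^*(v\gamma_j(W^*)y_j)$. The clean fix is to verify the equivalent statement $U_\Gamma\circ T_i^*=M_{Z_i}\circ U_\Gamma$ instead: then one starts from
\begin{equation*}
\langle T_i^*h,\ v\gamma_j(W^*)y_j\rangle_\CH=\langle h,\ T_i(v\gamma_j(W^*)y_j)\rangle_\CH=\langle h,\ v\gamma_j(W^*)W_i^*y_j\rangle_\CH ,
\end{equation*}
which, via the reproducing property, is exactly $\langle M_{Z_i}U_\Gamma h,\ K_{W,v,y}\rangle$. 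The paper does the same thing at the operator level: it first derives $T_j\circ\Lambda_m(W)=\Lambda_m(W)\circ L_{W_j^*}$ from the frame's eigenvalue property, then evaluates $(U_\Gamma(T_j^*h))(W)=\Lambda_m(W)^*(T_j^*h)=(T_j\Lambda_m(W))^*(h)=L_{W_j}\Lambda_m(W)^*(h)=(M_{Z_j}U_\Gamma h)(W)$, and finally passes to adjoints (using unitarity) to obtain $U_\Gamma T_j=M_{Z_j}^*U_\Gamma$.
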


\begin{proof}
Let $ \CH ( K ) : = \CH_{ \Gamma } $ and $ U : = U_{ \Gamma } $ as above. Fix an arbitrary $ 1 \leq j \leq d $ and recall from Definition \ref{Definition of nc CD class} that 
$$ ( T_j \otimes \mathrm{Id}_{ \C^{ m \times m } } ) ( \gamma_i ( W ) ) = \gamma_i ( W ) W_j, $$
for all $ 1 \leq i \leq r $, $ W = ( W_1, \hdots, W_d ) \in (\Omega_0^*)_m $ and $ m \in \N $. Consequently, it follows for any $ v \in \C^{ 1 \times m } $, $ y \in \C^{ m \times 1 } $ and $ W \in ( \Omega_0^* )_m $, that
\Bea
T_j \circ \Lambda_m ( W ) ( ( y_1 v_1, \hdots, y_r v_r ) ) & = & T_j ( v_1 \gamma_1 ( W^* ) y_1 + \cdots + v_r \gamma_r ( W^* ) y_r )\\
& = & v_1 \gamma_1 ( W^* ) W^*_j y_1 + \cdots + v_r \gamma_r ( W^* ) W^*_j y_r\\
& = & \Lambda_m ( W ) ( ( W^*_j y_1 v_1, \hdots, W_j^* y_r v_r )\\
& = & \Lambda_m ( W ) \circ L_{ W^*_j } ( ( y_1 v_1, \hdots, y_r v_r ) ), 
\Eea
where $ L_{ W^*_j } $ is the left multiplication operator on $ ( \C^{ m \times m }, \text{tr} )^{ \oplus r } $ by $ W^*_j $. So it verifies the following identity 
$$ T_j \circ \Lambda_m ( W ) = \Lambda_m ( W ) \circ L_{ W^*_j }, \quad W \in \Omega^*_0. $$
Now observe, for $ h \in \CH $ and $ W \in \Omega^*_0 $, that
\begin{multline*}
( U_{ \Gamma } ( T_j^* ( h ) ) ( W )  =  \Lambda_m ( W )^* ( T_j^* ( h ) )
 =  ( T_j \circ \Lambda_m ( W ) )^* ( h )
 =  ( \Lambda_m ( W ) \circ L_{ W^*_j } )^* ( h )\\
 =  L_{ W_j } \circ \Lambda_m ( W )^* ( h )
 =  W_j U_{ \Gamma } ( h )
 =  ( M_{ Z_j } ( U_{ \Gamma } ( h ) ) ( W )
\end{multline*}
verifying the identity given in Equation \eqref{model operator}. It remains to show that $ \CH ( K ) $ possesses a nc reproducing kernel.

For $ m \in \N $ and $ W \in ( \Omega^*_0 )_m $, consider the evaluation mapping $ \mathsf{E}_W : \CH_{ \Gamma } \rightarrow ( \C^{ m \times m } )^r $ defined by 
$$ \mathsf{E}_W ( U_{ \Gamma } ( h ) ) := U_{ \Gamma } ( h ) ( W ) = \Lambda_m ( W )^* ( h ). $$ 
Since $ \Lambda_m ( W ) $ is bounded and $ U_{ \Gamma } $ is unitary, $ \mathsf{E}_W $ is bounded for $ W \in ( \Omega^*_0 )_m $ and $ m \in \N $. Consequently, the evaluation functionals $ \mathsf{E}_{ W, v, y } : \CH_{ \Gamma } \rightarrow \C $ given by 
\be \label{evaluation functional}
\mathsf{E}_{ W, v, y } ( U_{ \Gamma } ( h ) ) := \langle U_{ \Gamma } ( h ) ( W ) v^*, y \rangle_{ ( \C^r )^m }
\ee
is bounded for every $ v \in \C^{ 1 \times m } $, $ y \in ( \C^r )^{ m \times 1 } $, $ W \in ( \Omega^* )_m $ and $ m \in \N $. Therefore, by Riesz representation theorem, for each $ m \in \N $, $ W \in ( \Omega^*_0 )_m $ and $ v \in \C^m $, $ y \in ( \C^r )^m $, there exists unique vector $ K_{ W, v, y } \in \CH_{ \Gamma } $ such that 
\be \label{reproducing formula}
\langle U_{ \Gamma } ( h ), K_{ W, v, y } \rangle_{ \CH_{ \Gamma } } = \mathsf{E}_{ W, v, y } ( U_{ \Gamma } ( h ) ) = \langle U_{ \Gamma } ( h ) ( W ) v^*, y \rangle_{ ( \C^r )^m }.
\ee
verifying that $ \CH_{ \Gamma } $ is a nc reproducing kernel Hilbert space of uniformly analytic $ \CL ( \C, \C^r )_{nc} $ valued functions on $ \Omega^*_0 $ (see \cite[Theorem 3.3]{Ball-Marx-Vinnikov-nc-rkhs}). Moreover, the reproducing kernel associated to $ \CH_{ \Gamma } $ turns out to be the cp nc reproducing kernel $ K^{ \Gamma } $ (see Theorem \ref{nc rkhs}) defined as follows:
$$ K^{ \Gamma } : \Omega^*_0 \times \Omega^*_0 \rightarrow \CL ( \C_{nc}, ( \C^{ r \times r } )_{nc} ) \quad \text{where, for} ~ n, m \in \N, $$ 
$$ K^{ \Gamma } : ( \Omega^*_0 )_n \times ( \Omega^*_0 )_m \rightarrow \CL ( \C^{ n \times m }, ( \C^{ r \times r } )^{ n \times m } ) \quad \text{is the mapping} $$
\be \label{nc reproducing kernel} K^{ \Gamma } ( Z, W ) ( P ) := \Gamma ( Z^* )^* ( P ) \Gamma ( W^* ), \quad P \in \C^{ n \times m }
\ee
We set $ K = K^{ \Gamma } $. Indeed, for any $ m \in \N $, $ W \in ( \Omega^*_0 )_m $, $ h \in \CH $, $ v \in \C^{ 1 \times m } $ and writing $ y \in ( \C^r )^{ m \times 1 } $ as $ y = ( y_1 \cdots y_r )^{ \top } $ with $ y_1, \hdots, y_r \in \C^{ m \times 1 } $, we have that
\Bea
\langle U_{ \Gamma } ( h ), U_{ \Gamma } ( v \gamma_1 ( W^* ) y_1 + \cdots + v \gamma_r ( W^* ) y_r ) \rangle_{ \CH_{ \Gamma } } & = & \langle h, v \gamma_1 ( W^* ) y_1 + \cdots + v \gamma_r ( W^* ) y_r \rangle_{ \CH }\\
& = & \langle h, \Lambda_m ( W ) ( ( y_1 v, \hdots, y_r v ) ) \rangle_{ \CH }\\
& = & \langle \Lambda_m ( W )^* ( h ), ( y_1 v, \hdots, y_r v )  \rangle_{ ( \C^{ m \times m } )^r }\\
& = & \langle U_{ \Gamma } ( h ) ( W ), ( y_1 v, \hdots, y_r v )  \rangle_{ ( \C^{ m \times m } )^r }\\
& = & \langle U_{ \Gamma } ( h ) ( W ) v^*, y \rangle_{ ( \C^m )^r }\\
& = & \langle U_{ \Gamma } ( h ), K_{ W, v, y } \rangle_{ \CH_{ \Gamma } }
\Eea
verifying that 
$$
U_{ \Gamma } ( v \gamma_1 ( W^* ) y_1 + \cdots + v \gamma_r ( W^* ) y_r ) = U_{ \Gamma } ( v \Gamma ( W^* ) ( y ) ) = K_{ W, v, y }.
$$

Also for any $ m, n \in \N $, $ W \in ( \Omega^*_0 )_m $, $ Z \in ( \Omega^*_0 )_n $, $ u \in \C^{ 1 \times n } $, $ x \in ( \C^r )^{ n \times 1 } $ and $ v \in \C^{ 1 \times m } $, $ y \in ( \C^r )^{ m \times 1 } $, note that
\Bea
\langle K_{ W, v, y }, K_{ Z, u, x } \rangle_{ \CH_{ \Gamma } } & = & \langle U_{ \Gamma } ( v \Gamma ( W^* ) ( y ) ), U_{ \Gamma } ( u \Gamma ( Z^* ) ( x ) )\rangle_{ \CH_{ \Gamma } }\\
& = & \langle v \Gamma ( W^* ) ( y ) , u \Gamma ( Z^* ) ( x ) \rangle_{ \CH }\\
& = & \langle ( u^* v ) \Gamma ( W^* ) ( y ), \Gamma ( Z^* ) ( x ) \rangle_{ \CH^{ \oplus n } }\\
& = & \langle  \Gamma ( Z^* )^*  ( u^* v ) \Gamma ( W^* ) ( y ), x \rangle_{ ( \C^n )^r }\\
& = & \langle K^{ \Gamma } ( Z, W ) ( u^* v ) y, x \rangle_{ ( \C^n )^r }.
\Eea
This completes the proof.
\end{proof}

\begin{rem} \label{no canonical model}
In general, there is no canonical representation of the Hilbert space of nc uniformly analytic functions on which a given tuple $ \boldsymbol{ T } \in \mathrm{ B }_r ( \Omega )_{ nc } $ is modeled as the tuple of adjoint of the left multiplication operators by the nc coordinate functions on it. This is because there is no canonical choice of local frames $ \{ \gamma_1, \hdots, \gamma_r \} $ of the associated vector bundle $ E_{ \boldsymbol{ T } } \overset{ \pi }{ \to } \Omega $. However, two such frames give rise to unitarily equivalent models spaces.

Let $ \{ \gamma_1, \hdots, \gamma_r \} $ and $ \{ \gamma'_1, \hdots, \gamma'_r \} $ be two local frames over a same nc subdomain $ \Omega_0 \subset \Omega $. As in the discussion following Definition \ref{automorphism of hermitian bundle}, we then have there exists a uniformly analytic nc mapping $ \Psi : \Omega_0 \to \coprod_{ n = 1 }^{ \infty } $ such that 
$$ \gamma'_i ( Z ) = \Psi_{ i 1 } \gamma_1 ( Z ) + \cdots + \Psi_{ i r } ( Z ) \gamma_r ( Z ), \quad Z \in \Omega_0, ~ 1 \leq i \leq r , $$
where $ \Psi = ( \! ( \Psi_{ i j } ) \! )_{ i, j = 1 }^r $ is an $ r \times r $ matrix of $ \C_{ nc } $ valued uniformly analytic nc functions on $ \Omega_0 $ so that, for any $ Z \in (\Omega_0)_n $, $ \Psi ( Z ) \in \mathrm{ GL } ( nr, \C ) $. Consequently, the corresponding cp nc kernels --- denoted by $ K $ and $ K' $ --- obtained from the construction above are related in the following manner: For $ n, m \in \N $, $ Z \in ( \Omega_0^* )_n $, $ W \in ( \Omega_0^* )_m $ and $ P \in \C^{ n \times m } $,
$$ K' ( Z, W ) ( P ) = \Psi ( Z^* )^* K ( Z, W ) ( P ) \Psi ( W^* ) . $$
In other words, the associated Hilbert spaces $ \CH ( K ) $ and $ \CH ( K' ) $ are unitarily equivalent via the unitary 
$$ U : = M_{ \Psi^* } : \CH ( K ) \to \CH ( K' ) \quad \text{defined by} \quad f ( Z ) \mapsto \Psi^* ( Z ) f ( Z ), $$
where $ \Psi^* : \Omega_0^* \to \coprod_{ n = 1 }^{ \infty } $ is the uniformly analytic nc function defined as 
$$ Z \mapsto \Psi^* ( Z ) : = \Psi ( Z^* )^* . $$
Thus, the model spaces turn out to be unique up to unitary equivalence. 

In certain special cases, there are canonical choices of the local frame. For instance, the case of the tuple $ \boldsymbol{ R }^* = ( R_1^*, \hdots, R_d^* ) $ of adjoint of the right creation operators on $ \ell^2 ( \CG_d ) $ as described in Example \ref{nc Hardy space}.
\end{rem}

\subsection{Local operators} \label{local operators} The \textit{local operators} associated to an element $ \boldsymbol{ T } = ( T_1, \hdots, T_d ) \in \mathrm B_r ( \Omega )_{ nc} $ are defined as follows. 
First, for each $ \ell \in \N $ and $ W \in \Omega $, we construct a family of subspaces $ \left \{ \CN_{ \ell, W } \right \}_{ \ell = 1 }^{ \infty } $. If $ W \in \Omega_m $ define $ \CN_{ 1, W } $ to be the $ \C^{ m \times m } $ bi-module generated by $ \ker D_{ \boldsymbol{ T } - W } $. Suppose that we have defined $ \CN_{ \ell, W } $ for $ 1 \leq \ell \leq p $. Then $ \CN_{ p + 1, W } $ is defined as $ \C^{ m \times m } $ bi-module generated by 
\be \label{nc generalized eigenspaces} 
( D_{ \boldsymbol{ T } - W } )^{ - 1 } \big( \CN_{ p, W }^{ \oplus d } \big) : = \big\{ h \in \CH^{ m \times m } : D_{ \boldsymbol{ T } - W } ( h ) \in \CN_{ p, W }^{ \oplus d } \big\} 
\ee
where $ \CN_{ p, W }^{ \oplus d } = \CN_{ p, W } \oplus \cdots \oplus \CN_{ p, W } $ ($ d $ times). Note that $ \CN_{ \ell, W } $ is an invariant subspace of $ T_j \otimes \mathrm{Id}_{ \C^{ m \times m } } - \mathrm{Id}_{ \CH } \otimes R_{ W_j } $, $ 1 \leq j \leq d $, for each $ \ell \in \N $ as recorded in the following proposition.

\begin{prop}
The subspaces $ \CN_{ \ell, W } $, $ \ell \in \N $ as defined in \eqref{nc generalized eigenspaces} are invariant under the action of the operators $ T_j \otimes \mathrm{Id}_{ \C^{ m \times m } } - \mathrm{Id}_{ \CH } \otimes R_{ W_j } $, $ 1 \leq j \leq d $, for any $ \boldsymbol{ T } \in \mathrm B_r ( \Omega )_{ nc } $, $ W \in \Omega_m $ and $ m \in \N $.
\end{prop}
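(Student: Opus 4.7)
The plan is to argue by induction on $\ell$, with the whole argument resting on one short commutator identity that controls how the operators $D_{\boldsymbol{T}-W,j} := T_j\otimes \mathrm{Id}_{\C^{m\times m}}-\mathrm{Id}_{\CH}\otimes R_{W_j}$ interact with left and right $\C^{m\times m}$ multiplication on $\CH^{m\times m}$. Namely, since $T_j\otimes\mathrm{Id}$ is a $\C^{m\times m}$-bi-module homomorphism and $\mathrm{Id}\otimes R_{W_j}$ is just right multiplication by $W_j$, a direct computation yields
\begin{equation} \label{key commutator identity}
D_{\boldsymbol{T}-W,j}(\SA h \SB)=\SA\, D_{\boldsymbol{T}-W,j}(h)\,\SB+\SA h[W_j,\SB],
\end{equation}
for every $\SA,\SB\in\C^{m\times m}$, $h\in\CH^{m\times m}$ and $1\leq j\leq d$. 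Both terms on the right hand side of \eqref{key commutator identity} are manifestly built from the bi-module operations, which is precisely the feature that will propagate invariance under $D_{\boldsymbol{T}-W,j}$ from one layer of the filtration to the next.

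For the base case $\ell=1$, I would take a generator $h\in\ker D_{\boldsymbol{T}-W}$ and an arbitrary bi-module element $\SA h\SB\in\CN_{1,W}$. Applying \eqref{key commutator identity} gives $D_{\boldsymbol{T}-W,j}(\SA h\SB)=\SA h[W_j,\SB]$, which is again a right action of an element of $\C^{m\times m}$ on $\SA h\in\CN_{1,W}$, and hence lies in $\CN_{1,W}$. Extending by linearity covers all of $\CN_{1,W}$.

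For the inductive step, suppose the invariance is established for $\CN_{p,W}$; I first note the nested inclusion $\CN_{p,W}\subseteq\CN_{p+1,W}$, which is automatic from the inductive hypothesis together with the definition \eqref{nc generalized eigenspaces}, since invariance of $\CN_{p,W}$ under each $D_{\boldsymbol{T}-W,j}$ says exactly that $\CN_{p,W}\subseteq (D_{\boldsymbol{T}-W})^{-1}\big(\CN_{p,W}^{\oplus d}\big)$. Now take a generator $h\in(D_{\boldsymbol{T}-W})^{-1}\big(\CN_{p,W}^{\oplus d}\big)$, so that $k_j:=D_{\boldsymbol{T}-W,j}(h)\in\CN_{p,W}$ for every $j$, and an arbitrary bi-module element $\SA h\SB\in\CN_{p+1,W}$. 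Applying \eqref{key commutator identity} gives
\[
D_{\boldsymbol{T}-W,j}(\SA h\SB)=\SA k_j\SB+\SA h[W_j,\SB].
\]
The first summand $\SA k_j\SB$ sits in $\CN_{p,W}\subseteq\CN_{p+1,W}$ because $\CN_{p,W}$ is a bi-module, and the second summand $\SA h[W_j,\SB]$ is, by definition, a bi-module combination of the generator $h$ of $\CN_{p+1,W}$, hence lies in $\CN_{p+1,W}$. Linearity then handles arbitrary elements of $\CN_{p+1,W}$, completing the induction.

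The only delicate point is bookkeeping: one has to be careful that the recursive definition of $\CN_{p+1,W}$ is phrased as the bi-module generated by the preimage $(D_{\boldsymbol{T}-W})^{-1}(\CN_{p,W}^{\oplus d})$, so that \eqref{key commutator identity} keeps producing elements that are manifestly in the correct bi-module. Apart from this, no analytic input is required; the entire argument is an algebraic consequence of \eqref{key commutator identity} together with the bi-module structure of each $\CN_{\ell,W}$.
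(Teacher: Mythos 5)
Your proof is correct, and it takes a genuinely more elementary route than the paper's. The paper proves this proposition by appealing to the model theorem (Theorem \ref{model space}): it transfers $D_{\boldsymbol{T}-W}$ via the unitary $U$ to the adjoint multiplication tuple on the nc reproducing kernel Hilbert space $\CH(K)$ and then cites Corollary \ref{nc generalized eigenspaces for mult op}(ii), whose own proof rests on the difference--differential calculus of the generalized kernel elements (Proposition \ref{properties of generalized kernel elements}). The paper's remark following the proposition notes a second, "direct" proof through a Leibniz-type lemma on $\Delta_R^\ell\gamma_j$, but that still requires choosing a local analytic nc frame and differentiating it. By contrast, you bypass both the model space and the analytic frame entirely: the one-line commutator identity
$$D_{\boldsymbol{T}-W,j}(\SA h \SB)=\SA\,D_{\boldsymbol{T}-W,j}(h)\,\SB+\SA h[W_j,\SB]$$
follows from nothing more than the observation that $T_j\otimes\mathrm{Id}$ is a $\C^{m\times m}$-bi-module map and $\mathrm{Id}\otimes R_{W_j}$ is right multiplication by $W_j$, and the induction is pure bi-module bookkeeping. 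What your approach buys is self-containedness and transparency (the mechanism that makes the filtration invariant is visible at a glance); what the paper's approach buys is that the identification of $\CN_{\ell,W}$ with the kernel-element subspaces $\mathfrak{B}_{\ell-1}$ is established along the way, which the authors use again in Theorem \ref{Thm 6.5}. Both proofs are sound; yours is the shorter path to the statement as such, and could replace the paper's proof if the aim were solely to establish this proposition.

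One small remark on presentation: your base case already implicitly shows that $\ker D_{\boldsymbol{T}-W}$ is a left $\C^{m\times m}$-module (set $\SB=I_m$ in the key identity to see $D_{\boldsymbol{T}-W,j}(\SA h)=\SA D_{\boldsymbol{T}-W,j}(h)$), which the paper obtains independently in Corollary \ref{joint kernel}; flagging this would tie your identity to the rest of the text. Also, at the very last step you write ``$\SA k_j\SB$ sits in $\CN_{p,W}\subseteq\CN_{p+1,W}$ because $\CN_{p,W}$ is a bi-module''---the bi-module property is being used here, not merely the inclusion, so state it first; as written the clause order momentarily suggests you need the inclusion to get $\SA k_j\SB\in\CN_{p,W}$. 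These are stylistic points; the mathematics is right.
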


\begin{proof}
Recall from Theorem \ref{model space} that the operators $ T_j \otimes \mathrm{Id}_{ \C^{ m \times m } } - \mathrm{Id}_{ \CH } \otimes R_{ W_j } $ are simultaneously unitarily equivalent to the operators $ M^*_{ Z_j } \otimes \mathrm{Id}_{ \C^{ m \times m } } - \mathrm{Id}_{ \CH } \otimes R_{ W^*_j } $, $ 1 \leq j \leq d $, respectively, on a nc reproducing kernel Hilbert space $ \CH ( K ) $ with the nc reproducing kernel $ K : \Omega_0^* \times \Omega_0^* \rightarrow \CL ( \C_{ nc }, \C^r_{ nc } ) $ where $ \Omega_0 \subset \Omega $ is a nc subdomain and $ \Omega_0^* = \{ W : W^* \in \Omega_0 \} $. Consequently, the proof follows from part (ii) of Corollary \ref{nc generalized eigenspaces for mult op}.
\end{proof}

\begin{rem}
A direct proof of this proposition without referring to the model space of the given tuple $ \boldsymbol{ T } $ of operators can be obtained with the help of the following lemma.
\end{rem}

\begin{lem}
Let $ \boldsymbol{ T } = ( T_1, \hdots, T_d ) \in \mathrm B_r ( \Omega )_{ nc } $ and $ \gamma_1, \hdots, \gamma_r $ be a local uniformly analytic nc frame of the nc hermitian holomorphic vector bundle $ E_{ \boldsymbol{ T } } \overset{ \pi }{ \rightarrow } \Omega $ associated to $ \boldsymbol{ T } $ over a uniformly open subdomain $ \Omega_0 \subset \Omega $. Then, for every $ m \in \N $, $ W \in ( \Omega_0 )_m $, $ \SA, \SB \in \C^{ m \times m } $, $ X^1, \hdots, X^{ \ell } \in ( \C^d )^{ m \times m } $, $ \xi \in \C^r $ and $ 1 \leq k \leq d $, $ 1 \leq j \leq r $,
\begin{align} \label{action of CD operator on derivative}
& \big( T_k \otimes \mathrm{Id}_{ m \times m } - \mathrm{Id}_{ \CH } \otimes R_{ W_k } \big)  \big( \SA ( _{ 1 } \Delta_R^{ \ell } \gamma_j ( W, \hdots, W ) ( X^1, \hdots, X^{ \ell } ) \SB \big) \\
\nonumber & = ~ \SA ( _1\Delta_R^{ { \ell } - 1 } \gamma_j ( W, \hdots, W ) ( X^1, \hdots, X^{ \ell -1 } ) X^{ \ell }_k \SB  + \SA ( _1 \Delta_R^{ \ell } \gamma_j ( W, \hdots, W ) ( X^1, \hdots, X^{ \ell } )  [ W_k, \SB ] .
\end{align}
\end{lem}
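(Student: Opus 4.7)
The plan is to exploit the defining property of the local frame---namely that $ \gamma_j ( Y ) \in \ker D_{ \boldsymbol{ T } - Y } $ for every $ Y \in \Omega_0 $, or equivalently $ ( T_k \otimes \mathrm{ Id } ) ( \gamma_j ( Y ) ) = \gamma_j ( Y ) Y_k $ for every $ 1 \leq k \leq d $---by evaluating it at a block upper bi-diagonal point. Specifically, form
$$ \tilde{ W } := \begin{bmatrix} W & X^1 & & & \\ & W & X^2 & & \\ & & \ddots & \ddots & \\ & & & W & X^{ \ell } \\ & & & & W \end{bmatrix} \in ( \C^d )^{ ( \ell + 1 ) m \times ( \ell + 1 ) m } , $$
which lies in $ ( \Omega_0 )_{ ( \ell + 1 ) m } $ provided the $ X^i $ are scaled sufficiently small; this is where right admissibility and uniform openness of $ \Omega_0 $ enter. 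Since both sides of the target identity are multilinear in $ ( X^1, \hdots, X^{ \ell } ) $, it suffices to treat this small case and then extend by multilinearity.

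By the block triangular evaluation formula \eqref{nc difference-differential operator} for nc functions, $ \gamma_j ( \tilde W ) $ is block upper triangular: its $ ( 1, \ell + 1 ) $ block is $ Y := \Delta_R^{ \ell } \gamma_j ( W, \hdots, W ) ( X^1, \hdots, X^{ \ell } ) $, its $ ( 1, \ell ) $ block is $ \Delta_R^{ \ell - 1 } \gamma_j ( W, \hdots, W ) ( X^1, \hdots, X^{ \ell - 1 } ) $, and its $ ( \ell + 1, \ell + 1 ) $ block is $ \gamma_j ( W ) $. The tuple $ \tilde W_k $ has $ W_k $ on the diagonal and $ X_k^i $ on the superdiagonal, so comparing the $ ( 1, \ell + 1 ) $ block on both sides of
$ ( T_k \otimes \mathrm{ Id }_{ ( \ell + 1 ) m } ) ( \gamma_j ( \tilde W ) ) = \gamma_j ( \tilde W ) \, \tilde W_k $
yields
$$ ( T_k \otimes \mathrm{ Id }_m ) ( Y ) \;=\; \Delta_R^{ \ell - 1 } \gamma_j ( W, \hdots, W ) ( X^1, \hdots, X^{ \ell - 1 } ) \, X_k^\ell \;+\; Y \, W_k , $$
since only the $ ( \ell, \ell + 1 ) $ and $ ( \ell + 1, \ell + 1 ) $ positions in $ \tilde W_k $ contribute to this block.

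The identity with arbitrary $ \SA, \SB \in \C^{ m \times m } $ then follows by pure bookkeeping. Both $ T_k \otimes \mathrm{ Id }_{ m \times m } $ and $ \mathrm{ Id }_\CH \otimes R_{ W_k } $ interact transparently with the left/right $ \C^{ m \times m } $-module actions: $ ( T_k \otimes \mathrm{ Id } ) ( \SA Y \SB ) = \SA ( T_k \otimes \mathrm{ Id } ) ( Y ) \SB $ and $ ( \mathrm{ Id }_\CH \otimes R_{ W_k } ) ( \SA Y \SB ) = \SA Y \SB W_k $. Substituting the previous display and collecting the remainder $ \SA Y W_k \SB - \SA Y \SB W_k = \SA Y [ W_k, \SB ] $ produces exactly the claimed formula.

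The only step requiring any real care is guaranteeing that $ \tilde W \in \Omega_0 $ for the rescaled $ X^i $, so that $ \gamma_j $ may be evaluated there; this is precisely the situation for which right admissibility was introduced in Subsection \ref{NC domains}. Once that is in place the rest is a one-line comparison of block entries inside the frame equation---no Taylor--Taylor series and no inner-product computation is needed, which makes this argument noticeably cleaner than the parallel calculation for generalized kernel elements in part (ii) of Proposition \ref{properties of generalized kernel elements}.
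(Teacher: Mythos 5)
Your proposal is correct and takes essentially the same approach as the paper: both start from the frame identity $ ( T_k \otimes \mathrm{Id}_{ \C^{ m \times m } } ) ( \gamma_j ( W ) ) = \gamma_j ( W ) W_k $ and derive the lemma via the nc difference-differential calculus --- the paper phrases this as ``differentiating repeatedly using the nc Leibniz rule,'' which is exactly the same computation as your one-shot bi-diagonal evaluation and comparison of the $ ( 1, \ell + 1 ) $ block, since the bi-diagonal evaluation formula is the definition of $ \Delta_R^{ \ell } $. Your version is more explicit than the paper's one-line sketch, in particular in handling the module-action bookkeeping for $ \SA, \SB $ and in flagging the rescaling-plus-multilinearity step needed to justify $ \tilde W \in \Omega_0 $.
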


\begin{proof}
Since $ \boldsymbol{ T } \in \mathrm B_r ( \Omega )_{ nc } $ and $ \gamma_1, \hdots, \gamma_r $ is a uniformly analytic nc frame for the associated nc vector bundle $ E_{ \boldsymbol{ T } } \overset{ \pi }{ \rightarrow } \Omega_0 $, it follows for $ m \in \N $, $ W \in ( \Omega_0 )_m $, $ 1 \leq j \leq r $ and $ 1 \leq k \leq d $, that
$$ T_k \otimes \mathrm{Id}_{ \C^{ m \times m } } ( \gamma_j ( W ) ) = \gamma_j ( W ) R_k . $$
Then the desired identity is obtained by differentiating the equation above repeatedly using the nc Leibnitz rule for nc different-differentials.
\end{proof}

We now define the \textit{local operator} $ N^{ ( \ell ) }_{ \boldsymbol{ T }, W } $ associated to $ \boldsymbol{ T } $ at $ W \in \Omega_m $ of order $ \ell $ as 
\be \label{local operator}
N^{ ( \ell ) }_{ \boldsymbol{ T }, W } : = D_{ \boldsymbol{ T } - W } \left \vert_{ \CN_{ \ell + 1, W } } \right. : \CN_{ \ell + 1, W } \rightarrow \underbrace{ \CN_{ \ell + 1, W } \oplus \cdots \oplus \CN_{ \ell + 1, W } }_{ d-\text{times}} . 
\ee

\begin{thm} \label{Thm 6.5}
Let $ \Omega \subset ( \C^d )_{ nc } $ be a nc domain and $ \boldsymbol{ T }, \widetilde{ \boldsymbol{ T } } \in \mathrm B_r ( \Omega )_{ nc } $. Assume that there is a unitary operator $ U : \CH \rightarrow \CH $ satisfying $ ( U \otimes \mathrm{Id}_{ \C^{ m \times m } } ) ( \CN_{ \ell, W } ) = \CN_{ \ell, W } $, $ \ell \in \N $ and
$$ ( U \otimes \mathrm{Id}_{ \C^{ m \times m } } ) \left\vert_{ \CN_{ \ell + 1, W } } \right. N^{ ( \ell ) }_{ \widetilde{ \boldsymbol{ T } }, W } = N^{ ( \ell ) }_{ \boldsymbol{ T }, W } ( U \otimes \mathrm{Id}_{ \C^{ m \times m } } )  \left\vert_{ \CN_{ \ell + 1, W } } \right., $$
for some $ W \in \Omega_m $. Then $ U $ intertwines $ \boldsymbol{ T } $ and $ \widetilde{ \boldsymbol{ T } } $.
\end{thm}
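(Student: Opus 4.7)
The plan is to reduce the hypothesized intertwining of the local operators of all orders at the single point $W$ to a global intertwining, by combining the Taylor--Taylor expansion of a local uniformly analytic frame of the associated nc hermitian holomorphic vector bundle around $W$ with the density property built into the definition of $\mathrm{B}_r(\Omega)_{nc}$. Throughout, I denote by $\CN_{\ell,W}$ (resp.\ $\widetilde{\CN}_{\ell,W}$) the $\ell$-th nc generalized eigenspace associated with $\boldsymbol{T}$ (resp.\ $\widetilde{\boldsymbol{T}}$), so that $(U \otimes \mathrm{Id}_{\C^{m \times m}})(\widetilde{\CN}_{\ell,W}) = \CN_{\ell,W}$ by assumption.

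First, I would rewrite the hypothesis cleanly. Since $(U \otimes \mathrm{Id}_{\C^{m \times m}})(\mathrm{Id}_{\CH} \otimes R_{W_j}) = (\mathrm{Id}_{\CH} \otimes R_{W_j})(U \otimes \mathrm{Id}_{\C^{m \times m}})$ trivially, the intertwining of $N^{(\ell)}_{\widetilde{\boldsymbol{T}},W}$ and $N^{(\ell)}_{\boldsymbol{T},W}$ is equivalent to
$$
(U \otimes \mathrm{Id}_{\C^{m \times m}})(\widetilde{T}_j \otimes \mathrm{Id}_{\C^{m \times m}}) \;=\; (T_j \otimes \mathrm{Id}_{\C^{m \times m}})(U \otimes \mathrm{Id}_{\C^{m \times m}})
$$
on $\widetilde{\CN}_{\ell+1,W}$, for every $1 \le j \le d$ and $\ell \in \N$. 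Reading this entrywise, for every matricial entry $h_{ij}$ of any $H = (\!(h_{ij})\!) \in \widetilde{\CN}_{\ell+1,W}$ one obtains the scalar identity $U\widetilde{T}_j h_{ij} = T_j U h_{ij}$.

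The heart of the argument is then to show that the closed linear span $\CS$ in $\CH$ of all such matricial entries, as $H$ ranges over $\bigcup_{\ell \ge 1} \widetilde{\CN}_{\ell,W}$, equals $\CH$. Invoking Proposition \ref{existence of local nc frame}, I choose a local uniformly analytic frame $\widetilde{\gamma}_1, \hdots, \widetilde{\gamma}_r$ of $E_{\widetilde{\boldsymbol{T}}} \overset{\pi}{\rightarrow} \Omega$ on a uniformly open nc neighbourhood of $W$ (passing first to a semi-simple point if necessary). Using Equation \eqref{action of CD operator on derivative}, a dimension count modeled on Corollary \ref{nc generalized eigenspaces for mult op}(i) --- transported via the model Theorem \ref{model space} to the setting where that corollary applies verbatim --- identifies $\widetilde{\CN}_{\ell,W}$ with the $\C^{m\times m}$-bi-module generated by the nc derivatives $\{\,_1\Delta_R^j \widetilde{\gamma}_t(W,\hdots,W)(X^1,\hdots,X^j) : 0 \le j \le \ell-1,\ 1 \le t \le r,\ X^i \in (\C^d)^{m \times m}\}$. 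The Taylor--Taylor expansion \eqref{TT series of nc function} of each $\widetilde{\gamma}_t$ around $W$ then places the matricial entries of $\widetilde{\gamma}_t(X)$, for $X$ in a sufficiently small nc ball $B_{nc}(W,\rho)$, inside $\CS$. Since the matricial entries of $\bigcup_{X \in B_{nc}(W,\rho)} \ker D_{\widetilde{\boldsymbol{T}}-X}$ are generated over $\C^{m\times m}$ by the values of this frame, and by Proposition \ref{local property of CD class} together with the density axiom (ii) of $\mathrm{B}_r(\Omega)_{nc}$ their closed linear span is all of $\CH$, I conclude $\overline{\CS} = \CH$.

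Combining the entrywise identity $U\widetilde{T}_j h = T_j U h$ valid on the dense set $\CS$ with the boundedness of $U$, $T_j$, and $\widetilde{T}_j$ then yields $U\widetilde{T}_j = T_j U$ on $\CH$ for every $1 \le j \le d$, which is the desired intertwining. The principal technical obstacle is the identification of $\widetilde{\CN}_{\ell,W}$ with the bi-module generated by nc derivatives of a local frame: this mirrors Corollary \ref{nc generalized eigenspaces for mult op}(i) but for a generic nc Cowen--Douglas tuple rather than for the multiplication tuple on a nc reproducing kernel Hilbert space, and I expect to handle it by transferring the corollary to the general setting through the unitary model constructed in Theorem \ref{model space}.
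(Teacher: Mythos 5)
Your proof has the same overall skeleton as the paper's: you reduce the hypothesis (intertwining of local operators of all orders at $W$) to an entrywise intertwining of $\widetilde{T}_j$ and $T_j$ on the matricial entries of the generalized eigenspaces $\widetilde{\CN}_{\ell,W}$, and then argue that those entries span a dense subspace of $\CH$. Where you differ is in how you establish that density. The paper is terse here: it simply cites the argument of Corollary~\ref{nc generalized eigenspaces for mult op}(iii), which (after the implicit transfer to the model space $\CH(K)$ via Theorem~\ref{model space}) proceeds by orthogonality --- if $f\perp\mathfrak{B}_{\ell}$ for all $\ell$, then $f$ together with all its nc difference-differentials vanishes at $W$ by the reproducing formula~\eqref{reproducing of derivatives}, whence $f\equiv 0$ near $W$ and therefore $f=0$. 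You instead go the constructive route: use a local frame $\widetilde{\gamma}_1,\ldots,\widetilde{\gamma}_r$ from Proposition~\ref{existence of local nc frame}, identify $\widetilde{\CN}_{\ell,W}$ with the bi-module generated by the $j$-th order nc derivatives of the frame at $W$ (à la Corollary~\ref{nc generalized eigenspaces for mult op}(i)), expand $\widetilde{\gamma}_t(X)$ in its Taylor--Taylor series to place its entries in $\overline{\CS}$, and then invoke the local Cowen--Douglas property (Proposition~\ref{local property of CD class}) together with density axiom~(ii). The two routes arrive at the same conclusion, and both are legitimate in spirit.

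One caveat worth flagging: your density argument explicitly relies on Proposition~\ref{local property of CD class}, which requires $\Omega$ to be bi-full and the center of the ball to be a semi-simple point; neither hypothesis appears in the statement of the theorem. You hedge (``passing first to a semi-simple point if necessary''), but if $W$ is not semi-simple this actually changes the game, because the frame and its Taylor--Taylor expansion would then be anchored at a different point $Y$, whereas the subspaces $\widetilde{\CN}_{\ell,W}$ are anchored at $W$; the identification would need to be reworked. The paper's orthogonality/reproducing-kernel route, by contrast, sidesteps Proposition~\ref{local property of CD class} altogether and only needs the model Theorem~\ref{model space} (which itself requires a semi-simple point in $\Omega$, but not in the small ball around $W$). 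So while your argument is morally right, the more economical path is the one the paper takes --- transfer to $\CH(K)$ and apply the orthogonality argument there, which needs fewer auxiliary hypotheses on $\Omega$ and $W$.
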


\begin{proof}
Since $ U \otimes I_{ \C^{ m \times m } } $ intertwines $ N_{ \boldsymbol{ T }, W }^{ ( \ell ) } $ and $ N_{ \widetilde{ \boldsymbol{ T } }, W }^{ ( \ell ) } $ for all $ \ell \in \N $, it follows that $ U $ intertwines $ \boldsymbol{ T } $ and $ \widetilde{ \boldsymbol{ T } } $ on the closed subspace 
$$ \overline{ \bigvee \{ h_{ i j } : ( \! ( h_{ i j } ) \! )_{ i, j = 1 }^m \in \CN_{ \boldsymbol{ T }, W }^{ ( \ell ) }, \ell \in \N \} }, $$ 
which can be shown to be equal to $ \CH $ as in the Corollary \ref{nc generalized eigenspaces for mult op}. 
\end{proof}

\begin{rem}
It is worth noting that the ideal hypothesis for Theorem \ref{Thm 6.5} would be the existence of a family of unitary operators $ U_{ \ell } : \CN_{ \ell, W } \to \CN_{ \ell, W } $ intertwining the local operators $ N^{ ( \ell ) }_{ \boldsymbol{ T }, W } $ and $ N^{ ( \ell ) }_{ \widetilde{ \boldsymbol{ T } }, W } $, for $ \ell \in \N $. However, it is not clear, in that case, if a global unitary $ U : \CH \to \CH $ as given in the theorem above can be manufactured from the given family of unitary operators. 
\end{rem}

\section{A noncommutative Gleason Problem} 

In this section, we present a noncommutative analogue of the classical Gleason problem: For a bounded domain $ D \subset \C^n $ and a Banach algebra $ \CA ( D ) $, the problem is to study the existence of functions $ f_1, \hdots, f_n \in \CA ( D ) $ such that
$$ f ( z ) = \sum_{ j = 1 }^n ( z_j - p_j ) f_j ( z ), ~~~ \text{for all} ~~~ z \in D, $$
whenever $ f \in \CA ( D ) $ with $ f ( p ) = 0 $ for some $ p  = ( p_1, \hdots, p_n ) \in D $. However, Gleason problem can be studied from different points of view. For instance, given a reproducing kernel Hilbert space $ \CH $ of holomorphic functions on $ D $ so that the multiplication operators $ M_{ z_1 }, \hdots, M_{ z_n } $ by the coordinate functions on $ \CH $ are bounded, one studies the existence of $ f_1, \hdots, f_n \in \CH $ satisfying
$$ f ( z ) = \sum_{ j = 1 }^n ( z_j - p_j ) f_j ( z ), $$
on some connected neighbourhood of $ p $ in $ D $, for any function $ f \in \CH $ with $ f ( p ) = 0 $. 

Paraphrasing we can restate this problem as the following identity
$$ \ker ( \mathrm ev_p ) = \mathrm{ ran } \left( D^*_{ M^* - \bar{ p } \mathrm{Id}_{ n \times n } } \right), $$
where $ M^* - \bar{ p } \mathrm{Id}_{ n \times n } = ( M_{z_1}^* - \bar{ p }_1 \mathrm{Id}_{ n \times n }, \hdots, M_{z_n }^* - \bar{ p }_n \mathrm{Id}_{ n \times n }) $ and $ D^*_{ M^* - \bar{ p } \mathrm{Id}_{ n \times n } } : \CH \rightarrow \CH \oplus \cdots \oplus \CH $ is defined by $ h \mapsto ( ( M_{z_1}^* - \bar{ p }_1 \mathrm{Id}_{ n \times n } ) h, \hdots, ( M_{z_n }^* - \bar{ p }_n \mathrm{Id}_{ n \times n }) h ) $. In the following subsection, we introduce a noncommutative analogue of the Gleason problem for uniformly analytic nc functions and show that such a problem in the nc category is always solvable uniquely, unlike the classical case. We then make use of this in the later subsection to obtain a characterization of nc reproducing Hilbert spaces of uniformly analytic nc functions on a nc domain in $ \C^d_{ nc } $ so that the adjoint of the $ d $-tuple of left multiplication operators by the nc coordinate functions are in the nc Cowen-Douglas class.

\subsection{Noncommutative Gleason problem} \label{On nc Gleason problem}
Let $ \CR $ be a commutative unital ring, $ \CN $ be a module over $ \CR $ and $ \Omega $ be a nc set in $ \CR^d_{ nc } $. Consider a bimodule $ \M $ of nc functions $ f : \Omega \to \mathrm{ Hom } ( \CN, \CR )_{ nc } $ over the free algebra $ \CR \langle z_1, \hdots, z_d \rangle $ and observe, for $ s \in \N $, that the nc space $ ( \mathrm{ Hom } ( \CN, \CR )^{ s \times s } )_{ nc } $ can be identified with $ \mathrm{ Hom } ( \CN, \CR )_{ nc } \otimes \CR^{ s \times s } $, and therefore, any nc function $ F : \Omega \to ( \mathrm{ Hom } ( \CN, \CR )^{ s \times s } )_{ nc } $ can be viewed as 
$$ F = \sum_{ j = 1 }^n f^j \otimes A^j $$
for some nc functions $ f^1, \hdots, f^n : \Omega \rightarrow \mathrm{ Hom } ( \CN, \CR )_{ nc } $ and $ A^1, \hdots, A^n \in \CR^{ s \times s } $. For $ s \in \N $ and $ W \in \Omega_s $, we define an evaluation mapping $ \mathrm{ev}_W : \M \otimes \CR^{ s \times s } \to \mathrm{ Hom } ( \CN, \CR )^{ s \times s } $ by
\be \label{evaluation mapping} 
\mathrm{ev}_W ( F ) : = \sum_{ j = 1 }^n A^j f^j ( W ),
\ee
where $ F = \sum_{ j = 1 }^n f^j \otimes A^j $. Observe that the kernel $ \M_W $ of $ \mathrm{ev}_W $ turns out to be a right $ \CR \langle z_1, \hdots, z_d \rangle $ - left $ \CR^{ s \times s } $ submodule of $ \M $. Then we ask the following question.\\

\noindent\emph{Given a nc set $ \Omega \subset \CR^d_{nc} $, a module $ \CN $ over $ \CR $ and a bimodule of nc functions on $ \Omega $ taking values in $ \CN_{ nc } $, for a point $ W \in \Omega_s $, if the right $ \CR \langle z_1, \hdots, z_d \rangle $ - left $ \CR^{ s \times s } $ submodule $$ \M_W : = \{ F \in \M \otimes \CR^{ s \times s } : \mathrm{ev}_W ( F ) = 0 \} $$ coincides with the direct sum of the submodules $$ ( L_{ Z_j } \otimes \mathrm{Id}_{ \CR^{ s \times s } } - \mathrm{Id}_{ \M } \otimes R_{ W_j } ) ( \M \otimes \CR^{ s \times s } ), ~ j= 1, \hdots, d, $$ where $ L_{ Z_j } $ and $ R_{ W_j } $ are the left and right multiplication by $ Z_j $ and $ W_j $, $ j = 1, \hdots, d $, respectively. }\\

For our purpose in this article, we study the nc Gleason problem in the case of the $ \C \langle z_1, \hdots, z_d \rangle $-bimodule $ \M $ of uniformly analytic nc functions on a nc domain $ \Omega \subset \C^d_{ nc } $ taking values in $ \CL ( \CV, \C )_{ nc } $, and prove that the nc Gleason problem is always solvable uniquely, unlike the classical case. However, the same proof can be adapted to solve the nc Gleason problem in full generality as introduced, provided the spaces of square matrices $ \CR^{ n \times n } $ over $ \CR $, $ n \in \N $, are equipped with a ``Hilbert-Schmidt type" pairing, that is, a function $ \langle \cdot, \cdot \rangle : \CR^{ n \times n } \times \CR^{ n \times n } \to \CR $ defined by $$ \langle A, B \rangle = \mathrm{trace} ( A B^{ \top } ) $$ with the property that $ \langle A, X \rangle = 0 $ for all $ X \in \CR^{ n \times n } $ if and only if $ A = 0 $.

Let $ \Omega $ be a nc domain in $ \C^d_{nc} $, a normed linear space $ \CV $ and $ F : \Omega \rightarrow ( \CL ( \CV, \C )^{ s \times s } )_{nc} $ be a uniformly analytic nc function for some $ s \in \N $. Since the nc space $ ( \CL ( \CV, \C )^{ s \times s } )_{ nc } $ can be identified with $ \CL ( \CV, \C )_{ nc } \otimes  \C^{ s \times s }$, we can also view $ F : \Omega \rightarrow ( \CL ( \CV, \C )^{ s \times s } )_{ nc } $ as 
$$ F = \sum_{ j = 1 }^n f^j \otimes \SA^j $$
for some uniformly analytic nc functions $ f^1, \hdots, f^n : \Omega \rightarrow \CL ( \CV, \C )_{ nc } $ and $ \SA^1, \hdots, \SA^n \in \C^{ s \times s } $. Then we say $ F $ \emph{vanishes} at $ W \in \Omega_s $ if $$ \mathrm{ev}_W ( F ) = 0 .$$

On the other hand, viewing $ F ( W ) $ as the linear mapping $ F ( W ) : \CV^{ s \times s } \rightarrow \C^{ s \times s } $ given by 
$$ X \mapsto F ( W ) ( X ) : = \sum_{ j = 1 }^n f^j ( W ) X \SA^j , $$ 
we note that $ \mathrm{ev}_W ( F ) = 0 $ if and only if 
$$ \mathrm{trace} ( F ( W ) ( X ) ) = 0 , \quad \text{for all} ~~ X \in \CV^{ s \times s }. $$
Indeed, it follows from the observation
$$ \mathrm{trace} ( F ( W ) ( X ) ) = \sum_{ j = 1 }^n\mathrm{trace} ( f^j ( W ) X \SA^j ) = \sum_{ j = 1 }^n\mathrm{trace} (  \SA^j f^j ( W ) X ) . $$
Thus, solving the \textit{nc Gleason problem} in the case when $ \CR = \C $ and $ \CN $ is a normed linear space $ \CV $ amounts to solving the following problem:\\

\noindent \textit{Given a nc domain $ \Omega \subset \C^d_{ nc } $, a uniformly analytic nc function $ F : \Omega \rightarrow ( \CL ( \CV, \C )^{ s \times s } )_{ nc } $ for some $ s \in \N $ and a point $ W \in \Omega_s $, assume that $ F $ vanishes at $ W $ as described above. Then the problem of interest is to ask if $ F $ can be factorized as 
\be \label{Gleason solution} 
F( Z ) = \sum_{ i = 1 }^d ( L_{ Z_i} \otimes I_{ s } - I_{ms} \otimes R_{ W_i } ) F_i ( Z ),~~~Z \in \Omega_{ ms }, m \in \N,
\ee
for some uniformly analytic nc functions $ F_1, \hdots, F_d $ on $ \Omega $ taking values in the nc space over $ ( \CL ( \CV, \C )^{ s \times s } ) $. If such factorization holds, which conditions on $ F $ make $ F_1, \hdots, F_d $ to be unique?}\\

We first solve this problem algebraically without involving any topology on the spaces, and therefore, the functions in this set-up are only nc functions; no assumptions on the analyticity of corresponding functions are imposed here.

\begin{thm} \label{uniqueness of Gleason solution}
Let $ \Omega \subset \C^d_{ nc } $ be a right admissible nc set, $ F : \Omega \rightarrow ( \CL ( \CV, \C )^{ s \times s } )_{ nc } $ be a nc function and $ W \in \Omega_s $ for some $ s \in \N $. If there exist nc functions $ F_1, \hdots, F_d : \Omega \rightarrow ( \CL ( \CV, \C )^{ s \times s } )_{ nc } $ such that 
$$ F( Z ) = \sum_{ i = 1 }^d ( L_{ Z_i} \otimes I_{ s } - I_{m} \otimes R_{ W_i } ) F_i ( Z ),~~~Z \in \Omega_m, m \in \N ,$$
then such nc functions $ F_1, \hdots, F_d $ are unique.
\end{thm}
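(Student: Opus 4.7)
Proof plan:

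By linearity, uniqueness reduces to the homogeneous statement: if nc functions $G_1,\dots,G_d:\Omega\to(\CL(\CV,\C)^{s\times s})_{nc}$ satisfy
\[
\sum_{i=1}^d (L_{Z_i}\otimes I_s - I_m\otimes R_{W_i})\,G_i(Z) = 0, \qquad Z\in\Omega_m,\ m\in\N,
\]
then each $G_i$ is identically zero. My strategy is to probe this homogeneous identity at block upper triangular points, which are supplied by right admissibility, and read off an explicit recursion for each $G_j$.

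Fix $j\in\{1,\dots,d\}$, $Z\in\Omega_m$, and $\tilde X\in\C^{s\times m}$. By right admissibility there is an invertible $\mathsf{r}\in\C$ such that
\[
U:=\begin{bmatrix}W & \mathsf{r}\,\tilde X\varepsilon_j\\ 0 & Z\end{bmatrix}\in\Omega_{s+m},
\]
where $\tilde X\varepsilon_j\in(\C^d)^{s\times m}$ is the $d$-tuple with $\tilde X$ in the $j$-th slot and zeros elsewhere. Apply the homogeneous identity at $U$ and expand each $G_i(U)$ by the block upper triangular formula \eqref{nc difference-differential operator}. The diagonal blocks reproduce the identity at $W$ and at $Z$; the non-trivial content sits in the $(1,2)$-block, which after cancelling the invertible factor $\mathsf{r}$ becomes
\[
(\tilde X\otimes I_s)\,G_j(Z) \;=\; -\sum_{i=1}^d\bigl[(W_i\otimes I_s)\Delta_{R,j}G_i(W,Z)(\tilde X) - \Delta_{R,j}G_i(W,Z)(\tilde X)(I_m\otimes W_i)\bigr].
\]
Letting $\tilde X$ range over the matrix units of $\C^{s\times m}$, the left-hand side extracts arbitrary rows (in the outer $m\times m$ block structure) of $G_j(Z)$, so vanishing of the right-hand side for every $\tilde X$ would give $G_j(Z)=0$.

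To kill the right-hand side I plan to iterate: by repeated application of right admissibility, one constructs $(\ell+1)\times(\ell+1)$ block upper triangular matrices $U_\ell\in\Omega_{\ell s+m}$ with $W$ in the first $\ell$ diagonal entries, $Z$ in the last, and chosen superdiagonal perturbations $\mathsf{r}_k\,\tilde X_k\varepsilon_{j_k}$. Evaluating the homogeneous identity at $U_\ell$ and comparing the extreme off-diagonal block with the expansion of $G_i(U_\ell)$ via the higher-order nc difference-differentials yields a hierarchy of identities expressing each $\Delta^{\ell}_{R}G_i(W,\dots,W,Z)$ through $\Delta^{\ell+1}_R G_i(W,\dots,W,Z)$, modulo terms of the same homogeneous form. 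I would close the recursion by invoking the Taylor--Taylor formula with remainder \eqref{TT formula}: at each stage the remainder is the value of $G_i$ on a block upper triangular matrix above $W$, which reduces again to the same homogeneous equation. Starting from the constraint $\sum_i\bigl[(W_i\otimes I_s)G_i(W)-G_i(W)(I_s\otimes W_i)\bigr]=0$ obtained by setting $Z=W$, and feeding it through the hierarchy, all contributions on the right-hand side should collapse to zero, forcing $G_j(Z)=0$.

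The main obstacle is organizing this iteration cleanly: one must carefully track how the higher-order difference-differentials interact with left multiplication by $W_i\otimes I_s$ and the right action $I_m\otimes W_i$, and verify that the recursive substitution indeed collapses at every stage. This combinatorial bookkeeping is closely parallel to the algebraic identity of Lemma \ref{algebraic lemma}, which relates Taylor--Taylor coefficients under a bilinear equation, and I expect the induction on order $\ell$ there to provide the model for the present argument.
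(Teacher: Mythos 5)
Your opening move is sound and parallels the paper's: by linearity the problem reduces to the homogeneous identity, and evaluating it at the block upper triangular point $\left[\begin{smallmatrix}W & \mathsf{r}\tilde X\varepsilon_j\\ 0 & Z\end{smallmatrix}\right]$ and reading off the $(1,2)$-block is exactly the coefficient comparison the paper performs via the Taylor--Taylor formula; your first displayed identity is the paper's Equation \eqref{general coefficients} with $\ell=0$. Where your plan diverges is in how to dispose of the right-hand side. You propose an ascending recursion: express the commutator-type terms involving $\Delta_{R,j}G_i(W,Z)$ through $\Delta^2_R G_i$, then those through $\Delta^3_R G_i$, and so on, closing the loop via the Taylor--Taylor remainder. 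This recursion replaces an order-$\ell$ unknown by an order-$(\ell+1)$ unknown at every stage and never terminates; without a uniform analyticity hypothesis (which the theorem does not assume --- $F$ is merely an nc function) there is no estimate that makes the remainder vanish or that lets you truncate. "Starting from the constraint at $Z=W$ and feeding it through the hierarchy" does not propagate to general $Z$, since that constraint only concerns the value $G_i(W)$ and not any first-order data.

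The missing ingredient is a single algebraic observation that the paper uses to collapse the right-hand side in one step: the commutator terms $W_i\cdot(\,\cdot\,) - (\,\cdot\,)\cdot W_i$ vanish after pairing with the identity matrix under the Hilbert--Schmidt inner product, since the trace of a commutator is zero. Concretely, the paper applies both sides of \eqref{general coefficients} to $T$ and pairs with $\varepsilon_1\otimes I_s$; the $I_m\otimes(W_i\otimes I_s - I_s\otimes W_i)$ factor on the right then contributes $\operatorname{trace}\big([W_i,\,\cdot\,]\big)=0$. This yields Equation \eqref{eqn1}, which expresses
\[
\big\langle F_{j,\beta}(X_2,\dots,X_{\ell+1})(T),\, X_1^*\big\rangle_{\C^{s\times s}}
\]
directly in terms of $F_{g_j\beta}$, and letting $X_1$ range over $\C^{s\times s}$ determines $F_{j,\beta}$ completely, with no iteration whatsoever. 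You should replace the proposed recursion with this trace-of-commutator step; with it, your block-evaluation framework yields the paper's conclusion immediately.
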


\begin{proof}
We begin by pointing out that $ \Omega = \big( \coprod_{ m = 1 }^{ \infty } \Omega_{ ms } \big)_{\text{d.s.e}} $, and therefore, by \cite[Proposition 9.2 (i) and Proposition A.3 ]{Verbovetskyi-Vinnikov} it is enough to find the required nc functions on $ \coprod_{ m = 1 }^{ \infty } \Omega_{ ms } $.

Assume that there are nc functions $ F_1, \hdots, F_d $ on $ \Omega $ taking values in $ ( \CL ( \CV, \C )^{ s \times s } )_{ nc } $ such that the equation \eqref{Gleason solution} holds on $ \coprod_{ m \geq 1 } \Omega_{ m s } $. Then we show the uniqueness of the nc functions $ F_1, \hdots, F_d $ by comparing their Taylor-Taylor coefficients with those of $ F $ from their respective Taylor-Taylor formula incorporated in Equation \eqref{Gleason solution}.

Let $ \CG_d $ be the free monoid over $ d $ letters and $ \beta \in \CG_d $ with $ | \theta | = \ell $, $ \ell \in \N $. Then we consider the Taylor-Taylor formula for $ F $ and $ F_1, \hdots, F_d $ around the point $ W \in \Omega_s $ up to the order $ \ell + 2 $. In other words, following Theorem \ref{TT formula}, there exist families of multilinear mappings $ \{ F_{ \beta } : ( ( \C^d )^{ s \times s } )^k \rightarrow ( \CL ( \CV, \C )^{ s \times s } )^{ s \times s } : \beta \in \CG_d, ~ | \beta | = k, k \leq \ell + 1 \} $, $ \{ F_{ i, \alpha } : ( ( \C^d )^{ s \times s } )^{ \ell } \rightarrow ( \CL ( \CV, \C )^{ s \times s } )^{ s \times s } : \alpha \in \CG_d, ~ | \alpha | = k, k \leq \ell + 1 \} $, $ i =1, \hdots, d $, each family satisfying the canonical intertwining conditions at $ W $, $ \text{CIC} ( W ) $ (see Definition \ref{canonical intertwining conditions}), such that 
\begin{multline} \label{TT series of F} 
F ( Z ) = \sum_{ k = 0 }^{ \ell + 1 } \sum_{ | \beta | = k } \left ( Z - I_m \otimes W \right )^{ \odot_s \beta } F_{ \beta } \\ + \sum_{ | \beta | = \ell + 2 }  \left ( Z - I_m \otimes W \right )^{ \odot_s \beta } \Delta^{ \beta^{ \top } }_R F ( W, \hdots, W, Z ) , ~~ Z \in \Omega_{ms}; 
\end{multline}
\begin{multline} \label{TT series of F_i} 
F_i ( Z ) = \sum_{ k = 0 }^{ \ell + 1 } \sum_{ | \alpha | = k } \left ( Z - I_m \otimes W \right )^{ \odot_s \alpha } F_{ i, \alpha } \\ + \sum_{ | \alpha | = \ell + 2 }  \left ( Z - I_m \otimes W \right )^{ \odot_s \alpha } \Delta^{ \alpha^{ \top } }_R F_i ( W, \hdots, W, Z ), ~ Z \in \Omega_{ms},~ i = 1, \hdots, d. 
\end{multline}
Also, writing $ Z_i = Z_i - I_m \otimes W_i + I_m \otimes W_i $ for each $ i = 1, \hdots, d $, we have that 
\be \label{variable adjustment}
 Z_i \otimes I_{ s } - I_{ ms } \otimes W_i = Z_i \otimes I_{ s } - I_m \otimes W_i \otimes I_{ s } + I_m \otimes W_i \otimes I_{ s } - I_{ms} \otimes W_i, ~~~ i = 1, \hdots, d. 
 \ee
Substituting $ F ( Z ) $, $ F_i ( Z ) $ and $ Z_i \otimes I_{ s } - I_{ ms } \otimes W_i $ for $ i =1, \hdots, d $ from Equations \eqref{TT series of F}, \eqref{TT series of F_i} and \eqref{variable adjustment}, respectively in Equation \eqref{Gleason solution}, and then comparing the coefficients associated to the word $ \beta $ with $ | \beta | = \ell $ in the resulting equation we have that 
\begin{align} \label{general coefficients}
\nonumber & ( X_1 \odot_s \cdots \odot_s X_{ \ell + 1 } ) F_{ g_j \beta } - ( X_1 \otimes I_{ s } ) ( X_2 \odot_s \cdots \odot_s X_{ \ell + 1 } ) F_{ j, \beta }\\
& =  \sum_{ i = 1 }^d I_m \otimes ( W_i \otimes I_{ s } - I_{ s } \otimes W_i ) ( X_1 \odot_s \cdots \odot_s X_{ \ell + 1 } ) F_{ i, g_j \beta },
\end{align}
for all $ j = 1, \hdots, d $, $ X_1, \hdots, X_{ \ell + 1 } \in \C^{ ms \times ms } $. Here, each of the terms $ ( X_1 \odot_s \cdots \odot_s X_{ \ell + 1 } ) F_{ g_j \beta }, ( X_1 \odot_s \cdots \odot_s X_{ \ell + 1 } ) F_{ i, g_j \beta } $ and $ ( X_2 \odot_s \cdots \odot_s X_{ \ell + 1 } ) F_{ j, \beta } $ belongs to the space $ ( \CL ( \CV, \C )^{ s \times s } )^{ ms \times ms } $, which is isomorphic to the space $ ( ( \CL ( \CV, \C )^{ s \times s } )^{ s \times s } )^{ m \times m } $. Thus each of these elements, being an $ m \times m $ block matrix with entries in $ ( \CL ( \CV, \C )^{ s \times s } )^{ s \times s } $, is viewed as a linear operator from $( \CV^{ s \times s } )^{ m \times 1 } $ into $ ( \C^{ s \times s } )^{ m \times 1 } $. Also, writing $ X_1 = ( \! ( X_1^{ ij } ) \! )_{ i, j = 1 }^{ m } $ with $ X_1^{ ij } \in \C^{ s \times s } $ for $ 1 \leq i, j \leq m $, we interpret the matrices $ X_1 \otimes I_s = ( \! ( X_1^{ ij } \otimes I_s ) \! )_{ i, j = 1 }^{ m } $ and $ I_m \otimes ( W_i \otimes I_{ s } - I_{ s } \otimes W_i ) $ as elements of the space $ ( \C^{ s \times s } )^{ ms \times ms } $, which is isomorphic to $ ( ( \C^{ s \times s } )^{ s \times s } )^{ m \times m } $. As such, these matrices are viewed as $ m \times m $ block matrices with entries in $ ( \C^{ s \times s } )^{ s \times s } $ and act linearly on $( \C^{ s \times s } )^{ m \times 1 } $. Consequently, it follows from the equation above that for any $ X_1, \hdots, X_{ \ell + 1 } \in \C^{ ms \times ms } $, $ T \in ( \CV^{ s \times s } )^{ m \times 1 } $ and $ 1 \leq j \leq d $,
\begin{align} \label{general defining equation} 
\nonumber & \langle ( X_1 \odot_s \cdots \odot_s X_{ \ell + 1 } ) F_{ g_j \beta } ( T ), \varepsilon_1 \otimes I_{ s } \rangle_{ ( \C^{ s \times s } )^{ m \times 1 } } \\
& =  \langle ( X_2 \odot_s \cdots \odot_s X_{ \ell + 1 } ) F_{ j, \beta } ( T ), X_1^* ( \varepsilon_1 \otimes I_{ s } )\rangle_{ ( \C^{ s \times s } )^{ m \times 1 } } 
\end{align}
where $ ( \C^{ s \times s } )^{ m \times 1 } $ is the Hilbert space obtained by taking $ m $-th direct sum of $ \C^{ s \times s } $ equipped with the Hilbert-Schmidt inner product, and $ ( \varepsilon_1 \otimes I_{ s } ) $ is the column vector $ ( I_s, 0, \hdots, 0 )^{ \text{tr} } $ in $ ( \C^{ s \times s } )^{ m \times 1 } $. In particular for $ m = 1 $, we have that
\bea \label{eqn1}
\langle F_{ g_j \beta } ( X_1, \hdots, X_{ \ell + 1 } )  ( T ), I_{ s } \rangle_{ \C^{ s \times s } } & = & \langle F_{ j, \beta } ( X_2 , \hdots, X_{ \ell + 1 } ) ( T ), X_1^* \rangle_{ \C^{ s \times s } } .
\eea
Note that the equation above with $ m = 1 $ defines the Taylor-Taylor coefficients of $ F_i $ for $ i = 1, \hdots, d $ uniquely.
\end{proof}

\begin{rem} \label{an alternative proof of existence}
We point out that Equation \eqref{eqn1} can be used to prove the local existence of uniformly analytic nc functions $ F_1, \hdots, F_d $ satisfying Equation \eqref{Gleason solution} whenever $ F $ is given to be uniformly analytic. 

For $ \beta \in \CG_d $ and $ 1 \leq j \leq d $, define the multilinear mappings $ F_{ j, \beta } : ( ( \C^d )^{ s \times s } )^{ \ell } \rightarrow ( \CL ( \CV, \C )^{ s \times s } )^{ s \times s } $ as in Equation \eqref{eqn1} and consider the formal Taylor-Taylor series $ F_j ( Z ) $ centered at $ W $ obtained from this family of multilinear mappings. Using the canonical intertwining conditions for $ F $ along with Equation \eqref{eqn1}, it can be seen that the family of multilinear mappings $ \{ F_{ j, \beta } : \beta \in \CG_d \} $ satisfy Equation \eqref{general coefficients}. Consequently, $ F_j ( Z ) $, $ 1 \leq j \leq d $, satisfy Equation \eqref{Gleason solution} whenever they define uniformly analytic nc functions. Then another use of the canonical intertwining conditions for $ F $ at $ W $ and Equation \eqref{eqn1} yields that the family of multilinear mappings $ \{ F_{ j, \beta } : \beta \in \CG_d \} $, $ 1 \leq j \leq d $ satisfy the canonical intertwining conditions. Finally, for $ X_2, \hdots, X_{ l + 1 } \in \C^{ ms \times ms } $, $ X \in ( \C^{ s \times s } )^{ m \times 1 } $ with $ \vert \vert X \vert \vert_{ ( \C^{ s \times s } )^{ m \times 1 } } = 1 $ and $ T \in ( \CV^{ s \times s } )^{ m \times 1 } $ with  $ \vert \vert T \vert \vert_{ ( \CV^{ s \times s } )^{ m \times 1 } } = 1 $, take $ X_1 \in \C^{ ms \times ms } $ to be the matrix whose first row is $ X^* $ and all other entries are the $ s \times s $ zero matrix. Then observe from \eqref{general defining equation} that
\begin{align*}
& \vert \vert ( X_2 \odot_s \cdots \odot_s X_{ \ell + 1 } ) F_{ j, \beta } \vert \vert_{ m s^2 } \\
& = \sup_{ \vert \vert T \vert \vert_{ ( \CV^{ s \times s } )^{ m \times 1 } } = \vert \vert X \vert \vert_{ ( \C^{ s \times s } )^{ m \times 1 } } = 1 } \left \vert \left \langle ( X_2 \odot_s \cdots \odot_s X_{ \ell + 1 } ) F_{ j, \beta } ( T ), X \right \rangle_{ ( \C^{ s \times s } )^{ m \times 1 } } \right \vert \\
& = \sup_{ \vert \vert T \vert \vert_{ ( \CV^{ s \times s } )^{ m \times 1 } } = \vert \vert X \vert \vert_{ ( \C^{ s \times s } )^{ m \times 1 } } = 1 } \left \vert \left \langle ( X_2 \odot_s \cdots \odot_s X_{ \ell + 1 } ) F_{ j, \beta } ( T ), X_1^* ( \varepsilon_1 \otimes I_{ s } ) \right \rangle_{ ( \C^{ s \times s } )^{ m \times 1 } } \right \vert \\
& = \sup_{ \vert \vert T \vert \vert_{ ( \CV^{ s \times s } )^{ m \times 1 } } = \vert \vert X \vert \vert_{ ( \C^{ s \times s } )^{ m \times 1 } } = 1 } \left \vert \left \langle ( X_1 \odot_s \cdots \odot_s X_{ \ell + 1 } ) F_{ g_j \beta } ( T ), \varepsilon_1 \otimes I_{ s } \right \rangle_{ ( \C^{ s \times s } )^{ m \times 1 } } \right \vert \\
& \leq \sqrt{ s } \sup_{ \vert \vert X \vert \vert_{ ( \C^{ s \times s } )^{ m \times 1 } } = 1 } \vert \vert F_{ g_j \beta } \vert \vert_{ \CL^{ \ell + 1 }_{ \text{cb} } } \vert \vert X_1 \vert \vert_{ ms } \vert \vert X_2 \vert \vert_{ ms } \cdots \vert \vert X_{ \ell + 1 } \vert \vert_{ ms } \\
& \leq \sqrt{ s } \frac{1}{\delta^{ \ell + 1 } } \vert \vert X_2 \vert \vert_{ ms } \cdots \vert \vert X_{ \ell + 1 } \vert \vert_{ ms } 
\end{align*}
where the first inequality is obtained by using Cauchy-Schwarz inequality on the Hilbert space $ ( \C^{ s \times s } )^{ m \times 1 } $ and the second one holds since $ F $ has Taylor-Taylor series expansion on $ B_{nc} ( W, \delta ) $ and due to the fact that
$$ \vert \vert X_1 \vert \vert_{ ms } \leq \sqrt{ \mathrm{trace} ( X_1 X_1^* ) } = \vert \vert X \vert \vert_{ ( \C^{ s \times s } )^{ m \times 1 } } \leq 1 .$$
Thus we have that 
$$ \limsup_{ \ell \rightarrow \infty } \sqrt[ \ell ]{ \norm { F_j }_{ \CL^{ \ell }_{ \text{cb} } } } \leq \limsup_{ \ell \rightarrow \infty } \sqrt[ \ell ]{ \sqrt{ s } \frac{1}{\delta^{ \ell + 1 } } } < \frac{ \sqrt[ 2 \ell ]{ s } }{ \delta } < + \infty, ~~~ 1 \leq j \leq d $$
verifying that $ F_1, \hdots, F_d $ are uniformly analytic functions on $ B_{ nc } ( W, r ) $ with $ 0 < r < \frac{ \delta }{ 2 \sqrt[ 2 \ell ]{ s } } $.

However, the next theorem shows that $ F_1, \hdots, F_d $ can be constructed explicitly from the given $ F $ and thus, they are actually defined globally on $ \coprod_{ m \geq 1 } \Omega_{ ms } $.
\end{rem}

We now proceed to prove the existence of the nc functions $ F_1, \hdots, F_d $ on $ \Omega $, which take values in $ ( \CL ( \CV, \C )^{ s \times s } )_{ nc } $ and solve the nc Gleason problem.


\begin{thm} \label{constructive solution for nc Gleason problem-algebraic version}
Let $ \Omega \subset \C^d_{ nc } $ be a right admissible nc set, $ F : \Omega \rightarrow ( \CL ( \CV, \C )^{ s \times s } )_{ nc } $ be a nc function and $ W \in \Omega_s $ for some $ s \in \N $. Then $ F $ vanishes at $ W $ if and only if there exists nc functions $ F_1, \hdots, F_d : \Omega \rightarrow ( \CL ( \CV, \C )^{ s \times s } )_{ nc } $ such that 
$$ F( Z ) = \sum_{ j = 1 }^d ( L_{ Z_j} \otimes I_{ s } - I_{ms} \otimes R_{ W_j } ) F_j ( Z ),~~~Z \in \Omega_{ n }, n \in \N . $$
Furthermore, the nc functions $ F_1, \hdots, F_d $ can be determined from the difference-differential quotient of $ F $ by the formulae:
$$ F_j ( Z ) : = \sum_{ i = 1 }^n \sum_{ \ell, k = 1 }^s \big( \varepsilon^*_{ \ell } \otimes I_{ ( \CV^* )^t } \big) \Delta_{ R, j } f^i ( W, Z ) \big( \varepsilon_k \otimes I_{ ( \CV^* )^t } \big) \otimes \SA^i E_{ \ell k } , ~~~ Z \in \Omega_t, $$
where $ E_{ \ell k } $'s are $ s \times s $ elementary matrices, $ F = \sum_{ i = 1 }^n f^i \otimes \SA^i $, $ f^i : \Omega \to \CL ( \CV, \C )_{ nc } $, and $ \SA^i \in \C^{ s \times s } $.
\end{thm}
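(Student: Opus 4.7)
The plan is to handle the two directions separately. For necessity, the argument is short: assuming $F(Z) = \sum_{j=1}^d (L_{Z_j} \otimes I_s - I \otimes R_{W_j}) F_j(Z)$ on $\Omega$, I would plug $Z = W \in \Omega_s$ in and observe that the right-hand side becomes a sum of expressions of the form $W_j F_j(W) - F_j(W) W_j$ (with appropriate tensor factors). Pairing with an arbitrary $X \in \CV^{s \times s}$ inside the trace and using cyclicity of trace makes each summand vanish, giving $\mathrm{ev}_W(F) = 0$.

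For sufficiency, the main tool is the nc finite difference formula \eqref{finite difference formula} applied to each scalar component $f^i$ of the expansion $F = \sum_{i=1}^n f^i \otimes \SA^i$: for any $\SS \in \C^{s \times t}$ and any $Z \in \Omega_t$,
\[
\SS f^i(Z) - f^i(W) \SS = \sum_{j=1}^d \Delta_{R,j} f^i(W, Z)(\SS Z_j - W_j \SS).
\]
Specializing $\SS$ to matrices of the form $(\varepsilon_k \otimes I_t)$ (built from the standard basis row vectors $\varepsilon_k$ of $\C^{1 \times s}$), extracting the $(\ell,k)$-component by left-multiplication with $(\varepsilon_\ell^* \otimes I_t)$, and then tensoring against $\SA^i E_{\ell k}$ and summing over $i, \ell, k$ is designed so as to simultaneously (i) reconstruct $F(Z)$ from the terms of the form $\SS f^i(Z)$, and (ii) assemble the right-hand side $\sum_j (L_{Z_j} \otimes I_s - I \otimes R_{W_j}) F_j(Z)$ from the $\Delta_{R,j} f^i(W, Z)(\SS Z_j - W_j \SS)$ terms, with $F_j(Z)$ precisely as given by the stated formula. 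The role of the vanishing hypothesis is to kill the residual $f^i(W)\SS$ contributions: the specific tensoring with $\SA^i E_{\ell k}$ reorganizes the sum $\sum_{i,\ell,k} f^i(W)\SS \otimes \SA^i E_{\ell k}$ into an expression whose leading factor is $\sum_i \SA^i f^i(W) = \mathrm{ev}_W(F) = 0$.

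To verify that each $F_j$ defined by the explicit formula is in fact a nc function on all of $\Omega$ (graded, respecting direct sums and similarities), I would rely on the corresponding properties of the partial difference-differential operators $\Delta_{R,j} f^i(W, \cdot)$ recorded in Section \ref{nc analyticity}, since the formula is a finite linear combination of such operators tensored with fixed matrices. The main obstacle I foresee is not conceptual but organizational: the careful bookkeeping of the tensor/amplification structure in the expression $(\varepsilon_\ell^* \otimes I_t) \Delta_{R,j} f^i(W, Z)(\varepsilon_k \otimes I_t) \otimes \SA^i E_{\ell k}$, making sure the indices match so that the two sides of the Gleason decomposition agree entry-by-entry after the cancellation of the $f^i(W) \SS$ terms via the hypothesis $\mathrm{ev}_W(F) = 0$. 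This is the point where the algebraic heart of the proof lies and where an incorrect index convention would silently produce a wrong formula.
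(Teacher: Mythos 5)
Your proposal is correct and essentially mirrors the paper's argument: necessity comes from plugging $Z = W$ into the Gleason identity and using cyclicity of the trace, and sufficiency comes from the nc finite difference formula \eqref{finite difference formula} applied to the components $f^i$, with the vanishing hypothesis $\mathrm{ev}_W(F) = 0$ killing the residual $f^i(W)\SS$ contributions exactly as you anticipate. The ``organizational'' issues you correctly flag as the heart of the matter are precisely where the paper spends its effort: the Gleason identity is verified entrywise by pairing both sides of the finite difference formula against arbitrary $\SS^*$ under the Hilbert--Schmidt trace on $\C^{ms\times ms}$ (your $\SS$-specialization amounts to the same thing, but recovers the identity only after exhausting all test vectors, not by direct reconstruction), and the identity is first established on $\coprod_{m\geq 1}\Omega_{ms}$ and then extended to all of $\Omega$ using $\Omega = \big(\coprod_{m\geq 1}\Omega_{ms}\big)_{\text{d.s.e}}$ together with the direct-summand-similarity extension of nc functions.
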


\begin{proof}
We begin by pointing out that $ \Omega = \big( \coprod_{ m = 1 }^{ \infty } \Omega_{ ms } \big)_{\text{d.s.e}} $, and therefore, by \cite[Proposition 9.2 (i) and Proposition A.3 ]{Verbovetskyi-Vinnikov}, it is enough to find the required nc functions on $ \coprod_{ m = 1 }^{ \infty } \Omega_{ ms } $. 

Since the existence of nc functions $ F_1, \hdots, F_d : \coprod_{ m = 1 }^{ \infty } \Omega_{ ms } \rightarrow ( \CL ( \CV, \C )^{ s \times s } )_{ nc } $ satisfying Equation \eqref{Gleason solution} trivially verifies that $ \mathrm{trace} F ( W ) ( X ) = 0 $ for all $ X \in \CV^{ s \times s } $, we only prove the forward direction.

Let $ F = \sum_{ i = 1 }^n f^i \otimes \SA^i $ for some nc functions $ f^1, \hdots, f^n : \Omega \rightarrow \CL ( \CV, \C )_{ nc } $ and $ \SA^1, \hdots, \SA^n \in \C^{ s \times s } $. Then for each $ j = 1, \hdots, d $ and $ Z \in \Omega_t $, $ t \in \N $,
$$ \Delta_{ R, j } F ( W, Z ) = \sum_{ i = 1 }^n  \Delta_{ R, j } f^i ( W, Z ) \otimes \SA^i . $$
Also for each $ i = 1, \hdots, n $ and $ j = 1, \hdots, d $, note that $ \Delta_{ R, j } f^i ( W, . ) $ can be thought of as a $ s \times s $ matrix of nc functions on $ \{ W \} \times \Omega $ taking values in $ \CL ( \CV, \C )_{ nc } $ as follows
$$ \Delta_{ R, j } f^i ( W, Z ) = \sum_{ \ell, k = 1 }^s E_{ \ell k } \otimes f^i_{ \ell k } ( Z ), Z \in \Omega_t, $$
where we view $ \Delta_{ R, j } f^i ( W, Z ) $ as a linear mapping from $ \C^{ s \times t } $ into $ \CL ( \CV, \C )^{ s \times t } $ which acts on a rank one matrix $ u \cdot v $ with $ u \in \C^{ s \times 1 } $ and $ v \in \C^{ 1 \times t } $ as
$$ \Delta_{ R, j } f^i ( W, Z ) ( u \cdot v ) = \sum_{ \ell, k = 1 }^s E_{ \ell k } ( u \cdot v ) f^i_{ \ell k } ( Z ), $$
and $ \{ \varepsilon_1, \hdots, \varepsilon_s \} $ is the standard ordered basis for $ \C^{ s \times 1 } $. Here, we have used the notation 
$$ f^i_{ \ell k } ( Z ) : = \big( \varepsilon^*_{ \ell } \otimes I_{ ( \CV^* )^t } \big) \Delta_{ R, j } f^i ( W, Z ) \big( \varepsilon_k \otimes I_{ ( \CV^* )^t } \big) , \,\,\, Z \in \Omega_t , $$
and $ I_{ ( \CV^* )^t } $ is the identity mapping on $ ( \CV^* )^t $.
For the rest of the proof, we thus write the nc function $  \Delta_{ R, j } f^i ( W, . ) $ as 
$$ \Delta_{ R, j } f^i ( W, . ) = \sum_{ \ell, k = 1 }^s E_{ \ell k } \otimes f^i_{ \ell k } , $$
and consequently, $  \Delta_{ R, j } F ( W, Z ) $, for $ Z \in \Omega $, turns out to be
$$  \Delta_{ R, j } F ( W, Z ) = \sum_{ i = 1 }^n \left[ \sum_{ \ell, k = 1 }^s E_{ \ell k } \otimes f^i_{ \ell k } ( Z ) \right] \otimes \SA^i . $$
We now define the nc functions $ F_1, \hdots, F_d : \coprod_{ m = 1 }^{ \infty } \Omega_{ ms } \rightarrow ( \CL ( \CV, \C )^{ s \times s } )_{ nc } $ by the following formulae:
\be \label{Gleason factors}
F_j ( Z ) : = \sum_{ i = 1 }^n \sum_{ \ell, k = 1 }^s f^i_{ \ell k } ( Z ) \otimes \SA^i E_{ \ell k } ,
\ee
and check that $ F_1, \hdots, F_d $ indeed satisfy Equation \eqref{Gleason solution}. 

It follows from Equation \eqref{finite difference formula} that
\be \label{finite difference formula for F}
( \SS \otimes I_s ) F( Z ) - F( W ) ( \SS \otimes I_s ) = \sum_{ j = 1 }^d \Delta_{ R, j } F ( W, Z ) ( \SS Z_j - W_j \SS ),
\ee
for $m \in \N $, $ Z \in \Omega_{ ms } $ and $ \SS \in \C^{ s \times ms } $. Here, we view $ \SS $ as an $ 1 \times m $ row vector over $ \C^{ s \times s } $ and write 
$ \SS = \left[ \begin{smallmatrix} \SS_1 & \cdots & \SS_m \end{smallmatrix} \right] = \sum_{ p = 1 }^m \epsilon^*_p \otimes \SS_p $ with $ \SS_1, \hdots, \SS_m \in \C^{ s \times s } $ and the standard ordered basis $ \{ \epsilon_1, \hdots, \epsilon_m \} $ for $ \C^m $. 

For $ m \in \N $, $ Z \in \Omega_{ ms } $ and $ T \in \CV^{ ms \times s } $ viewing as $ m \times 1 $ column vector 
$$ T = \begin{bmatrix} T_1 & \cdots & T_m \end{bmatrix}^{ \top } = \sum_{ q = 1 }^m \epsilon_q \otimes T_q $$ 
with $ T_1, \hdots, T_m \in \CV^{ s \times s } $, write $ f^i ( Z ) = \big( \! \big( f^i ( Z )_{ p q } \big) \! \big)_{ p, q = 1 }^m $, $ i = 1, \hdots, n $ and observe that
\begin{align*}
& \bigg\langle [ ( \SS \otimes I_s ) F( Z ) - F( W ) ( \SS \otimes I_s )] ( T ), I_s \bigg\rangle_{ \C^{ s \times s } } \\
& = \bigg\langle \bigg( \begin{bmatrix} \SS_1 & \cdots & \SS_m \end{bmatrix} \sum_{ i = 1 }^n ( \! ( f^i ( Z )_{ p q } ) \! )_{ p, q = 1 }^m \otimes A^i \bigg) ( T ), I_s  \bigg\rangle_{ \C^{ s \times s } } \\
& = \sum_{ i = 1 }^n \big\langle \begin{bmatrix} \sum_{ p = 1 }^m S_p f^i ( Z )_{ p 1 } \otimes A^i & \cdots & \sum_{ p = 1 }^m S_p f^i ( Z )_{ p m } \otimes A^i \end{bmatrix} T, I_s \big\rangle_{ \C^{ s \times s } } \\
& =  \sum_{ i = 1 }^n \bigg\langle \sum_{ q' = 1 }^m \varepsilon^*_{ q' } \otimes \bigg(  \sum_{ p = 1 }^m S_p f^i ( Z )_{ p q' } \otimes A^i \bigg) \bigg( \sum_{ q = 1 }^m \epsilon_q \otimes T_q \bigg), I_s \bigg\rangle_{ \C^{ s \times s } } \\
& =  \sum_{ i = 1 }^n \bigg\langle \sum_{ p, q = 1 }^m S_p f^i ( Z )_{ p q } (T_q A^i ), I_s \bigg\rangle_{ \C^{ s \times s } } \\
& =  \sum_{ p = 1 }^n \bigg\langle \sum_{ i, q = 1 }^m f^i ( Z )_{ p q } (T_q A^i ), \SS_p^* \bigg\rangle_{ \C^{ s \times s } } \\
& =  \bigg\langle  \sum_{ p = 1 }^n \varepsilon_p \otimes \bigg[ \sum_{ i, q = 1 }^m f^i ( Z )_{ p q } (T_q A^i ) \bigg], \sum_{ p' = 1 }^m \varepsilon_{ p' } \otimes \SS_{ p' }^* \bigg\rangle_{ \C^{ ms \times s } } \\
& = \langle F ( Z ) ( T ), \SS^* \rangle_{ \C^{ ms \times s } }
\end{align*}
where $ \SS^* = \sum_{ p = 1 }^m \epsilon_p \otimes \SS^*_p = \left[ \begin{smallmatrix} \SS^*_1 & \cdots & \SS^*_m \end{smallmatrix} \right]^{ \top } $. Here, the first equality holds due the following observation
\begin{align*}
&\left\langle F ( W ) ( \SS \otimes I_s ) ( T ), I_s \right\rangle_{ \C^{ s \times s } }  =  \sum_{ q = 1 }^m \sum_{ i = 1}^n \mathrm{trace} (f^i ( W ) ( \SS_q T_q ) A^i ) \\
 &=   \sum_{ q = 1 }^m \sum_{ i = 1}^n \mathrm{trace} ( A^i f^i ( W ) ( \SS_q T_q ) ) 
 =  0,
\end{align*}
as $ \mathrm{trace} ( F ( W ) ( X ) ) = 0 $. We have thus shown that
\be \label{LHS of 8.14}
\big\langle [ ( \SS \otimes I_s ) F( Z ) - F( W ) ( \SS \otimes I_s )] ( T ), I_s \big\rangle_{ \C^{ s \times s } }  = \langle F ( Z ) ( T ), \SS^* \rangle_{ \C^{ ms \times s } } .
\ee
Now for each $ 1 \leq j \leq d $, $ Z \in \Omega_{ ms } $ and $ T = \left[ \begin{smallmatrix} T_1 & \cdots & T_m \end{smallmatrix} \right]^{ \top } \in \CV^{ ms \times s } $, note that
\begin{align*}
& \Delta_{ R, j } F ( W, Z ) ( \SS Z_j ) ( T ) \\
& = \bigg[ \bigg( \sum_{ i = 1 }^n \bigg[ \sum_{ \ell, k = 1 }^s E_{ \ell k } \otimes f^i_{ \ell k } ( Z ) \bigg] \otimes A^i \bigg) \begin{bmatrix} \SS_1 & \cdots & \SS_m \end{bmatrix} \big( \! \big( Z_j^{ pq } \big) \! \big)_{ p, q = 1 }^m \bigg] \begin{bmatrix} T_1 \\ \vdots \\ T_m \end{bmatrix} \\
& = \bigg( \sum_{ i = 1 }^n \bigg[ \sum_{ \ell, k = 1 }^s E_{ \ell k } \begin{bmatrix} \SS_1 & \cdots & \SS_m \end{bmatrix} \big( \! \big( Z_j^{ pq } \big) \! \big)_{ p, q = 1 }^m \big( \! \big( ( f^i_{ \ell k } ( Z ) )_{ p'q' } \big) \! \big)_{ p', q' = 1 }^m \bigg] \otimes A^i \bigg) \begin{bmatrix} T_1 \\ \vdots \\ T_m \end{bmatrix} \\
& =  \sum_{ i = 1 }^n \bigg[ \sum_{ \ell, k = 1 }^s \sum_{ q = 1 }^m \bigg(  \sum_{ p = 1 }^m E_{ \ell k } \SS_p \bigg( \sum_{ t = 1 }^m Z_j^{ pt } ( f^i_{ \ell k } ( Z ) )_{ tq } \bigg) ( T_q ) A^i \bigg) \bigg] \\
& = \sum_{ i = 1 }^n \bigg[  \sum_{ \ell, k = 1 }^s \sum_{ t = 1 }^m \bigg(  \sum_{ p = 1 }^m E_{ \ell k } \SS_p \bigg( \sum_{ q = 1 }^m Z_j^{ pq } \big( f^i_{ \ell k } ( Z ) \big)_{ qt } \bigg) ( T_t ) A^i \bigg) \bigg] 
\end{align*} 
and consequently, we have that
\begin{align*}
& \big\langle  \Delta_{ R, j } F ( W, Z ) ( \SS Z_j ) ( T ), I_s \big\rangle_{ \C^{ s \times s } } \\
& = \sum_{ i = 1 }^n \sum_{ \ell, k = 1 }^s \sum_{ t = 1 }^m \sum_{ p = 1 }^m \sum_{ q = 1 }^m \text{trace} \big( E_{ \ell k } \SS_p  Z_j^{ pq } ( f^i_{ \ell k } ( Z ) )_{ qt }  ( T_t ) A^i \big) \\
& = \sum_{ p = 1 }^m \sum_{ i = 1 }^n \sum_{ \ell, k = 1 }^s \sum_{ q, t = 1 }^m \text{trace} \big( Z_j^{ pq } ( f^i_{ \ell k } ( Z ) )_{ qt } ( T_t ) A^i E_{ \ell k } \SS_p \big) \\
& = \sum_{ p = 1}^m \bigg\langle \sum_{ i = 1 }^n \sum_{ \ell, k = 1 }^s \sum_{ q, t = 1 }^m Z_j^{ pq } ( f^i_{ \ell k } ( Z ) )_{ qt }  ( T_t ) A^i E_{ \ell k }, \SS_p^* \bigg\rangle_{ \C^{ s \times s } }\\
& = \big\langle ( Z_j \otimes I_s ) F_j ( Z ) ( T ), \SS^* \big\rangle_{ \C^{ ms \times s } }.
\end{align*}
Here, the last equality holds since for each $ 1 \leq j \leq d $,
\begin{align*}
& \big( Z_j \otimes I_s \big) F_j ( Z ) ( T ) \\
& = \big( \! \big( Z_j^{ p q } \otimes I_s \big) \! \big)_{ p, q = 1 }^m \bigg( \sum_{ p', q' = 1 }^m E_{ p' q' } \otimes \bigg[ \sum_{ i = 1 }^n \sum_{ \ell, k = 1 }^s ( f_{ \ell k }^i ( Z ) )_{ p' q' } \otimes A^i E_{ \ell k } \bigg] \bigg) \bigg( \sum_{ t = 1 }^m \epsilon_t \otimes T_t \bigg) \\
& = \sum_{ p = 1 }^m \epsilon_p \otimes \bigg[ \sum_{ i = 1 }^n \sum_{ \ell, k = 1 }^s \sum_{ q, t = 1 }^m Z_j^{ pq } ( f^i_{ \ell k } ( Z ) )_{ qt }  ( T_t ) A^i E_{ \ell k } \bigg] . 
\end{align*}
Thus, it has been shown, for each $ 1 \leq j \leq d $, that
\be \label{RHS of 8.14 part 1}
\big\langle  \Delta_{ R, j } F ( W, Z ) ( \SS Z_j ) ( T ), I_s \big\rangle_{ \C^{ s \times s } } = \big\langle ( Z_j \otimes I_s ) F_j ( Z ) ( T ), \SS^* \big\rangle_{ \C^{ ms \times s } }.
\ee
A similar computation also shows, for every $ 1 \leq j \leq d $, that
\be \label{RHS of 8.14 part 2}
\big\langle  \Delta_{ R, j } F ( W, Z ) ( W_j \SS ) ( T ), I_s \big\rangle_{ \C^{ s \times s } } = \big\langle ( I_{ ms } \otimes W_j ) F_j ( Z ) ( T ), \SS^* \big\rangle_{ \C^{ ms \times s } }.
\ee
Finally, Equation \eqref{finite difference formula for F}, Equation \eqref{LHS of 8.14}, Equation \eqref{RHS of 8.14 part 1} and Equation \eqref{RHS of 8.14 part 2} together verify the validation of Equation \eqref{Gleason solution} with the nc functions $ F_1, \hdots, F_d $ as constructed above.
\end{proof}

\begin{rem}
When $ \Omega $ is uniformly open nc domain and $ F $ is uniformly analytic, the uniqueness of the functions $ F_1, \hdots, F_d $ is followed from Equations \eqref{RHS of 8.14 part 1} or \eqref{RHS of 8.14 part 2}, provided $ \Omega $ contains at least one invertible point $ Z $ or $ W $ is invertible, respectively. Indeed, any nc functions $ G_1, \hdots, G_d $ satisfying Equation \eqref{Gleason solution} must satisfy \eqref{RHS of 8.14 part 1} and \eqref{RHS of 8.14 part 2}. Consequently, we have 
$$  ( Z_j \otimes I_s ) F_j ( Z ) ( T ) =  ( Z_j \otimes I_s ) G_j ( Z ) ( T ) $$ and
$$\ ( I_{ ms } \otimes W_j ) F_j ( Z ) ( T ) = ( I_{ ms } \otimes W_j ) G_j ( Z ) ( T ) ,  $$
for all $ 1 \leq j \leq d $, $ Z \in \Omega_{ ms } $, $ \SS \in \C^{ s \times ms } $ and $ T \in \CV^{ ms \times s } $, and therefore, with the help of the added hypothesis it verifies the claim.
\end{rem}

In the following theorem, we solve the nc Gleason problem when $ \Omega $ is a uniformly open nc set and $ F $ is uniformly analytic.

\begin{thm} \label{analytic solution for nc Gleason problem}
Let $ F : \Omega \rightarrow ( \CL ( \CV, \C )^{ s \times s } )_{ nc } $ be a uniformly analytic nc function and $ W \in \Omega_s $ for some $ s \in \N $. Then $ F $ vanishes at $ W $ if and only if there exists uniformly analytic nc functions $ F_1, \hdots, F_d : \Omega \rightarrow ( \CL ( \CV, \C )^{ s \times s } )_{ nc } $ such that Equation \eqref{Gleason solution} holds. Moreover, the functions $ F_1, \hdots, F_d $ satisfying Equation \eqref{Gleason solution} are unique.
\end{thm}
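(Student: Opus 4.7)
The plan is to bootstrap the present theorem off the two preceding algebraic results: Theorem \ref{constructive solution for nc Gleason problem-algebraic version} already supplies globally defined nc functions $F_1, \hdots, F_d$ on $\Omega$ whose existence and explicit description via $\Delta_{R,j} F(W, \cdot)$ is recorded there, and Theorem \ref{uniqueness of Gleason solution} already yields uniqueness among all nc functions (without any analyticity assumption) satisfying \eqref{Gleason solution}. So the only genuinely new content is to verify that when $F$ is uniformly analytic, each of the $F_j$ produced by the explicit formula
\[
F_j(Z) = \sum_{i=1}^n \sum_{\ell, k = 1}^s \big(\varepsilon_\ell^* \otimes I_{(\CV^*)^t}\big) \Delta_{R,j} f^i(W, Z) \big(\varepsilon_k \otimes I_{(\CV^*)^t}\big) \otimes \SA^i E_{\ell k}
\]
is also uniformly analytic. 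Both the uniqueness and the forward implication ``$F$ uniformly analytic with the factorization implies $F$ vanishes at $W$'' are immediate (the latter by evaluating trace at $W$).

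The core step is therefore to show that $Z \mapsto \Delta_{R,j} f^i(W, Z)$ is uniformly locally bounded on $\Omega$, after which Theorem \ref{locally bounded = nc analytic} promotes it to a uniformly analytic nc function and the explicit formula above, being a finite $\C$-linear combination of entries of such functions, is uniformly analytic as well. Since $f^i : \Omega \to \CL(\CV, \C)_{nc}$ is uniformly analytic, its right nc difference-differential $\Delta_R f^i$ belongs to the class of first-order nc functions whose uniform local boundedness on $\Omega \times \Omega$ is standard (cf.\ the nc affine kernel argument used for Proposition \ref{properties of kernel elements}(i) invoking \cite[Theorem 7.53]{Verbovetskyi-Vinnikov}). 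Slicing in the first variable to $W$ and projecting onto the $j$-th coordinate (Equation \eqref{nc j-th partial difference-differential operator}) preserves uniform local boundedness; compressing by the fixed matrices $\varepsilon_\ell^* \otimes I$ and $\varepsilon_k \otimes I$ does likewise.

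An alternative route, should one wish to avoid appealing to the first-order theory, is laid out in Remark \ref{an alternative proof of existence}: define the candidate multilinear coefficients $F_{j, \beta}$ by Equation \eqref{eqn1}, verify they satisfy $\mathrm{CIC}(W)$ via the canonical intertwining conditions for $F$, and estimate
\[
\|F_{j, \beta}\|_{\CL^\ell_{\mathrm{cb}}} \leq \sqrt{s} \, \|F_{g_j \beta}\|_{\CL^{\ell+1}_{\mathrm{cb}}}
\]
using the Cauchy--Schwarz argument and the rank-one matrix trick shown there. Since $F$ being uniformly analytic forces $\limsup_\ell \sqrt[\ell]{\|F_\gamma\|_{\CL^\ell_{\mathrm{cb}}}} < \infty$ (Theorem \ref{convergence of TT series}), the same holds for the $F_{j, \beta}$, and the associated Taylor--Taylor series defines a uniformly analytic nc function on a nc ball around $W$ which, by uniqueness (Theorem \ref{uniqueness of Gleason solution}), must coincide with the globally defined $F_j$ on that ball; therefore $F_j$ is uniformly analytic in a neighborhood of $W$, and by another application of Theorem \ref{locally bounded = nc analytic} together with the global formula in terms of $\Delta_{R, j}$, uniform local boundedness extends to all of $\Omega$.

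The main technical point, modest in this setting, is securing uniform local boundedness of $\Delta_{R, j} f^i(W, \cdot)$ globally on $\Omega$ rather than just near $W$; this is where one relies on the fact that restricting a uniformly locally bounded first-order nc function to a fixed slice in the first argument yields a uniformly locally bounded zeroth-order nc function on the remaining variable, which combined with Theorem \ref{locally bounded = nc analytic} closes the argument. Uniqueness is then a direct invocation of Theorem \ref{uniqueness of Gleason solution}, since any two uniformly analytic solutions are in particular nc function solutions.
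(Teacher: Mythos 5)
Your proposal is correct and takes essentially the same approach as the paper: reduce to checking that the $F_j$ furnished by Theorem \ref{constructive solution for nc Gleason problem-algebraic version} are uniformly analytic, which you obtain by observing that $\Delta_{R,j} F(\cdot,\cdot)$ (equivalently, the $\Delta_{R,j} f^i$) is a uniformly analytic first-order nc function and that freezing the first variable at $W$ and compressing by fixed matrix factors preserves uniform local boundedness, after which Theorem \ref{locally bounded = nc analytic} yields analyticity, and uniqueness is Theorem \ref{uniqueness of Gleason solution}. The alternative route you sketch via the Taylor--Taylor coefficient estimates of Remark \ref{rem on solving nc eqn} would only give analyticity on a small nc ball about $W$, which is precisely why the paper (and your primary argument) works globally with the difference-differential formula instead.
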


\begin{proof}
Since $ \Omega = \big( \coprod_{ m = 1 }^{ \infty } \Omega_{ ms } \big)_{\text{d.s.e}} $, by \cite[Proposition 9.2 (i) and Proposition A.3 ]{Verbovetskyi-Vinnikov}, it is enough to find the required nc functions on $ \coprod_{ m = 1 }^{ \infty } \Omega_{ ms } $. 

In view of Theorem \ref{uniqueness of Gleason solution} and Theorem \ref{constructive solution for nc Gleason problem-algebraic version}, it is enough to show that the functions $ F_1, \hdots, F_d $ obtained in Theorem \ref{constructive solution for nc Gleason problem-algebraic version} are uniformly analytic nc functions whenever so is $ F $. Recall from Equation \eqref{Gleason factors} that $ F_1, \hdots, F_d : \coprod_{ m = 1 }^{ \infty } \Omega_{ ms } \rightarrow ( \CL ( \CV, \C )^{ s \times s } )_{ nc } $ are nc functions given by
    $$ F_j ( Z ) : = \sum_{ i = 1 }^n \sum_{ \ell, k = 1 }^s f^i_{ \ell k } ( Z ) \otimes \SA^i E_{ \ell k },  $$
where $ F = \sum_{ i = 1 }^n f^i \otimes \SA^i $ and for $ 1 \leq \ell, k \leq s $, the functions $ f^i_{ \ell k } $ are determined by the equation 
$$ \Delta_{ R, j } f^i ( W, Z ) = \sum_{ \ell, k = 1 }^s E_{ \ell k } \otimes f^i_{ \ell k } ( Z ), ~ ~ Z \in \Omega . $$
Since $ F $ is uniformly analytic nc function, so are the order $ 1 $ nc functions  (cf. \cite[Theorem 7.44, pp 114]{Verbovetskyi-Vinnikov}) 
$$ \Delta_{ R, j } F ( \cdot, \cdot ) : \Omega \times \Omega \to \CL \big( \C^d_{ nc }, ( \CL ( \CV, \C )^{ s \times s } )_{ nc } \big), \quad 1 \leq j \leq d . $$ 
In particular, for a fixed $ W \in \Omega_s $, the functions $ \Delta_{ R, j } F ( W, \cdot ) $, $ 1 \leq j \leq d $, turn out to uniformly analytic nc functions on $ \coprod_{ m \geq 1 } \Omega_{ m s } $ taking values in $ ( \CL ( \CV, \C )^{ s \times s } )_{ nc } $. Consequently, $ F_1, \hdots, F_d $ are uniformly analytic nc functions completing the proof.
\end{proof}

\begin{rem}
    Note that there is no speciality of the choice of uniformly-open topology in Theorem \ref{analytic solution for nc Gleason problem}. In fact, if $ \Omega $ is finitely open (respectively, open in disjoint open topology) and the nc function $ F $ is locally bounded on slices (respectively, locally bounded in disjoint open topology), the same line of argument as in Theorem \ref{analytic solution for nc Gleason problem} along with Theorem \ref{constructive solution for nc Gleason problem-algebraic version} and Theorem \ref{uniqueness of Gleason solution} yield locally bounded solutions $ F_1, \hdots, F_d $ to the Gleason problem in finitely open and disjoint open topologies, respectively. We refer \cite[Chapter 7, pp 83, 89]{Verbovetskyi-Vinnikov} to the readers for the definitions of finitely open and disjoint open topologies.
\end{rem}

\subsection{Application to the noncommutative Cowen-Douglas theory} \label{application of nc Gleason problem}

In this subsection, we obtain a sufficient condition on a cp nc reproducing kernel Hilbert space of uniformly analytic nc functions on a nc domain $ \Omega \subset \C^d_{ nc } $ so that the adjoint of the left multiplication operators by the nc coordinate functions on this Hilbert space is in the nc Cowen-Douglas class over $ \Omega $. 

First, recall from Equation \eqref{evaluation mapping} that the evaluation mapping $ \sev_W : \CH_s \to ( \C^{ m \times m } )^r $ at $ W \in \Omega_s $ is, by definition, 
$$ \sev_W ( F ) = \sum_{ j = 1 }^n \SA^j f^j ( W ) ,$$
where $ F = \sum_{ j = 1 }^n f^j \otimes \SA^j \in \CH_s $. Evidently, $ \sev_W $ is a bounded linear operator. We now compute the kernel of this operator in the following lemma.

\begin{lem} \label{kernel of generalized evaluation}
For $ W \in \Omega $, the kernel of the evaluation mapping $ \sev_W : \CH_s \to ( \C^{ m \times m } )^r $ at $ W \in \Omega_s $ is 
$$ \ker \sev_W = N^{ \perp }_W = \{ S K ( \cdot, W ) \sigma_t : S \in \C^{ m \times m },\, 1 \leq t \leq r \}^{ \perp } . $$
\end{lem}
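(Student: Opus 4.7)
The plan is to reduce the identity $\ker \sev_W = N_W^\perp$ to the reproducing property of the generalized kernel elements together with the interplay between the left Hilbert $C^*$-module inner product $\langle \cdot, \cdot \rangle_L$ and the Hilbert--Schmidt inner product on $\CH_s$. First I would observe that $\sev_W$ coincides exactly with the left generalized evaluation functional $\ev_W^L : \CH_s \to (\C^{s \times s})^{r \times 1}$ introduced in Proposition \ref{left evaluation}, so that the $t$-th component of $\sev_W(F)$ equals $\ev_{W,t}^L(F) = \langle F, K(\cdot, W) \sigma_t \rangle_L$, a matrix in $\C^{s \times s}$. Consequently $F \in \ker \sev_W$ if and only if $\langle F, K(\cdot, W) \sigma_t \rangle_L = 0$ for every $1 \leq t \leq r$.

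Next I would exploit the left $C^*$-module identity $\langle F, SG \rangle_L = \langle F, G \rangle_L S^*$ for $S \in \C^{s \times s}$, which follows directly from the definition of $\langle \cdot, \cdot \rangle_L$ in Subsection \ref{Hilbert module structure}. Taking traces on both sides and using the relation $\operatorname{trace} \langle \cdot, \cdot \rangle_L = \langle \cdot, \cdot \rangle_{HS}$ yields
$$\langle F, S K(\cdot, W) \sigma_t \rangle_{HS} = \operatorname{trace}\!\big(\langle F, K(\cdot, W) \sigma_t \rangle_L \, S^*\big)$$
for every $S \in \C^{s \times s}$ and every $1 \leq t \leq r$.

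The conclusion follows from the non-degeneracy of the Hilbert--Schmidt pairing on $\C^{s \times s}$: the matrix $\langle F, K(\cdot, W) \sigma_t \rangle_L$ vanishes if and only if the right-hand side of the displayed equation is zero for all $S \in \C^{s \times s}$, and this in turn is precisely the statement that $F$ is Hilbert--Schmidt orthogonal in $\CH_s$ to every vector of the form $S K(\cdot, W) \sigma_t$. Combining these equivalences across $t = 1, \hdots, r$ gives $\ker \sev_W = N_W^\perp$, proving the lemma. I do not anticipate any genuine obstacle: the argument is essentially a three-line direct calculation, and the only care needed is to respect the convention $\langle F, SG \rangle_L = \langle F, G \rangle_L S^*$ for the left $C^*$-module inner product and to appeal to the non-degeneracy of the trace pairing on $\C^{s \times s}$.
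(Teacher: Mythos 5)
Your argument is correct and follows essentially the same route as the paper: both reduce membership in $\ker \sev_W$ to orthogonality against the vectors $\SS K(\cdot,W)\sigma_t$ via the reproducing property of the generalized kernel elements, the paper invoking Equation \eqref{generalized reproducing property} directly while you reach the same identity through Proposition \ref{left evaluation} (observing $\sev_W = \ev_W^L$), the left-module relation $\langle F, \SS G\rangle_L = \langle F, G\rangle_L \SS^*$, and $\operatorname{trace}\langle\cdot,\cdot\rangle_L = \langle\cdot,\cdot\rangle_{HS}$. The two computations are equivalent reformulations of one another, so no substantive gap.
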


\begin{proof}
Let $ W \in \Omega_m $ for some $ m \in \N $ and $ F \in \ker \sev_W $. Recall from the preceding subsection that $ \sev_W ( F ) = 0 $ if and only if $ \mathrm{ trace } F ( W ) ( \SX ) = 0 $ for all $ \SX \in \C^{ s \times s } $. Thus, we have that
$$  \big\langle F, \SX K ( \cdot, W ) \sigma_t \big\rangle_{ HS } = \mathrm{trace} ( ( F ( W ) \sigma_t ) ( \SX ) )  = \big\langle ( F ( W ) \sigma_t ) ( \SX ), \mathrm{Id}_{ \C^{ m \times m } } \big\rangle_{ \C^{ m \times m } } = 0 $$
verifying, in particular for $ \SX = E_{ i j } $, $ 1 \leq i, j \leq m $, that $ \ker \sev_W \subset N_W^{ \perp } $. The converse inclusion also follows from the similar computation as above.
\end{proof}

\begin{thm} \label{complete description of CD tuple}
Let $ \Omega $ be a nc domain in $ \C^d_{ nc } $ and $ \CH ( K ) $ be a nc reproducing kernel Hilbert space of uniformly analytic nc functions on $ \Omega^* = \{ W : W^* \in \Omega \} $ taking values in $ ( \C^r )_{ nc } $ with the cp nc reproducing kernel $ K : \Omega^* \times \Omega^* \rightarrow \CL ( \C_{ nc }, ( \C^{ r \times r } )_{ nc } ) $ which is invariant under the action of the multiplication operators by the nc coordinate functions from left. Assume that
\begin{itemize}
\item[(a)] For any $ s \in \N $ and $ W \in \Omega^*_s $, the generalized evaluation functional $ \sev_W $ is bounded from $ \CH_s $ onto $ ( \C^{ s \times s } )^r $;
\item[(b)] for any $ s \in \N $, $ \CH_s $ satisfies the nc Gleason property, that is, every $ F \in \CH_s $ with $ \sev_W ( F ) = 0 $ can be expressed as 
\be \label{nc Gleason property}
F = \sum_{ i =1 }^d \left( M_{ Z_i } \otimes \mathrm{Id}_{ \C^{ s \times s } } - \mathrm{Id}_{ \CH ( K ) } \otimes R_{ W_i } \right) F_i,
\ee
for some $ F_1, \hdots, F_d \in \CH_s $.
\end{itemize}
Then $ \boldsymbol{M}^* = ( M^*_{ Z_1 }, \hdots, M^*_{ Z_d } ) $ lies in $ \mathrm B_r ( \Omega )_{ nc } $.
\end{thm}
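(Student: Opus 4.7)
The plan is to verify the three conditions of Definition~\ref{Definition of nc CD class} for $\boldsymbol{M}^* = (M^*_{Z_1}, \ldots, M^*_{Z_d})$ by translating everything into an adjoint statement. Fix $W \in \Omega_m$ and note that since elements of $\CH(K)$ are nc functions on $\Omega^*$, the evaluation functional $\sev_{W^*}$ is naturally attached to the point $W^* \in \Omega^*_m$; the argument will hinge on identifying $\ker D_{\boldsymbol{M}^*-W}$ and $\mathrm{range}(D_{\boldsymbol{M}^*-W})$ with concrete objects on $\Omega^*$ via $\sev_{W^*}$ and the generalized kernel elements.

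The key step will be to compute the adjoint $D^*_{\boldsymbol{M}^*-W}: \CH_m^{\oplus d} \to \CH_m$ as
$$(F_1, \ldots, F_d) \longmapsto \sum_{i=1}^d \left(M_{Z_i} \otimes \mathrm{Id}_{\C^{m \times m}} - \mathrm{Id}_{\CH(K)} \otimes R_{W_i^*}\right) F_i,$$
the $W_i^*$ appearing because $R_{W_i}^* = R_{W_i^*}$ with respect to the Hilbert--Schmidt pairing on $\C^{m\times m}$. A direct check shows $\sev_{W^*}((M_{Z_i}\otimes \mathrm{Id} - \mathrm{Id}\otimes R_{W_i^*}) F) = 0$ for every $F \in \CH_m$, so $\mathrm{range}(D^*_{\boldsymbol{M}^*-W}) \subseteq \ker \sev_{W^*}$; the reverse inclusion is precisely hypothesis~(b), the nc Gleason property applied at the point $W^* \in \Omega^*_m$. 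Combining this equality with Lemma~\ref{kernel of generalized evaluation} yields
$$\ker D_{\boldsymbol{M}^*-W} \;=\; \bigl(\mathrm{range}(D^*_{\boldsymbol{M}^*-W})\bigr)^{\perp} \;=\; (\ker \sev_{W^*})^{\perp} \;=\; N_{W^*},$$
where $N_{W^*} = \mathrm{span}\{\SA\,K(\cdot, W^*)\sigma_t : \SA \in \C^{m\times m},\ 1 \leq t \leq r\}$ is finite-dimensional and therefore closed. The dimension count $\dim_\C N_{W^*} = rm^2$ will follow from hypothesis~(a) combined with Theorem~\ref{equivalent criterion for non-degeneracy}: surjectivity of $\sev_{W^*} = \ev^L_{W^*}$ is equivalent (via Proposition~\ref{left evaluation}(2)) to injectivity of $\boldsymbol{\SA} \mapsto \sum_{s=1}^r \SA_s K(\cdot, W^*)\sigma_s$ on $(\C^{m\times m})^{r\times 1}$, forcing the image $N_{W^*}$ to have the full dimension $rm^2$. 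This settles condition~(i).

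For condition~(ii), specializing $\SA$ to elementary matrices $E_{ii}$ shows that the entries of elements of $\ker D_{\boldsymbol{M}^*-W}$ already contain all scalar kernel functions $K_{W^*,\varepsilon_i,\varepsilon_j^* \otimes \sigma_t}$ of Equation~\eqref{kernel element}. As $W$ ranges over $\Omega$ (equivalently $W^*$ over $\Omega^*$), the reproducing property together with the nc identity theorem shows that the closed linear span of these functions is all of $\CH(K)$, giving (ii). Condition~(iii) on closed range is then essentially free: $\mathrm{range}(D^*_{\boldsymbol{M}^*-W}) = \ker \sev_{W^*}$ is the kernel of the bounded functional $\sev_{W^*}$ (bounded by hypothesis~(a)), hence closed, and the closed range theorem transfers closedness to $\mathrm{range}(D_{\boldsymbol{M}^*-W})$.

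The main obstacle will be largely bookkeeping: carefully tracking the involutive passage between $\Omega$ and $\Omega^*$, and making sure the Gleason property~(b) is invoked at the correct point $W^* \in \Omega^*_s$ rather than $W$. Once the adjoint duality $\mathrm{range}(D^*_{\boldsymbol{M}^*-W}) = \ker \sev_{W^*}$ is rigorously established, the three defining conditions of $\mathrm{B}_r(\Omega)_{nc}$ drop out as direct consequences of hypotheses~(a) and~(b) together with the structural results of Section~\ref{nc reproducing kernel Hilbert space}.
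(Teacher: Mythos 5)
Your proposal is correct and follows essentially the same route as the paper's proof: identify $\mathrm{range}(D^*_{\boldsymbol{M}^*-W})$ with $\ker\sev_{W^*}$ (one inclusion immediate, the other being precisely hypothesis (b)), take orthogonal complements, and read off the three Cowen–Douglas conditions. The only cosmetic difference is that the paper counts $\dim\ker D_{\boldsymbol{M}^*-W}$ directly as the codimension of $\ker\sev_{W^*}$ (a consequence of surjectivity of $\sev_{W^*}$ onto the $rs^2$-dimensional space $(\C^{s\times s})^r$), rather than detouring through Lemma~\ref{kernel of generalized evaluation}, Proposition~\ref{left evaluation}(2), and Theorem~\ref{equivalent criterion for non-degeneracy} as you do; Lemma~\ref{kernel of generalized evaluation} is used in the paper only for the density condition (ii), where the nc identity theorem you mention is not needed — density of kernel elements is part of the definition of $\CH(K)$.
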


\begin{proof}
Since $ \CH ( K ) $ is invariant under the action of the multiplication operators by the nc coordinate functions from left, it follows from Closed Graph theorem that they are bounded operators on $ \CH ( K ) $. For $ s \in \N $ and $ W \in \Omega^*_s $, recall that $ \boldsymbol{M}^*_W : \CH_s \rightarrow \CH_s^{ \oplus d } $ is the linear mapping 
$$ \boldsymbol{ M }^*_W : = ( M_{ Z_1 }^* \otimes \mathrm{Id}_{ \C^{ s \times s } } - \mathrm{Id}_{ \CH ( K ) } \otimes R_{ W _1^* }, \hdots, M_{ Z_d }^* \otimes \mathrm{Id}_{ \C^{ s \times s } } - \mathrm{Id}_{ \CH ( K ) } \otimes R_{ W _d^* } ). $$
So the adjoint $ ( \boldsymbol{M}^*_W )^*$ of this operator turns out to be the map $ ( \boldsymbol{M}^*_W )^* : \CH_s^{ \oplus d } \rightarrow \CH_s $ defined by 
$$ ( F_1, \hdots, F_d ) \mapsto \sum_{ i =1 }^d \left( M_{ Z_i } \otimes \mathrm{Id}_{ \C^{ s \times s } } - \mathrm{Id}_{ \CH ( K ) } \otimes R_{ W_i } \right) F_i. $$
Thus, the condition (b) in the hypothesis turns out to be  
$$ \ker \sev_W = \mathrm{range} ( \boldsymbol{M}^*_W )^*.$$
Since $  \mathrm{range} ( \boldsymbol{M}^*_W )^* = ( \ker \boldsymbol{M}^*_W )^{ \perp } $ and $ \sev_W $ is onto $ ( \C^{ s \times s } )^r $ (from condition (a)), it follows that the dimension of $ \ker \boldsymbol{M}^*_W $ is $ r s^2 $. Also, the same identity implies that the range of $ ( \boldsymbol{M}^*_W )^* $ is closed and consequently, so is the range of $ \boldsymbol{M}^*_W $. Finally, recall from Lemma \ref{kernel of generalized evaluation} that $ \ker \sev_W^{ \perp } = \{ \SS K ( \cdot, W ) \sigma_t : \SS \in \C^{ s \times s }, \, 1 \leq t \leq r \} $ and observe that
\begin{align*} 
& \overline { \bigvee_{  s \in \N, W \in \Omega_s^*, v, y \in \C^s, 1 \leq t \leq r } \left\{ v^* ( \SS K( \cdot, W ) \sigma_t ) y : \SS \in \C^{ s \times s } \right\} }\\
& = \overline { \bigvee_{ s \in \N, W \in \Omega_s^*, v \in \C^s, \mathsf{y} \in ( \C^r )^s } \left\{ K_{ W, v, \mathsf{y}} \right\} } = \CH ( K ). 
\end{align*}
This completes the proof.
\end{proof}

\begin{rem}
    From Theorem \ref{constructive solution for nc Gleason problem-algebraic version}, we observe that the condition in part (b) of the theorem holds for all nc reproducing kernel Hilbert spaces $ \CH ( K ) $ which are closed under all partial right difference-differential operators: For any $ f \in \CH ( K ) $, $ s \in \N $, $ W \in \Omega^*_s $ and $ 1 \leq j \leq d $, $ \Delta_{ R, j } f ( W, \cdot ) \in \CH_s $. In general, we need this property only for a certain class of functions in $ \CH ( K ) $, namely, the family of all functions in $ \CH ( K ) $ which are entries of all functions $ F \in \CH_s $ with $ \sev_W ( F ) = 0 $. However, note that this condition needs to be satisfied by all elements in $ \CH ( K ) $ whenever $ \CH ( K ) $ contains nc constant functions. In other words, the tuple of adjoints of the multiplication operators by the nc coordinate functions on a cp nc reproducing kernel Hilbert space $ \CH ( K ) $ is in $ \mathrm{ B }_r ( \Omega )_{ nc } $ if for every $ s \in \N $, $ W \in \Omega^*_s $, $ 1 \leq i, j \leq s $, the nc functions $ f_{ W, \varepsilon_i, \varepsilon_j^* } : \Omega^* \to \C^r_{ nc } $ defined by 
    $$ Z \mapsto f_{ W, \varepsilon_i, \varepsilon_j^* } ( Z ) : \C^{ 1 \times n } \to \C^{ 1 \times n }, \quad u \mapsto \varepsilon_j^* \Delta_R f ( W, Z ) ( \varepsilon_i u ) , $$
    belong to $ \CH ( K ) $ whenever $ f \in \CH ( K ) $. Also, observe that using Corollary \ref{cond for membership in rkhs} this condition can be reformulated as the following inequality: For $ 1 \leq i, j \leq s $, there exist $ c_{ i j } > 0 $ such that
    $$ c_{ i j } K \big( Z, Z \big) -  f_{ W, \varepsilon_i, \varepsilon_j^* } ( Z ) \otimes f_{ W, \varepsilon_i, \varepsilon_j^* } ( Z )^* \succeq 0 , \quad Z \in \Omega^*_n, n \in \N . $$
    In fact, one can choose a positive constant $ c > 0 $ such that this inequality turns out to be equivalent to the following:
    $$ c K \big( Z, Z \big) -  f_{ W, \varepsilon_i, \varepsilon_j^* } ( Z ) \otimes f_{ W, \varepsilon_i, \varepsilon_j^* } ( Z )^* \succeq 0 , \quad Z \in \Omega^*_n,~ n \in \N,~ 1 \leq i, j \leq s. $$
\end{rem}

\bigskip

\noindent\textit{Acknowledgments.} The authors wish to thank Professor Joseph A. Ball for fruitful discussions on nc reproducing kernel Hilbert spaces.


\begin{thebibliography}{10}

\bibitem{Agler-McCarthy1}
J. Agler and J. E. McCarthy.
\newblock Aspects of non-commutative function theory.
\newblock {\em Concr. Oper.}, 3 (2016), no. 1, 15--24.

\bibitem{Agler-McCarthy2}
J. Agler and J. E. McCarthy.
\newblock Non-commutative functional calculus.
\newblock {\em J. Anal. Math.}, 137 (2019), no. 1, 211--229.

\bibitem{Agler-McCarthy-Young1}
J. Agler, J. E. McCarthy, and N.~J. Young.
\newblock Non-commutative manifolds, the free square root and symmetric
  functions in two non-commuting variables.
\newblock {\em Trans. London Math. Soc.}, 5 (2018), no. 1, 132--183.

\bibitem{Agler-McCarthy-Young2}
J. Agler, J. E. McCarthy, and N. J. Young.
\newblock {\em Operator analysis -- Hilbert space methods in complex analysis},
  {\em Cambridge Tracts in Mathematics}, 219.
\newblock Cambridge University Press, Cambridge, 2020.

\bibitem{Alpay-Verbovetskyi}
D. Alpay and D. S. Kalyuzhny\u{\i}-Verbovetzki\u{\i}.
\newblock On the intersection of null spaces for matrix substitutions in a
  non-commutative rational formal power series.
\newblock {\em C. R. Math. Acad. Sci. Paris}, 339 (2004), no. 8, 533--538.

\bibitem{Ball-Groenewald-Malakorn1}
J. A. Ball, G. Groenewald, and T. Malakorn.
\newblock Structured noncommutative multidimensional linear systems.
\newblock {\em SIAM J. Control Optim.}, 44 (2005), no. 4, 1474--1528.

\bibitem{Ball-Groenewald-Malakorn2}
J.~A. Ball, G. Groenewald, and T. Malakorn.
\newblock Bounded real lemma for structured noncommutative multidimensional
  linear systems and robust control.
\newblock {\em Multidimens. Syst. Signal Process.}, 17 (2006), no. 2-3, 119--150.

\bibitem{Ball-Groenewald-Malakorn3}
J. A. Ball, G. Groenewald, and T. Malakorn.
\newblock Conservative structured noncommutative multidimensional linear
  systems.
\newblock 161 (2006), 179--223.

\bibitem{Ball-Verbovetskyi}
J. A. Ball and D. S. Kalyuzhny\u{\i}-Verbovetzki\u{\i}.
\newblock Conservative dilations of dissipative multidimensional systems: the commutative and non-commutative settings.
\newblock {\em Multidimens. Syst. Signal Process.}, 19 (2008), no. 1, 79--122.

\bibitem{Ball-Marx-Vinnikov-nc-rkhs}
J. A. Ball, G. Marx, and V. Vinnikov.
\newblock Noncommutative reproducing kernel {H}ilbert spaces.
\newblock {\em J. Funct. Anal.}, 271(7):1844--1920, 2016.

\bibitem{Ball-Marx-Vinnikov-interpolation}
J. A. Ball, G. Marx, and V. Vinnikov.
\newblock Interpolation and transfer-function realization for the noncommutative {S}chur-{A}gler class.
\newblock {\em Oper. Theory Adv. Appl.} 262 (2018), 23--116.

\bibitem{Ball-Marx-Vinnikov-nc-hereditary kernel}
J. A. Ball, G. Marx, and V. Vinnikov.
\newblock Free noncommutative hereditary kernels: {J}ordan decomposition, {A}rveson extension, kernel domination.
\newblock {\em Doc. Math.}, 27 (2022), 1985--2040.

\bibitem{Ball-Vinnikov-formal-kernel}
J. A. Ball and V. Vinnikov.
\newblock Formal reproducing kernel {H}ilbert spaces: the commutative and
  noncommutative settings.
\newblock 143 (2003), 77--134.

\bibitem{Ball-Vinnikov}
J. A. Ball and V. Vinnikov.
\newblock Lax-{P}hillips scattering and conservative linear systems: a
  {C}untz-algebra multidimensional setting.
\newblock {\em Mem. Amer. Math. Soc.}, 178 (2005), no. 837, iv+101 pp.

\bibitem{Berstel-Reutenauer}
J. Berstel and C. Reutenauer.
\newblock {Les s\'{e}ries rationnelles et leurs langages}.
\newblock {\em \'{E}tudes et Recherches en Informatique. [Studies and Research in
  Computer Science]}. Masson, Paris, 1984.
  
\bibitem{Blecher-Merdy}
D. P. Blecher and C. Le Merdy.
\newblock {Operator algebras and their modules -- an operator space
  approach}, volume 30 of {\em London Mathematical Society Monographs. New
  Series}.
\newblock The Clarendon Press, Oxford University Press, Oxford, 2004.
\newblock Oxford Science Publications.

\bibitem{Cowen-Douglas1}
M. J. Cowen and R. G. Douglas.
\newblock Complex geometry and operator theory.
\newblock {\em Acta Math.}, 141 (1978), no. 3-4, 187--261.

\bibitem{Cowen-Douglas2}
M. J. Cowen and R. G. Douglas.
\newblock Operators possessing an open set of eigenvalues.
\newblock 35 (1983), 323--341.

\bibitem{Cowen-Douglas-connection}
M. J. Cowen and R. G. Douglas.
\newblock Equivalence of connections.
\newblock {\em Adv. in Math.}, 56 (1985), no. 1, 39--91.

\bibitem{Curto-Salinas}
R. E. Curto and N. Salinas.
\newblock Generalized {B}ergman kernels and the {C}owen-{D}ouglas theory.
\newblock {\em Amer. J. Math.}, 106 (1984), no. 2, 447--488.

\bibitem{Effros-Ruan}
E. G. Effros and Z. J. Ruan.
\newblock {Operator spaces}, volume 23 of {\em London Mathematical Society
  Monographs. New Series}.
\newblock The Clarendon Press, Oxford University Press, New York, 2000.

\bibitem{Eilenberg}
S. Eilenberg.
\newblock {Automata, languages, and machines. {V}ol. {A}}.
\newblock {\em Pure and Applied Mathematics}, Vol. 58. Academic Press [Harcourt Brace
  Jovanovich, Publishers], New York, 1974.

\bibitem{Flanders}
H. Flanders.
\newblock On spaces of linear transformations with bounded rank.
\newblock {\em J. London Math. Soc.}, 37 (1962), 10--16.

\bibitem{Gleason}
A. M. Gleason.
\newblock Finitely generated ideals in {B}anach algebras.
\newblock {\em J. Math. Mech.}, 13 (1964), 125--132.

\bibitem{Hadwin}
D. W. Hadwin.
\newblock Continuous functions of operators; a functional calculus.
\newblock {\em Indiana Univ. Math. J.}, 27 (1978), no. 1, 113--125.

\bibitem{Hadwin-Kaonga-Mathes}
D. Hadwin, L. Kaonga, and B. Mathes.
\newblock Noncommutative continuous functions.
\newblock {\em J. Korean Math. Soc.}, 40 (2003), no. 5, 789--830.

\bibitem{Helton-Klep-McCullough-free-proper}
J. W. Helton, I. Klep, and S. McCullough.
\newblock Proper analytic free maps.
\newblock {\em J. Funct. Anal.}, 260 (2011), no. 5, 1476--1490.

\bibitem{Helton-Klep-McCullough-convex-Positivstellensatz}
J. W. Helton, I. Klep, and S. McCullough.
\newblock The convex {P}ositivstellensatz in a free algebra.
\newblock {\em Adv. Math.}, 231 (2012), no. 1, 516--534.

\bibitem{Helton-Klep-McCullough-convexity-semidefinite-programming}
J. W. Helton, I. Klep, and S. McCullough.
\newblock Convexity and semidefinite programming in dimension-free matrix
  unknowns.
\newblock In {\em Handbook on semidefinite, conic and polynomial optimization},
  volume 166 of {\em Internat. Ser. Oper. Res. Management Sci.}, 377--405. Springer, New York, 2012.

  \bibitem{Helton-Klep-McCullough-Free-convex-algebraic-geometry}
J. W. Helton, I. Klep, and S. McCullough.
\newblock Free convex algebraic geometry.
\newblock In {\em Semidefinite optimization and convex algebraic geometry},
{\em MOS-SIAM Ser. Optim.}, SIAM, Philadelphia, PA, 13 (2013), 341--405.

  \bibitem{Helton-Klep-McCullough-Slinglend-nc-ball-maps}
J. W. Helton, I. Klep, S. McCullough and N. Slinglend.
\newblock Noncommutative ball maps.
\newblock {\em J. Funct. Anal.}, 257 (2009), no. 1, 47--87.

\bibitem{Helton-Klep-McCullough-Volcic-spectrahedra}
J. W. Helton, I. Klep, S. McCullough, and J. Vol\v{c}i\v{c}.
\newblock Bianalytic free maps between spectrahedra and spectraballs.
\newblock {\em J. Funct. Anal.}, 278 (2020), no. 11, 108472, 61 pp.

\bibitem{Helton-McCullough-convex-free-basic-semi-algebraic-set}
J. W. Helton and S. McCullough.
\newblock Every convex free basic semi-algebraic set has an {LMI}
  representation.
\newblock {\em Ann. of Math. (2)}, 176 (2012), no. 2, 979--1013.

\bibitem{Helton-McCullough-Putinar-Vinnikov}
J. W. Helton, S. McCullough, M. Putinar, and V. Vinnikov.
\newblock Convex matrix inequalities versus linear matrix inequalities.
\newblock {\em IEEE Trans. Automat. Control}, 54 (2009), no. 5, 952--964.

\bibitem{Helton-McCullough-Vinnikov}
J. W. Helton, S. A. McCullough, and V. Vinnikov.
\newblock Noncommutative convexity arises from linear matrix inequalities.
\newblock {\em J. Funct. Anal.}, 240 (2006), no. 1, 105--191.

\bibitem{Verbovetskyi-Vinnikov}
D. S. Kaliuzhnyi-Verbovetskyi and V. Vinnikov.
\newblock {Foundations of free noncommutative function theory}, volume 199
  of {\em Mathematical Surveys and Monographs}.
\newblock American Mathematical Society, Providence, RI, 2014.

\bibitem{Kalman-Falb-Arbib}
R. E. Kalman, P. L. Falb, and M. A. Arbib.
\newblock {\em Topics in mathematical system theory}.
\newblock McGraw-Hill Book Co., New York-Toronto, Ont.-London, 1969.

\bibitem{Kleene}
S. C. Kleene.
\newblock Representation of events in nerve nets and finite automata.
\newblock 1956, 3--41.

\bibitem{Muhly-Solel1}
P. S. Muhly and B. Solel.
\newblock Hardy algebras, {$W^\ast$}-correspondences and interpolation theory.
\newblock {\em Math. Ann.}, 330 (2004), no. 2, 353--415.

\bibitem{Muhly-Solel2}
P. S. Muhly and B. Solel.
\newblock Hardy algebras associated with {$W^*$}-correspondences (point
  evaluation and {S}chur class functions).
\newblock In {\em Operator theory, systems theory and scattering theory:
  multidimensional generalizations}, {\em Oper. Theory Adv.
  Appl.}, Birkh\"{a}user, Basel, 157 (2005), 221--241.

\bibitem{Muhly-Solel3}
P. S. Muhly and B. Solel.
\newblock Schur class operator functions and automorphisms of {H}ardy algebras.
\newblock {\em Doc. Math.}, 13 (2008), 365--411.

\bibitem{Muhly-Solel4}
P. S. Muhly and B. Solel.
\newblock Matricial function theory and weighted shifts.
\newblock {\em Integral Equations Operator Theory}, 84 (2016), no. 4, 501--553.

\bibitem{Paulsen}
V. Paulsen.
\newblock {Completely bounded maps and operator algebras}, volume 78 of {\em Cambridge Studies in Advanced Mathematics}.
\newblock Cambridge University Press, Cambridge, 2002.

\bibitem{Pisier}
G. Pisier.
\newblock {Introduction to operator space theory}, volume 294 of {\em
  London Mathematical Society Lecture Note Series}.
\newblock Cambridge University Press, Cambridge, 2003.

\bibitem{Popa-Vinnikov}
M. Popa and V. Vinnikov.
\newblock {$\rm H^2$} spaces of non-commutative functions.
\newblock {\em Complex Anal. Oper. Theory}, 12 (2018), no. 4, 945--967.

\bibitem{Popescu1}
G. Popescu.
\newblock Free holomorphic functions on the unit ball of {$B(H)^n$}.
\newblock {\em J. Funct. Anal.}, 241 (2006), no. 1, 268--333.

\bibitem{Popescu2}
G. Popescu.
\newblock Operator theory on noncommutative domains.
\newblock {\em Mem. Amer. Math. Soc.}, 205 (2010), no. 364, vi+124 pp.

\bibitem{Popescu3}
G. Popescu.
\newblock Composition operators on noncommutative {H}ardy spaces.
\newblock {\em J. Funct. Anal.}, 260 (2011), no. 3, 906--958.

\bibitem{Popescu4}
G. Popescu.
\newblock Free biholomorphic functions and operator model theory.
\newblock {\em J. Funct. Anal.}, 262 (2012), no. 7, 3240--3308.

\bibitem{Popescu5}
G. Popescu.
\newblock Noncommutative multivariable operator theory.
\newblock {\em Integral Equations Operator Theory}, 75 (2013), no. 1, 87--133.

\bibitem{Popescu6}
G. Popescu.
\newblock Unitary invariants on the unit ball of {$B(H)^n$}.
\newblock {\em Trans. Amer. Math. Soc.}, 365 (2013), no. 12, 6243--6267.

\bibitem{Popescu7}
G. Popescu.
\newblock Free biholomorphic functions and operator model theory, {II}.
\newblock {\em J. Funct. Anal.}, 265 (2013), no. 5, 786--836.

\bibitem{Popescu8}
G. Popescu.
\newblock Bergman spaces over noncommutative domains and commutant lifting.
\newblock {\em J. Funct. Anal.}, 280 (2021), no. 8, 108943, 89 pp.

\bibitem{Schutzenberger1}
M. P. Sch\"{u}tzenberger.
\newblock On the definition of a family of automata.
\newblock {\em Information and Control}, 4 (1961), 245--270.

\bibitem{Schutzenberger2}
M. P. Sch\"{u}tzenberger.
\newblock Certain elementary families of automata.
\newblock 1963 139--153.

\bibitem{Taylor1}
J. L. Taylor.
\newblock A general framework for a multi-operator functional calculus.
\newblock {\em Advances in Math.}, 9 (1972), 183--252.

\bibitem{Taylor2}
J. L. Taylor.
\newblock Functions of several noncommuting variables.
\newblock {\em Bull. Amer. Math. Soc.}, 79 (1973), 1--34.

\bibitem{Voiculescu-free-probability}
D. Voiculescu.
\newblock {Lectures on free probability theory} in {\em Lectures on probability theory and statistics ({S}aint-{F}lour, 1998), Lecture Notes in Math.}, Springer, Berlin.
\newblock 1738 (2000), 279--349, 2000.

\bibitem{Voiculescu-I}
D. Voiculescu.
\newblock Free analysis questions. {I}. {D}uality transform for the coalgebra
  of {$\partial_{X\colon B}$}.
\newblock {\em Int. Math. Res. Not.}, 16 (2004), 793--822.

\bibitem{Voiculescu-II}
D. V. Voiculescu.
\newblock Free analysis questions {II}: the {G}rassmannian completion and the
  series expansions at the origin.
\newblock {\em J. Reine Angew. Math.}, 645 (2010), 155--236.

\end{thebibliography}
\end{document}